\theoremstyle{plain} 
\newtheorem{Thm}[equation]{Theorem}
\newtheorem*{Thm*}{Theorem} 
\newtheorem*{MainThm*}{Main Theorem} 
\newtheorem{Cor}[equation]{Corollary}
\newtheorem{Prop}[equation]{Proposition}
\newtheorem{Lem}[equation]{Lemma} 
\newtheorem{Rmk}[equation]{Remark}
\newtheorem*{hypo1}{Hypothesis ($\ast$)}
\newtheorem*{hypo2}{Hypothesis ($\ast\ast$)}
\newtheorem*{hypo3}{Hypothesis ($\ast\ast\ast$)}
\numberwithin{equation}{section}
\newcommand{\Hom}{\operatorname{Hom}}
\newcommand{\GL}{\operatorname{GL}}
\newcommand{\Aut}{\operatorname{Aut}}
\newcommand{\SL}{\operatorname{SL}}
\newcommand{\Ind}{\operatorname{Ind}}
\newcommand{\cInd}{\operatorname{c-Ind}}
\newcommand{\diag}{\operatorname{diag}}
\newcommand{\GLt}{\widetilde{\operatorname{GL}}}
\newcommand{\SLt}{\widetilde{\operatorname{SL}}}
\newcommand{\Pt}{\widetilde{P}}
\newcommand{\Qt}{\widetilde{Q}}
\newcommand{\MQt}{\widetilde{M_Q}}
\newcommand{\GLtt}{\widetilde{\operatorname{GL}}^{(2)}}
\newcommand{\Mtt}{\widetilde{M}^{(2)}}
\newcommand{\GLtn}{\widetilde{\operatorname{GL}}^{(n)}}
\newcommand{\Mtn}{\widetilde{M}^{(n)}}
\newcommand{\MMtn}{\widetilde{M'}^{(n)}}
\newcommand{\Mtnn}{(\widetilde{M})^{(n)}}
\newcommand{\Mtnni}{(\widetilde{M_i})^{(n)}}
\newcommand{\Mn}{M^{(n)}}
\newcommand{\sigGLt}{{^{\sigma}\widetilde{\operatorname{GL}}}}
\newcommand{\cMt}{{^c\widetilde{M}}}
\newcommand{\cMtn}{{^c\widetilde{M}^{(n)}}}
\newcommand{\Zt}{\widetilde{Z}} 
\newcommand{\Mt}{\widetilde{M}}
\newcommand{\MMt}{\widetilde{M'}}
\newcommand{\Ht}{\widetilde{H}}
\newcommand{\Vt}{\widetilde{V}}
\newcommand{\timest}{\widetilde{\times}}
\newcommand{\otimest}{\widetilde{\otimes}}
\newcommand{\varphit}{\widetilde{\varphi}}
\newcommand{\pin}{\pi^{(n)}}
\newcommand{\gt}{\tilde{g}}
\newcommand{\mt}{\tilde{m}}
\newcommand{\OF}{\mathcal{O}_F} 
\newcommand{\OFv}{\mathcal{O}_{F_v}}
\newcommand{\G}{\mathbb{G}}
\newcommand{\C}{\mathbb C}
\newcommand{\A}{\mathbb{A}} 
\newcommand{\Q}{\mathbb{Q}}
\newcommand{\Z}{\mathbb{Z}}
\newcommand{\R}{\mathbb{R}}
\newcommand{\ZZ}{\mathcal{Z}}
\newcommand{\M}{\mathfrak{M}}
\newcommand{\supp}{\operatorname{supp}}
\newcommand{\Det}{\operatorname{Det}}
\newcommand{\longhookrightarrow}
{\ensuremath{\lhook\joinrel\relbar\joinrel\rightarrow}}
\newcommand{\s}{\mathbf{s}}
\newcommand{\ttt}{\mathbf{t}}
\newcommand{\sss}{\mathfrak{s}}
\newcommand{\ie}{{\em i.e. }}
\newcommand{\tth}{{\text{th}}}
\title[Metaplectic tensor products]{Metaplectic tensor products for
automorphic representations of $\GLt(r)$} 
\author{Shuichiro Takeda}
\address{Shuichiro Takeda: Mathematics Department, University of Missouri, Columbia, 202
  Math Sciences Building, Columbia, MO, 65211}
\email{takedas@missouri.edu}
\begin{document}

\maketitle

\begin{abstract} 
Let
$M=\GL_{r_1}\times\cdots\times\GL_{r_k}\subseteq\GL_r$ be a Levi
subgroup of $\GL_r$, where $r=r_1+\cdots+r_k$, and $\Mt$ its metaplectic preimage
in the $n$-fold metaplectic cover $\GLt_r$ of $\GL_r$. For automorphic
representations $\pi_1,\dots,\pi_k$ of $\GLt_{r_1}(\A),\dots,\GLt_{r_k}(\A)$, 
we construct (under a certain
technical assumption, which is always satisfied when $n=2$) an
automorphic representation $\pi$
of $\Mt(\A)$ which can be considered as the ``tensor product'' of the
representations $\pi_1,\dots,\pi_k$. This is
the global analogue of the metaplectic tensor product
defined by P. Mezo in the sense that locally at each place $v$,
$\pi_v$ is equivalent to the local metaplectic tensor product of
$\pi_{1,v},\dots,\pi_{k,v}$ defined by Mezo. Then we show that if all
of $\pi_i$ are cuspidal (resp. square-integrable modulo center), then
the metaplectic tensor product is  cuspidal (resp. square-integrable
modulo center). We also show that (both
locally and globally) the metaplectic tensor product behaves in the
expected way under the action of a Weyl group element, and show the
compatibility with parabolic inductions.
\end{abstract}

%%%%%%%%%%%%%%%%%%%%%%%%%%%%%%%%%%%%%%%%%%%%%%%%%%%%%%%%%%%%%%%%%%%

\section{\bf Introduction}

%%%%%%%%%%%%%%%%%%%%%%%%%%%%%%%%%%%%%%%%%%%%%%%%%%%%%%%%%%%%%%%%%%%

Let $F$ be either a local field of characteristic 0 or a number field,
and $R$ be $F$ if $F$ is local and the ring of adeles $\A$ if $F$ is global.
Consider the group $\GL_r(R)$. For a
partition $r=r_1+\cdots+r_k$ of $r$, one has the Levi subgroup 
\[
M(R):=\GL_{r_1}(R)\times\cdots\times\GL_{r_k}(R)\subseteq\GL_r(R).
\]
Let $\pi_1,\dots,\pi_k$ be irreducible admissible
(resp. automorphic) representations of
$\GL_{r_1}(R),\dots,\GL_{r_k}(R)$ where $F$ is local (resp. $F$ is global). Then it is a trivial construction to obtain the
representation $\pi_1\otimes\cdots\otimes\pi_k$, which is an
irreducible admissible (resp. automorphic) representation of the Levi
$M(R)$. Though highly trivial, this construction is of great importance in
the representation theory of $\GL_r(R)$.

Now if one considers the metaplectic $n$-fold cover $\GLt_r(R)$
constructed by Kazhdan and Patterson in \cite{KP}, the analogous
construction turns out to be far from trivial. Namely for the
metaplectic preimage $\Mt(R)$ of $M(R)$ in $\GL_r(R)$ and
representations $\pi_1,\dots,\pi_k$  of the metaplectic $n$-fold
covers $\GLt_{r_1}(R),\dots,\GLt_{r_k}(R)$, one cannot construct a
representation of $\Mt(R)$ simply by taking the tensor product
$\pi_1\otimes\cdots\otimes\pi_k$. This is simply because $\Mt(R)$ is not the
direct product of $\GLt_{r_1}(R),\dots,\GLt_{r_k}(R)$, namely
\[
\Mt(R)\ncong\GLt_{r_1}(R)\times\dots\times\GLt_{r_k}(R),
\]
and even worse there is no natural map between them.

When $F$ is a local field, for irreducible admissible
representations $\pi_1,\dots,\pi_k$  of
$\GLt_{r_1}(F),\dots,\GLt_{r_k}(F)$, P. Mezo (\cite{Mezo}), whose
work, we believe, is based on the work by Kable \cite{Kable2},
constructed an irreducible admissible representation of the Levi
$\Mt(F)$, which can be called the ``metaplectic tensor product'' of
$\pi_1,\dots,\pi_k$, and characterized it uniquely up to certain
character twists. (His construction will be reviewed and expanded further in
Section \ref{S:Mezo}.)

The theme of the paper is to carry out a construction analogous to
Mezo's when $F$ is a number field, and our main theorem is
\begin{MainThm*}
Let $M=\GL_{r_1}\times\cdots\times\GL_{r_k}$ be a Levi subgroup of $\GL_r$,
and let $\pi_1,\dots,\pi_k$ be unitary automorphic subrepresentations of
$\GLt_{r_1}(\A),\dots,\GLt_{r_k}(\A)$. Assume that $M$ and $n$ are
such that Hypothesis ($\ast$) (see section \ref{S:abelian}) is
satisfied, which is always the case if  $n=2$.  Then there exists an
automorphic representation $\pi$ of $\Mt(\A)$ such that
\[
\pi\cong\otimest'_v\pi_v,
\]
where each $\pi_v$ is the local metaplectic tensor product of
Mezo. Moreover, if $\pi_1,\dots,\pi_k$ are cuspidal
(resp. square-integrable modulo center), then $\pi$ is cuspidal
(resp. square-integrable modulo center).
\end{MainThm*}

In the above theorem, $\otimest_v'$ indicates the metaplectic restricted
tensor product, the meaning of which will be explained later in the
paper. The existence and the local-global compatibility in the main
theorem are proven in
Theorem \ref{T:main}, and the cuspidality and square-integrability are
proven in Theorem \ref{T:cuspidal} and Theorem
\ref{T:square_integrable}, respectively.

Let us note that by unitary, we mean that $\pi_i$ is equipped with a
Hermitian structure invariant under the action of the group. Also
we require $\pi_i$ be an automorphic subrepresentation, so that it is
realized in a subspace of automorphic forms and hence each element in
$\pi_i$ is indeed an automorphic form. (Note that usually an
automorphic representation is a subquotient.) We need those two
conditions for technical reasons, and they are satisfied if $\pi_i$ is
in the discrete spectrum, namely either cuspidal or residual.

Also we should emphasize that if $n>2$, we do not know if our
construction works unless we impose a technical assumption as in
Hypothesis ($\ast$). We will show in Appendix \ref{A:topology} that this assumption is
always satisfied if $n=2$, and if $n>2$ it is satisfied, for example,
if $\gcd(n, r-1+2cr)=1$, where $c$ is the parameter to be
explained. We hope that even for $n>2$ it is always satisfied, though
at this moment we do not know how to prove it.

As we will see, strictly speaking the metaplectic tensor product of $\pi_1,\dots,\pi_k$
might not be unique even up to equivalence but is dependent on a
character $\omega$ on the
center $Z_{\GLt_r}$ of $\GLt_r$. Hence we write
\[
\pi_\omega:=(\pi_1\otimest\cdots\otimest\pi_k)_\omega
\]
for the metaplectic tensor product to emphasize the dependence on $\omega$. \\

Also we will establish a couple of important properties of the metaplectic
tensor product both locally and globally. The first one is that the
metaplectic tensor product behaves in the expected way under the
action of the Weyl group. Namely

\quad\\
\noindent{\bf Theorem \ref{T:Weyl_group_local}
and \ref{T:Weyl_group_global}.} {\it Let $w\in W_M$ be a Weyl group
element of $\GL_r$ that only permutes the $\GL_{r_i}$-factors of
$M$. Namely for each
$(g_1,\dots,g_k)\in\GL_{r_1}\times\cdots\times\GL_{r_k}$, we have $w
(g_1,\dots,g_k)w^{-1}=(g_{\sigma(1)},\dots,g_{\sigma(k)})$ for a
permutation $\sigma\in S_k$ of $k$ letters. Then both locally and
globally, we have
\[
^{w}(\pi_1\otimest\cdots\otimest\pi_k)_\omega
\cong(\pi_{\sigma(1)}\otimest\cdots\otimest\pi_{\sigma(k)})_\omega,
\]
where the left hand side is the twist of
$(\pi_1\otimest\cdots\otimest\pi_k)_\omega$ by $w$.
}  \quad\\

The second important property we establish is the compatibility of the
metaplectic tensor product with parabolic inductions. Namely 

\quad\\
\noindent{\bf Theorem \ref{T:induction_local} and \ref{T:induction_global}.}
 {\it 
Both locally and globally, let $P=MN\subseteq\GL_r$ be the standard
parabolic subgroup whose Levi
part is $M=\GL_{r_1}\times\cdots\times\GL_{r_k}$. Further for each
$i=1,\dots,k$ let $P_i=M_iN_i\subseteq\GL_{r_i}$ be the standard
parabolic of $\GL_{r_i}$ whose Levi part is
$M_i=\GL_{r_{i,1}}\times\cdots\times\GL_{r_{i, l_i}}$. For each $i$, we are
given a representation 
\[
\sigma_i:=(\tau_{i,1}\,\otimest\cdots\otimest\,\tau_{i,l_i})_{\omega_i}
\]
of $\Mt_i$, which is given as the metaplectic tensor product of the
representations $\tau_{i,1},\dots,\tau_{i,l_i}$ of
$\GLt_{r_{i,1}},\dots,\GLt_{r_{i, l_i}}$. Assume that $\pi_i$ is an
irreducible constituent of the induced representation
$\Ind_{\Pt_i}^{\GLt_{r_i}}\sigma_i$. Then the metaplectic tensor
product
\[
\pi_\omega:=(\pi_1\,\otimest\cdots\otimest\,\pi_k)_\omega
\]
is an irreducible constituent of the induced representation
\[
\Ind_{\Qt}^{\Mt}(\tau_{1,1}\,\otimest\cdots\otimest\,\tau_{1, l_1}\,\otimest\cdots\otimest
\,\tau_{k,1}\,\otimest\cdots\otimest\,\tau_{k,l_k})_\omega,
\]
where $Q$ is the standard parabolic subgroup of $M$ whose Levi part is
$M_1\times\cdots\times M_k$.
}  \quad\\

In the above two theorems, it is implicitly assumed that if $n>2$ and
$F$ is global, the metaplectic tensor products in the theorems exist
in the sense that Hypothesis ($\ast$) is satisfied for
the relevant Levi subgroups.

Finally at the end, we will discuss the behavior of the global
metaplectic tensor product when restricted to a smaller Levi. Namely
for each automorphic form
$\varphi\in(\pi_1\otimest\cdots\otimest\pi_k)_\omega$ in the
metaplectic tensor product, we would like to know which space the
restriction $\varphi|_{\Mt_2}$ belongs to, where
$M_2=\{I_{r_1}\}\times\GL_{r_2}\times\cdots\times\GL_{r_k}\subset M$,
viewed as a subgroup of $M$,
is the Levi for the smaller group $\GL_{r-r_1}$. Somehow similarly to
the non-metaplectic case, the restriction
$\varphi|_{\Mt_2}$  belongs to the metaplectic tensor product of
$\pi_2, \dots,\pi_k$. But the precise statement is a bit more
subtle. Indeed, we will prove

\quad\\
\noindent{\bf Theorem \ref{T:restriction}.}
 {\it Assume Hypothesis ($\ast\ast$) (see section \ref{S:restriction}) is satisfied, which is
   always the case if $n=2$ or
   $\gcd(n,r-1+2cr)=\gcd(n,r-r_1-1+2c(r-r_1))=1$. Then there exists a
   realization of the metaplectic tensor product
   $\pi_\omega=(\pi_1\otimest\cdots\otimest\pi_k)_{\omega}$
   such that, if we let
\[
\pi_\omega\|_{\Mt_2(\A)}=\{\varphit|_{\Mt_2(\A)}:\varphit\in \pi_\omega\},
\]
then
\[
\pi_\omega\|_{\Mt_2(\A)}\subseteq \bigoplus_\delta m_\delta(\pi_2\otimest\cdots\otimest\pi_k)_{\omega_\delta},
\]
as a representation of $\Mt_2(\A)$,
where
$(\pi_2\otimest\cdots\otimest\pi_k)_{\omega_\delta}$
is the metaplectic tensor product of $\pi_2,\dots,\pi_k$,
$\omega_\delta$ is a certain character twisted by 
$\delta$ which runs through a finite subset of $\GL_{r_1}(F)$ and
$m_\delta\in\Z^{\geq 0}$ is a multiplicity. 
}
\quad\\

The precise meanings of the notations will be explained in
Section \ref{S:restriction}.

\quad

Even though the theory of metaplectic groups is an important
subject in representation theory and automorphic forms and used in
various important literatures such as \cite{Banks2, F, BBL,
  BFH, BH, Suzuki} to name a few, and most importantly for the purpose
of this paper, \cite{BG} which concerns the symmetric square
$L$-function on $\GL(r)$ , it has an
unfortunate history of numerous technical errors and as a result published
literatures in this area are often marred by those
errors which compromise their reliability. As is pointed out in
\cite{BLS}, this is probably due to the deep and subtle nature of the
subject. At any rate, this has made people who work in the area particularly wary
of inaccuracies in new works. For this reason, especially considering
the foundational nature of this paper, we tried
to provide detailed proofs for most of our assertions at the expense of the length
of the paper. Furthermore, for large part, we
rely only on the two fundamental works, namely the work on the
metaplectic cocycle by Banks, Levy and Sepanski (\cite{BLS}) and the
local metaplectic tensor product by Mezo (\cite{Mezo}), both of which
are written carefully enough to be reliable. 

\quad

Finally, let us mention that the result of this paper will be used in our
forthcoming \cite{Takeda2}, which will improve the main result of
\cite{Takeda1}.

\quad

\begin{center}{\bf Notations}\end{center}

Throughout the paper, $F$ is a local field of characteristic zero or
a number field. If $F$ is a number field, we denote the ring of adeles
by $\A$. As we did in the introduction we often use the notation
\[
R=\begin{cases} F\quad\text{if $F$ is local}\\
\A\quad\text{if $F$ is global}.
\end{cases}
\]
The symbol $R^\times$ has the usual meaning and we set
\[
R^{\times n}=\{a^n: a\in R^\times\}.
\]
Both locally and globally, we denote by $\OF$ the ring of integers of
$F$. For each algebraic group $G$ over a global $F$, and $g\in G(\A)$,
by $g_v$ we mean the $v^{\tth}$ component of $g$, and so $g_v\in
G(F_v)$.

For a positive integer $r$, we denote by $I_r$ the $r\times r$
identity matrix. Throughout we fix an integer $n\geq 2$, and we let
$\mu_n$ be the group of $n^{\text{th}}$ roots of unity in the
algebraic closure of the prime field. We always assume that
$\mu_n\subseteq F$, where $F$ is either local or global. So in
particular if $n\geq 3$, for archimedean $F$, we have
$F=\C$, and for global $F$, $F$ is totally complex.

The symbol $(-,-)_F$ denotes the $n^{\text{th}}$ order Hilbert
symbol of $F$ if $F$ is local, which is a bilinear map
\[
(-,-)_F:F^\times\times F^\times\rightarrow\mu_n.
\]
If $F$ is global, we let $(-,-)_\A:=\prod_v(-,-)_{F_v}$, where the
product is finite. We sometimes write simply $(-,-)$ for
$(-,-)_R$ when there is no danger of confusion. Let us recall that
both locally and globally the
Hilbert symbol has the following properties:
\begin{gather*}
(a,b)^{-1}=(b,a)\\
(a^n,b)=(a, b^n)=1\\
(a, -a)=1
\end{gather*}
for $a, b\in R^\times$. Also for the global Hilbert symbol, we have
the product formula $(a,b)_\A=1$ for all $a, b\in F^\times$.

We fix a partition $r_1+\cdots+r_k=r$ of $r$, and we let
\[
M=\GL_{r_1}\times\cdots\times\GL_{r_k}\subseteq\GL_r
\]
and assume it is embedded diagonally as usual. We often denote each
element $m\in M$ by
\[
m=\begin{pmatrix}g_1&&\\ &\ddots&\\ &&g_k\end{pmatrix},
\quad\text{or}\quad
m=\diag(g_1,\dots,g_k)
\]
or sometimes simply $m=(g_1,\dots,g_k)$, where $g_i\in\GL_{r_i}$.

For $\GL_r$, we let $B=TN_B$ be the Borel subgroup with the unipotent
radical $N_B$ and the maximal torus $T$.

If $\pi$ is a representation of a group $G$, we denote the space of
$\pi$ by $V_{\pi}$, though we often confuse $\pi$ with $V_\pi$ when
there is no danger of confusion. We say $\pi$ is unitary if $V_\pi$ is
equipped with a Hermitian structure invariant under the action of
$G$, but we do not necessarily assume that the space $V_\pi$ is
complete. Now assume that the
space $V_\pi$ is a space of functions or maps on the group $G$ and
$\pi$ is the representation of $G$ on $V_\pi$ defined by right translation. (This
is the case, for example, if $\pi$ is an automorphic subrepresentation.)
Let $H\subseteq G$ be a subgroup. Then we define $\pi\|_H$ to be the
representation of $H$ realized in the space
\[
V_{\pi\|_H}:=\{f|_H: f\in V_\pi\}
\]
of restrictions of $f\in V_\pi$ to $H$, on which $H$ acts by right
translation. Namely $\pi\|_H$ is the representation obtained by
restricting the functions in $V_\pi$. Occasionally, we confuse
$\pi\|_H$ with its space when there is no danger of confusion. Note
that there is an $H$-intertwining surjection
$\pi|_H\rightarrow\pi\|_H$, where $\pi|_H$ is the (usual) restriction of
$\pi$ to $H$.

For any group $G$ and elements $g, h\in G$, we define $^gh=ghg^{-1}$.
For a subgroup $H\subseteq G$ and a representation $\pi$ of $H$, we
define $^g\pi$ to be the representation of $gHg^{-1}$ defined by
$^g\pi(h')=\pi(g^{-1}h'g)$ for $h'\in gHg^{-1}$.

We let $W$ be the set of all $r\times r$ permutation
matrices, so for each element $w\in W$ each row and each column has
exactly one 1 and  all the other entries are 0. The Weyl group of
$\GL_r$ is identified with $W$. Also for our Levi $M$, we let
$W_M$ be the subset of $W$ that only permutes the
$\GL_{r_i}$-blocks of $M$. Namely  $W_M$ is the collection of block
matrices 
\[
W_M:=\{(\delta_{\sigma(i),j}I_{r_j})\in W : \sigma\in S_k\},
\]
where $S_k$ is the permutation group of $k$ letters. Though $W_M$
is not a group in general, it is in bijection with $S_k$. Note that if
$w\in W_M$ corresponds to
$\sigma\in S_k$, we have
\[
^w\diag(g_1,\dots,g_k)=w\diag(g_1,\dots,g_k)w^{-1}
=\diag(g_{\sigma^{-1}(1)},\dots, g_{\sigma^{-1}(k)}).
\]
In addition to $W$, in order to use various results from \cite{BLS},
which gives a detailed description of the 2-cocycle $\sigma_r$
defining our metaplectic group $\GLt_r$,
one sometimes needs to use another set of representatives of the Weyl group
elements, which we as well as \cite{BLS} denote by $\M$. The set $\M$
is chosen to be such that for each element
$\eta\in\mathfrak{M}$ we have $\det(\eta)=1$. To be
more precise, each $\eta$ with length $l$ is written as
\[
\eta=w_{\alpha_1}\cdots w_{\alpha_l}
\]
where $w_{\alpha_i}$ is a simple root refection corresponding to a
simple root $\alpha_i$ and is the matrix of the
form
\[
w_{\alpha_i}=\begin{pmatrix}
\ddots&&&\\
&&-1&\\
&1&&\\
&&&\ddots
\end{pmatrix}.
\]
Though the set $\mathfrak{M}$ is not a group, it has the advantage
that we can compute the cocycle $\sigma_r$ in a systematic way as one
can see in \cite{BLS}. For each $w\in W$, we denote by $\eta_w$ the
corresponding element in $\M$. If $w\in W_M$, one can see that
$\eta_w$ is of the form $(\varepsilon_j\delta_{\sigma(i),j}I_{r_j})$
for $\varepsilon_j\in\{\pm 1\}$. Namely $\eta_w$ is a $k\times k$
block matrix in which the non-zero entries are either $I_{r_j}$ or
$-I_{r_j}$.

\quad

\begin{center}{\bf Acknowledgements}\end{center}
The author would like to thanks Paul Mezo for reading an early draft
and giving him helpful comments, and Jeff Adams for sending him the
preprint \cite{Adams} and explaining the construction of the
metaplectic tensor product for the real case.
The author is partially supported by NSF grant DMS-1215419. Also
part of this research was done when he was visiting the I.H.E.S. in
the summer of 2012 and
he would like to thank their hospitality.

\quad

%%%%%%%%%%%%%%%%%%%%%%%%%%%%%%%%%%%%%%%%%%%%%%%%%%%%%%%%%%%%%%%%%%%

\section{\bf The metaplectic cover $\GLt_r$ of 
$\GL_r$}\label{S:metaplectic_cover}

%%%%%%%%%%%%%%%%%%%%%%%%%%%%%%%%%%%%%%%%%%%%%%%%%%%%%%%%%%%%%%%%%%%

In this section, we review the theory of the metaplectic $n$-fold cover $\GLt_r$ of
$\GL_r$ for both local and global cases, which was originally
constructed by Kazhdan and Patterson in \cite{KP}.

%%%%%%%%%%%%%%%%%%%%%%%%%%%%%%%%%%%%%%%%%%%%%%%%%%%%%%%%%%%%%%%%%%%

\subsection{\bf The local metaplectic cover $\GLt_r(F)$}

%%%%%%%%%%%%%%%%%%%%%%%%%%%%%%%%%%%%%%%%%%%%%%%%%%%%%%%%%%%%%%%%%%%

Let $F$ be a (not necessarily non-archimedean) local field of
characteristic $0$ which contains all the $n^\text{th}$ roots of unity.
In this paper, by the metaplectic $n$-fold cover
$\GLt_r(F)$ of $\GL_r(F)$ with a fixed parameter $c\in\{0,\dots,n-1\}$, we
mean the central extension of $\GL_r(F)$ by $\mu_n$ as constructed
by Kazhdan and Patterson in \cite{KP}. To be more specific, let us first
recall that the $n$-fold cover $\SLt_{r+1}(F)$ of $\SL_{r+1}(F)$ was
constructed by Matsumoto in \cite{Matsumoto}, and there is an
embedding
\begin{equation}\label{E:embedding0}
l_0:\GL_r(F)\rightarrow\SL_{r+1}(F),\quad 
g\mapsto\begin{pmatrix}\det(g)^{-1}&\\ &g\end{pmatrix}.
\end{equation}
Our metaplectic
$n$-fold cover $\GLt_r(F)$ with $c=0$ is the preimage of $l_0(\GL_r(F))$ via the
canonical projection $\SLt_{r+1}(F)\rightarrow\SL_{r+1}(F)$. 
Then $\GLt_r(F)$ is defined by a 2-cocycle
\[
\sigma_r:\GL_r(F)\times\GL_r(F)\rightarrow\mu_n.
\]
For arbitrary parameter $c\in\{0,\dots, n-1\}$,
we define the twisted cocycle $\sigma_r^{(c)}$ by
\[
\sigma_r^{(c)}(g, g')=\sigma_r(g, g')(\det(g),\det(g'))^c_F
\]
for $g, g'\in\GL_r(F)$, where recall from the notation section that
$(-,-)_F$ is the $n^{\text{th}}$ order Hilbert symbol for $F$. The
metaplectic cover with a parameter $c$ is defined by this
cocycle. In \cite{KP}, the metaplectic cover with parameter
$c$ is denoted by $\GLt_r^{(c)}(F)$ but we avoid this notation. This
is because later we will introduce the notation $\GLtn_r(F)$, which
has a completely different meaning. Also we suppress the superscript
$(c)$ from the notation of
the cocycle and always agree that the parameter $c$ is fixed
throughout the paper. 

By carefully studying Matsumoto's construction, Banks, Levy, and Sepanski
(\cite{BLS}) gave an explicit description of the 2-cocycle $\sigma_r$
and shows that their 2-cocycle is
``block-compatible'' in the following sense:  For the standard
$(r_1,\dots,r_k)$-parabolic of $\GL_r$, so that its Levi $M$ is of the
form
$\GL_{r_1}\times\cdots\times\GL_{r_k}$ which is embedded diagonally into
$\GL_r$, we have
\begin{align}\label{E:compatibility}
&\sigma_r(\begin{pmatrix}g_1&&\\ &\ddots&\\ &&g_k\end{pmatrix},
\begin{pmatrix}g'_1&&\\ &\ddots&\\ &&g'_k\end{pmatrix})\\
=&\prod_{i=1}^k\sigma_{r_i}(g_i,g_i')
\prod_{1\leq i<j\leq k}(\det(g_i), \det(g_j'))_F
\prod_{i\neq j}(\det(g_i), \det(g_j'))_F^c,\notag
 \end{align}
for all $g_i, g_i'\in\GL_{r_i}(F)$. (See \cite[Theorem 11,
\S3]{BLS}. Strictly speaking in \cite{BLS} only the case $c=0$ is
considered but one can derive the above formula using
the bilinearity of the Hilbert symbol.) This 2-cocycle generalizes the
well-known cocycle given by Kubota \cite{Kubota} for the case
$r=2$. Also we should note that if $r=1$, this cocycle is
trivial. Note that $\GLt_r(F)$ is not the $F$-rational points of an
algebraic group, but this notation seems to be standard. 

Let us list some other important properties of the cocycle
$\sigma_r$, which we will use in this paper.
\begin{Prop}\label{P:BLS}
Let $B=TN_B$ be the Borel subgroup of $\GL_r$ where $T$ is the maximal
torus and $N_B$ the unipotent radical. The cocycle $\sigma_r$
satisfies the following properties:
\begin{enumerate}[(1)]
\vspace{.1in}
\item [(0)] $\sigma_r(g,g')\sigma_r(gg', g'')=\sigma_r(g,
  g'g'')\sigma_r(g',g'')$ for $g,g',g''\in\GL_r$.
\vspace{.1in}
\item $\sigma_r(ng, g'n')=\sigma_r(g, g')$ for $g, g'\in\GL_r$ and $n,
  n'\in N_B$, and so in particular $\sigma_r(ng, n')=\sigma_r(n, g'n')=1$.
\vspace{.1in}
\item $\sigma_r(gn, g')=\sigma_r(g, ng')$ for $g, g'\in\GL_r$ and
  $n\in N_B$.
\vspace{.1in}
\item $\sigma(\eta,
  t)=\underset{\substack{\alpha=(i,j)\in\Phi^+\\ \eta\alpha<0}}{\prod}(-t_j,
  t_i)$ for $\eta\in\M$ and $t=\diag(t_1,\dots,t_r)\in T$, where $\Phi^+$ is the set
  of positive roots and each root $\alpha\in\Phi^+$ is identified with a pair
  of integers $(i,j)$ with $1\leq i<j\leq r$ as usual.
\vspace{.1in}
\item $\sigma_r(t,t')=\underset{i<j}{\prod}(t_i, t'_j)(\det(t),\det(t'))^c$ for
  $t=\diag(t_1,\dots,t_r), t'=\diag(t'_1,\dots,t'_r)\in T$.
\vspace{.1in}
\item $\sigma_r(t,\eta)=1$ for $t\in T$ and $\eta\in\M$.
\end{enumerate}
\end{Prop}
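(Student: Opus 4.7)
The plan is to reduce each of (0)--(5) to its untwisted ($c=0$) analogue, which is already established in \cite{BLS}, and then track the effect of the twist factor $(\det(g),\det(g'))^c_F$ that appears in the definition $\sigma_r^{(c)}(g,g')=\sigma_r^{(0)}(g,g')(\det(g),\det(g'))^c_F$. In every case below, the key observation is that the twist factor either trivializes (because one of the arguments has determinant $1$) or contributes exactly the expected piece.

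For (0), I would verify directly that the map $(g,g')\mapsto(\det(g),\det(g'))^c_F$ is itself a $2$-cocycle on $\GL_r(F)$ with values in $\mu_n$: bilinearity of the Hilbert symbol gives
\[
(\det(g),\det(g'))^c(\det(gg'),\det(g''))^c=(\det(g),\det(g'g''))^c(\det(g'),\det(g''))^c,
\]
so the product of the BLS cocycle with this twist is again a cocycle. (Alternatively, one checks the cocycle identity for $\sigma_r^{(c)}$ by direct expansion.)

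For (1) and (2), since $\det(n)=\det(n')=1$ for $n,n'\in N_B$, the twist factor satisfies $(\det(ng),\det(g'n'))^c=(\det(g),\det(g'))^c$ and $(\det(gn),\det(g'))^c=(\det(g),\det(ng'))^c$, so both identities reduce to the corresponding invariance properties of the untwisted BLS cocycle, which are built into its construction via the Bruhat-type decomposition in \cite[\S3]{BLS}. For (3) and (5), the representatives $\eta\in\M$ are chosen so that $\det(\eta)=1$ (as recalled in the notation section), so the twist factor is trivial whenever one of the arguments lies in $\M$, and the claims reduce immediately to the formulas in \cite[\S3, Theorem 11]{BLS}. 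Finally, for (4), both arguments are in $T$, so $\sigma_r(t,t')$ equals the BLS product $\prod_{i<j}(t_i,t_j')$ multiplied by the twist factor $(\det(t),\det(t'))^c$, yielding exactly the stated formula.

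The main (mild) obstacle is matching the BLS conventions to ours, in particular confirming that the $\M$-representatives satisfy $\det(\eta)=1$ and that the BLS formulas on $T\times T$ and on $\M\times T$ are the untwisted ones; once this is verified against \cite{BLS}, all six items drop out from the recipe above without further computation.
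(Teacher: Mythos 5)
Your proof is correct and matches the paper's approach, which simply observes that (0) is the defining cocycle identity and cites \cite[Theorem~7, p.~153]{BLS} for items (1)--(5), relying implicitly on the paper's earlier remark that BLS treats only $c=0$ and the general $c$ follows from bilinearity of the Hilbert symbol. You usefully make that reduction explicit, noting that the twist factor trivializes whenever an argument lies in $N_B$ or $\M$ (because its determinant is $1$) and contributes exactly $(\det(t),\det(t'))^c$ in item~(4); the only small slip is that the relevant BLS reference for items (3) and (5) is Theorem~7 (the list of cocycle properties), not Theorem~11 (which concerns block-compatibility).
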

\begin{proof}
The first one is simply the definition of 2-cocycle and all the others
are some of the properties of $\sigma_r$ listed in \cite[Theorem 7,
p.153]{BLS}.
\end{proof}

We need to recall how this cocycle is constructed. As mentioned
earlier, Matsumoto constructed $\SLt_{r+1}(F)$. It is shown in
\cite{BLS} that $\SLt_{r+1}(F)$ is defined by a cocycle
$\sigma_{\SL_{r+1}}$ which satisfies the block-compatibility in a much
stronger sense as in \cite[Theorem 7, \S2, p145]{BLS}. (Note that our
$\SL_{r+1}$ corresponds to $\G^\flat$ of \cite{BLS}.) Then the cocycle
$\sigma_r$ is defined by
\[
\sigma_r(g, g')=\sigma_{\SL_{r+1}}(l(g), l(g'))(\det(g), \det(g'))_F (\det(g), \det(g'))^c_F,
\]
where $l$ is the embedding defined by
\begin{equation}\label{E:embedding}
l:\GL_r(F)\rightarrow\SL_{r+1}(F),\quad 
g\mapsto\begin{pmatrix}g&\\ &\det(g)^{-1}\end{pmatrix}.
\end{equation}
See \cite[p.146]{BLS}. (Note the difference between this
embedding and the one in (\ref{E:embedding0}). This is the reason we
have the extra Hilbert symbol in the definition of $\sigma_r$.)

Since we would like to emphasize the cocycle being used, we denote
$\GLt_r(F)$ by $\sigGLt_r(F)$ when the cocycle $\sigma$ is
used. Namely $\sigGLt_r(F)$ is the group whose underlying set is
\[ 
\sigGLt_r(F) =\GL_r(F)\times\mu_n=\{(g,\xi):g\in\GL_r(F), \xi\in\mu_n\},
\] 
and the group law is defined by
\[
(g,\xi)\cdot (g',\xi')=(gg',\sigma_r(g, g')\xi\xi').
\]

To use the block-compatible 2-cocycle of \cite{BLS} has obvious
advantages. In particular, it has been explicitly computed and, of course, it is
block-compatible. Indeed, when we consider purely local problems, we
always assume that the cocycle $\sigma_r$ is used. 

However it does not allow us to construct the global
metaplectic cover $\GLt_r(\A)$. Namely one
cannot define the adelic block-combatible 2-cocycle simply by taking
the product of the local block-combatible 2-cocycles over all the
places. Namely for $g, g'\in\GL_r(\A)$, the product
\[
\prod_v\sigma_{r,v}(g_v,g_v')
\]
is not necessarily finite. This can be already observed for the
case $r=2$. (See \cite[p.125]{F}.)

For this reason, we will use a different 2-cocycle $\tau_r$ which
works nicely with the global metaplectic cover $\GLt_r(\A)$. To construct such
$\tau_r$, first assume $F$ is non-archimedean. It is known that an
open compact subgroup $K$
splits in $\GLt_r(F)$, and moreover if $|n|_F=1$, we have
$K=\GL_r(\OF)$. (See \cite[Proposition 0.1.2]{KP}.) Also for
$k,k'\in K$, a property of the Hilbert symbol gives  $(\det(k),\det(k'))_F=1$.
Hence one has a
continuous map $s_r:\GL_r(F)\rightarrow\mu_n$  such that
$\sigma_r(k,k')s_r(k)s_r(k')=s_r(kk')$ for all $k,k'\in
K$. Then define our 2-cocycle $\tau_r$ by
\begin{equation}\label{E:tau_sigma}
\tau_r(g, g'):=\sigma_r(g, g')\cdot\frac{s_r(g)s_r(g')}{s_r(gg')}
\end{equation}
for $g, g'\in\GL_r(F)$. If $F$ is archimedean, we set
$\tau_r=\sigma_r$. 

The choice of $s_r$ and hence $\tau_r$ is not unique. However when
$|n|_F=1$, there is a canonical choice with
respect to the splitting of $K$ in the following sense: Assume that
$F$ is such that $|n|_F=1$. Then the Hilbert symbol $(-,-)_F$ is
trivial on $\OF^\times\times\OF^\times$, and hence, when restricted to $\GL_r(\OF)\times\GL_r(\OF)$,
the cocycle $\sigma_r$ is the restriction of $\sigma_{\SL_{r+1}}$ to the image of the embedding
$l$. Now it is known that the compact group $\SL_{r+1}(\OF)$ also splits in
$\SLt_{r+1}(F)$, and hence there is a map $\sss_r:\SL_{r+1}(F)\rightarrow\mu_n$ such
that the section $\SL_{r+1}(F)\rightarrow\SLt_{r+1}(F)$ given by $(g,\sss_r(g))$
is a homomorphism on $\SL_{r+1}(\OF)$. (Here we are assuming $\SLt_{r+1}(F)$ is
realized as $\SL_{r+1}(F)\times\mu_n$ as a set and the group structure
is defined by the cocycle $\sigma_{\SL_{r+1}}$.) Moreover $\sss_r|_{\SL_{r+1}(\OF)}$ is
determined up to twists by the elements in $H^1(\SL_{r+1}(\OF),
\mu_n)=\Hom(\SL_{r+1}(\OF), \mu_n)$. But $\Hom(\SL_{r+1}(\OF),
\mu_n)=1$ because $\SL_{r+1}(\OF)$ is a perfect group and $\mu_n$ is
commutative. Hence $\sss_r|_{\SL_{r+1}(\OF)}$ is unique. (See also \cite[p. 43]{KP} for this
matter.)  We choose $s_r$ so that
\begin{equation}\label{E:canonical_section}
s_r|_{\GL_r(\OF)}=\sss_r|_{l(\GL_r(\OF))}.
\end{equation} 
With this choice, we have the commutative diagram
\begin{equation}\label{E:canonical_diagram}
\xymatrix{
\sigGLt_r(\OF)\ar[r]&\SLt_{r+1}(\OF)\\
K\ar[r]\ar[u]^{k\mapsto (k,\;s_r(k))}&\SL_{r+1}(\OF),\ar[u]_{k\mapsto(k,\;\sss_r(k))}
}
\end{equation}
where the top arrow is $(g,\xi)\mapsto (\l(g),\xi)$, the bottom arrow
is $l$, and  all the arrows can be seen to be
homomorphisms. This choice of $s_r$ will be crucial for constructing the
metaplectic tensor product of automorphic representations. Also note
that the left vertical arrow in the above diagram is what is called
the canonical lift in \cite{KP} and denoted by $\kappa^\ast$
there. (Although we do not need this fact in this
paper, if $r=2$ one can show that $\tau_r$ can be chosen to be block
compatible, which is the cocycle used in \cite{F}.)

Using $\tau_r$, we realize $\GLt_r(F)$ as
\[ 
\GLt_r(F)=\GL_r(F)\times\mu_n,
\] 
as a set and the group law is given by
\[ 
(g,\xi)\cdot(g',\xi')=(gg', \tau_r(g,g')\xi\xi').
\] 
Note that we have the exact sequence
\[
\xymatrix{
0\ar[r]&\mu_n\ar[r]&\GLt_r(F)\ar[r]^{p}&\GL_r(F)\ar[r]&
0
}
\]
given by the obvious maps, where we call $p$ the canonical projection.

We define a set theoretic section
\[ 
\kappa:\GL_r(F)\rightarrow\GLt_r(F),\; g\mapsto (g,1).
\] 
Note that $\kappa$ is not a homomorphism. But by our
construction of the
cocycle $\tau_r$, $\kappa|_K$ is a homomorphism if $F$ is
non-archimedean and $K$ is a sufficiently small open compact subgroup,
and moreover if $|n|_F=1$, one has
$K=\GL_r(\OF)$.

Also we define another set theoretic section
\[
\s_r:\GL_r(F)\rightarrow\GLt_r(F),\; g\mapsto (g,s_r(g)^{-1})
\]
where $s_r(g)$ is as above, and then we have the isomorphism
\[
\GLt_r(F)\rightarrow\sigGLt_r(F),\quad (g,\xi)\mapsto (g,s_r(g)\xi), 
\]
which gives rise to the commutative diagram
\[
\xymatrix{
\GLt_r(F)\ar[rr]&&\sigGLt_r(F)\\
&\GL_r(F)\ar[ul]^{\s_r}\ar[ur]_{g\mapsto (g,1)}&
}
\]
of set theoretic maps. Also note
that the elements in the image $\s_r(\GL_r(F))$ ``multiply via
$\sigma_r$'' in the sense that for $g,g'\in\GL_r(F)$, we have
\begin{equation}\label{E:convenient}
(g,s_r(g)^{-1}) (g',s_r(g')^{-1})=(gg', \sigma_r(g,g')s_r(gg')^{-1}).
\end{equation}
 
Let us mention
\begin{Lem}
Assume $F$ is non-archimedean with $|n|_F=1$. We have
\begin{equation}\label{E:kappa_and_s}
\kappa|_{T\cap K}=\s_r|_{T\cap K},\quad \kappa|_{W}=\s_r|_{W},\quad
\kappa|_{N_B\cap K}=\s_r|_{N_B\cap K},
\end{equation}
where $W$ is the Weyl group and $K=\GL_r(\OF)$. 
In particular, this implies $s_r|_{T\cap K}=s_r|_{W}=s_r|_{N_B\cap K}=1$.
\end{Lem}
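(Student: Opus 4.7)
The three identities $\kappa=\s_r$ on $T\cap K$, $W$, and $N_B\cap K$ are, in view of the definitions $\kappa(g)=(g,1)$ and $\s_r(g)=(g,s_r(g)^{-1})$, equivalent to the single assertion that $s_r\equiv 1$ on each of these sets. All three are contained in $K=\GL_r(\OF)$ (permutation matrices have entries in $\{0,1\}\subset\OF$), so by the choice (\ref{E:canonical_section}) one has $s_r=\sss_r\circ l$ on each, and the task reduces to showing that the canonical section $\sss_r$ of $\SL_{r+1}(\OF)$ is trivial on the images $l(T\cap K)$, $l(N_B\cap K)$, and $l(W)$.

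For the torus, Proposition \ref{P:BLS}(4) expresses $\sigma_r(t,t')$ as a product of Hilbert symbols in the entries and determinants of $t,t'$; these all lie in $\OF^\times$, and since $|n|_F=1$ the Hilbert symbol vanishes on $\OF^\times\times\OF^\times$. Hence $\sigma_r$ restricts to the trivial cocycle on $(T\cap K)^2$ and $s_r|_{T\cap K}$ is an honest character $T\cap K\to\mu_n$. For the unipotent radical, Proposition \ref{P:BLS}(1) applied with $g=g'=1$ gives $\sigma_r(n,n')=\sigma_r(1,1)=1$ for all $n,n'\in N_B$, so $s_r|_{N_B\cap K}$ is likewise a character. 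In both cases the triviality of the character can be read off from Matsumoto's explicit construction (transcribed in \cite{BLS}): the canonical lifts $x_\alpha(u)\mapsto(x_\alpha(u),1)$ of root-subgroup elements and $h_\alpha(u)\mapsto(h_\alpha(u),1)$ of torus generators for $u\in\OF^\times$ generate these subgroups of $\SL_{r+1}(\OF)$, forcing the two candidate splittings of $l(T\cap K)$ and $l(N_B\cap K)$ to coincide.

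The Weyl-group case is the main obstacle, since $\sigma_r$ does not vanish on $W\times W$ in general and no soft character argument applies. The plan here is to pass to the representatives $\M$: for each $\eta\in\M$ one writes $\eta=w_{\alpha_1}\cdots w_{\alpha_l}$ as a minimal product of simple reflections, and each simple reflection decomposes as $w_\alpha=x_\alpha(1)x_{-\alpha}(-1)x_\alpha(1)$. Because the canonical lifts of the $x_{\pm\alpha}(\pm 1)$ are trivial (by the unipotent case, applied to both $N_B$ and its opposite) and because the relevant products can be evaluated explicitly via the block-compatible cocycle of \cite[\S3]{BLS} and the cocycle identity Proposition \ref{P:BLS}(0), one obtains $s_r(\eta)=1$ for every $\eta\in\M$. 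Finally, any $w\in W$ differs from the corresponding $\eta_w\in\M$ by a sign-diagonal element in $T\cap K$ (the blocks $\pm I_{r_j}$ described in the Notations section), which has already been handled in the torus case; this yields $s_r(w)=1$, completing the proof.
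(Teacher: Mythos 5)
The paper's own proof of this lemma is a one-line citation to \cite[Proposition~0.I.3]{KP}, so you are supplying an argument where the paper supplies none. Your opening reduction is correct: since $\kappa(g)=(g,1)$ and $\s_r(g)=(g,s_r(g)^{-1})$, the three asserted equalities are exactly $s_r\equiv 1$ on the indicated subsets, all of which lie in $K=\GL_r(\OF)$, and via the canonicality choice (\ref{E:canonical_section}) the task is indeed to show $\sss_r$ is trivial on $l(T\cap K)$, $l(N_B\cap K)$, and $l(W)$. Your observation that $\sigma_r$ is literally trivial on $(T\cap K)^2$ (Proposition~\ref{P:BLS}(4) plus triviality of the Hilbert symbol on $\OF^\times\times\OF^\times$) and on $(N_B\cap K)^2$ (Proposition~\ref{P:BLS}(1)) is also correct, and it legitimately reduces those two cases to showing that a certain \emph{character} of the subgroup is trivial.

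That remaining step, however, is asserted rather than proven. The uniqueness of $\sss_r$ on $\SL_{r+1}(\OF)$ comes from perfectness of $\SL_{r+1}(\OF)$; it does not descend to the abelian subgroups $l(T\cap K)\cong(\OF^\times)^r$ or to $l(N_B\cap K)$, both of which admit many nontrivial homomorphisms to $\mu_n$, so there is no ``two candidate splittings must coincide'' argument available on the subgroup itself. Writing that ``the canonical lifts $x_\alpha(u)\mapsto(x_\alpha(u),1)$ and $h_\alpha(u)\mapsto(h_\alpha(u),1)$ \ldots\ are trivial'' is circular: the canonical lift is $g\mapsto(g,\sss_r(g))$ by definition, so saying its value is $(g,1)$ on these generators is precisely the content of the lemma, not something one gets for free from ``Matsumoto's explicit construction'' without pointing to a specific formula. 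The Weyl group case has an additional gap: the factorization $w_\alpha=x_\alpha(1)x_{-\alpha}(-1)x_\alpha(1)$ mixes upper and lower unipotents, and the cocycle values $\sigma_r(x_\alpha(1),x_{-\alpha}(-1))$ and $\sigma_r(x_\alpha(1)x_{-\alpha}(-1),x_\alpha(1))$ are not covered by Proposition~\ref{P:BLS}(1), which speaks only of $N_B$; none of the listed cocycle properties forces them to equal $1$, so even granting triviality of $s_r$ on the two unipotent subgroups you cannot multiply through. To close the argument you would need either to cite the explicit splitting function of \cite[Proposition~0.I.3]{KP} as the paper does, or to compute directly from the formulas for $\sigma_{\SL_{r+1}}$ in \cite{BLS} on elements of $\SL_{r+1}(\OF)$, tracking the torus-part function $\ttt$ and the representatives in $\M$.
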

\begin{proof}
See \cite[Proposition 0.I.3]{KP}.
\end{proof}

\begin{Rmk}
Though we do not need this fact in this paper, it should be noted that $\s_r$
splits the Weyl group $W$ if and only if $(-1,-1)_F=1$. So in
particular it splits $W$ if $|n|_F=1$. See \cite[\S 5]{BLS}. 
\end{Rmk}

If $P$ is a parabolic subgroup of $\GL_r$ whose Levi is
$M=\GL_{r_1}\times\cdots\times\GL_{r_k}$, we often write
\[ 
\Mt(F)=\GLt_{r_1}(F)\timest\cdots\timest\GLt_{r_k}(F)
\] 
for the metaplectic preimage of $M(F)$. Next let
\[ 
\GL_r^{(n)}(F)=\{g\in\GL_r(F):\det g\in F^{\times n}\}, 
\] 
and $\GLtn_r(F)$ its metaplectic preimage. Also we define
\[ 
M^{(n)}(F)=\{(g_1,\dots,g_k)\in M(F): \det g_i\in F^{\times n}\}
\] 
and often denote its preimage by
\[ 
\Mtn(F)=\GLtn_{r_1}(F)\timest\cdots\timest\GLtn_{r_k}(F).
\] 
The group $\Mtn(F)$ is a normal subgroup of finite index. Indeed, we
have the exact sequence
\begin{equation}\label{E:finite_quotient}
1\rightarrow\Mtn(F)\rightarrow\Mt(F)\rightarrow
\underbrace{F^{\times n}\backslash F^\times\times\cdots\times
  F^{\times n}\backslash F^\times}_{\text{$k$ times}}
\rightarrow 1,
\end{equation}
where the third map is given by
$(\diag(g_1,\dots,g_k),\xi)\mapsto(\det(g_1),\dots,\det(g_k))$.
We should mention the explicit isomorphism $F^{\times n}\backslash
F^\times\times\cdots\times F^{\times n}\backslash
F^\times\rightarrow\Mtn(F)\backslash\Mt(F)$ defined as follows: First 
for each $i\in\{1,\dots,k\}$, define a map
$\iota_i:F^\times\rightarrow\GL_{r_i}$ by
\begin{equation}\label{E:iota}
\iota_i(a)=\begin{pmatrix}a&\\ &I_{r_i-1}\end{pmatrix}.
\end{equation}
Then the map given by
\[
(a_1,\dots,a_k)\mapsto(\begin{pmatrix}\iota_1(a_1)&&\\ &\ddots& \\
  &&\iota_k(a_k)\end{pmatrix}, 1)
\]
is a homomorphism. Clearly the map is well-defined and 1-1. Moreover
this is surjective because each element $g_i\in\GL_{r_i}$ is written as
\[
g_i=g_i\iota_i(\det(g_i)^{n-1})\iota_i(\det(g_i)^{1-n})
\]
and $g_i\iota_i(\det(g_i)^{n-1})\in\GL_{r_i}^{(n)}$.
 
The following should be mentioned.
\begin{Lem}\label{L:closed_subgroup_local}
The groups $F^{\times n}, M^{(n)}(F)$ and $\Mtn(F)$ are closed
subgroups of $F^\times, M(F)$ and $\Mt(F)$, respectively.
\end{Lem}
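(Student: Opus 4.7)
The plan is to reduce everything to the statement that $F^{\times n}$ is closed in $F^\times$, and then pull back along continuous maps.

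First I would prove that $F^{\times n}\subseteq F^\times$ is closed. In the archimedean case this is immediate: under the standing hypothesis $\mu_n\subseteq F$, one has $F=\C$ (so $F^{\times n}=F^\times$) unless $n=2$, in which case $F$ can also be $\R$ and $F^{\times n}=\R_{>0}$, which is closed. In the non-archimedean case the key input is Hensel's lemma applied to the polynomial $T^n-x$: for $x$ sufficiently close to $1$ the polynomial has a root near $1$, so $F^{\times n}$ contains an open neighborhood of the identity. Being an open subgroup of the topological group $F^\times$, it is automatically closed (its complement is a union of translates of an open set).

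Next I would deduce the statement for $M^{(n)}(F)$. The determinants give a continuous homomorphism
\[
d:M(F)\rightarrow (F^\times)^k,\qquad (g_1,\dots,g_k)\mapsto(\det g_1,\dots,\det g_k),
\]
and by definition $M^{(n)}(F)=d^{-1}\bigl((F^{\times n})^k\bigr)$. Since $(F^{\times n})^k$ is closed in $(F^\times)^k$ by the previous step, $M^{(n)}(F)$ is closed in $M(F)$.

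Finally, for $\Mtn(F)$ I would use the canonical projection $p:\Mt(F)\rightarrow M(F)$, which is continuous with respect to the standard topology on $\Mt(F)$ (locally the product topology $M(F)\times\mu_n$ with $\mu_n$ discrete, so that $p$ is an $n$-fold covering map). Since $\Mtn(F)=p^{-1}\bigl(M^{(n)}(F)\bigr)$, closedness of $M^{(n)}(F)$ gives closedness of $\Mtn(F)$.

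The only non-formal step is the openness of $F^{\times n}$ in the non-archimedean case, and that is the main (mild) obstacle; once it is in hand, the remaining assertions are purely a matter of pulling a closed set back along a continuous map.
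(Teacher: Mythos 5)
Your proof is correct but follows a genuinely different route from the paper's. The paper argues via quotients: it cites as well known that $F^{\times n}$ is closed \emph{and} of finite index in $F^\times$, concludes that the $k$-fold product of $F^{\times n}\backslash F^\times$ is discrete (hence Hausdorff), and then observes that $M^{(n)}(F)\backslash M(F)$ and $\Mtn(F)\backslash\Mt(F)$ are, as topological groups, isomorphic to this Hausdorff product, so that the corresponding subgroups are closed (``Hausdorff quotient $\Rightarrow$ closed subgroup''). Your argument instead works with preimages: you first give a self-contained proof that $F^{\times n}$ is closed --- directly in the archimedean cases, and via Hensel's lemma in the non-archimedean case, where $F^{\times n}$ is an open, hence closed, subgroup --- and then pull back the closed set $(F^{\times n})^k$ along the continuous determinant map $d$ and the continuous projection $p$. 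The two arguments rest on different topological principles, and yours buys a little extra: it avoids the finite-index input entirely, and it sidesteps the point (left implicit in the paper) that the group-theoretic identification coming from the exact sequence \eqref{E:finite_quotient} is actually a homeomorphism, which is needed before one can transfer Hausdorffness across it. The only delicate step in your version is the Hensel argument for the openness of $F^{\times n}$ when the residue characteristic divides $n$, where one needs the form of Hensel's lemma with the derivative estimate; with that in hand, the rest is formal.
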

\begin{proof}
It is well-known that $F^{\times n}$ is
closed and of finite index in $F^\times$. Hence the group 
$F^{\times n}\backslash F^\times\times\cdots\times
  F^{\times n}\backslash F^\times$ is discrete, in particular
  Hausdorff. But both $\Mtn(F)\backslash\Mt(F)$ and
  $M^{(n)}(F)\backslash M(F)$ are, as topological groups, isomorphic
  to this Hausdorff space. This completes the proof. 
\end{proof}

\begin{Rmk}\label{R:archimedean1}
If $F=\C$, clearly $\Mtn(F)=\Mt(F)$. If $F=\R$, then necessarily $n=2$
and $\GL_r^{(2)}(\R)$ consists of the elements of positive
determinants, which is usually denoted by $\GL_r^+(\R)$. Accordingly
one may denote $\GLtn_r(\R)$ and $\Mtn(\R)$ by $\GLt_r^+(\R)$ and
${\Mt}^+(\R)$ respectively. Both $\GLt_r^+(\R)$ and $\GLt_r(\R)$ share
the identity component, and hence they have the same Lie algebra. The
same applies to $\Mt^+(\R)$ and $\Mt(\R)$.
\end{Rmk}

\quad

Let us mention the following important fact. Let $Z_{\GL_r}(F)\subseteq\GL_r(F)$ be
the center of $\GL_r(F)$. Then its metaplectic preimage
$\widetilde{Z_{\GL_r}}(F)$ is not the center of $\GLt_r(F)$ in
general. (It might not be even commutative for $n>2$.) The
center, which we denote by $Z_{\GLt_r}(F)$, is
\begin{align}\label{E:center_GLt}
Z_{\GLt_r}(F)&=\{(aI_r, \xi):a^{r-1+2rc}\in F^{\times n},
\xi\in\mu_n\}\\
\notag&=\{(aI_r, \xi):a\in F^{\times \frac{n}{d}}, \xi\in\mu_n\},
\end{align}
where $d=\gcd(r-1+2c, n)$. (The second equality is proven in
\cite[Lemma 1]{GO}.) Note that $Z_{\GLt_r}(F)$ is a closed subgroup.

\quad

Let $\pi$ be an admissible representation of a subgroup
$\widetilde{H}\subseteq \GLt_r(F)$, where $\Ht$ is the metaplectic
preimage of a subgroup $H\subseteq\GL_r(F)$. We say $\pi$ is
``genuine'' if each element $(1,\xi)\in\widetilde{H}$ acts as
multiplication by $\xi$, where we view $\xi$ as an element of
$\C$ in the natural way.

%%%%%%%%%%%%%%%%%%%%%%%%%%%%%%%%%%%%%%%%%%%%%%%%%%%%%%%%%%%%%%%%%%%

\subsection{\bf The global metaplectic cover $\GLt_r(\A)$}\label{S:group}

%%%%%%%%%%%%%%%%%%%%%%%%%%%%%%%%%%%%%%%%%%%%%%%%%%%%%%%%%%%%%%%%%%%

In this subsection we consider the global metaplectic group. So we let
$F$ be a number field which contains all the $n^\text{th}$ roots of
unity and $\A$ the ring of adeles. Note that if $n>2$, then $F$ must
be totally complex. We shall define the
$n$-fold metaplectic cover $\GLt_r(\A)$ of $\GL_r(\A)$. (Just like the
local case, we write $\GLt_r(\A)$ even though it is not the adelic
points of an algebraic group.) The construction of $\GLt_r(\A)$ has
been done in various places such as \cite{KP, FK}. 

First define the adelic 2-cocycle $\tau_r$ by
\[
    \tau_r(g, g'):=\prod_v\tau_{r,v}({g}_v, g'_v),
\]
for $g, g'\in\GL_r(\A)$, where $\tau_{r,v}$ is the local
cocycle defined in the previous subsection. By definition of $\tau_{r,v}$, we
have $\tau_{r,v}(g_v, g'_v)=1$ for almost all $v$, and hence the
product is well-defined. 

We define $\GLt_r(\A)$ to be the group whose underlying set is
$\GL_r(\A)\times\mu_n$ and the group structure is defined via $\tau_r$
as in the local case, \ie
\[
    (g, \xi)\cdot(g', \xi')=(gg', \tau_r(g, g')\xi\xi'),
\]
for $g, g'\in\GL_r(\A)$, and $\xi, \xi'\in\mu_n$.  Just as the local case, we have
\[
\xymatrix{
0\ar[r]&\mu_n\ar[r]&\GLt_r(\A)\ar[r]^{p}&\GL_r(\A)\ar[r]&0,
}
\]
where we call $p$ the canonical projection. Define a set theoretic section
$\kappa:\GL_r(\A)\rightarrow\GLt_r(\A)$ by
$g\mapsto(g,1)$. 

It is well-known that $\GL_r(F)$ splits in $\GLt_r(\A)$. However the
splitting is not via $\kappa$. In what follows, we will see that the
splitting is via the product of all the local $\s_r$.

Let us start with the following ``product formula'' of $\sigma_r$.
\begin{Prop}\label{P:product_formula}
For $g, g'\in\GL_r(F)$, we have $\sigma_{r,v}(g, g')=1$ for almost
all $v$, and further
\[
\prod_v\sigma_{r,v}(g, g')=1.
\]
\end{Prop}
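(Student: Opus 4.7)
The plan is to use the explicit factorization
\[
\sigma_{r,v}(g, g') \;=\; \sigma_{\SL_{r+1},v}(l(g), l(g')) \cdot (\det g,\det g')_{F_v}^{1+c}
\]
recalled in the text (with $l:\GL_r\hookrightarrow\SL_{r+1}$, $g\mapsto\diag(g,\det(g)^{-1})$), and to reduce the proposition to two standard product formulas: global Hilbert reciprocity and Matsumoto's product formula for the $\SL_{r+1}$-cocycle on $F$-rational points.

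For the ``almost all $v$'' statement, I would fix $g,g'\in\GL_r(F)$ and note that at all but finitely many places we have $F_v$ non-archimedean, $|n|_{F_v}=1$, and $g,g'\in\GL_r(\OFv)$ (in particular $\det g,\det g'\in\OFv^\times$). At such $v$ the Hilbert-symbol factor is $1$ since $(-,-)_{F_v}$ is trivial on $\OFv^\times\times\OFv^\times$. The remaining factor $\sigma_{\SL_{r+1},v}(l(g),l(g'))$ also equals $1$ because, at unramified places, the explicit formulas for $\sigma_{\SL_{r+1}}$ on the torus, Weyl group, and unipotent radical given by Proposition~\ref{P:BLS} all reduce to Hilbert symbols of units, which vanish; equivalently, the canonical unramified splitting $\sss_r|_{\SL_{r+1}(\OFv)}$ is the trivial function, as reflected in the commutative diagram (\ref{E:canonical_diagram}).

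For the product formula itself, I would multiply the identity above across all $v$ to obtain
\[
\prod_v \sigma_{r,v}(g, g') \;=\; \Big(\prod_v\sigma_{\SL_{r+1},v}(l(g),l(g'))\Big)\cdot\Big(\prod_v(\det g,\det g')_{F_v}\Big)^{1+c}.
\]
The second factor equals $1$ by the global Hilbert reciprocity $\prod_v(a,b)_{F_v}=1$ for $a,b\in F^\times$ (stated in the Notations section), applied to $a=\det g$ and $b=\det g'$. The first factor equals $1$ by Matsumoto's classical theorem — equivalently, $\SL_{r+1}(F)$ splits in the adelic metaplectic cover $\SLt_{r+1}(\A)$ via the product of the local sections attached to $\sigma_{\SL_{r+1},v}$.

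The main substantive input is Matsumoto's product formula for $\sigma_{\SL_{r+1}}$, which the proof uses as a black box — this is the origin of global metaplectic covers and is treated carefully in \cite{BLS}. Once it is in hand, the argument is essentially formal: decompose $\sigma_r$ into an $\SL_{r+1}$-part and a Hilbert-symbol part, take the product over all places, and invoke the two reciprocity laws.
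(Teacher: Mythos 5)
Your global product formula argument is correct but follows a genuinely different route from the paper's. The paper simply reads off from the explicit cocycle description at the end of \S4 of \cite{BLS} that for $g,g'\in\GL_r(F)$, the value $\sigma_{r,v}(g,g')$ is a finite product of Hilbert symbols $(t,t')_{F_v}$ in which the $t,t'\in F^\times$ are determined by $g,g'$ and are independent of $v$; both conclusions then follow at once from Hilbert reciprocity. You instead decompose $\sigma_r$ into the $\SL_{r+1}$-piece plus the extra Hilbert symbol, apply Hilbert reciprocity to the latter, and import Matsumoto's product formula as a black box for the former. That is legitimate and tracks the historical construction, but it trades the paper's single explicit-formula argument for an external input.

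However, your argument for the ``almost all $v$'' claim has a gap. You argue that at places $v$ with $|n|_{F_v}=1$ and $g,g'\in\GL_r(\OFv)$, the Hilbert symbols appearing in the BLS formulas are of units and hence vanish. But the Bruhat decomposition of an element of $\GL_r(\OFv)$ need not have integral torus part: for example $g=\begin{pmatrix}1&0\\\pi&1\end{pmatrix}\in\SL_2(\OFv)$ has torus part $\diag(\pi^{-1},\pi)$. So integrality of $g,g'$ alone does not give the vanishing you need. What actually closes the gap --- and this is exactly what the paper's observation encodes --- is that the quantities entering the BLS formula are fixed elements of $F^\times$ depending only on $g,g'$, hence units at almost every $v$; that set of places may be strictly smaller than the set where $g,g'$ are integral, but it is still cofinite. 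You should also drop the parenthetical claim that $\sss_r|_{\SL_{r+1}(\OFv)}$ is the trivial function: the paper establishes only its uniqueness, not its triviality, and the claim is neither justified nor needed once the argument is corrected as above.
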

\begin{proof}
From the explicit description of the cocycle $\sigma_{r,v}(g, g')$
given at the end of $\S 4$ of \cite{BLS}, one can see that
$\sigma_{r,v}(g, g')$ is written as a product of Hilbert
symbols of the form $(t, t')_{F_v}$ for $t, t'\in F^\times$. This proves
the first part of the proposition. The second part follows from the
product formula for the Hilbert symbol.
\end{proof}

\begin{Prop}
If $g\in\GL_r(F)$, then we have $s_{r,v}(g)=1$ for almost all $v$, where
$s_{r,v}$ is the map $s_{r,v}:\GL(F_v)\rightarrow\mu_n$ defining the local
section $\s_r:\GL(F_v)\rightarrow\GLt_r(F_v)$.
\end{Prop}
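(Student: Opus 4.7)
The plan is to combine Bruhat decomposition of $g$ over $F$ with the explicit cocycle formulas of Proposition \ref{P:BLS} and the triviality of $s_r$ on $T\cap K$, $W$, and $N_B\cap K$ established in the lemma just above. Fix $g\in\GL_r(F)$ and choose a Bruhat decomposition $g=u_1 t w u_2$ with $u_1,u_2\in N_B(F)$, $t\in T(F)$, and $w\in W$. Since the entries of $u_1,u_2,t$ lie in $F$, there is a finite set of bad places $S$ so that for every $v\notin S$ one has simultaneously $|n|_{F_v}=1$, the matrices $u_1,u_2,t,w,g$ all lie in $K_v=\GL_r(\OFv)$, and the Hilbert symbol $(-,-)_{F_v}$ is trivial on $\OFv^{\times}\times\OFv^{\times}$.

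For such $v$, all Bruhat factors lie in $K_v$, so I can iteratively apply the splitting identity $s_{r,v}(kk')=\sigma_{r,v}(k,k')s_{r,v}(k)s_{r,v}(k')$, which holds on $K_v\times K_v$ by construction of $s_r$. Proposition \ref{P:BLS}(1) kills every cocycle factor with a unipotent argument on either side, and the lemma gives $s_{r,v}(u_1)=s_{r,v}(u_2)=s_{r,v}(t)=s_{r,v}(w)=1$. Telescoping, the claim reduces to showing that $\sigma_{r,v}(t,w)=1$ for almost all $v$.

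To handle $\sigma_{r,v}(t,w)$, I relate $w\in W$ to its normalized Weyl representative $\eta_w\in\M$ via $w=d\,\eta_w$, where $d\in T$ is a diagonal matrix with $\pm 1$ entries. Applying the 2-cocycle identity of Proposition \ref{P:BLS}(0) together with Proposition \ref{P:BLS}(5), which gives $\sigma(t',\eta_w)=1$ for any $t'\in T$, yields $\sigma_{r,v}(t,w)=\sigma_{r,v}(t,d)$. Proposition \ref{P:BLS}(4) then expresses this as a product of Hilbert symbols $(t_i,d_j)_{F_v}$ and $(\det t,\det d)_{F_v}^{c}$; since every argument is a unit in $\OFv^{\times}$ at our chosen $v$, each Hilbert symbol is trivial, and hence $s_{r,v}(g)=1$ for all $v\notin S$.

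The main obstacle I expect is the passage from $w\in W$ to $\eta_w\in\M$, since the cleanest cocycle identities in Proposition \ref{P:BLS} are tailored to $\M$ rather than $W$; this is a minor bookkeeping issue once one observes that the two differ only by a $\pm 1$ diagonal matrix and uses bilinearity of the Hilbert symbol.
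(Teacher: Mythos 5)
Your proof is correct, and it differs from the paper's in a way worth noting. The paper uses the coarser Bruhat decomposition $g=bwb'$ with $b,b'\in B(F)$, rewrites $s_{r,v}(g)$ via the relation \eqref{E:tau_sigma} between $\tau_{r,v}$ and $\sigma_{r,v}$, and then invokes Proposition \ref{P:product_formula} (the product formula, i.e.\ that $\sigma_{r,v}(g,g')=1$ for almost all $v$ when $g,g'\in\GL_r(F)$) to annihilate the $\sigma_{r,v}$ factors, together with the Lemma \eqref{E:kappa_and_s} and the triviality of $\tau_{r,v}$ on $K_v\times K_v$. You instead refine the decomposition to $g=u_1twu_2$, telescope directly with the $K_v$-splitting identity, and then \emph{compute} the sole surviving factor $\sigma_{r,v}(t,w)$ explicitly by passing to the $\M$-representative via $w=d\,\eta_w$ and applying Proposition \ref{P:BLS}(0), (4), (5). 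Your route sidesteps Proposition \ref{P:product_formula} entirely (which itself appeals to the explicit description of $\sigma_{r,v}$ as a product of Hilbert symbols from \cite{BLS}) and replaces it with a more hands-on cocycle calculation; the trade-off is that the paper's argument is shorter once the product formula is available as a black box, while yours is more self-contained and makes transparent exactly which Hilbert symbols must vanish.

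One small point of caution: in your telescoping you implicitly use that $\sigma_{r,v}(u_1, twu_2)=1$ and $\sigma_{r,v}(tw,u_2)=1$. Both follow from Proposition \ref{P:BLS}(1) (taking $g=1$, resp.\ $g'=1$, in $\sigma_r(ng,g'n')=\sigma_r(g,g')$), so the step is sound, but it is worth spelling out that the unipotent factor sits on the ``inside'' of the product in the required way. With that caveat made explicit, the argument is complete.
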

\begin{proof}
By the Bruhat decomposition we have $g=bwb'$ for some $b, b'\in B(F)$ and
$w\in W$. Then for each place $v$
\begin{align*}
s_{r,v}(g)
&=s_{r,v}(bwb')\\
&=\sigma_{r,v}(b, wb')s_{r,v}(b)s_{r,v}(wb')/\tau_{r,v}(b,wb')\quad\text{by
  (\ref{E:tau_sigma})}\\
&=\sigma_{r,v}(b, wb')s_{r,v}(b) \sigma_{r,v}(w,
b')s_{r,v}(w)s_{r,v}(b')/\tau_{r,v}(w,b')\tau_{r,v}(b,wb') \quad\text{again by
  (\ref{E:tau_sigma})}.
\end{align*}
By the previous proposition, $\sigma_{r,v}(b, wb')=\sigma_{r,v}(w,
b')=1$ for almost all $v$.
By (\ref{E:kappa_and_s}) we know $s_{r,v}(b)=s_{r,v}(w)=s_{r,v}(b')=1$ for almost
all $v$. Finally by definition of $\tau_{r,v}$,
$\tau_{r,v}(w,b')=\tau_{r,v}(b,wb')=1$ for almost all $v$.
\end{proof}

This proposition implies that the expression 
\[
s_r(g):=\prod_vs_{r,v}(g)
\]
makes sense for all $g\in\GL_r(F)$, and one can define the map
\[
\s_r:\GL_r(F)\rightarrow\GLt_r(\A),\quad g\mapsto (g, s_r(g)^{-1}).
\]
Moreover, this is a homomorphism because of Proposition
\ref{P:product_formula} and (\ref{E:convenient}).

Unfortunately, however, the expression $\prod_vs_{r,v}(g_v)$ does not make
sense for every $g\in\GL_r(\A)$ because one does not know
whether $s_{r,v}(g_v)=1$ for almsot all $v$. Yet, we have
\begin{Prop}\label{P:s_split}
The expression $s_r(g)=\prod_vs_{r,v}(g_v)$ makes sense when $g$ is in
$\GL_r(F)$ or $N_B(\A)$, so $\s_r$ is defined on $\GL_r(F)$ and
$N_B(\A)$. Moreover, $\s_r$ is indeed a homomorphism on  $\GL_r(F)$ and
$N_B(\A)$. Also if $g\in\GL_r(F)$ and
$n\in N_B(\A)$, both $s_r(gn)$ and $s_r(ng)$ make sense and further we
have $\s_r(gn)=\s_r(g)\s_r(n)$ and $\s_r(ng)=\s_r(n)\s_r(g)$.
\end{Prop}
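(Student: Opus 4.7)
The plan is to verify the three assertions (well-definedness on $\GL_r(F)$ and on $N_B(\A)$; the homomorphism property there; and the mixed compatibility with $g\in\GL_r(F)$, $n\in N_B(\A)$) using three ingredients already available: the defining identity (\ref{E:tau_sigma}) relating $\tau_{r,v}$ and $\sigma_{r,v}$, the unipotent triviality properties in Proposition \ref{P:BLS}(1), and the local vanishing (\ref{E:kappa_and_s}) of $s_{r,v}$ on $N_B\cap K$.

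First I would deal with the two separate cases. For $g\in\GL_r(F)$, finiteness of $\prod_v s_{r,v}(g_v)$ is exactly the preceding proposition; the homomorphism property then comes from taking the product over $v$ of the place-by-place identity (\ref{E:convenient}) and invoking Proposition \ref{P:product_formula} to kill the resulting $\prod_v\sigma_{r,v}(g_v,g'_v)$. For $n\in N_B(\A)$, finiteness is immediate because $n_v\in N_B\cap K$ for almost all $v$, and then $s_{r,v}(n_v)=1$ by (\ref{E:kappa_and_s}). The homomorphism property at each place reduces to $\sigma_{r,v}(n_v,n'_v)=1$ (Proposition \ref{P:BLS}(1)); plugging this into (\ref{E:tau_sigma}) gives $\tau_{r,v}(n_v,n'_v)=s_{r,v}(n_v)s_{r,v}(n'_v)/s_{r,v}(n_v n'_v)$, which is exactly what one needs for $\s_{r,v}(n_v)\s_{r,v}(n'_v)=\s_{r,v}(n_v n'_v)$. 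Multiplying over $v$ yields the global identity.

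Next I would handle the mixed case (treating $\s_r(gn)=\s_r(g)\s_r(n)$; the argument for $ng$ is symmetric). The crucial point is the finiteness of $\prod_v s_{r,v}(g_v n_v)$. For almost all $v$ one has $g_v,n_v\in K$, hence $g_v n_v\in K$ and $\tau_{r,v}(g_v,n_v)=1$ because $\kappa|_K$ is a homomorphism. Combining this with $\sigma_{r,v}(g_v,n_v)=1$ (from Proposition \ref{P:BLS}(1), taking $g'=1$ in the first identity) and with $s_{r,v}(n_v)=1$, the relation (\ref{E:tau_sigma}) collapses to $s_{r,v}(g_v n_v)=s_{r,v}(g_v)$. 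Since $s_{r,v}(g_v)=1$ for almost all $v$ by the previous proposition, the mixed product is finite. The same manipulation, applied without the $K$-restriction but for \emph{every} $v$, gives $\tau_{r,v}(g_v,n_v)=s_{r,v}(g_v)s_{r,v}(n_v)/s_{r,v}(g_v n_v)$; taking the product over $v$ (now legitimate because all three factors on the right give finite products) provides the global equality $\tau_r(g,n)=s_r(g)s_r(n)/s_r(gn)$, which is precisely the identity $\s_r(g)\s_r(n)=\s_r(gn)$.

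The main obstacle is the finiteness claim in the mixed case: $s_{r,v}$ is only guaranteed to vanish on $T\cap K$, $W$, and $N_B\cap K$, not on all of $K$, so one cannot argue directly that $s_{r,v}(g_v n_v)=1$ almost everywhere. The trick above circumvents this by playing the triviality of $\tau_{r,v}$ on $K\times K$ off against the triviality of $\sigma_{r,v}(g,n)$ from the unipotent property, thereby reducing the mixed case to the already known vanishing on $N_B\cap K$ and on $F$-rational points.
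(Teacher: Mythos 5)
Your proposal is correct and follows essentially the same route as the paper: you use (\ref{E:tau_sigma}) together with the unipotent triviality of $\sigma_r$ from Proposition \ref{P:BLS}(1) to reduce everything to the vanishing of $s_{r,v}$ on $N_B\cap K$ and on $\GL_r(F)$, and you kill $\prod_v\sigma_{r,v}$ on $\GL_r(F)\times\GL_r(F)$ by the product formula. The only real difference is that you spell out explicitly why $\tau_{r,v}(g_v,n_v)=1$ for almost all $v$ (via $g_v,n_v\in K_v$ and triviality of $\tau_{r,v}$ on $K_v\times K_v$), a point the paper leaves implicit.
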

\begin{proof}
We already know $s_r(g)$ is defined and $\s_r$ is a homomorphism on
$\GL_r(F)$. Also $s_r(n)$ is defined thanks
to (\ref{E:kappa_and_s}) and $\s_r$ is a homomorphism on $N_B(\A)$ thanks
to Proposition \ref{P:BLS} (1). Moreover for all places $v$,
we have $\sigma_{r,v}(g_v, n_v)=1$ again by Proposition \ref{P:BLS} (1). Hence
for all $v$,
$s_{r,v}(gn_v)=s_{r,v}(g)s_{r,v}(n_v)/\tau_{r,v}(g,n_v)$. For
almost all $v$, the right hand side is $1$. Hence the global $s_r(gn)$
is defined. Also this equality shows that $\s_r(gn)=\s_r(g)\s_r(n)$. The
same argument works for $ng$.
\end{proof}

If $H\subseteq\GL_r(\A)$ is a subgroup on which $\s_r$ is not only defined but
also a group homomorphism, we write $H^\ast:=\s_r(H)$. In particular we have
\begin{equation}\label{E:star}
\GL_r(F)^\ast:=\s_r(\GL_r(F))\quad\text{and}\quad
N_B(\A)^\ast:=\s_r(N_B(\A)).
\end{equation}

\quad

We define the groups like $\GLtn_r(\A)$, $\Mt(\A)$, $\Mtn(\A)$, etc
completely analogously to the local case. Let us mention
\begin{Lem}\label{L:closed_subgroup_global}
The groups $\A^{\times n}, M^{(n)}(\A)$ and $\Mtn(\A)$ are closed
subgroups of $\A^\times, M(\A)$ and $\Mt(\A)$, respectively.
\end{Lem}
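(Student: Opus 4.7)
The plan is to mimic the strategy of Lemma \ref{L:closed_subgroup_local}, but since $\A^\times/\A^{\times n}$ need not be discrete, I cannot finish with a Hausdorff-quotient argument. Instead I will prove directly that $\A^{\times n}$ is closed in $\A^\times$, and deduce the other two statements by pulling back along continuous maps.

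For $\A^{\times n}$, the key observation I would record is the identity
\[
\A^{\times n}=\bigcap_v\{a\in\A^\times:a_v\in F_v^{\times n}\},
\]
where $v$ runs over all places of $F$. The inclusion $\subseteq$ is immediate. For $\supseteq$, given $a\in\A^\times$ with every $a_v$ a local $n$-th power, I choose $b_v\in F_v^\times$ with $b_v^n=a_v$; for the cofinite set of $v$ with $a_v\in\OFv^\times$ one has $|b_v|_v^n=|a_v|_v=1$, so $b_v\in\OFv^\times$ automatically. Thus $b=(b_v)\in\A^\times$ and $a=b^n\in\A^{\times n}$. Each set in the intersection is closed because the projection $\A^\times\to F_v^\times$ is continuous (restricted-product topology) and $F_v^{\times n}$ is closed in $F_v^\times$ by Lemma \ref{L:closed_subgroup_local}. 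An arbitrary intersection of closed sets is closed, which gives the claim.

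The remaining two parts then follow formally. The determinant map
\[
\det\colon M(\A)\longrightarrow (\A^\times)^k,\qquad (g_1,\dots,g_k)\longmapsto(\det g_1,\dots,\det g_k),
\]
is continuous, and $M^{(n)}(\A)=\det^{-1}((\A^{\times n})^k)$, so closedness of $\A^{\times n}$ forces closedness of $M^{(n)}(\A)$. Likewise $\Mtn(\A)=p^{-1}(M^{(n)}(\A))$ for the continuous canonical projection $p\colon\Mt(\A)\to M(\A)$, and so $\Mtn(\A)$ is closed in $\Mt(\A)$.

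I do not expect any serious obstacle. The only point worth stating carefully is the $\supseteq$ half of the intersection identity, which is where the adelic nature enters: one needs to confirm that the local $n$-th roots $b_v$ can be chosen simultaneously to yield an element of $\A^\times$, and this is automatic because $|a_v|_v=1$ forces $|b_v|_v=1$.
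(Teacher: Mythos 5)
Your proof is correct, and it takes a genuinely different (and somewhat more economical) route than the paper's. The paper first isolates a general-purpose statement — Lemma \ref{L:closed_subgroup_local_global}, that any subgroup $H\subseteq G(\A)$ which is the restricted product of its closed local pieces $H_v=H\cap G(F_v)$ is itself closed — proves it with a net argument, and then applies it separately to both $\A^{\times n}$ (with $H_v=F_v^{\times n}$) and $M^{(n)}(\A)$ (with $H_v=M^{(n)}(F_v)$); the $\Mtn(\A)$ case is then, as in your argument, a preimage under the canonical projection. You instead prove the $\A^{\times n}$ case by hand via the identity
\[
\A^{\times n}=\bigcap_v\{a\in\A^\times:a_v\in F_v^{\times n}\},
\]
whose nontrivial inclusion rests on the same observation (that $|a_v|_v=1$ forces local $n$-th roots to be units almost everywhere) that the paper needs in order to check that $\A^{\times n}$ really is a restricted product; you then get $M^{(n)}(\A)$ for free as $\Det_M^{-1}\bigl((\A^{\times n})^k\bigr)$ rather than redoing a restricted-product check for $M$. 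The net argument is avoided entirely. The trade-off is that the paper's Lemma \ref{L:closed_subgroup_local_global} is reused later (e.g.\ for $A_M(\A)$ in Proposition \ref{P:A_M_for_n=2}), so the paper's detour through a general lemma is not wasted; your version is leaner for the statement at hand but does not supply that reusable tool.
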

\begin{proof}
That $\A^{\times n}$ and $M^{(n)}(\A)$ are closed follows from the
following lemma together with Lemma
\ref{L:closed_subgroup_local}. Once one knows $M^{(n)}(\A)$ is closed,
one will know
$\Mtn(\A)$ is closed because it is the preimage of the closed
$M^{(n)}(\A)$ under the canonical projection, which is continuous.
\end{proof}

\begin{Lem}\label{L:closed_subgroup_local_global}
Let $G$ be an algebraic group over $F$ and $G(\A)$ its adelic
points. Let $H\subseteq G(\A)$ be a subgroup such that $H$ is written
as $H=\prod'_vH_v$ (algebraically) where for each place $v$, $H_v:=H\cap
G(F_v)$ is a closed subgroup of $G(F_v)$. Then $H$ is closed.
\end{Lem}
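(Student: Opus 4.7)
The plan is to realize $H$ as an intersection of closed subsets of $G(\A)$, which is the cleanest route. Write $\mathrm{pr}_v : G(\A) \to G(F_v)$ for the projection onto the $v$-th component. The key observation is that
\[
H \;=\; \bigcap_v \mathrm{pr}_v^{-1}(H_v).
\]
The inclusion ``$\subseteq$'' is immediate from the definition $H = \prod'_v H_v$, which in particular implies that every $h \in H$ has $h_v \in H_v$ for all $v$. For the reverse inclusion, if $g \in G(\A)$ satisfies $g_v \in H_v$ for every place $v$, then since $g \in G(\A)$ forces $g_v \in G(\OFv)$ for almost all $v$, we automatically have $g_v \in H_v \cap G(\OFv)$ for almost all $v$; hence $g$ lies in the restricted product $\prod'_v H_v = H$.

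Once this identification is established, the conclusion is immediate. Each projection $\mathrm{pr}_v$ is continuous in the restricted product topology on $G(\A)$: unravelling the definition of that topology, the preimage of an open set $V \subseteq G(F_v)$ is a union of basic open sets of the form $V \times \prod_{w \in S,\, w \neq v} U_w \times \prod_{w \notin S \cup \{v\}} G(\mathcal{O}_{F_w})$ for varying finite sets of places $S$ and open sets $U_w \subseteq G(F_w)$. Since $H_v$ is closed in $G(F_v)$ by hypothesis, each $\mathrm{pr}_v^{-1}(H_v)$ is closed in $G(\A)$, and the intersection $H = \bigcap_v \mathrm{pr}_v^{-1}(H_v)$ is thus closed.

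The argument is essentially topological and elementary; I do not anticipate any substantive obstacle. The only point deserving a moment's care is the continuity of the coordinate projections in the restricted product topology and the verification that the restricted product description of $H$ coincides set-theoretically with the intersection of the $\mathrm{pr}_v^{-1}(H_v)$; both reduce to direct unpacking of definitions.
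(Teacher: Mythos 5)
Your proof is correct, and it takes a genuinely different (and cleaner) route than the paper's. The paper argues via nets and contradiction: take a convergent net $(x_i)$ in $H$ with limit $g \in G(\A)$; if $g \notin H$ then $g_w \notin H_w$ for some place $w$, so a basic neighborhood of $g$ built with $G(F_w)\setminus H_w$ in the $w$-slot excludes the entire net, contradicting convergence. You instead identify $H = \bigcap_v \mathrm{pr}_v^{-1}(H_v)$ and then invoke continuity of the coordinate projections, so closedness falls out of general topology without nets. Both arguments rest on the same set-theoretic observation — namely that for $g \in G(\A)$, membership in $H = \prod'_v H_v$ is equivalent to the coordinatewise conditions $g_v \in H_v$, because $g \in G(\A)$ already forces $g_v \in G(\OFv)$ (and hence $g_v \in H_v \cap G(\OFv)$) for almost all $v$; you make this explicit, while the paper uses it silently in the step ``there exists $w$ with $g_w \notin H_w$.'' The one point worth stating with a bit more care in your write-up is why $\mathrm{pr}_v$ is continuous: the restricted product topology on $G(\A)$ is finer than the subspace topology inherited from the full product $\prod_v G(F_v)$, and $\mathrm{pr}_v$ is already continuous for the coarser product topology, so it remains continuous on $G(\A)$; this is a cleaner justification than enumerating basic open sets. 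What your approach buys is brevity and transparency; what the paper's net argument buys is that it generalizes with no change to situations where one does not want to (or cannot) exhibit $H$ as an intersection of coordinate preimages.
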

\begin{proof}
Let $(x_i)_{i\in I}$ be a net in $H$ that converges in $G(\A)$, where
$I$ is some index set. Let $g=\lim_{i\in I} x_i$. Assume $g\notin
H$. Then there exists a place $w$ such that $g_w\notin H_w$. Since
$H_w$ is closed, the set $U_w:=G(F_w)\backslash H_w$ is open. Then
there exists an open neighborhood $U$ of $g$ of the form $U=\prod_v
U_v$, where $U_v$ is some open neighborhood of $g_v$ and at $v=w$,
$U_v=U_w$. But for any $i\in I$, $x_i\notin U$ because $x_{i,
  w}\notin U_w$, which contradicts the assumption that  $g=\lim_{i\in
  I} x_i$. Hence $g\in H$, which shows $H$ is closed.
\end{proof}

Just like the local case, the preimage $\widetilde{Z_{\GL_r}}(\A)$ of
the center $Z_{\GL_r}(\A)$ of
$\GL_r(\A)$ is in general not the center of
$\GLt_r(\A)$ but the center, which we denote by $Z_{\GLt_r}(\A)$, is 
\begin{align*}
Z_{\GLt_r}(\A)&=\{(aI_r, \xi):a^{r-1+2rc}\in \A^{\times n}, \xi\in\mu_n\}\\
&=\{(aI_r, \xi):a\in \A^{\times \frac{n}{d}}, \xi\in\mu_n\},
\end{align*}
where $d=\gcd(r-1+2c, n)$. The center is a closed subgroup of $\GLt_r(\A)$.

\quad

We can also describe $\GLt_r(\A)$ as a quotient of a restricted direct
product of the groups $\GLt_r(F_v)$ as follows. Consider the
restricted direct product $\prod_v'\GLt_r(F_v)$ with respect to the
groups $\kappa(K_v)=\kappa(\GL_r(\mathcal{O}_{F_v}))$ for all $v$ with
$v\nmid n$ and $v\nmid\infty$. If we denote each element in this
restricted direct product by $\Pi'_v(g_v,\xi_v)$ so that $g_v\in
K_v$ and $\xi_v=1$ for almost all $v$, we have the
surjection
\begin{equation}\label{E:surjection}
    \rho:{\prod_v}'\GLt_r(F_v)\rightarrow\GLt_r(\A),\quad
    \Pi'_v(g_v,\xi_v)\mapsto (\Pi'_vg_v, \Pi_v\xi_v),
\end{equation}
where the product $\Pi_v\xi_v$ is literary the product inside $\mu_n$.
This is a group homomorphism because $\tau_r=\prod_v\tau_{r,v}$ and
the groups $\GLt_r(\A)$ and $\GLt_r(F_v)$ are defined, respectively,
by $\tau_r$ and $\tau_{r,v}$. We have
\[
    {\prod_v}'\GLt_r(F_v)/\ker\rho\cong \GLt_r(\A),
\]
where $\ker\rho$ consists of the elements of the form
$(1,\xi)$ with $\xi\in\prod'_v\mu_n$ and $\Pi_v\xi_v=1$.

\quad

Let $\pi$ be a representation of $\widetilde{H}\subseteq \GLt_r(\A)$
where $\Ht$ is the metaplectic preimage of a subgroup
$H\subseteq\GL_r(\A)$. Just like the local
case, we call $\pi$ genuine if $(1,\xi)\in\widetilde{H}(\A)$ acts as
multiplication by $\xi$ for all $\xi\in\mu_n$. Also we have the notion of automorphic
representation as well as automorphic form on $\GLt_r(\A)$ or
$\Mt(\A)$. In this paper, by an automorphic form, we mean a smooth
automorphic form instead of a $K$-finite one, namely an automorphic
form is $K_f$-finite, $\ZZ$-finite and of uniformly moderate
growth. (See \cite[p.17]{Cogdell}.) Hence if $\pi$ is an
automorphic representation of $\GLt_r(\A)$ (or $\Mt(\A)$), the
full group $\GLt_r(\A)$ (or $\Mt(\A)$) acts on $\pi$. An automorphic form $f$ on
$\GLt_r(\A)$ (or $\Mt(\A)$) is said to be genuine if $f(g,\xi)=\xi
f(g,1)$ for all $(g,\xi)\in\GLt_r(\A)$ (or $\Mt(\A)$). In particular
every automorphic form in the space of a genuine automorphic
representation is genuine.

\quad

Suppose we are given a collection of irreducible admissible
representations $\pi_v$ of $\GLt_r(F_v)$ such that $\pi_v$ is
$\kappa(K_v)$-spherical for almost all $v$. Then we can form an
irreducible admissible representation of $\prod_v'\GLt_r(F_v)$ by
taking a restricted tensor product $\otimes_v'\pi_v$ as usual. Suppose
further that $\ker\rho$ acts trivially on $\otimes_v'\pi_v$, which is
always the case if each $\pi_v$ is genuine. Then it
descends to an irreducible admissible representation of $\GLt_r(\A)$,
which we denote by $\otimest'_v\pi_v$, and call it the ``metaplectic
restricted tensor product''. Let us emphasize that the space for
$\otimest'_v\pi_v$ is the same as that for
$\otimes_v'\pi_v$. Conversely, if $\pi$ is an irreducible admissible
representation of $\GLt_r(\A)$, it is written as
$\otimest'_v\pi_v$ where $\pi_v$ is an irreducible admissible
representation of $\GLt_r(F_v)$, and for almost all $v$, $\pi_v$ is
$\kappa(K_v)$-spherical. (To see it, view $\pi$ as a representation of
the restricted product $\prod_v'\GLt_r(F_v)$ by pulling it back by
$\rho$ as in (\ref{E:surjection}) and apply the usual
tensor product theorem for the restricted direct product. This gives
the restricted tensor product
$\otimes_v'\pi_v$, where each $\pi_v$ is genuine, and hence it
descends to $\otimest_v'\pi_v$.) 

\quad

Finally in this section, let us mention that we define
\begin{equation}\label{L:Hasse}
\GL_r^{(n)}(F):=\GL_r(F)\cap\GL_r^{(n)}(\A),
\end{equation}
namely $\GL_r^{(n)}(F)=\{g\in\GL_r(F):\det g\in \A^{\times n}\}$. But
since $F$ contains $\mu_n$, one can easily show that
\[
\GL_r^{(n)}(F)=\{g\in\GL_r(F):\det g\in F^{\times n}\}.
\]
(See, for example, \cite[Chap. 9, Theorem 1]{AT}. Also for $n=2$, this
is a consequence of the Hasse-Minkowski theorem.) Similarly we define
\[
M^{(n)}(F)=M(F)\cap M^{(n)}(\A).
\]

\quad

%%%%%%%%%%%%%%%%%%%%%%%%%%%%%%%%%%%%%%%%%%%%%%%%%%%%%%%%%%%%

\section{\bf The metaplectic cover $\Mt$ of the Levi $M$}\label{S:Levi}

%%%%%%%%%%%%%%%%%%%%%%%%%%%%%%%%%%%%%%%%%%%%%%%%%%%%%%%%%%%%%

Both locally and globally, one cannot show the cocycle
$\tau_r$ has the block-compatibility as in (\ref{E:compatibility})
(except when $r=2$). Yet, in order to define the metaplectic tensor
product, it seems to be necessary to have the block-compatibility of
the cocycle. To get round it, we will introduce another cocycle $\tau_M$, but this
time it is a cocycle only on the Levi $M$, and will show that $\tau_M$
is cohomologous to the restriction $\tau_r|_{M\times M}$ of $\tau_r$
to $M\times M$ both for the local and global cases.

%%%%%%%%%%%%%%%%%%%%%%%%%%%%%%%%%%%%%%%%%%%%%%%%%%%%%%%%%%%%

\subsection{\bf The cocycle $\tau_M$}

%%%%%%%%%%%%%%%%%%%%%%%%%%%%%%%%%%%%%%%%%%%%%%%%%%%%%%%%%%%%%

 In this subsection, we assume that all the groups are over $F$ if $F$
 is local and over $\A$ if $F$ is global, and suppress it
 from our notation.

We define the cocycle
\[
\tau_M: M\times M\rightarrow\mu_n,
\]
by
\[
\tau_M(\begin{pmatrix}g_1&&\\ &\ddots&\\ &&g_k\end{pmatrix},
\begin{pmatrix}g'_1&&\\ &\ddots&\\ &&g'_k\end{pmatrix})
=\prod_{i=1}^k\tau_{r_i}(g_i,g_i')\prod_{1\leq i<j\leq k}(\det(g_i),
\det(g_j'))
\prod_{i\neq j}(\det(g_i), \det(g_j'))^c,
 \]
where $(-,-)$ is the local or global Hilbert symbol.
Note that the definition makes sense both locally and
globally. Moreover the global $\tau_M$ is the product of the local
ones.

We define the group $\cMt$ to be 
\[
\cMt=M\times\mu_n
\]
as a set and the group structure is
given by $\tau_M$. The superscript $^c$ is for
``compatible''. One advantage to work with $\cMt$ is that each
$\GLt_{r_i}$ embeds into $\cMt$ via the natural map
\[
(g_i,\xi)\mapsto(\begin{pmatrix}I_{r_1+\cdots+r_{i-1}}&&\\ &g_i&\\
    &&I_{r_{i+1}+\cdots+r_k}\end{pmatrix}, \xi).
\]
Indeed, the cocycle $\tau_M$ is so chosen that we have this
embedding.

Also recall our notation
\[
M^{(n)}=\GL_{r_1}^{(n)}\times\cdots\times\GL_{r_k}^{(n)},
\]
and
\[
\Mtn=\GLtn_{r_1}\timest\cdots\timest\GLtn_{r_k}.
\]
We define $\cMtn$ analogously to $\cMt$, namely the group structure of
$\cMtn$ is defined via the cocycle $\tau_M$. Of course, $\cMtn$ is a
subgroup of $\cMt$. Note that each
$\GLtn_{r_i}$ naturally embeds into $\cMtn$ as above.

\begin{Lem}
The subgroups $\GLtn_{r_i}$ and $\GLtn_{r_j}$ in $\cMtn$ commute
pointwise for $i\neq j$.
\end{Lem}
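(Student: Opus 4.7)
The plan is a direct computation from the defining formula of $\tau_M$. Fix $g\in\GL_{r_i}^{(n)}$ and $h\in\GL_{r_j}^{(n)}$ with $i\neq j$, and let $m_g$ (resp.\ $m_h$) denote the block-diagonal element of $M$ with $g$ (resp.\ $h$) in the $i$-th (resp.\ $j$-th) block and identity matrices in the remaining blocks. The images of arbitrary elements of $\GLtn_{r_i}$ and $\GLtn_{r_j}$ in $\cMtn$ are then $(m_g,\xi)$ and $(m_h,\eta)$ for some $\xi,\eta\in\mu_n$. Because $m_g$ and $m_h$ commute as matrices in $\GL_r$ and $\mu_n$ is abelian, everything reduces to showing $\tau_M(m_g,m_h)=\tau_M(m_h,m_g)$.

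Next I would expand both sides using the explicit definition of $\tau_M$. In the product $\prod_l\tau_{r_l}(\cdot,\cdot)$, every factor is trivial: in each block $l$, at least one of the two arguments of $\tau_{r_l}$ is the identity, and $\tau_{r_l}$ is normalized so that $\tau_{r_l}(g,I)=\tau_{r_l}(I,g)=1$. (This normalization follows from Proposition \ref{P:BLS}(1) — which gives $\sigma_{r_l}(g,I)=1$ via the ``in particular'' clause — combined with the cocycle identity and the fact that $s_{r_l}(I)=1$.) Thus the computation of both $\tau_M(m_g,m_h)$ and $\tau_M(m_h,m_g)$ reduces to the Hilbert-symbol factors, in which the only blocks with nontrivial determinants are block $i$ (giving $\det g$) and block $j$ (giving $\det h$). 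Hence each Hilbert-symbol factor is a power of $(\det g,\det h)$.

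A small case analysis on whether $i<j$ or $i>j$, using $(a,b)^{-1}=(b,a)$, shows that in both cases one obtains
\[
\tau_M(m_g,m_h)\,\tau_M(m_h,m_g)^{-1}=(\det g,\det h)^{1+2c}.
\]
Since $g\in\GL_{r_i}^{(n)}$, we have $\det g\in R^{\times n}$, and the Hilbert-symbol identity $(a^n,b)=1$ immediately kills this quantity. Therefore $\tau_M(m_g,m_h)=\tau_M(m_h,m_g)$, which is exactly the needed commutation relation.

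The only mildly delicate point is confirming the normalization $\tau_{r_l}(g,I)=\tau_{r_l}(I,g)=1$; otherwise the proof is a bookkeeping exercise in the definition of $\tau_M$, and I do not anticipate any substantive obstacle.
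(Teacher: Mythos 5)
Your proof is correct and follows essentially the same route as the paper's: reduce to the symmetry $\tau_M(m_g,m_h)=\tau_M(m_h,m_g)$, observe the $\tau_{r_l}$-factors are trivial because one argument in each is the identity, and kill the residual Hilbert-symbol contribution using $\det g\in R^{\times n}$. The only cosmetic difference is that the paper asserts $\tau_M(g_i,g_j)=1$ and $\tau_M(g_j,g_i)=1$ individually (leaving the Hilbert-symbol step implicit), whereas you compute the ratio $(\det g,\det h)^{1+2c}$ explicitly; your bookkeeping is if anything a bit more careful.
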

\begin{proof}
Locally or globally, it suffices to show
$\tau_M(g_i,g_j)=\tau_M(g_j,g_i)$ for $g_i\in\GL^{(n)}_{r_i}$ and
$g_j\in\GL^{(n)}_{r_j}$. But the block-compatibility of the 2-cocycle
$\tau_M$, we have $\tau_M(g_i,g_j)=\tau_{r_i}(g_i,
I_{r_j})\tau_{r_j}(I_{r_j}, g_j)=1$, and similarly have $\tau_M(g_j,g_i)=1$.
\end{proof}

\begin{Lem}
There is a surjection
\[
\GLtn_{r_1}\times\cdots\times\GLtn_{r_k}\rightarrow\; \cMtn
\]
given by the map
\[
((g_1,\xi_1),\dots,(g_k,\xi_k))\mapsto
(\begin{pmatrix}g_1&&\\ &\ddots&\\ &&g_k\end{pmatrix},
\xi_1\cdots\xi_k),
\]
whose kernel is
\[
\mathcal{K}_P:=\{((1,\xi_1),\dots,(1,\xi_k)):\xi_1\cdots\xi_k=1\},
\]
so that $\cMtn\cong
\GLtn_{r_1}\times\cdots\times\GLtn_{r_k}/\mathcal{K}_P$.
\end{Lem}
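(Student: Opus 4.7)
The plan is to verify three assertions in turn: (a) the given map is a well-defined group homomorphism, (b) it is surjective, and (c) its kernel is exactly $\mathcal{K}_P$; the quotient description then follows from the first isomorphism theorem.

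The essential content lies in (a), and hinges on one observation: the block-compatibility formula defining $\tau_M$ collapses dramatically on $M^{(n)}\times M^{(n)}$. Indeed, if $g_i,g_i'\in\GL_{r_i}^{(n)}$ then $\det(g_i),\det(g_i')\in R^{\times n}$, and the standard identity $(a^n,b)=1$ for the Hilbert symbol annihilates every cross-term $(\det g_i,\det g_j')$ and $(\det g_i,\det g_j')^c$ with $i\neq j$. Consequently, on $M^{(n)}\times M^{(n)}$ the defining formula reduces to
\[
\tau_M(\diag(g_1,\ldots,g_k),\diag(g_1',\ldots,g_k'))=\prod_{i=1}^k\tau_{r_i}(g_i,g_i').
\]
Multiplying componentwise in $\GLtn_{r_1}\times\cdots\times\GLtn_{r_k}$ and then mapping into $\cMtn$, and comparing with direct multiplication of the images in $\cMtn$, yields the same second coordinate in both cases, which establishes the homomorphism property.

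For (b), any $(\diag(g_1,\ldots,g_k),\xi)\in\cMtn$ is the image of $((g_1,\xi),(g_2,1),\ldots,(g_k,1))$, noting that $g_i\in\GL_{r_i}^{(n)}$ by the definition of $\cMtn$. For (c), the image equals the identity $(I_r,1)$ if and only if $g_i=I_{r_i}$ for every $i$ and $\xi_1\cdots\xi_k=1$, which is precisely the description of $\mathcal{K}_P$. The asserted isomorphism follows.

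There is no serious obstacle; the only substantive ingredient is the vanishing of the Hilbert-symbol cross-terms on $M^{(n)}\times M^{(n)}$, which is also exactly the reason the subgroup $M^{(n)}$ (rather than all of $M$) is singled out here — on the full Levi $M$ these cross-terms genuinely obstruct exhibiting $\cMt$ as a quotient of a direct product of the covers $\GLt_{r_i}$.
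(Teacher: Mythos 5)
Your proof is correct and takes essentially the same approach as the paper, which simply invokes ``block-compatibility of $\tau_M$''; you make explicit the point that the paper leaves implicit, namely that on $M^{(n)}\times M^{(n)}$ the Hilbert-symbol cross-terms $(\det g_i,\det g_j')$ and $(\det g_i,\det g_j')^c$ die because of the identity $(a^n,b)=1$, so that $\tau_M$ genuinely reduces to $\prod_i\tau_{r_i}$ and the map intertwines the group laws. Your closing remark correctly identifies this vanishing as the reason the construction is restricted to $M^{(n)}$ rather than $M$; this is also what the paper's preceding lemma (pointwise commutation of $\GLtn_{r_i}$ and $\GLtn_{r_j}$ inside $\cMtn$) encodes.
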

\begin{proof}
The block-compatibility of $\tau_M$
guarantees that the map is indeed a group homomorphism. The
description of the kernel is immediate.
\end{proof}

%%%%%%%%%%%%%%%%%%%%%%%%%%%%%%%%%%%%%%%%%%%%%%%%%%%%%%%%%%%%

\subsection{\bf The relation between $\tau_M$ and $\tau_r$}

%%%%%%%%%%%%%%%%%%%%%%%%%%%%%%%%%%%%%%%%%%%%%%%%%%%%%%%%%%%%%

Note that for the group $\Mt$ (instead of $\cMt$), the group structure
is defined by the restriction of $\tau_r$ to $M\times M$, and hence
each $\GLt_{r_i}$ might not embed into $\GLt_r$ in
the natural way because of the possible failure of the
block-compatibility of $\tau_r$ unless $r=2$.
To make explicit the relation between $\cMt$ and $\Mt$, the
discrepancy between $\tau_M$ and 
$\tau_r|_{M\times M}$ (which we denote simply by $\tau_r$) has to be
clarified.

\quad

\noindent{\bf Local case:}

Assume $F$ is local. Then we have
\begin{align*}
&\tau_M(\begin{pmatrix}g_1&&\\ &\ddots&\\ &&g_k\end{pmatrix},
\begin{pmatrix}g'_1&&\\ &\ddots&\\ &&g'_k\end{pmatrix})\\
=&\sigma_r(\begin{pmatrix}g_1&&\\ &\ddots&\\ &&g_k\end{pmatrix},
\begin{pmatrix}g'_1&&\\ &\ddots&\\ &&g'_k\end{pmatrix})
\prod_{i=1}^k\frac{s_{r_i}(g_i)s_{r_i}(g_i')}{s_{r_i}(g_ig_i')},
\end{align*}
so $\tau_M$ and $\sigma_r|_{M\times M}$ are cohomologous via the function
$\prod_{i=1}^ks_{r_i}$. Here recall from Section \ref{S:group} that the map
$s_{r_i}:\GL_{r_i}\rightarrow\mu_n$ relates $\tau_{r_i}$ with
$\sigma_{r_i}$ by
\[
\sigma_{r_i}(g_i,g_i')=\tau_{r_i}(g_i,g_i')\cdot\frac{s_{r_i}(g_i,g_i')}{s_{r_i}(g_i)s_{r_i}(g_i')},
\]
for $g_i,g_i'\in\GL_{r_i}$. Moreover if $|n|_F=1$, $s_{r_i}$ is chosen to
be ``canonical'' in the sense that (\ref{E:canonical_section}) is satisfied.

The block-compatibility of $\sigma_r$ implies
\[
\tau_r(m, m')\cdot\frac{s_r(mm')}{s_r(m)s_r(m')}
=\sigma_r(m,m')
=\tau_M(m,m')\cdot\prod_{i=1}^k\frac{s_{r_i}(g_ig_i')}{s_{r_i}(g_i)s_{r_i}(g_i')},
\]
for $m=\begin{pmatrix}g_1&&\\ &\ddots&\\ &&g_k\end{pmatrix}$ and
$m'=\begin{pmatrix}g_1'&&\\ &\ddots&\\ &&g_k'\end{pmatrix}$. Hence if
we define $\hat{s}_M:M\rightarrow\mu_n$ by
\begin{equation}\label{E:s_hat_M}
\hat{s}_M(m)=\frac{\prod_{i=1}^ks_{r_i}(g_i)}{s_r(m)}, 
\end{equation}
we have
\begin{equation}\label{E:s_hat}
\tau_M(m, m')=\tau_r(m, m')\cdot
\frac{\hat{s}_M(m)\hat{s}_M(m')}{\hat{s}_M(mm')},
\end{equation}
namely $\tau_r$ and $\tau_M$ are cohomologous via $\hat{s}_M$.
Therefore we have the isomorphism
\[
\alpha_M:\cMt\rightarrow\Mt,\quad (m,\xi)\mapsto (m, \hat{s}_M(m)\xi).
\]

The following lemma will be crucial later for showing that the global
$\tau_M$ is also cohomologous to $\tau_r|_{M(\A)\times M(\A)}$.

\begin{Lem}\label{L:s_hat_M}
Assume $F$ is such that
$|n|_F=1$. Then for all $k\in M(\OF)$, we have $\hat{s}_M(k)=1$.
\end{Lem}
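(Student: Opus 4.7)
The plan is to use equation (\ref{E:s_hat}) to show that $\hat{s}_M|_{M(\OF)}$ is a group homomorphism into $\mu_n$, and then to check that it vanishes on a set of generators of $M(\OF)$.

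First I would verify that both cocycles $\tau_r$ and $\tau_M$ are identically $1$ on $M(\OF)\times M(\OF)$. For $\tau_r$ this is immediate from the fact noted right after (\ref{E:tau_sigma}) that when $|n|_F=1$ the section $\kappa$ is a splitting over $\GL_r(\OF)\supseteq M(\OF)$. For $\tau_M$ I would use the defining formula: the factors $\tau_{r_i}(k_i,k_i')$ are trivial on each $\GL_{r_i}(\OF)\times\GL_{r_i}(\OF)$ by the same reasoning, and the cross-block Hilbert symbols $(\det k_i,\det k_j')_F$ vanish because $\det k_i,\det k_j'\in\OF^\times$ and $|n|_F=1$ forces $(-,-)_F$ to be trivial on $\OF^\times\times\OF^\times$. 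With both cocycles trivial on $M(\OF)\times M(\OF)$, equation (\ref{E:s_hat}) collapses to $\hat{s}_M(kk')=\hat{s}_M(k)\hat{s}_M(k')$.

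With the homomorphism property in hand, triviality follows by evaluating on generators. On the diagonal torus $T(\OF)\subseteq M(\OF)$, equation (\ref{E:kappa_and_s}) gives $s_{r_i}(t_i)=1$ for $t_i\in T_{r_i}(\OF)$ and $s_r(t)=1$, hence $\hat{s}_M(t)=1$ directly from the definition (\ref{E:s_hat_M}). On $\prod_i\SL_{r_i}(\OF)$, any homomorphism to the abelian group $\mu_n$ must be trivial because each $\SL_{r_i}(\OF)$ is perfect---the same fact the paper already invokes at (\ref{E:canonical_section}) for $\SL_{r+1}(\OF)$. Any $k\in M(\OF)$ with blocks $k_1,\dots,k_k$ factors as $k=s\cdot t$ with $t=\diag(t_1,\dots,t_k)$, $t_i=\diag(\det k_i,1,\dots,1)\in T_{r_i}(\OF)$, and $s_i=k_it_i^{-1}\in\SL_{r_i}(\OF)$; the conclusion $\hat{s}_M(k)=\hat{s}_M(s)\hat{s}_M(t)=1$ follows.

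I do not expect a genuine obstacle. The essential point is that the ``extra Hilbert symbols'' distinguishing $\tau_M$ from $\sigma_r|_{M\times M}$, and the correction turning $s_r$ from the canonical to the block-compatible section, are all invisible to the maximal compact when $|n|_F=1$. The one non-routine input is the perfectness of $\SL_{r_i}(\OF)$ for $r_i\geq 2$, which the paper has already adopted; the case $r_i=1$ needs no separate treatment since $\GL_1(\OF)$ is absorbed into the torus step.
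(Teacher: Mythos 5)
Your proof is correct and takes a genuinely different route from the paper. Both arguments begin identically by observing that $\hat{s}_M$ is a homomorphism on $M(\OF)$ (since $\tau_r$ and $\tau_M$ both vanish there), and both then reduce to checking triviality on a generating set. The paper checks the block elements $k=\diag(I,\dots,k_i,\dots,I)$ directly, establishing $s_r(k)=s_{r_i}(k_i)$ by invoking the strong block-compatibility of $\sigma_{\SL_{r+1}}$ from [BLS, Lemma 5, \S 2] and the compatibility of the canonical sections $\sss_r$ and $\sss_{r_i}$ under the embedding $F:\SL_{r_i+1}\to\SL_{r+1}$. You instead peel off the determinant and verify triviality on the torus $T(\OF)$ (via $s_r|_{T\cap K}=1$ from (\ref{E:kappa_and_s}) together with the definition (\ref{E:s_hat_M})) and on $\prod_i\SL_{r_i}(\OF)$ (via the vanishing of homomorphisms to $\mu_n$), bypassing the block-compatibility of $\sigma_{\SL_{r+1}}$ and the embedding $F$ altogether. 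Your version is shorter and more self-contained; the paper's buys the explicit compatibility $s_r(k)=s_{r_i}(k_i)$ as a byproduct. One caveat you should repair: it is not literally true that $\SL_{r_i}(\OF)$ is always perfect --- $\SL_2(\OF)$ fails to be when the residue field has $3$ elements, its abelianization being $\Z/3$ (pulled back from $\SL_2(\F_3)$). The argument is still correct, but the justification should be the weaker and sufficient statement $\Hom(\SL_{r_i}(\OF),\mu_n)=0$: any such homomorphism factors through $\SL_{r_i}(\F_q)$ because the congruence kernel is pro-$p$ with $p\nmid n$, and the standing hypothesis $\mu_n\subseteq F$ forces $n\mid q-1$, which is coprime to $3$ when $q=3$; for $r_i\geq 3$ one can appeal to genuine perfectness, and $r_i=1$ is vacuous as you note.
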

\begin{proof}
First note that if $k,k'\in M(\OF)$, then $\tau_r(k,k')=\tau_M(k,
k')=1$ and so by (\ref{E:s_hat}) we have
\[
\hat{s}_M(kk')=\hat{s}_M(k)\hat{s}_M(k'),
\]
\ie $\hat{s}_M$ is a homomorphism on $M_M(\OF)$. Hence it suffices to
prove the lemma only for the elements $k\in M(\OF)$ of the form
\[
k=\begin{pmatrix} I_{r_1+\cdots+r_{i-1}}&&\\ &k_i&\\ && I_{r_{i+1}+\cdots+r_k}\end{pmatrix}
\]
where $k_i\in\GL_{r_i}$ is in the $i^{\text{th}}$ place on the
diagonal. Namely we need to prove
\[
\frac{s_{r_i}(k_i)}{s_r(k)}=1.
\]

In what follows, we will show that this follows from the
``canonicality'' of $s_r$ and $s_{r_i}$, and the fact that the
cocycle for $\SL_{r+1}$ is block-compatible in a very strong sense as in
\cite[Lemma 5, Theorem 7 \S 2, p.145]{BLS}. Recall
from (\ref{E:canonical_section}) that $s_r$ has been chosen to satisfy
$s_r|_{\GL_r(\OF)}={\sss_r}|_{l(\GL_r(\OF))}$, where $\sss_r$ is the map on
$\SL_{r+1}(F)$ that makes the diagram (\ref{E:canonical_diagram})
commute. Similarly for $s_{r_i}$ with $r$ replaced by $r_i$. Let us
write 
\[
l_i:\GL_{r_i}(F)\rightarrow \SL_{r_i+1}(F),\quad
g_i\mapsto\begin{pmatrix}g_i&\\ &\det(g_i)^{-1}\end{pmatrix}
\]
for the embedding that is used to define the cocycle $\sigma_{r_i}$. Define the embedding
\[
F:\SL_{r_i+1}(F)\rightarrow\SL_{r+1}(F),\quad
\begin{pmatrix} A&b\\ c&d\end{pmatrix}\mapsto 
\begin{pmatrix} I_{r_1+\cdots+r_{i-1}}&&&\\
& A&&b\\ &&I_{r_{i+1}+\cdots+r_{k}}&\\
&c&&d\end{pmatrix},
\]
where $A$ is a $r_i\times r_i$-block and accordingly $b$ is $r_i\times
1$, $c$ is $1\times r_i$ and d is $1\times 1$. Note that this
embedding is chosen so that we have 
\begin{equation}\label{E:F_and_l}
F(l_i(k_i))=l(k).
\end{equation}

By the block compatibility of $\sigma_{\SL_{r+1}}$
we have 
\[
\sigma_{\SL_{r+1}}|_{F(\SL_{r_i+1})\times
  F(\SL_{r_i+1})}=\sigma_{\SL_{r_i+1}}.
\]
This is nothing but \cite[Lemma 5, \S2]{BLS}. (The reader has to
be careful in that the image $F(\SL_{r_i+1})$ is not a standard
subgroup in the sense defined in \cite[p.143]{BLS} if one chooses the
set $\Delta$ of simple roots of $\SL_{r+1}$ in the usual way. One can, however,
choose $\Delta$ differently so that $F(\SL_{r_i+1})$ is indeed a
standard subgroup. And all the results of \cite[\S 2]{BLS} are totally
independent of the choice of $\Delta$.)
This implies the map $(g_i, \xi)\mapsto (F(g_i),\xi)$ for $(g_i,\xi)\in\SLt_{r_i+1}$ is a
homomorphism. Hence the canonical section
$\SL_{r+1}(\OF)\rightarrow \SLt_{r+1}(F)$, which is given by $g\mapsto (g,
\sss_r(g))$, restricts to the canonical section
$\SL_{r_i+1}(\OF)\rightarrow \SLt_{r_i+1}(F)$, which is given by
$g_i\mapsto (g_i,\sss_{r_i}(g_i))$. Namely we have the commutative
diagram
\[
\xymatrix{
\SLt_{r_i+1}(\OF)\ar[rrr]^{(g,\;\xi)\rightarrow( F(g),\; \xi)}&&&\SLt_{r+1}(\OF)\\
\SL_{r_i+1}(\OF)\ar[rrr]^F\ar[u]^{g_i\mapsto(g_i,\;
  \sss_{r_i}(g_i))}&&&\SL_{r+1}(\OF)\ar[u]_{g\mapsto(g,\;\sss_r(g))},
}
\]
where all the maps are homomorphisms.
In particular, we have 
\begin{equation}\label{E:sss}
\sss_r(F(g_i))=\sss_{r_i}(g_i),
\end{equation}
for all $g_i\in\SL_{r_i+1}(\OF)$. Thus 
\begin{align*}
s_r(k)&=\sss_r(l(k))\quad\text{by (\ref{E:canonical_section})}\\
&=\sss_r(F(l_i(k_i)))\quad\text{by (\ref{E:F_and_l})}\\
&=\sss_{r_i}(l_i(k_i))\quad\text{by (\ref{E:sss})}\\
&=s_{r_i}(k_i)\quad\text{by (\ref{E:canonical_section}) with $r$
  replaced by $r_i$}.
\end{align*}
The lemma has been proven.
\end{proof}

\quad

\noindent{\bf Global case:}

Assume $F$ is a number field. We define $\hat{s}_M:
M(\A)\rightarrow\mu_n$ by
\[
\hat{s}_M(\prod_vm_v):=\prod_v{\hat{s}_{M_v}}(m_v)
\]
for $\prod_vm_v\in M(\A)$. The product is finite thanks to Lemma
\ref{L:s_hat_M}. Since both of the cocycles $\tau_r$ and $\tau_M$ are
the products of the corresponding local ones, one can
see that the relation (\ref{E:s_hat}) holds globally as well.

Thus analogously to the local case, we have
the isomorphism
\[
\alpha_M:\; \cMt(\A)\rightarrow\Mt(\A),\quad
(m,\xi)\mapsto (m, \hat{s}_M(m)\xi).
\]

\quad

\begin{Lem}\label{L:splitting_cMPt}
The splitting of $M(F)$ into $\cMt(\A)$ is given by
\[
\s_M:M(F)\rightarrow\;\cMt(\A),\quad
\begin{pmatrix}g_1&&\\ &\ddots&\\ &&g_k\end{pmatrix}\mapsto 
(\begin{pmatrix}g_1&&\\ &\ddots&\\ &&g_k\end{pmatrix},\;
\prod_{i=1}^k s_i(g_i)^{-1}).
\]
\end{Lem}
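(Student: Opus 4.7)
The plan is to observe that a splitting of $M(F)$ into $\Mt(\A)$ is already known from Proposition \ref{P:s_split}, and then transport it to $\cMt(\A)$ via the isomorphism $\alpha_M: \cMt(\A) \to \Mt(\A)$ constructed in the preceding subsection. More precisely, since $\s_r: \GL_r(F) \to \GLt_r(\A)$ is a group homomorphism by Proposition \ref{P:s_split}, its restriction $\s_r|_{M(F)}$ gives a splitting $M(F) \to \Mt(\A)$. Composing with $\alpha_M^{-1}$ then produces a splitting $\s_M := \alpha_M^{-1} \circ \s_r|_{M(F)}: M(F) \to \cMt(\A)$, which is a homomorphism because $\alpha_M$ is an isomorphism of groups.

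Next, I would carry out the explicit calculation. For $m = \diag(g_1,\dots,g_k) \in M(F)$, by definition $\s_r(m) = (m, s_r(m)^{-1})$, and $\alpha_M^{-1}(m,\xi) = (m, \hat{s}_M(m)^{-1}\xi)$. Hence
\[
\s_M(m) = \alpha_M^{-1}(m, s_r(m)^{-1}) = (m,\; \hat{s}_M(m)^{-1}\, s_r(m)^{-1}).
\]
Using the defining formula (\ref{E:s_hat_M}), namely $\hat{s}_M(m) = \prod_{i=1}^k s_{r_i}(g_i) / s_r(m)$, the second coordinate simplifies to
\[
\hat{s}_M(m)^{-1}s_r(m)^{-1} = \frac{s_r(m)}{\prod_{i=1}^k s_{r_i}(g_i)} \cdot s_r(m)^{-1} = \prod_{i=1}^k s_{r_i}(g_i)^{-1},
\]
which is exactly the formula claimed in the statement.

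The only point that requires justification is that $\hat{s}_M(m)$ and each $s_{r_i}(g_i)$ make sense as finite products over places for $m \in M(F)$. For $s_{r_i}(g_i)$ this follows from the fact, established just before Proposition \ref{P:s_split}, that $s_{r_i,v}(g_{i,v}) = 1$ for almost all $v$ when $g_i \in \GL_{r_i}(F)$; for $\hat{s}_M$ it follows from this together with Lemma \ref{L:s_hat_M}, which guarantees $\hat{s}_{M_v} = 1$ on $M(\mathcal{O}_{F_v})$ at almost all places. There is no serious obstacle in this lemma: the content is purely bookkeeping, and the main thing to verify is that the map $\alpha_M$ really converts $\s_r$ into the asserted formula, which is the one-line calculation above.
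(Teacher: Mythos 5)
Your proof is correct, but it takes a genuinely different route from the paper's. The paper proves the homomorphism property of $\s_M$ directly ``from the pieces'': each $\s_{r_i}$ is a splitting of $\GL_{r_i}(F)$ into $\GLt_{r_i}(\A)$, the block-compatibility of $\tau_M$ writes the cocycle on a product of rational blocks as $\prod_i\tau_{r_i}(g_i,g_i')$ times Hilbert symbols $(\det g_i,\det g_j')_\A$, and these auxiliary factors vanish by the product formula for the Hilbert symbol since $g_i,g_j'$ are rational. Your proof instead goes ``from the top'': you restrict the known global splitting $\s_r:\GL_r(F)\to\GLt_r(\A)$ to $M(F)$ and transport it through the isomorphism $\alpha_M:\cMt(\A)\to\Mt(\A)$, verifying by a one-line simplification using the definition $\hat{s}_M(m)=\prod_i s_{r_i}(g_i)/s_r(m)$ that the result coincides with the asserted formula. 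Both are valid. The paper's argument is more self-contained in that it does not invoke $\alpha_M$; your argument leans on the machinery already set up (that $\alpha_M$ is a global isomorphism, which itself relies on Lemma \ref{L:s_hat_M} and the cohomology relation (\ref{E:s_hat})), and it is essentially the same calculation that the paper later presents as the proof of Proposition \ref{P:diagram}, just run in reverse to \emph{define} $\s_M$ rather than to check its compatibility with $\s_r$. Your closing remark about finiteness of the products is correct but slightly redundant: the well-definedness of $\hat{s}_M$ on $M(\A)$, and hence of $\alpha_M$, is already part of the setup of $\alpha_M$ that precedes this lemma in the paper.
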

\begin{proof}
For each $i$ the splitting
$\s_{r_i}:\GL_{r_i}(F)\rightarrow\GLt_{r_i}(\A)$ is given by
$g_i\mapsto(g_i,\;s_{r_i}(g_i)^{-1})$, where $\GLt_{r_i}(\A)$ is
defined via the cocycle $\tau_{r_i}$. The lemma follows by the
block-compatibility of $\tau_M$ and the product formula for the
Hilbert symbol.
\end{proof}

Just like the case of $\GLt_r(\A)$, the section
$\s_M$ as in this lemma cannot be defined on all of $M(\A)$ even
set theoretically because the expression $\prod_is_{r_i}(g_i)$ does
not make sense to all $\diag(g_1,\dots,g_k)\in M(\A)$. So we only have
a partial set theoretic section
\[
\s_M:M(\A)\rightarrow\cMt(\A).
\]
But analogously to Proposition \ref{P:s_split}, we have
\begin{Prop}\label{P:s_split_M}
The partial section $\s_M$ is defined on both
$M(F)$ and $N_M(\A)$, where $N_M(\A)$ is the unipotent radical of the
Borel subgroup of $M$, and moreover it gives rise to a group
homomorphism on each of these subgroups. Also for $m\in M(F)$ and $n\in
N_M(\A)$, both $\s_M(mn)$ and $\s_M(nm)$ are defined and further
$\s_M(mn)=\s_M(m)\s_M(n)$  and $\s_M(nm)=\s_M(n)\s_M(m)$.
\end{Prop}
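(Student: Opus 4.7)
The plan is to reduce every assertion in the proposition to a block-level statement using the block-compatibility of $\tau_M$, and then invoke Proposition \ref{P:s_split} applied separately to each factor $\GL_{r_i}$. The guiding observation is that the cross Hilbert-symbol terms
\[
\prod_{i<j}(\det g_i,\det g'_j)\prod_{i\neq j}(\det g_i,\det g'_j)^c
\]
appearing in $\tau_M$ become trivial whenever one of the two arguments is unipotent, because then one of the determinants is $1$.

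The assertions about $M(F)$ are already contained in Lemma \ref{L:splitting_cMPt}: definedness of $\s_M$ on $M(F)$ follows because each $s_{r_i}(g_i)$ is defined for $g_i\in\GL_{r_i}(F)$ by Proposition \ref{P:s_split} applied to $\GL_{r_i}$, and the homomorphism property follows from the block-compatibility of $\tau_M$ together with the product formula for the Hilbert symbol. I would just briefly recall this.

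For $n=\diag(n_1,\dots,n_k)\in N_M(\A)$, each $n_i$ lies in the unipotent radical of the Borel of $\GL_{r_i}$, so Proposition \ref{P:s_split} applied to $\GL_{r_i}$ says $s_{r_i}(n_i)$ is defined (almost all local factors are $1$) and $\s_{r_i}$ is a homomorphism on that unipotent radical. To verify that $\s_M$ is a homomorphism on $N_M(\A)$, I would use block-compatibility:
\[
\tau_M(n,n')=\prod_i\tau_{r_i}(n_i,n'_i)\prod_{i<j}(\det n_i,\det n'_j)\prod_{i\neq j}(\det n_i,\det n'_j)^c,
\]
and note that since $\det n_i=\det n'_j=1$ all Hilbert-symbol factors are trivial, so $\tau_M(n,n')=\prod_i\tau_{r_i}(n_i,n'_i)$. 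Combining this with the componentwise identity $\s_{r_i}(n_i)\s_{r_i}(n'_i)=\s_{r_i}(n_in'_i)$ yields $\s_M(n)\s_M(n')=\s_M(nn')$.

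The mixed case is handled identically. For $m=\diag(g_1,\dots,g_k)\in M(F)$ and $n=\diag(n_1,\dots,n_k)\in N_M(\A)$, the product $mn=\diag(g_1n_1,\dots,g_kn_k)$ has each $s_{r_i}(g_in_i)$ well-defined by Proposition \ref{P:s_split} applied to $\GL_{r_i}$; the cross terms in $\tau_M(m,n)$ again vanish because $\det n_i=1$, giving $\tau_M(m,n)=\prod_i\tau_{r_i}(g_i,n_i)$; and then $\s_M(mn)=\s_M(m)\s_M(n)$ follows from the blockwise identity $\s_{r_i}(g_in_i)=\s_{r_i}(g_i)\s_{r_i}(n_i)$ of Proposition \ref{P:s_split}. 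The argument for $\s_M(nm)=\s_M(n)\s_M(m)$ is symmetric. There is no real obstacle here beyond careful bookkeeping; the entire proposition is a reflection, at the level of the Levi, of the corresponding facts for each $\GL_{r_i}$ factor, once one notices that unipotence trivializes every Hilbert-symbol contribution in the block-compatibility formula.
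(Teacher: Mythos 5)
Your proof is correct and takes essentially the same approach as the paper: reduce to the $\GL_{r_i}$ factors via block-compatibility of $\tau_M$, apply Proposition \ref{P:s_split} blockwise, and dispose of the cross Hilbert-symbol terms by noting they are trivial (by the global product formula on $M(F)$, by $\det n_i=1$ when a unipotent is involved). The paper's proof is a one-sentence citation of Proposition \ref{P:s_split} and block-compatibility, with a parenthetical that $(\det g,\det g')_\A=1$ on the subgroup generated by $M(F)$ and $N_M(\A)$; your case-by-case account of why each cross term vanishes is a correct unpacking of that remark.
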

\begin{proof}
This follows from Proposition \ref{P:s_split} applied to each
$\GLt_{r_i}(\A)$ together with the block-compatibility of the cocycle
$\tau_M$. (Note that one also needs to use the fact that for all
$g, g'$ in the subgroup generated by $M(F)$ and $N_M(\A)$, we have
$(\det(g), \det(g'))_\A=1$.)
\end{proof}

This splitting is related to the splitting
$\s_r:\GL_r(F)\rightarrow\GL_r(\A)$ by
\begin{Prop}\label{P:diagram}
We have the following commutative diagram:
\[
\xymatrix{\cMt(\A)\; \ar@{^{(}->}[r]^{\alpha_M}&\GLt_r(\A)\\
M(F)\;\ar@{^{(}->}[r]\; \ar[u]^{\s_M}&\GL_r(F)\ar[u]_{\s_r}.
}
\]
\end{Prop}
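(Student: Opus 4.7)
The plan is to chase the definitions around the diagram and reduce the equality to the defining identity of $\hat{s}_M$ verified one place at a time. Fix $m=\diag(g_1,\ldots,g_k)\in M(F)$. Going up then right, Lemma \ref{L:splitting_cMPt} gives $\s_M(m)=(m,\prod_i s_{r_i}(g_i)^{-1})$, and the definition of $\alpha_M$ then yields
\[
\alpha_M(\s_M(m))=\bigl(m,\;\hat{s}_M(m)\prod_{i=1}^k s_{r_i}(g_i)^{-1}\bigr).
\]
Going right then up gives $\s_r(m)=(m,\,s_r(m)^{-1})$. Thus the commutativity of the diagram is equivalent to the single identity
\[
\hat{s}_M(m)=\frac{\prod_{i=1}^k s_{r_i}(g_i)}{s_r(m)}.
\]

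First I would check that all three quantities on the right are well-defined as finite products over all places. The product $s_r(m)=\prod_v s_{r,v}(m_v)$ is well-defined by Proposition \ref{P:s_split}, and the same proposition applied to each $\GLt_{r_i}$ shows $s_{r_i}(g_i)=\prod_v s_{r_i,v}(g_{i,v})$ is well-defined. The global $\hat{s}_M(m)=\prod_v\hat{s}_{M_v}(m_v)$ is finite because $m_v\in M(\mathcal{O}_{F_v})$ for almost all $v$ and Lemma \ref{L:s_hat_M} guarantees $\hat{s}_{M_v}(m_v)=1$ at such places.

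Once finiteness is established, the proof is essentially a place-by-place application of the local formula \eqref{E:s_hat_M}, which reads
\[
\hat{s}_{M_v}(m_v)=\frac{\prod_{i=1}^k s_{r_i,v}(g_{i,v})}{s_{r,v}(m_v)}
\]
at every place $v$. Multiplying over all $v$, and using that each side is a finite product in $\mu_n$, produces the desired global identity and hence the commutativity of the diagram.

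There is no serious obstacle: the only real point to verify is the interchange of the global products, and this is immediate from the finiteness noted in the previous paragraph. In particular the proof uses none of the metaplectic machinery beyond Lemma \ref{L:s_hat_M} (to ensure $\hat{s}_M$ is defined on $M(\A)$) and Lemma \ref{L:splitting_cMPt} (to identify $\s_M$ on $M(F)$).
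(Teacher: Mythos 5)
Your proof is correct and matches the paper's argument: reduce commutativity to the identity $\hat{s}_M(m)=\prod_i s_{r_i}(g_i)/s_r(m)$ via the explicit formulas for $\s_M$, $\alpha_M$, and $\s_r$, and then invoke the local defining relation \eqref{E:s_hat_M} place by place (with finiteness coming from Proposition~\ref{P:s_split} and Lemma~\ref{L:s_hat_M}). The paper states this more tersely but uses the same reduction and the same ingredients.
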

\begin{proof}
For $m=\begin{pmatrix}g_1&&\\ &\ddots&\\ &&g_k\end{pmatrix}
\in M(F)$, we have
\[
\alpha_M(\s_M(m))=\alpha_M(m, \prod_{i=1}^ks_{r_i}(g_i)^{-1})
=(m, \hat{s}_M(m)\prod_{i=1}^ks_{r_i}(g_i)^{-1})
=(m, s_r(m)^{-1})=\s_r(m),
\]
where for the elements in $M(F)$, all of $s_{r_i}$ and $s_r$
are defined globally, and the second equality follows from the
definition of $\hat{s}_M$ as in (\ref{E:s_hat_M}).
\end{proof}

This proposition implies
\begin{Cor} \label{C:diagram}
Assume $\pi$ is an automorphic subrepresentation of
$\cMt(\A)$. The representation of
$\Mt(\A)$ defined by $\pi\circ\alpha_M^{-1}$ is
also automorphic.
\end{Cor}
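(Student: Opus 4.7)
The plan is to check directly that the representation $\pi\circ\alpha_M^{-1}$ satisfies every requirement of an automorphic representation, exploiting the fact that $\alpha_M$ is an isomorphism of topological groups. The one nontrivial input is Proposition \ref{P:diagram}, which says $\alpha_M\circ\s_M=\s_r|_{M(F)}$, so $\alpha_M$ carries the rational splitting inside $\cMt(\A)$ onto the rational splitting inside $\Mt(\A)$.

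Concretely, for each $\varphi\in V_\pi$ (viewed as a smooth genuine function on $\cMt(\A)$) I would define $\widetilde{\varphi}:=\varphi\circ\alpha_M^{-1}$, a function on $\Mt(\A)$. The space $\{\widetilde{\varphi}:\varphi\in V_\pi\}$ is obviously stable under right translation by $\Mt(\A)$, and transport of structure along $\alpha_M$ identifies this right action with $\pi\circ\alpha_M^{-1}$. The crucial left-invariance calculation is then immediate: for $\gamma\in M(F)$ and $g\in\Mt(\A)$,
\[
\widetilde{\varphi}(\s_r(\gamma)\,g)
=\varphi\bigl(\alpha_M^{-1}(\s_r(\gamma))\cdot\alpha_M^{-1}(g)\bigr)
=\varphi\bigl(\s_M(\gamma)\cdot\alpha_M^{-1}(g)\bigr)
=\varphi(\alpha_M^{-1}(g))
=\widetilde{\varphi}(g),
\]
where the second equality uses Proposition \ref{P:diagram} and the third uses that $\varphi$, being an automorphic form on $\cMt(\A)$, is left-invariant under $\s_M(M(F))$.

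The remaining analytic axioms (genuineness, $K_f$-finiteness, $\mathcal{Z}$-finiteness, smoothness at archimedean places, and uniformly moderate growth) transfer automatically. Genuineness is immediate because $\alpha_M$ fixes the central $\mu_n$ pointwise (since $\hat{s}_M(1)=1$). For the others, observe that on the underlying set $M(\A)\times\mu_n$, the map $\alpha_M$ differs from the identity only by multiplication by the $\mu_n$-valued function $\hat{s}_M$; by Lemma \ref{L:s_hat_M}, $\hat{s}_{M,v}$ is trivial on $M(\mathcal{O}_{F_v})$ whenever $|n|_{F_v}=1$, and at the finitely many remaining places it is continuous with values in the discrete group $\mu_n$. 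Therefore $\alpha_M$ is a homeomorphism (and a diffeomorphism at the archimedean places), so all topological and differential conditions on $\widetilde{\varphi}$ follow from the corresponding conditions on $\varphi$.

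The only genuine step is the invariance computation, which is a direct consequence of Proposition \ref{P:diagram}; everything else is a routine verification that $\alpha_M$ is compatible with the topological and analytic structure used to define automorphic forms. I do not foresee any serious obstacle.
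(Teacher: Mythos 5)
Your proof is correct and follows the same approach as the paper: realize $\pi\circ\alpha_M^{-1}$ on the space $\{\varphi\circ\alpha_M^{-1}\}$, and deduce left-invariance under $\s_r(M(F))$ from the commutative diagram $\alpha_M\circ\s_M=\s_r|_{M(F)}$ of Proposition \ref{P:diagram}. The paper states this more tersely, but the substance is identical; your extra remarks on the analytic axioms are a correct elaboration of what the paper leaves implicit.
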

\begin{proof}
If $\pi$ is realized in a space $V$ of automorphic forms on
$\cMt(\A)$, then $\pi\circ\alpha_M^{-1}$ is realized in the
space of functions of the form $f\circ\alpha_M^{-1}$ for $f\in
V$. The automorphy follows from the commutativity of the diagram
in the above lemma.
\end{proof}

\quad

The following remark should be kept in mind for the rest of the paper.
\begin{Rmk}
The results of this subsection essentially show that we may identify
$\cMt$ (locally or globally) with $\Mt$. We may even ``pretend''
that the cocycle $\tau_r$ has the block-compatibility
property. We need to make the distinction between $\cMt$ and $\Mt$
only when we would like to view the group $\Mt$ as a subgroup of
$\GLt_r$. For most part of this paper, however, we will not have to view $\Mt$
as a subgroup of $\GLt_r$. Hence we suppress the superscript $^c$ from
the notation and always denote $\cMt$ simply by $\Mt$, when there is
no danger of confusion. Accordingly,
we denote the partial section $\s_M$ simply by $\s$.
\end{Rmk}

\quad

%%%%%%%%%%%%%%%%%%%%%%%%%%%%%%%%%%%%%%%%%%%%%%%%%%%%%%%%%%%%%%%%%%%

\subsection{\bf The center $Z_{\Mt}$ of $\Mt$}

%%%%%%%%%%%%%%%%%%%%%%%%%%%%%%%%%%%%%%%%%%%%%%%%%%%%%%%%%%%%%%%%%%%

In this subsection $F$ is either local or global, and accordingly we
let $R=F$ or $\A$ as in the notation section. And all the groups are over
$R$. 

For any
group $H$ (metaplectic or not), we denote its center by $Z_H$. In
particular for each group $\widetilde{H}\subseteq\GLt_r$, we let
\[
Z_{\widetilde{H}}=\text{center of $\widetilde{H}$}.
\]

For the Levi part
$M=\GL_{r_1}\times\cdots\times\GL_{r_2}\subseteq\GL_r$, 
we of course have
\[
Z_{M}=\{\begin{pmatrix}a_1I_{r_1}&&\\ &\ddots&\\
  &&a_{k}I_{r_k}\end{pmatrix}:a_i\in R^\times\}.
\]
But for the center $Z_{\Mt}$ of $\Mt$, we have
\[
Z_{\Mt}\subsetneq \widetilde{Z_{M}},
\]
in general, and indeed $\widetilde{Z_{M}}$ might not be even commutative.

In what follows, we will describe $Z_{\Mt}$ in detail. For this
purpose, let us start with
\begin{Lem}\label{L:to_compute_center}
Assume $F$ is local. Then for each $g\in\GL_r(F)$ and $a\in F^\times$,
we have
\[
\sigma_r(g, aI_r)\sigma_r(aI_r, g)^{-1}=(\det(g), a^{r-1+2cr}).
\]
\end{Lem}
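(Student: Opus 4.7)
Observe that $aI_r$ is central in $\GL_r(F)$, so $g$ and $aI_r$ commute there, and the expression
\[
C_a(g) := \sigma_r(g, aI_r)\sigma_r(aI_r, g)^{-1}
\]
is exactly the commutator $[(g,1), (aI_r,1)]$ in the central extension $\GLt_r(F)$; in particular $C_a(g)\in\mu_n$. The target formula therefore amounts to identifying this commutator with the Hilbert symbol $(\det(g), a^{r-1+2cr})$.

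The first step is to show that $C_a : \GL_r(F)\to\mu_n$ is a group homomorphism. This is the standard fact that, in a central extension, commutators with a fixed central element give a homomorphism from the centralizer; concretely I would verify it by two applications of the cocycle identity (Proposition \ref{P:BLS}(0)) to expand $\sigma_r(g_1g_2, aI_r)$ and $\sigma_r(aI_r, g_1g_2)$, each time using centrality of $aI_r$ to move it past a $g_i$ inside the cocycle arguments. After the dust settles, the $\sigma_r(g_1,g_2)$ and $\sigma_r(g_1aI_r, g_2)$ factors cancel and one is left with $C_a(g_1)C_a(g_2)$.

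Once $C_a$ is a homomorphism into the abelian group $\mu_n$, it factors through the abelianization $\GL_r(F)^{\mathrm{ab}}=F^\times$ via the determinant, so $C_a(g)=\chi_a(\det g)$ for some character $\chi_a:F^\times\to\mu_n$. To pin down $\chi_a$ I would test on $g = \diag(t,1,\ldots,1)$, where both $g$ and $aI_r$ lie in the torus $T$, so the explicit torus formula Proposition \ref{P:BLS}(4) applies directly. The computation gives
\[
\sigma_r(\diag(t,1,\ldots,1),\, aI_r) = (t,a)^{r-1}(t,a^r)^c = (t,a)^{r-1+rc},
\]
\[
\sigma_r(aI_r,\, \diag(t,1,\ldots,1)) = (a^r,t)^c = (t,a)^{-rc},
\]
whose ratio is $(t,a)^{r-1+2cr} = (t, a^{r-1+2cr}) = (\det g, a^{r-1+2cr})$. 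Since such $g$ realize every value of $\det$, this identifies $\chi_a(\cdot) = (\cdot, a^{r-1+2cr})$ and proves the lemma.

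The only genuine obstacle is the bookkeeping in step one: one has to be patient enough to push the cocycle identity through twice while swapping $aI_r$ with $g_i$ at the level of $\GL_r$. Everything else is either a formal consequence of abelianness of $\mu_n$ or a straightforward Hilbert-symbol calculation on the torus.
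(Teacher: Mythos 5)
Your proposal is correct, and it takes a genuinely different route from the paper's proof.

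The paper's argument is a direct computation: it first reduces to the case $c=0$, then writes $g = nt\eta n'$ via the Bruhat decomposition and pushes $\sigma_r(g,aI_r)$ and $\sigma_r(aI_r,g)$ through Proposition~\ref{P:BLS}~(0)--(5) to show each equals the corresponding torus cocycle; both sides are then evaluated explicitly as products of Hilbert symbols in the diagonal entries of $t$, and the ratio collapses to $(\det g, a^{r-1})$. Your approach instead exploits the structure of the central extension: $C_a(g)=\sigma_r(g,aI_r)\sigma_r(aI_r,g)^{-1}$ is the commutator $[(g,1),(aI_r,1)]$ in $\GLt_r(F)$, which is a homomorphism $\GL_r(F)\to\mu_n$ because commutation with a fixed lift of a central element always yields a homomorphism in a central extension. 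Since $\mu_n$ is abelian, $C_a$ factors through $\GL_r(F)^{\mathrm{ab}}\cong F^\times$ via $\det$, so one only needs to evaluate on the single family $g=\diag(t,1,\dots,1)$, where Proposition~\ref{P:BLS}~(4) applies immediately, and your Hilbert-symbol arithmetic there is correct (including the $c$-term, so you don't even need the paper's initial reduction to $c=0$). What the paper's approach buys is self-containment within the cocycle calculus of \cite{BLS}; what your approach buys is conceptual economy --- you never touch the Bruhat decomposition, the $\M$-representatives, or the length-additivity lemma, and you expose the structural reason the answer depends only on $\det g$. The only thing to make explicit in a final write-up would be the commutator-homomorphism verification you sketch (two applications of the cocycle identity, or the direct group-theoretic computation in $\GLt_r(F)$), together with the standard fact that $[\GL_r(F),\GL_r(F)]=\SL_r(F)$ for a local field $F$ of characteristic zero.
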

\begin{proof}
First let us note that if we write $\sigma_r=\sigma_r^{(c)}$ to
emphasize the parameter $c$, then 
\[
\sigma_r^{(c)}(g, aI_r)\sigma_r^{(c)}(aI_r, g)^{-1}
=\sigma_r^{(0)}(g, aI_r)\sigma_r^{(0)}(aI_r, g)^{-1}(\det(g), a^r)^{2c}
\]
because $(a^r,\det(g))^{-1}=(\det(g), a^r)$. Hence it suffices to
show the lemma for the case $c=0$.

But this can be done by using the recipe provided by
\cite{BLS}. Namely let $g=nt\eta n'$ for $n, n'\in N_B$, $t\in T$ and
$\eta\in\M$. Then
\begin{align*}
\sigma_r(g, aI_r)&=\sigma_r(nt\eta n', aI_r)\\
&=\sigma_r(t\eta, n' aI_r)\quad\text{by Proposition \ref{P:BLS} (1)
  and (2)}\\
&=\sigma_r(t\eta, aI_r)\quad\text{by $n'aI_r=aI_rn'$ and Proposition
  \ref{P:BLS} (1)}\\
&=\sigma_r(t, \eta aI_r)\sigma_r(\eta, aI_r)\sigma_r(t,
\eta)^{-1}\quad\text{by Proposition \ref{P:BLS} (0)}\\
&=\sigma_r(t, aI_r\eta)\sigma_r(\eta, aI_r)\quad\text{by Proposition
  \ref{P:BLS} (5)}\\
&=\sigma_r(taI_r, \eta)\sigma_r(t, aI_r)\sigma_r(aI_r, \eta)^{-1}
\sigma_r(\eta, aI_r)\quad\text{by Proposition
  \ref{P:BLS} (0)}\\
&=\sigma_r(t, aI_r)\sigma_r(\eta, aI_r)\quad\text{by Proposition
  \ref{P:BLS} (5)}.
\end{align*}
Now by Proposition \ref{P:BLS} (3), $\sigma(\eta, aI_r)$ is a product
of $(-a, a)$'s, which is $1$. Hence by using Proposition \ref{P:BLS}
(4), we have
\[
\sigma_r(g, aI_r)=\sigma_r(t, aI_r)
=\prod_{i=1}^{r}(t_i, a)^{r-i}.
\]
By an analogous computation, one can see
\[
\sigma_r(aI_r, g)=\sigma_r(aI_r, t)=\prod_{i=1}^{r}(a, t_i)^{i-1}.
\]
Using $(a, t_i)^{-1}=(t_i, a)$, one can see
\[
\sigma_r(g, aI_r) \sigma_r(aI_r, g)^{-1}=\prod_{i=1}^r(t_i, a)^{r-1}.
\]
But this is equal to $(\det(g), a^{r-1})$ because
$\det(g)=\prod_{i=1}^rt_i$. 
\end{proof}

Note that this lemma immediately implies that the center $Z_{\GLt_r}$ of $\GLt_r$
is indeed as in (\ref{E:center_GLt}), though a different proof is
provided in \cite{KP}. 

Also with this lemma, we can prove
\begin{Prop}
Both locally and globally, the center $Z_{\Mt}$ is described as
\[
Z_{\Mt}=\{\begin{pmatrix}a_1I_{r_1}&&\\ &\ddots&\\
  &&a_kI_{r_k}\end{pmatrix}:
a_i^{r-1+2cr}\in R^{\times n}\;\text{ and }\;a_1\equiv\cdots\equiv
a_r\mod{R^{\times n}}\}.
\]
\end{Prop}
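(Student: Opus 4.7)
The plan is to compute directly when an element $(z,\xi) \in \Mt$ is central. Since $\mu_n$ is central in $\Mt$ and since the projection $p:\Mt \to M$ is a surjective group homomorphism, any central element must project into $Z_M$, so $z = \diag(a_1 I_{r_1},\ldots,a_k I_{r_k})$ for some $a_i \in R^\times$, with $\xi$ arbitrary in $\mu_n$. For any $g=\diag(g_1,\ldots,g_k)\in M$ and any lift $(g,\eta)$, the commutator of $(z,\xi)$ with $(g,\eta)$ in $\Mt$ equals $\tau_M(z,g)\tau_M(g,z)^{-1}$ and is in particular independent of $\xi$ and $\eta$, so centrality is equivalent to $\tau_M(z,g) = \tau_M(g,z)$ for every $g \in M$.

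Next I would exploit the block-compatibility of $\tau_M$, which holds by definition both locally and globally. Writing $d_i := \det g_i$, the ratio $\tau_{r_i}(a_iI_{r_i},g_i)/\tau_{r_i}(g_i,a_iI_{r_i})$ equals the corresponding ratio for $\sigma_{r_i}$ (the coboundary relating $\tau_{r_i}$ to $\sigma_{r_i}$ cancels in any commutator), which by Lemma \ref{L:to_compute_center} equals $(a_i^{r_i-1+2cr_i},d_i)$. The cross-block Hilbert-symbol factors coming from the block-compatibility formula collapse, using $(x,y)^{-1}=(y,x)$ and an index swap to merge the $i<j$ and $i>j$ portions of $\prod_{i\ne j}$, into $\prod_{i<j}[(a_i^{r_i},d_j)(a_j^{r_j},d_i)]^{1+2c}$. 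Combining these pieces gives
\[
\frac{\tau_M(z,g)}{\tau_M(g,z)} = \prod_{i=1}^k (a_i^{r_i-1+2cr_i},d_i) \cdot \prod_{1\le i<j\le k}\bigl[(a_i^{r_i},d_j)(a_j^{r_j},d_i)\bigr]^{1+2c}.
\]
In the global case the same identity holds as a product of its local versions, since both $\tau_M$ and the Hilbert symbol are defined place-by-place.

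Finally, I would fix an index $i_0$, set $d_j=1$ for $j\ne i_0$, and let $d_{i_0}$ range over $R^\times$; the expression above then simplifies to $(a_{i_0}^{-1}D^{1+2c},d_{i_0})$, where $D:=\prod_j a_j^{r_j}$. By non-degeneracy of the Hilbert symbol modulo $n^{\tth}$ powers (applied place-by-place in the global case, since $(x,y)_\A=1$ for all $y\in\A^\times$ forces $x_v\in F_v^{\times n}$ at each $v$ and hence $x\in\A^{\times n}$), centrality is equivalent to $a_{i_0}^{-1}D^{1+2c}\in R^{\times n}$ for every $i_0 = 1,\ldots,k$. Taking the ratio of two such conditions yields $a_i\equiv a_j \pmod{R^{\times n}}$ for all $i,j$; letting $a$ be a common representative, one has $D\equiv a^r$ modulo $R^{\times n}$, and the remaining condition reduces to $a^{r-1+2cr}\in R^{\times n}$, equivalently $a_i^{r-1+2cr}\in R^{\times n}$ for each $i$. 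The converse implication is immediate from the same simplification. The main obstacle is keeping the doubly-indexed Hilbert-symbol bookkeeping straight; once the cross terms are carefully reorganized via $(x,y)^{-1}=(y,x)$, the remaining algebra is routine.
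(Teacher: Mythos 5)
Your proof is correct and takes essentially the same route as the paper's: reduce centrality to a cocycle commutator, expand it via block-compatibility, invoke Lemma \ref{L:to_compute_center} for the diagonal blocks, reorganize the cross-block Hilbert symbols, and then isolate one index at a time using nondegeneracy of the Hilbert symbol. The only cosmetic difference is that you phrase the computation in terms of $\tau_M$ throughout, passing to $\sigma_{r_i}$ via the coboundary cancellation, whereas the paper works directly with $\sigma_r|_{M\times M}$; the algebra is identical.
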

\begin{proof}
First assume $F$ is local. Let $m=\diag(g_1,\dots,g_k)\in M$ and
$a=\diag(a_1I_{r_1},\dots,a_kI_{r_k})$. It suffices to show
$\sigma_r(m,a)\sigma_r(a, m)^{-1}=1$ if and only if all $a_i$ are as in the
proposition. But
\begin{align*}
&\sigma_r(m,a) \sigma_r(a, m)^{-1}\\
=&\prod_{i=1}^r\sigma_{r_i}(g_i, a_iI_{r_i})\sigma_{r_i}(a_iI_{r_i},
g_i)^{-1}\prod_{1\leq i<j\leq r}(\det(g_i), a_j^{r_j})\prod_{i\neq
  j}(\det(g_i), a_j^{r_j})^c\\
&\qquad\qquad\qquad\qquad\qquad\qquad\qquad
\cdot\prod_{1\leq i<j\leq r}(a_i^{r_i},\det(g_j))^{-1}\prod_{i\neq
  j}(a_i^{r_i},\det(g_j))^{-c}\\
=&\prod_{i=1}^r\sigma_{r_i}(g_i, a_iI_{r_i})\sigma_{r_i}(a_iI_{r_i},
g_i)^{-1}\prod_{i\neq
  j}(\det(g_i), a_j^{r_j})^{1+2c}\\
=&\prod_{i=1}^r (\det(g_i), a_i^{r_i-1+2cr_i})\prod_{i\neq
  j}(\det(g_i), a_j^{r_j+2cr_j})\\
=&\prod_{i=1}^r (\det(g_i),\, a_i^{-1}\prod_{j=1}^ra_j^{r_j+2cr_j}),
\end{align*}
where for the third  equality we used the above lemma with $r$ replaced
by $r_i$. 

Now assume $a$ is such that $(a,1)\in Z_{\Mt}$. Then the above product
must be 1 for any $m$. In particular, choose $m$ so that $g_j=1$ for all
$i\neq j$. Then we must have $(\det(g_i),\,
a_i^{-1}\prod_{j=1}^ra_j^{r_j+2cr_j})=1$ for all $g_i\in
\GL_{r_i}$. This implies 
\[
a_i^{-1}\prod_{j=1}^ra_j^{r_j+2cr_j}\in F^{\times n}
\]
for all $i$. Since this holds for all $i$, one can see
$a_i^{-1}a_j\in
F^{\times n}$ for all $i\neq j$, which implies $\;a_1\equiv\cdots\equiv
a_r\mod{F^{\times n}}$. But if $\;a_1\equiv\cdots\equiv
a_r\mod{F^{\times n}}$, then
\begin{align*}
\prod_{i=1}^r (\det(g_i),\, a_i^{-1}\prod_{j=1}^ra_j^{r_j+2cr_j})
=&\prod_{i=1}^r(\det(g_i), a_i^{-1}\prod_{j=1}^ra_i^{r_j+2cr_j})\\
=&\prod_{i=1}^r(\det(g_i), a_i^{r-1+2cr}).
\end{align*}
This must be equal to 1 for any choice of $g_i$, which gives
$a_i^{r-1+2cr}\in F^{\times n}$.

Conversely if $a$ is of the form as in the proposition, one can see
that $\sigma_r(m,a) \sigma_r(a, m)^{-1}=\prod_{i=1}^r (\det(g_i),\,
a_i^{-1}\prod_{j=1}^ra_j^{r_j+2cr_j})=1$ for any $m$.

The global case follows from the local one because locally by using
(\ref{E:tau_sigma}) and $am=ma$, one can see 
$\sigma_r(m,a) \sigma_r(a, m)^{-1}=1$ if and only if $\tau_r(m,a)
\tau_r(a, m)^{-1}=1$, and the global $\tau_r$ is the product of local ones.
\end{proof}

Lemma \ref{L:to_compute_center} also implies
\begin{Lem}\label{L:center_GLtt}
Both locally and globally, $\widetilde{Z_{\GL_r}}$ commutes with
$\GLtn_r$ pointwise.
\end{Lem}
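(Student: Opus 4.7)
The plan is to reduce the commutation assertion to a Hilbert symbol identity that is already computed in Lemma \ref{L:to_compute_center}. Fix $\tilde{a} = (aI_r, \xi) \in \widetilde{Z_{\GL_r}}$ and $\tilde{g} = (g, \xi') \in \GLtn_r$. Since $aI_r$ is central in $\GL_r$, the products $\tilde{a}\tilde{g}$ and $\tilde{g}\tilde{a}$ have the same underlying group element, so commutation in $\GLt_r$ is equivalent to the equality $\tau_r(aI_r, g) = \tau_r(g, aI_r)$.

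Next, I would pass from $\tau_r$ to $\sigma_r$. By (\ref{E:tau_sigma}), $\tau_r$ differs from $\sigma_r$ by the coboundary $s_r(x)s_r(y)/s_r(xy)$, and this coboundary is symmetric in $(x,y)$ whenever $xy = yx$. Therefore
\[
\tau_r(g, aI_r)\tau_r(aI_r, g)^{-1} = \sigma_r(g, aI_r)\sigma_r(aI_r, g)^{-1},
\]
and Lemma \ref{L:to_compute_center} evaluates the right-hand side as $(\det(g), a^{r-1+2cr})_F$. If $g \in \GL_r^{(n)}(F)$, then $\det(g) = d^n$ for some $d \in F^\times$, and the Hilbert symbol property $(d^n, b)_F = 1$ forces the expression to be $1$. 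This settles the local case.

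For the global case, the adelic cocycle $\tau_r$ is the product of its local components, so the same reduction gives
\[
\tau_r(g, aI_r)\tau_r(aI_r, g)^{-1} = \prod_v (\det(g_v), a_v^{r-1+2cr})_{F_v} = (\det(g), a^{r-1+2cr})_{\A}.
\]
If $g \in \GL_r^{(n)}(\A)$, then $\det(g) \in \A^{\times n}$, and the same Hilbert symbol identity applied adelically gives the value $1$. Hence $\tilde{a}$ and $\tilde{g}$ commute in $\GLt_r(\A)$.

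There is no serious obstacle here; the only thing to be mindful of is the symmetry of the coboundary argument (which uses $aI_r \cdot g = g \cdot aI_r$), and the fact that the local computation of Lemma \ref{L:to_compute_center} patches adelically because $\tau_r$ is built factor-by-factor from $\tau_{r,v}$.
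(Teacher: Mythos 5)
Your argument is correct and follows essentially the same route as the paper: reduce commutation to $\tau_r(aI_r,g)=\tau_r(g,aI_r)$, transfer to $\sigma_r$ via the coboundary symmetry of \eqref{E:tau_sigma} (using that $aI_r$ is central), apply Lemma \ref{L:to_compute_center} together with the vanishing of $(\det g, \cdot)$ on $n$-th powers, and patch adelically since the global $\tau_r$ is the product of the local ones.
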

\begin{proof}
The local case is an immediate corollary of Lemma
\ref{L:to_compute_center} because if
$g\in\GL_r^{(n)}$ the lemma implies $\sigma_r(g, aI_r)=\sigma_r(aI_r,
g)$. Hence by (\ref{E:tau_sigma}), 
locally $\tau_r(g, aI_r)=\tau_r(aI_r, g)$ for all $g\in\GL_r^{(n)}$
and $a\in F^\times$. Since the global $\tau_r$
is the product of the local ones, the global case also follows.
\end{proof}
Let us mention that in particular, if $n=2$ and $r=\text{even}$, then
$\widetilde{Z_{\GL_r}}\subseteq\GLtn_r$
and $\widetilde{Z_{\GL_r}}$ is the center of $\GLtn_r$. This fact is used
crucially in \cite{Takeda1}.

\quad

It should be mentioned that this description of the center $Z_{\Mt}$ easily
implies 
\begin{equation}\label{E:center=center}
Z_{\GLt_r}\Mtn=Z_{\Mt}\Mtn.
\end{equation}

Also we have
\begin{Prop}\label{P:Z_M_commute_Mtn}
Both locally and globally, the groups $\widetilde{Z_M}$ and $\Mtn$ commute pointwise, which gives
\begin{equation}\label{E:center_Mtn}
Z_{\Mtn}=\widetilde{Z_{M}}\cap {\Mtn},
\end{equation}
and hence
\begin{equation}\label{E:center_Mtn2}
Z_{\GLt_r}Z_{\Mtn}=Z_{\GLt_r}(\widetilde{Z_{M}}\cap
{\Mtn})=\widetilde{Z_{M}}\cap( Z_{\GLt_r} \Mtn).
\end{equation}
\end{Prop}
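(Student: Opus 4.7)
My plan is to work throughout in $\cMt$ instead of $\Mt$, using the isomorphism $\alpha_M$ from the previous subsection. This is convenient because $\alpha_M$ preserves first coordinates (hence centers and the normal subgroup $\Mtn$), and because the cocycle $\tau_M$ is explicitly block-compatible, making commutation relations amenable to direct calculation.

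For the pointwise commutation, I would take $(z_0,\eta)\in\widetilde{Z_M}$ with $z_0=\diag(a_1 I_{r_1},\dots,a_k I_{r_k})$ and $(g_0,\xi)\in\Mtn$ with $g_0=\diag(g_1,\dots,g_k)$ and $\det(g_i)\in R^{\times n}$. Since $z_0$ and $g_0$ already commute in $M$, the claim reduces to $\tau_M(z_0,g_0)=\tau_M(g_0,z_0)$. The block-compatibility of $\tau_M$ splits this comparison into two kinds of factors: the diagonal terms $\tau_{r_i}(a_i I_{r_i},g_i)$ versus $\tau_{r_i}(g_i, a_i I_{r_i})$, which agree by Lemma \ref{L:center_GLtt} applied to each $\GLtn_{r_i}$; and Hilbert symbol cross terms of the form $(a_i^{r_i},\det(g_j))$ or $(\det(g_i),a_j^{r_j})$, which vanish because each $\det(g_j)\in R^{\times n}$ and the Hilbert symbol annihilates $n$-th powers. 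Globally the same computation goes through place by place.

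For $Z_{\Mtn}=\widetilde{Z_M}\cap\Mtn$, the inclusion $\supseteq$ is immediate from the pointwise commutation. For the reverse inclusion, I would show that the centralizer of $M^{(n)}$ in $M$ equals $Z_M$: each $\GL_{r_i}^{(n)}$ contains $\SL_{r_i}$ (and the statement is trivial if $r_i=1$), whose centralizer in $\GL_{r_i}$ is $Z_{\GL_{r_i}}$, so projecting the commutation relation for any $(m,\xi)\in Z_{\Mtn}$ down to $M$ forces $m\in Z_M$. For the final chain, the first equality $Z_{\GLt_r}Z_{\Mtn}=Z_{\GLt_r}(\widetilde{Z_M}\cap\Mtn)$ is pure substitution, and for $Z_{\GLt_r}(\widetilde{Z_M}\cap\Mtn)=\widetilde{Z_M}\cap(Z_{\GLt_r}\Mtn)$ I would exploit that $Z_{\GLt_r}\subseteq\widetilde{Z_M}$, since its image in $\GL_r$ consists of scalar matrices and these lie in $Z_M$: the inclusion $\subseteq$ is then clear, and for $\supseteq$, writing $x=zm$ with $z\in Z_{\GLt_r}$ and $m\in\Mtn$, the hypothesis $x\in\widetilde{Z_M}$ forces $m=z^{-1}x\in\widetilde{Z_M}\cap\Mtn=Z_{\Mtn}$.

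The only point I expect to need care on is the slightly fiddly bookkeeping in the $\tau_M$ computation --- in particular tracking the indices $i<j$ versus $i\neq j$ in the Hilbert symbol cross terms and verifying that each is genuinely of the form $(\,\cdot\,, n\text{-th power})$ --- but this follows mechanically from the explicit formula for $\tau_M$ together with Lemma \ref{L:center_GLtt}.
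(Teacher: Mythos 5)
Your argument is correct, and the central step — reducing the commutation of $\widetilde{Z_M}$ with $\Mtn$ to the individual blocks via the block-compatibility of $\tau_M$ and then invoking Lemma \ref{L:center_GLtt} with $r$ replaced by $r_i$ — is exactly the paper's proof. The paper stops there, treating the identity $Z_{\Mtn}=\widetilde{Z_M}\cap\Mtn$ and the chain \eqref{E:center_Mtn2} as immediate consequences; you spell these out (centralizer of $M^{(n)}$ in $M$ is $Z_M$ via $\SL_{r_i}\subseteq\GL_{r_i}^{(n)}$, and $Z_{\GLt_r}\subseteq\widetilde{Z_M}$ for the last equality), which is correct and a reasonable amount of detail to add.
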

\begin{proof}
By the block compatibility of the cocycle $\tau_M$, one can see
that an element of the form $(\begin{pmatrix}a_1I_{r_1}&&\\ &\ddots&\\
  &&a_{k}I_{r_k}\end{pmatrix}, \xi)$ commutes with all the elements in
$\Mtn$ if and only if each $(a_iI_{r_i},\xi)$ commutes with all the
elements in $\GLtn_{r_i}$. But this is always the case by the above
lemme (with $r$ replaced by $r_i$). This proves the proposition.
\end{proof}

\quad

If $F$ is global, we define
\[
Z_{\Mt}(F)=Z_{\Mt}(\A)\cap\s(M(F)),
\]
where recall that $\s:M(F)\rightarrow\Mt(\A)$ is the section that
splits $M(F)$. Similarly we define groups like $Z_{\GLt_r}(F),
\Mtn(F)$, etc. Namely in general for any subgroup $\Ht\subseteq
\Mt(\A)$, we define the ``$F$-rational points'' $\Ht(F)$ of $\Ht$ by
\begin{equation}\label{E:H(F)}
\Ht(F):=\Ht\cap\s(M(F)).
\end{equation}

\quad

%%%%%%%%%%%%%%%%%%%%%%%%%%%%%%%%%%%%%%%%%%%%%%%%%%%%%%%%%%%%

\subsection{\bf The abelian subgroup $A_{\Mt}$}\label{S:abelian}

%%%%%%%%%%%%%%%%%%%%%%%%%%%%%%%%%%%%%%%%%%%%%%%%%%%%%%%%%%%%%

Again in this subsection, $F$ is local or global, and $R=F$ or $\A$.
As we mentioned above, the preimage $\widetilde{Z_{M}}$ of the center
$Z_M$ of the Levi $M$ might not be even commutative. For later
purposes, we let $A_{\Mt}$ be a closed abelian subgroup of $\widetilde{Z_M}$
containing the center $Z_{\GLt_r}$. Namely
$A_{\Mt}$ is a closed abelian subgroup such that
\[
Z_{\GLt_r}\subseteq A_{\Mt}\subseteq \widetilde{Z_{M}}.
\]
We let
\[
A_M:=p(A_{\Mt}),
\]
where $p$ is the canonical projection.  If $F$ is global, we always
assume $A_{\Mt}(\A)$ is chosen
compatibly with the local $A_{\Mt}(F_v)$ in the sense that we have
\[
A_{M}(\A)={\prod_v}'A_{M}(F_v).
\]
Note that if $A_{M}(F_v)$ (hence $A_{\Mt}(F_v)$) is closed, then
$A_{M}(\A)$ (hence $A_{\Mt}(\A)$) is closed by Lemma
\ref{L:closed_subgroup_local_global}.

Of course there are many different choices for $A_{\Mt}$. But we would
like to choose $A_{\Mt}$ so that the following hypothesis is
satisfied:
\begin{hypo1}
Assume $F$ is global. The image of $M(F)$ in the quotient $A_M(\A)M^{(n)}(\A)\backslash
M(\A)$ is discrete in the quotient topology. 
\end{hypo1}

The author does not know if one can always find such $A_{\Mt}$ for
general $n$. But at least we have
\begin{Prop}\label{P:hypothesis}
If $n=2$, the above hypothesis is satisfied for a suitable choice of
$A_{\Mt}$. For $n>2$, if
$d=\gcd(n, r-1+2cr)$ is such that $n$ divides $nr_i/d$ for all
$i=1,\dots,k$, (which is the case, for example, if $d=1$,) then the
above hypothesis is satisfied with $A_{\Mt}=Z_{\Mt}$.
\end{Prop}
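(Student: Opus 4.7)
The plan is to push Hypothesis ($\ast$) through the determinant map, reducing it to a discreteness question on ideles, which is then settled by class-group/unit-group considerations. Let $\phi:M(\A)\to(\A^\times)^k$ be the determinant map $(g_1,\dots,g_k)\mapsto(\det g_1,\dots,\det g_k)$. Its kernel is contained in $M^{(n)}(\A)$, and $\phi(M^{(n)}(\A))=(\A^{\times n})^k$, so $\phi$ induces a topological isomorphism
\[
M(\A)/M^{(n)}(\A)\;\xrightarrow{\sim}\;(\A^\times/\A^{\times n})^k
\]
sending $M(F)$ onto $(F^\times)^k$. Consequently Hypothesis ($\ast$) is equivalent to the discreteness of $(F^\times)^k$ in the quotient $(\A^\times)^k/\phi(A_M(\A))(\A^{\times n})^k$.

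For $n>2$ with $A_{\Mt}=Z_{\Mt}$, the description of $Z_{\Mt}$ shows that $\phi(A_M(\A))$ is contained in $\{(a^{r_1},\dots,a^{r_k}):a\in\A^{\times n/d}\}\cdot(\A^{\times n})^k$, where $d=\gcd(n,r-1+2cr)$. The assumption $n\mid nr_i/d$ (equivalently $d\mid r_i$) forces each $a^{r_i}\in\A^{\times n}$, so $\phi(A_M(\A))\subseteq(\A^{\times n})^k$ and the hypothesis reduces to the discreteness of the image of $F^\times$ in $\A^\times/\A^{\times n}$. The $n=2$ case is handled analogously, choosing a suitable abelian subgroup $A_{\Mt}\subseteq\widetilde{Z_M}$ containing $Z_{\GLt_r}$---for which commutativity is easier to verify than for $n>2$ thanks to the symmetry and bilinearity of the quadratic Hilbert symbol---and exploiting the Hasse-Minkowski identity $F^\times\cap\A^{\times 2}=F^{\times 2}$ to simplify the subsequent discreteness verification.

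The crucial remaining step is the discreteness of $F^\times$ in $\A^\times/\A^{\times n}$: I construct an open neighborhood $U$ of $1$ in $\A^\times$ with $U\A^{\times n}\cap F^\times\subseteq\A^{\times n}$. Take a finite set of places $S$ containing the archimedean places, all $v\mid n$, primes generating the ideal class group of $F$, and additional primes (supplied by Chebotarev density) chosen so that every non-$n$-th-power class in the $S$-unit group is detected locally at some place in $S$, and set
\[
U=\prod_{v\in S}U_v\times\prod_{v\notin S}\OFv^\times,
\]
where each $U_v\subseteq F_v^{\times n}$ is a small open neighborhood of $1$ (which exists because $F_v^{\times n}$ is open for $v$ archimedean or $v\mid n$). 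For $q\in F^\times\cap U\A^{\times n}$: the constraint at $v\notin S$ forces $n\mid v_v(q)$, and at $v\in S$ forces $q\in F_v^{\times n}$. Using that $S$ generates the class group, write $q=\beta^n u$ with $\beta\in F^\times$ and $u$ an $S$-unit; the local $n$-th-power condition on $u$ at $v\in S$, together with the Chebotarev choice, forces $u$ to be an $n$-th power, so $q\in F^{\times n}\subseteq\A^{\times n}$. The main obstacle is precisely this last step---discreteness of $F^\times$ in $\A^\times/\A^{\times n}$ is not formal (since $\A^{\times n}$ is not open in $\A^\times$) and rests essentially on the finiteness of the class group, Dirichlet's $S$-unit theorem, openness of $F_v^{\times n}$ at the archimedean and ramified places, and Chebotarev density.
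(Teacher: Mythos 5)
Your treatment of the $n>2$ case is correct and follows the same route as the paper: the divisibility hypothesis $n\mid nr_i/d$ forces $p(Z_{\Mt}(\A))\subseteq M^{(n)}(\A)$ (equivalently $Z_{\GLt_r}(\A)\subseteq\Mtn(\A)$), so the problem reduces via the determinant map to the discreteness of the image of $F^\times$ in $\A^{\times n}\backslash\A^\times$.

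Your proof of that last auxiliary fact, however, takes a genuinely different route from the paper's. You aim to produce an open neighborhood $U$ of $1$ in $\A^\times$ with $U\A^{\times n}\cap F^\times\subseteq\A^{\times n}$, i.e.\ to make the intersection trivial modulo $\A^{\times n}$, and you invoke Chebotarev to enlarge $S$ until local $n$-th-power conditions at $S$ detect all non-$n$-th-power $S$-unit classes. This can be made to work, but two points need care: (i) there is a circularity to resolve in ``choose $S$ to detect the $S$-units'' --- enlarging $S$ enlarges the $S$-unit group; you must first fix $S_0$ (archimedean places, $v\mid n$, class-group generators), observe that the relevant $u$ is automatically an $S_0$-unit (because $(q)\in I_F^n$ once $q$ is a local $n$-th power at $S_0$), and then add detecting primes for the finitely many classes of $U_F^{(S_0)}/(U_F^{(S_0)})^n$; (ii) you assert that $F_v^{\times n}$ is open only ``for $v$ archimedean or $v\mid n$'', but in fact it is open at every finite place, which is what lets $U_v\subseteq F_v^{\times n}$ be chosen at the auxiliary Chebotarev primes as well. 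The paper avoids all of this: it does not try to make the intersection trivial, only finite modulo $\A^{\times n}$. Taking $K_v=\mathcal{O}_{F_v}^\times$ at finite $v$ and $K_v=F_v^\times$ at archimedean $v$, it reduces to the finiteness of $\{u\in F^\times:(u)\in P_F\cap I_F^n\}/F^{\times n}$, which sits in an exact sequence between $U_F/U_F^n$ (finite by Dirichlet) and the $n$-torsion of the class group (finite). Since the ambient quotient $\A^{\times n}\backslash\A^\times$ is Hausdorff, finiteness of the intersection already gives discreteness, so the Chebotarev input is unnecessary.

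The $n=2$ part of your proposal has a genuine gap. The proposition claims that a suitable $A_{\Mt}$ exists when $n=2$, which is not vacuous because when some $r_i$ is odd one cannot take $A_{\Mt}=Z_{\GLt_r}$ or $Z_{\Mt}$ and expect $A_M M^{(n)}(\A)=M^{(n)}(\A)$; the determinant image then involves the full $\A^\times$ in the odd blocks, not $\A^{\times 2}$, so the $n>2$ argument does not transfer. You only write that one ``chooses a suitable abelian subgroup'' and that Hasse--Minkowski ``simplifies the subsequent discreteness verification,'' but no subgroup is produced and no reduction is carried out. The paper does the work in Proposition~\ref{P:A_M_for_n=2}: it sets $A_{\Mt}(\A)=\Zt_1(\A)\cdots\Zt_k(\A)$, the product of the centers $\Zt_i=Z_{\GLt_{r_i+\cdots+r_k}}(\A)$ of the nested lower-right blocks, checks that this is a closed abelian subgroup of $\widetilde{Z_M}(\A)$ containing $Z_{\GLt_r}(\A)$, shows by induction on $k$ that $A_M(\A)M^{(2)}(\A)$ is the kernel of a modified determinant map into $\prod_i\A^{\times\varepsilon_i}\backslash\A^\times$ with $\varepsilon_i\in\{1,2\}$, and deduces that the quotient is Hausdorff and that the image of $M(F)$ in it is discrete via Proposition~\ref{P:F_is_discrete}. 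Your remark that commutativity is ``easier to verify for $n=2$'' is true but beside the point: commutativity of the paper's $A_{\Mt}$ holds for all $n$ (the $\Zt_i$ are nested centers); what fails for general $n$ is the discreteness assertion, and that is precisely what your argument does not address for $n=2$.
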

\begin{proof}
This is proven in Appendix \ref{A:topology}.
\end{proof}

We believe that for any reasonable choice of
$A_{\Mt}$ the above hypothesis is always satisfied, but the author
does not know how to prove it at this moment. This is a bit
unfortunate in that this subtle technical issue makes the main theorem
of the paper conditional when $n>2$. However if $n=2$, our main
results are complete, and this is the only case we need for our applications to
symmetric square $L$-functions in \cite{Takeda1, Takeda2}, which is
the main motivation for the present work.

Let us mention that the group $A_M(\A)M^{(n)}(\A)$ (for any
choice of $A_M$) is a normal subgroup of $M(\A)$, and hence the
quotient $A_M(\A)M^{(n)}(\A)\backslash M(\A)$ is a group. Accordingly,
if the hypothesis is satisfied,  the image of $M(F)$ in the quotient
is a discrete subgroup and hence closed. 

Also we have
\[
A_{\Mt}(F)=A_{\Mt}(\A)\cap\s(M(F)).
\]
following the convention as in (\ref{E:H(F)}), and we set
\[
A_M(F)=p(A_{\Mt}(F)).
\]

\quad

%%%%%%%%%%%%%%%%%%%%%%%%%%%%%%%%%%%%%%%%%%%%%%%%%%%%%%%%%%%%%%%%%%%

\section{\bf On the local metaplectic tensor product}\label{S:Mezo}

%%%%%%%%%%%%%%%%%%%%%%%%%%%%%%%%%%%%%%%%%%%%%%%%%%%%%%%%%%%%%%%%%%%

In this section we first review the local metaplectic tensor product of Mezo
\cite{Mezo} and then extend his theory further, first by proving that the
metaplectic tensor product behaves in the expected way under the Weyl
group action, and second by establishing
the compatibility of the metaplectic tensor product with
parabolic inductions. Hence in this
section, all the groups are over a local (not necessarily non-archimedean)
field $F$ unless otherwise stated. Accordingly, we assume that our metaplectic group is
defined by the block-compatible cocycle $\sigma_r$ of \cite{BLS}, and
hence by $\GLt_r$ we actually mean $\sigGLt_r$.

%%%%%%%%%%%%%%%%%%%%%%%%%%%%%%%%%%%%%%%%%%%%%%%%%%%%%%%%%%%%%%%%%%%

\subsection{\bf Mezo's metaplectic tensor product}\label{SS:Mezo}

%%%%%%%%%%%%%%%%%%%%%%%%%%%%%%%%%%%%%%%%%%%%%%%%%%%%%%%%%%%%%%%%%%%

Let $\pi_1,\cdots,\pi_k$ be irreducible genuine
representations of $\GLt_{r_1},\dots,\GLt_{r_k}$, respectively. The
construction of the metaplectic tensor product takes several
steps. First of all,
for each $i$, fix an irreducible constituent $\pin_i$ of the restriction
$\pi_i|_{\GLtn_{r_i}} $ of $\pi_i$ to $\GLtn_{r_i}$. Then we have
\[
\pi_i|_{\GLtn_{r_i}}=\sum_{g}m_i\,^g(\pin_i),
\]
where $g$ runs through a finite subset of $\GLt_{r_i}$, $m_i$ is a
positive multiplicity and $^g(\pin_i)$ is the representation twisted
by $g$. Then we construct the tenor product representation
\[
\pin_1\otimes\cdots\otimes\pin_k
\]
of the group $\GLtn_{r_1}\times\cdots\times\GLtn_{r_k}$. Note that
this group
is merely the direct product of the groups $\GLtn_{r_i}$. The
genuineness of the representations $\pin_1,\dots,\pin_k$ implies that
this tensor product representation descends to a representation of the group
$\GLtn_{r_1}\timest\cdots\timest\GLtn_{r_k}$, \ie the representation
factors through the natural surjection
\[
\GLtn_{r_1}\times\cdots\times\GLtn_{r_k}\twoheadrightarrow 
\GLtn_{r_1}\timest\cdots\timest\GLtn_{r_k}=\Mtn.
\]
We denote this representation of $\Mtn$ by
\[
\pin:=\pin_1\,\otimest\cdots\otimest\,\pin_k,
\]
and call it the metaplectic tensor product of $\pin_1,\dots,\pin_k$. Let us note that
the space $V_{\pin}$ of $\pin$ is simply the tensor product
$V_{\pin_1}\otimes\cdots\otimes V_{\pin_k}$ of the spaces of
$\pin_i$. Let $\omega$ be a character on $Z_{\GLt_r}$ such that for
all $(aI_{r},\xi)\in Z_{\GLt_r}\cap\Mtn$ where $a\in F^\times$ we have
\[
\omega(aI_r,\xi)=\pin(aI_r,\xi)=\xi\pin_1(aI_{r_1},1)\cdots\pin_k(aI_{r_k},1).
\]
Namely $\omega$ agrees with $\pin$ on the intersection
$Z_{\GLt_r}\cap\Mtn$. We can extend $\pin$ to the representation
\[
\pin_\omega:=\omega\pin
\] 
of $Z_{\GLt_r}\Mtn$ by letting $Z_{\GLt_r}$ act by $\omega$. Now extend the representation
$\pin_\omega$ to a representation $\rho_\omega$ of a subgroup $\Ht$ of
$\Mt$ so that $\rho_\omega$ satisfies Mackey's irreducibility
criterion and so the induced representation
\begin{equation}\label{E:Mezo_tensor}
\pi_\omega:=\Ind_{\Ht}^{\Mt}\rho_\omega
\end{equation}
is irreducible. It is always possible to find such $\Ht$ and moreover
$\Ht$ can be chosen to be normal. Mezo shows
in \cite{Mezo} that $\pi_\omega$ is dependent only on $\omega$
and is independent of the other
choices made throughout, namely the choices of $\pin_i$, $\Ht$ and
$\rho_\omega$. We write
\[
\pi_\omega=(\pi_1\,\otimest\cdots\otimest\,\pi_k)_\omega
\]
and call it the metaplectic tensor product of $\pi_1,\dots,\pi_k$ with
the character $\omega$.

Mezo also shows that the metaplectic tensor product $\pi_\omega$ is
unique up to twist. Namely
\begin{Prop}\label{P:local_uniqueness}
Let $\pi_1,\dots,\pi_k$ and $\pi'_1,\dots,\pi'_k$ be representations
of $\GLt_{r_1},\dots,\GLt_{r_k}$. They give rise to isomorphic
metaplectic tensor products with a character $\omega$, \ie
\[
(\pi_1\,\otimest\cdots\otimest\,\pi_k)_\omega\cong
(\pi'_1\,\otimest\cdots\otimest\,\pi'_k)_\omega,
\]
if and only if for each $i$ there exists a character $\omega_i$ of
$\GLt_{r_i}$ trivial on $\GLtn_{r_i}$ such that
$\pi_i\cong\omega_i\otimes\pi'_i$.
\end{Prop}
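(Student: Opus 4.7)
The plan is to treat the two directions separately, with the nontrivial content lying in necessity.

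For the sufficiency $(\Leftarrow)$, suppose $\pi_i \cong \omega_i \otimes \pi'_i$ with each $\omega_i$ a character of $\GLt_{r_i}$ trivial on $\GLtn_{r_i}$. Then $\pi_i$ and $\pi'_i$ restrict to the same representation of $\GLtn_{r_i}$, so the irreducible constituents $\pin_i$ and $(\pi'_i)^{(n)}$ entering the construction may be chosen to coincide. The representation $\pin$ of $\Mtn$ and its extension $\pin_\omega$ to $Z_{\GLt_r}\Mtn$ are therefore identical for both sequences, and Mezo's independence-of-choices statement applied to (\ref{E:Mezo_tensor}) yields $\pi_\omega \cong \pi'_\omega$.

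For the necessity $(\Rightarrow)$, suppose $\pi_\omega \cong \pi'_\omega$ as irreducible representations of $\Mt$. Restricting to the normal subgroup $\Mtn$, Clifford theory gives that the inner constituents $\pin$ and $(\pi'_1 \otimest \cdots \otimest \pi'_k)^{(n)}$ lie in the same $\Mt$-orbit under conjugation, so there exists $g = \diag(g_1,\dots,g_k) \in \Mt$ with $\pin \cong {}^g(\pi'_1 \otimest \cdots \otimest \pi'_k)^{(n)}$. Using the identification $\Mt \cong \cMt$ and the block-compatibility of $\tau_M$, conjugation by $g$ acts factor-by-factor on $\Mtn$, producing $\pin_i \cong {}^{g_i}(\pi'_i)^{(n)}$ for each $i$. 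Since $g_i \in \GLt_{r_i}$, the representation ${}^{g_i}(\pi'_i)^{(n)}$ is itself an irreducible constituent of $\pi'_i|_{\GLtn_{r_i}}$, and hence $\pi_i|_{\GLtn_{r_i}}$ and $\pi'_i|_{\GLtn_{r_i}}$ share the common constituent $\pin_i$.

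To conclude, set $G_i := \GLt_{r_i}$, $N_i := \GLtn_{r_i}$, and let $H_i$ be the stabilizer in $G_i$ of the isomorphism class of $\pin_i$. Note that $G_i/N_i$ is abelian, being isomorphic via the determinant to a quotient of $F^\times/F^{\times n}$. By Clifford theory, $\pi_i \cong \Ind_{H_i}^{G_i}\rho_i$ and $\pi'_i \cong \Ind_{H_i}^{G_i}\rho'_i$ for irreducible representations $\rho_i,\rho'_i$ of $H_i$ each of whose restriction to $N_i$ is $\pin_i$-isotypic; the set of such extensions of $\pin_i$ forms a torsor under characters of $H_i/N_i$, so $\rho'_i \cong \rho_i \otimes \chi_i$ for some character $\chi_i$ of $H_i/N_i$. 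By abelianness of $G_i/N_i$, $\chi_i$ extends to a character $\omega_i$ of $G_i/N_i$; the projection formula then gives $\pi'_i \cong \Ind_{H_i}^{G_i}(\omega_i|_{H_i} \otimes \rho_i) \cong \omega_i \otimes \pi_i$, completing the proof (after replacing $\omega_i$ by $\omega_i^{-1}$).

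The principal obstacle is this last step: one must apply Clifford theory carefully, exploiting the torsor structure on extensions of $\pin_i$ to $H_i$, which is controlled by characters only because $H_i/N_i$ is abelian. The same abelianness is what allows the extension of $\chi_i$ from $H_i/N_i$ to $G_i/N_i$ before invoking the projection formula.
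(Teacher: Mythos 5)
The paper's own proof is a one-line citation to Mezo's Lemma 5.1, so there is no argument in the paper to compare against directly; your Clifford-theoretic proof is the natural one and is surely the same route Mezo takes. Your argument is correct, with one small imprecision in the last paragraph that is worth flagging. The set of irreducible representations of $H_i$ lying over $\pin_i$ is not in general a \emph{torsor} under $\widehat{H_i/N_i}$: when the obstruction class $\alpha\in H^2(H_i/N_i,\C^\times)$ to extending $\pin_i$ is nontrivial, a nontrivial subgroup of characters fixes each such $\rho_i$, so the action is not free. What you actually need, and what does hold, is the weaker statement that any two irreducibles of $H_i$ lying over $\pin_i$ differ by a twist by a character of $H_i/N_i$. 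This is precisely where the abelianness of $H_i/N_i$ enters: for a finite abelian group $A$ and cocycle $\alpha$, the irreducible representations of the twisted group algebra $\C_\alpha[A]$ all have the same dimension and are classified by the characters of the radical of the commutator pairing of $\alpha$; since any such character extends to $A$, any two irreducibles differ by a twist by a character of $A$. With "torsor" replaced by this statement, the rest of your argument — extending $\chi_i$ to $G_i/N_i$ and invoking the projection formula — goes through verbatim. A secondary remark: your assertion that conjugation by $g\in\Mt$ acts factor-by-factor on $\Mtn$, so that $\pin_i\cong{}^{g_i}(\pi'_i)^{(n)}$, is correct but hides a short cocycle computation using the block-compatibility of $\tau_M$ together with the fact that $\det(m_i)\in F^{\times n}$ kills the cross Hilbert-symbol terms; in a careful write-up one would record it, in the spirit of the analogous calculation done in the paper's treatment of the Weyl-group twist in Section 4.3.
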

\begin{proof}
This is \cite[Lemma 5.1]{Mezo}.
\end{proof}

\begin{Rmk}\label{R:dependence_on_omega}
Though the metaplectic tensor product generally depends on the choice of
$\omega$, if the center $Z_{\GLt_r}$ is already contained in $\Mtn$,
we have $\pin_\omega=\pin$ and hence there is no actual choice for $\omega$
and the metaplectic tensor product is canonical. This is the case, for
example, when $n=2$ and $r$ is even, which is one of the important
cases we consider in our applications in \cite{Takeda1, Takeda2}.
\end{Rmk}

\begin{Rmk}
The equality (\ref{E:center=center}) implies that extending a
representation $\pin$ of $\Mtn$ to $\pin_\omega$ multiplying the
character $\omega$ on $Z_{\GLt_r}$ is the same
as extending it by multiplying an appropriate character on
$Z_{\Mt}$. 
\end{Rmk}

Let us mention the following, which is not explicitly mentioned in
\cite{Mezo}.
\begin{Lem}\label{L:always_tensor_product}
Let $\pi_\omega$ be an irreducible admissible representation of $\Mt$
where $\omega$ is the character on $Z_{\GLt_r}$ defined by
$\omega=\pi_\omega|_{Z_{\GLt_r}}$. Then there exist irreducible
admissible representations $\pi_1,\dots\pi_k$ of
$\GLt_{r_1},\dots,\GLt_{r_k}$, respectively, such that
\[
\pi_\omega=(\pi_1\,\otimest\cdots\otimest\,\pi_k)_\omega.
\]
Namely a representation of $\Mt$ is always a metaplectic tensor product.
\end{Lem}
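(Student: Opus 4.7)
The plan is to invert Mezo's construction step by step. I extract $\pin_i$'s from $\pi_\omega$ by restriction and decomposition, lift each to a representation $\pi_i$ of $\GLt_{r_i}$, and then verify that the metaplectic tensor product of the $\pi_i$'s recovers $\pi_\omega$.

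First, I restrict $\pi_\omega$ to the normal subgroup $\Mtn$ and pick any irreducible genuine constituent $\pin$. Since $\omega=\pi_\omega|_{Z_{\GLt_r}}$ (a character by Schur's lemma) must agree with $\pin$ on $Z_{\GLt_r}\cap\Mtn$, the data is compatible with Mezo's framework. The surjection $\GLtn_{r_1}\times\cdots\times\GLtn_{r_k}\twoheadrightarrow\Mtn$ has central kernel $\mathcal{K}_P$, so the pullback of $\pin$ to the direct product is an irreducible genuine representation and, by the tensor product theorem for irreducible admissible representations of direct products, decomposes as an external tensor product $\pin_1\otimes\cdots\otimes\pin_k$ with $\pin_i$ irreducible genuine on $\GLtn_{r_i}$; descending, $\pin=\pin_1\otimest\cdots\otimest\pin_k$. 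Then, for each $i$, I let $\pi_i$ be any irreducible constituent of $\Ind_{\GLtn_{r_i}}^{\GLt_{r_i}}\pin_i$; since $[\GLt_{r_i}:\GLtn_{r_i}]$ is the finite index $[F^\times:F^{\times n}]$, this induction is admissible of finite length, and Frobenius reciprocity guarantees $\pin_i$ appears in $\pi_i|_{\GLtn_{r_i}}$.

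Now I form $\pi':=(\pi_1\otimest\cdots\otimest\pi_k)_\omega$, using these $\pin_i$ as the intermediate constituents in Mezo's construction. Both $\pi'$ and $\pi_\omega$ are irreducible admissible representations of $\Mt$ with central character $\omega$ on $Z_{\GLt_r}$ whose restrictions to $Z_{\GLt_r}\Mtn$ contain $\pin_\omega=\omega\pin$. By Clifford theory applied to the normal inclusion $Z_{\GLt_r}\Mtn\subseteq\Mt$, such representations are classified by irreducible extensions of $\pin_\omega$ to its stabilizer $T\subseteq\Mt$, with the set of extensions forming a torsor under characters of $T/Z_{\GLt_r}\Mtn$. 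If the two extensions differ by a nontrivial character, I absorb the discrepancy by replacing each $\pi_i$ with a different irreducible constituent of $\Ind_{\GLtn_{r_i}}^{\GLt_{r_i}}\pin_i$; these alternative constituents are obtained by twists by characters of $\GLt_{r_i}/\GLtn_{r_i}\cong F^\times/F^{\times n}$, and such twists propagate through Mezo's construction to exactly the Clifford characters of $T/Z_{\GLt_r}\Mtn$, yielding $\pi'\cong\pi_\omega$.

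The main obstacle is the last matching step: one must show that the map from tuples of admissible lifts $(\pi_i)$ (modulo twists by characters trivial on $\GLtn_{r_i}$, which by Proposition \ref{P:local_uniqueness} do not affect the output) to extensions of $\pin_\omega$ over $T$ is surjective. This is essentially the parametrization statement underlying Mezo's construction and requires tracing how twists of the $\pi_i$ flow through the intermediate extension from $Z_{\GLt_r}\Mtn$ to Mezo's auxiliary subgroup $\Ht$ and then through the induction to $\Mt$; the key point is that the character group of $T/Z_{\GLt_r}\Mtn$ is generated by images of characters of $\GLt_{r_i}/\GLtn_{r_i}$, so no extension is left inaccessible.
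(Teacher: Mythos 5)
Your proof follows the paper's route through its first two paragraphs: restrict to $Z_{\GLt_r}\Mtn$, decompose to extract factors $\pin_i$, and lift each to a constituent $\pi_i$ of $\Ind_{\GLtn_{r_i}}^{\GLt_{r_i}}\pin_i$. The divergence is in the final verification $\pi_\omega\cong(\pi_1\otimest\cdots\otimest\pi_k)_\omega$, and that is where your proof has a gap: you identify ``surjectivity of the map from tuples of lifts onto Clifford extensions over the stabilizer $T$'' as the main obstacle and then leave it unverified.

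That obstacle is illusory, and the tool that removes it is precisely the one you do not invoke: Mezo's Lemma 4.2, recorded in this paper as Proposition~\ref{P:Mezo}, which says that $\Ind_{Z_{\GLt_r}\Mtn}^{\Mt}\pin_\omega$ is \emph{isotypic} — every extension $\chi\otimes\rho_\omega$ of $\pin_\omega$ to $\Ht$ induces to the same irreducible, so $\Ind_{Z_{\GLt_r}\Mtn}^{\Mt}\pin_\omega=m\,(\pi_1\otimest\cdots\otimest\pi_k)_\omega$. With this in hand the proof closes immediately: $Z_{\GLt_r}\Mtn$ is open, normal, and of finite index in $\Mt$, so $\pi_\omega|_{Z_{\GLt_r}\Mtn}$ is semisimple and contains $\pin_\omega$ as a direct summand; Frobenius reciprocity then embeds $\pi_\omega$ into the induction, forcing $\pi_\omega\cong(\pi_1\otimest\cdots\otimest\pi_k)_\omega$ with no adjustment of the lifts. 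Your proposed remedy of ``replacing each $\pi_i$ with a different constituent'' is in any case a null operation: Mezo's construction and Proposition~\ref{P:local_uniqueness} show the tensor product depends only on the $\pin_i$ and $\omega$, so swapping a constituent of $\Ind_{\GLtn_{r_i}}^{\GLt_{r_i}}\pin_i$ for another one cannot change the output and hence cannot ``absorb'' a discrepancy. Replace your closing paragraph with the isotypicity argument and the proof is complete.
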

\begin{proof}
The restriction $\pi_\omega|_{Z_{\GLt_r}\Mtn}$ contains a
representation of the form $\omega
(\pin_1\,\otimest\cdots\otimest\,\pin_k)$ for some representations
$\pin_i$ of $\GLt_{r_i}$. Let $\pi_i$ be an irreducible constituent of
$\Ind_{\GLtn_{r_i}}^{\GLt_{r_i}}\pin_i$. Then one can see that
$\pi_\omega$ is $(\pi_1\,\otimest\cdots\otimest\,\pi_k)_\omega$.
\end{proof}

From Mezo's construction, one can tell that essentially the
representation theory of the group $\Mt$ is determined by that of
$Z_{\GLt_r}\Mtn$. Let us briefly explain why this is so. Let $\pi$ be
an irreducible admissible representation of $\Mt$, and
$\chi_\pi:\Mt\rightarrow\C$ be the distribution character. If $\pi$ is
genuine, so is $\chi_\pi$. Namely
$\chi_\pi((1,\xi)\mt)=\xi\chi_\pi(\mt)$ for all $\xi\in\mu_n$ and
$\mt\in\Mt$. But if $\mt\in\Mt$ is a regular
element but not in $Z_{\GLt_r}\Mtn$, then one can find $\xi\in\mu_n$
with $\xi\neq 1$
such that $(1,\xi)\mt$ is conjugate to $\mt$. This is proven in the same
way as \cite[Proposition 0.1.4]{KP}. (The only modification one needs
is to
choose $A\subset M_r(F)$ in their proof so that $A\subset
M_{r_1}(F)\times\cdots\times M_{r_k}(F)$.) Therefore for such $\mt$, one
has $\chi_\pi(\mt)=0$. Namely, the support of $\chi_\pi$ is contained
in $Z_{\GLt_r}\Mtn$. (Indeed, this argument by the
distribution character is crucially used in \cite[Lemma 4.2]{Mezo}. ) This
explains why $\pi$ is essentially determined by the restriction
$\pi|_{Z_{\GLt_r}\Mtn}$.  

This idea can be observed in 
\begin{Lem}\label{L:equivalent_tensor_product}
Let $\pi$ and $\pi'$ be irreducible admissible representations of
$\Mt$. Then $\pi$ and $\pi'$ are equivalent if and only if
$\pi|_{Z_{\GLt_r}\Mtn}$ and $\pi'|_{Z_{\GLt_r}\Mtn}$ have an
equivalent constituent.
\end{Lem}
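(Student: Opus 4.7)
The forward direction is immediate: if $\pi\cong\pi'$, then their restrictions to any subgroup agree, so they share every constituent of $\pi|_{Z_{\GLt_r}\Mtn}$. The content of the lemma is in the reverse direction, and the plan is to exploit the idea (sketched in the paragraph preceding the lemma, and underlying Mezo's construction) that a genuine irreducible representation of $\Mt$ is essentially determined by its restriction to $Z_{\GLt_r}\Mtn$.

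Assume $\pi|_{Z_{\GLt_r}\Mtn}$ and $\pi'|_{Z_{\GLt_r}\Mtn}$ share an irreducible constituent $\sigma$. By Lemma \ref{L:always_tensor_product}, write $\pi=(\pi_1\,\otimest\cdots\otimest\,\pi_k)_\omega$ and $\pi'=(\pi'_1\,\otimest\cdots\otimest\,\pi'_k)_{\omega'}$ for some data $\pin_i,\pin'_i$, characters $\omega,\omega'$ on $Z_{\GLt_r}$, and choices of intermediate subgroups as in \eqref{E:Mezo_tensor}. Setting $\pin=\pin_1\otimest\cdots\otimest\pin_k$ and similarly $\pin'$, Mezo's construction (via $\pi_\omega=\Ind_{\Ht}^{\Mt}\rho_\omega$) shows that $\pi|_{Z_{\GLt_r}\Mtn}$ decomposes as a direct sum of $\Mt$-conjugates of $\omega\cdot\pin$, with analogous description for $\pi'|_{Z_{\GLt_r}\Mtn}$.

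Comparing the action of $Z_{\GLt_r}$ on the shared constituent $\sigma$ forces $\omega=\omega'$, since $Z_{\GLt_r}$ acts by the scalar $\omega$ on every summand of $\pi|_{Z_{\GLt_r}\Mtn}$ (and likewise by $\omega'$ on $\pi'|_{Z_{\GLt_r}\Mtn}$). Moreover, $\sigma$ being a common constituent means there exist $g,g'\in\Mt$ such that ${}^g(\omega\pin)\cong{}^{g'}(\omega\pin')$ as representations of $Z_{\GLt_r}\Mtn$; since $\omega$ is central, this reduces to ${}^g\pin\cong{}^{g'}\pin'$ as representations of $\Mtn$, i.e.\ the $\Mtn$-representations $\pin$ and $\pin'$ lie in the same $\Mt$-orbit.

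Now the key point is that Mezo's construction of $\pi_\omega$ does not depend on the choice of the constituent $\pin$ of $\pi|_{\Mtn}$, only on the $\Mt$-orbit of $\pin$ and on $\omega$ (this is the independence statement recalled just after \eqref{E:Mezo_tensor}, and is the reason $\pi_\omega$ is well-defined). Since $\pin'$ is an $\Mt$-conjugate of $\pin$, we may without loss of generality replace $\pin'$ by $\pin$; but then the data $(\pin,\omega)$ determines the same metaplectic tensor product whether we start from $\pi$ or from $\pi'$. Applied to both, this yields $\pi\cong(\pi_1\,\otimest\cdots\otimest\,\pi_k)_\omega\cong(\pi'_1\,\otimest\cdots\otimest\,\pi'_k)_{\omega'}\cong\pi'$, as desired. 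The main obstacle is the bookkeeping in the third step: turning an abstract equivalence of some constituents into an explicit $\Mt$-conjugacy of the ordered tensor factors $\pin_i$ and $\pin'_i$; but since the $\GLtn_{r_i}$'s commute pointwise inside $\Mtn$ and the conjugating element can be taken in $\widetilde{Z_M}$, this is handled by tracking how $\widetilde{Z_M}/(Z_{\GLt_r}(\widetilde{Z_M}\cap\Mtn))$ acts on the isotypic components, exactly as in \cite[Lemma 4.2]{Mezo}.
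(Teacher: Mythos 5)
Your argument is correct and follows essentially the same route as the paper: the paper's proof is a single line citing Lemma~\ref{L:always_tensor_product} (every irreducible representation of $\Mt$ is a metaplectic tensor product) together with Proposition~\ref{P:local_uniqueness} (Mezo's uniqueness up to character twist), and your write-up is a detailed unfolding of exactly this. Where the paper would invoke Proposition~\ref{P:local_uniqueness}, you instead invoke the well-definedness of Mezo's construction given only the $\Mt$-orbit of $\pin$ and the character $\omega$; these are the same statement viewed through Clifford theory (the conjugation-invariance of $\Ind_{Z_{\GLt_r}\Mtn}^{\Mt}$, made explicit in Proposition~\ref{P:Mezo}, gives $\Ind\pin_\omega\cong\Ind\pin'_\omega$ whenever $\pin'$ is a conjugate of $\pin$, and all constituents of that induction are isomorphic to one another).

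One point in your final sentence is, however, wrong: the claim that ``the conjugating element can be taken in $\widetilde{Z_M}$.'' The conjugation action of $\Mt$ on irreducible representations of $\Mtn$ factors through $\Mt/\Mtn\cong\prod_i F^{\times n}\backslash F^\times$ via the block determinants, and $\widetilde{Z_M}$ (elements $\diag(a_1I_{r_1},\dots,a_kI_{r_k})$) maps into this quotient by $a_i\mapsto a_i^{r_i}$, which fails to be surjective as soon as $\gcd(r_i,n)>1$ (already $n=2$, $r_i=2$ gives a trivial image). So $\widetilde{Z_M}$ does not realize all the conjugations, and ``tracking how $\widetilde{Z_M}/(Z_{\GLt_r}(\widetilde{Z_M}\cap\Mtn))$ acts'' does not capture what you need. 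Fortunately this aside is also unnecessary: since every $m\in M$ is block-diagonal, conjugation by \emph{any} element of $\Mt$ already respects the tensor decomposition $\pin=\pin_1\otimest\cdots\otimest\pin_k$ factor-by-factor, so ``turning an abstract equivalence into a conjugacy of the ordered tensor factors'' is automatic and needs no central element. Deleting the last sentence leaves a correct proof whose content matches the paper's intended argument.
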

\begin{proof}
This follows from Proposition \ref{P:local_uniqueness} and Lemma
\ref{L:always_tensor_product}.
\end{proof}

Also let us mention
\begin{Prop}\label{P:Mezo}
we have
\[
\Ind_{Z_{\GLt_r}\Mtn}^{\Mt}\pin_\omega=m\pi_\omega
\]
for some finite multiplicity $m$, so every constituent of
$\Ind_{Z_{\GLt_r}\Mtn}^{\Mt}\pin_\omega$ is isomorphic to $\pi_\omega$.
\end{Prop}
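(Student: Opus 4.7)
The plan is to combine Frobenius reciprocity with Lemma \ref{L:equivalent_tensor_product}. First I would note that $Z_{\GLt_r}\Mtn$ has finite index in $\Mt$ (because $\Mtn$ already does, by the exact sequence (\ref{E:finite_quotient})), so ordinary and compact induction from $Z_{\GLt_r}\Mtn$ up to $\Mt$ coincide and both directions of Frobenius reciprocity are available. Moreover the restriction $(\Ind_{Z_{\GLt_r}\Mtn}^{\Mt}\pin_\omega)|_{Z_{\GLt_r}\Mtn}$ decomposes as a finite sum of $\Mt$-conjugates of $\pin_\omega$ by a standard Mackey computation, so once every irreducible constituent of the induction is identified with $\pi_\omega$, the finiteness of $m$ will be automatic.

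Second, I would verify that $\pin_\omega$ is a constituent of $\pi_\omega|_{Z_{\GLt_r}\Mtn}$. By Mezo's construction $\pi_\omega=\Ind_{\Ht}^{\Mt}\rho_\omega$ where $\rho_\omega$ extends $\pin_\omega$, so $\rho_\omega|_{Z_{\GLt_r}\Mtn}=\pin_\omega$. Applying Frobenius reciprocity to the identity map $\pi_\omega\to\Ind_\Ht^\Mt\rho_\omega$ produces a nonzero $\Ht$-map $\pi_\omega|_\Ht\to\rho_\omega$, hence $\rho_\omega$ appears as a constituent of $\pi_\omega|_\Ht$, and a further restriction places $\pin_\omega$ inside $\pi_\omega|_{Z_{\GLt_r}\Mtn}$.

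Third, for any irreducible constituent $\sigma$ of $\Ind_{Z_{\GLt_r}\Mtn}^{\Mt}\pin_\omega$, Frobenius reciprocity (used in whichever direction is appropriate for $\sigma$ being a sub- or quotient-object of the induction) forces $\pin_\omega$ to appear as a constituent of $\sigma|_{Z_{\GLt_r}\Mtn}$. Hence $\sigma|_{Z_{\GLt_r}\Mtn}$ and $\pi_\omega|_{Z_{\GLt_r}\Mtn}$ share the irreducible constituent $\pin_\omega$, and Lemma \ref{L:equivalent_tensor_product} immediately yields $\sigma\cong\pi_\omega$. Combined with the finite length observation of the first step, this gives $\Ind_{Z_{\GLt_r}\Mtn}^{\Mt}\pin_\omega=m\pi_\omega$ with $m<\infty$.

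The only delicate point I anticipate is the uniform handling of sub-, quotient-, and subquotient-constituents under Frobenius reciprocity for smooth representations. However, the finite-index assumption collapses these distinctions and makes the relevant Mackey decomposition a finite direct sum, so no ingredient beyond Lemma \ref{L:equivalent_tensor_product} and Mezo's construction of $\pi_\omega$ is required.
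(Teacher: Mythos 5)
Your argument is correct, but it takes a genuinely different route from the paper's. The paper inducts in stages through the intermediate group $\Ht$ of Mezo's construction: by \cite[Lemma~4.1]{Mezo} the inner step $\Ind_{Z_{\GLt_r}\Mtn}^{\Ht}\pin_\omega$ decomposes explicitly as $\bigoplus_\chi\chi\otimes\rho_\omega$ over the finitely many characters $\chi$ of $\Ht$ trivial on $Z_{\GLt_r}\Mtn$, and then \cite[Lemma~4.2]{Mezo} gives $\Ind_{\Ht}^{\Mt}\chi\otimes\rho_\omega\cong\pi_\omega$ for each $\chi$, yielding the direct-sum decomposition $m\pi_\omega$ on the nose. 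Your approach bypasses $\Ht$ altogether and uses only the finite index of $Z_{\GLt_r}\Mtn$ in $\Mt$, Mackey/Frobenius for compact induction, and the paper's Lemma~\ref{L:equivalent_tensor_product}. What the paper's route buys is semisimplicity of the induction for free (it is literally exhibited as a direct sum of irreducibles), and it showcases the role of Mezo's lemmas; what your route buys is economy, since it relies only on material already established internally in Section~\ref{SS:Mezo}.

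One point you should make explicit: your ``delicate point'' about sub- versus quotient- versus subquotient-constituents is not quite collapsed by finite index alone, as you assert. What you actually need is that $\Ind_{Z_{\GLt_r}\Mtn}^{\Mt}\pin_\omega$ is a semisimple $\Mt$-representation, so that every irreducible constituent is simultaneously a subrepresentation and a quotient and Frobenius reciprocity applies cleanly. This does follow from the facts at hand, but via a short Maschke-type averaging argument: $Z_{\GLt_r}\Mtn$ is open, normal, and of finite index in $\Mt$, Mackey gives that the restriction of the induction back to $Z_{\GLt_r}\Mtn$ is a finite direct sum of conjugates $^g\pin_\omega$ (each irreducible), and averaging an $H$-equivariant projection over $\Mt/Z_{\GLt_r}\Mtn$ promotes it to an $\Mt$-equivariant one. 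Once semisimplicity is in place, your third step is fine: for a summand $\sigma$, Frobenius reciprocity puts $\pin_\omega$ itself in $\sigma|_{Z_{\GLt_r}\Mtn}$, it also sits in $\pi_\omega|_{Z_{\GLt_r}\Mtn}$ by Mezo's construction, and Lemma~\ref{L:equivalent_tensor_product} forces $\sigma\cong\pi_\omega$. So the proof is sound once this step is spelled out, and it recovers both the multiplicity statement and the direct-sum form $m\pi_\omega$.
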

\begin{proof}
By inducting in stages, we have
$\Ind_{Z_{\GLt_r}\Mtn}^{\Mt}\pin_\omega
=\Ind_{\Ht}^{\Mt}\Ind_{Z_{\GLt_r}\Mtn}^{\Ht}\pin_\omega$, where $\Ht$
is as in (\ref{E:Mezo_tensor}), 
and by \cite[Lemma 4.1]{Mezo} we have
\[
\Ind_{Z_{\GLt_r}\Mtn}^{\Ht}\pin_\omega=\bigoplus_{\chi}\chi\otimes\rho_\omega
\]
where $\chi$ runs over the finite set of characters of $\Ht$ that are
trivial on $Z_{\GLt_r}\Mtn$. Moreover it is shown in \cite[Lemma 4.1]{Mezo} that 
any extension of $\pin_\omega$ to
$\Ht$ is of the form $\chi\otimes\rho_\omega$ and
$\Ind^{\Mt}_{\Ht}\chi\otimes\rho_\omega=\pi_\omega$ for all $\chi$ by
\cite[Lemma 4.2]{Mezo}. Hence we have
\[
\Ind_{Z_{\GLt_r}\Mtn}^{\Mt}\pin_\omega
=\bigoplus_\chi \Ind_{\Ht}^{\Mt}\chi\otimes\rho_\omega
=m\pi_\omega.
\]
\end{proof}

Let $\omega$ be as above and $A_{\Mt}$ as in Section
\ref{S:abelian}. The restriction $\pin|_{A_{\Mt}\cap\Mtn}$
gives a character on $A_{\Mt}\cap\Mtn$ because
${A_{\Mt}\cap\Mtn}$ is contained in the center of $\Mtn$ by
(\ref{E:center_Mtn}). The product $\omega
(\pin|_{A_{\Mt}\cap\Mtn})$ of $\omega$ and $\pin|_{A_{\Mt}\cap\Mtn} $ defines a
character on $Z_{\GLt_r}(A_{\Mt}\cap\Mtn)$ because the two
characters agree on $Z_{\GLt_r}\cap (A_{\Mt}\cap\Mtn)$. Since the Pontryagin
dual is an exact functor, one can extend
it to a character on $A_{\Mt}$, which we denote again by
$\omega$. Namely $\omega$ is a character on $A_{\Mt}$ extending
$\omega$ such that $\omega(a)=\pin(a)$ for all
$a\in A_{\Mt}\cap\Mtn$. With this said, we have

\begin{Cor}\label{C:local_tensor}
Let $\omega$ be the character on $A_{\Mt}$ described above, and let
$\pin_{\omega}:=\omega\pin$ be the representation of $A_{\Mt}\Mtn$ extending
$\pin$ by letting $A_{\Mt}$ act as $\omega$. Then
\[
\Ind_{A_{\Mt}\Mtn}^{\Mt}\pin_{\omega}=m'\pi_\omega
\]
where $m'$ is some finite multiplicity.
\end{Cor}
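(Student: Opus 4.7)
The plan is to deduce Corollary \ref{C:local_tensor} directly from Proposition \ref{P:Mezo} by induction in stages along the chain $Z_{\GLt_r}\Mtn \subseteq A_{\Mt}\Mtn \subseteq \Mt$. To keep the two extensions of $\pin$ distinct, let me temporarily write $\pin_\omega^{(1)}$ for the extension to $Z_{\GLt_r}\Mtn$ used in Proposition \ref{P:Mezo}, and $\pin_\omega^{(2)}$ for the extension to $A_{\Mt}\Mtn$ appearing in the corollary. The first step is to verify the compatibility
\[
\pin_\omega^{(2)}\big|_{Z_{\GLt_r}\Mtn} = \pin_\omega^{(1)}.
\]
This is immediate from how $\omega$ was extended to $A_{\Mt}$: by construction it restricts to the original $\omega$ on $Z_{\GLt_r}$ and agrees with $\pin$ on $A_{\Mt}\cap\Mtn$, so the two extensions coincide on $Z_{\GLt_r}(A_{\Mt}\cap\Mtn)\cdot\Mtn = Z_{\GLt_r}\Mtn$.

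Next, since $\pin_\omega^{(1)}$ is irreducible (it is the tensor product $\pin_1\otimest\cdots\otimest\pin_k$ extended by a central character), Frobenius reciprocity gives nonzero intertwiners in both directions between $\pin_\omega^{(2)}$ and $\Ind_{Z_{\GLt_r}\Mtn}^{A_{\Mt}\Mtn}\pin_\omega^{(1)}$; for instance,
\[
\Hom_{A_{\Mt}\Mtn}\bigl(\Ind_{Z_{\GLt_r}\Mtn}^{A_{\Mt}\Mtn}\pin_\omega^{(1)},\; \pin_\omega^{(2)}\bigr) = \Hom_{Z_{\GLt_r}\Mtn}\bigl(\pin_\omega^{(1)},\; \pin_\omega^{(1)}\bigr) = \C,
\]
and similarly in the opposite direction. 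Hence $\pin_\omega^{(2)}$ appears as both a sub- and a quotient representation of $\Ind_{Z_{\GLt_r}\Mtn}^{A_{\Mt}\Mtn}\pin_\omega^{(1)}$.

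By exactness of smooth induction and induction in stages, it then follows that $\Ind_{A_{\Mt}\Mtn}^{\Mt}\pin_\omega^{(2)}$ is a subquotient of
\[
\Ind_{A_{\Mt}\Mtn}^{\Mt}\Ind_{Z_{\GLt_r}\Mtn}^{A_{\Mt}\Mtn}\pin_\omega^{(1)} = \Ind_{Z_{\GLt_r}\Mtn}^{\Mt}\pin_\omega^{(1)} = m\pi_\omega,
\]
where the last equality is Proposition \ref{P:Mezo}. Since every irreducible constituent of the right-hand side is isomorphic to $\pi_\omega$ and the total length is the finite number $m$, we conclude that $\Ind_{A_{\Mt}\Mtn}^{\Mt}\pin_\omega^{(2)} = m'\pi_\omega$ for some $m' \leq m$, which is the desired assertion. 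There is no real obstacle to this argument; the only delicate point is the compatibility $\pin_\omega^{(2)}|_{Z_{\GLt_r}\Mtn} = \pin_\omega^{(1)}$, but this is built into the way $\omega$ was extended from $Z_{\GLt_r}(A_{\Mt}\cap\Mtn)$ to $A_{\Mt}$.
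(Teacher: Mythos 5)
Your proof is correct and follows essentially the same route as the paper: the paper's one-line proof asserts the inclusion $\Ind_{A_{\Mt}\Mtn}^{\Mt}\pin_{\omega}\hookrightarrow \Ind_{Z_{\GLt_r}\Mtn}^{\Mt}\pin_{\omega}$ and invokes Proposition \ref{P:Mezo}, whereas you supply the justification for that inclusion via induction in stages and Frobenius reciprocity, together with the (correct) compatibility check that the two extensions of $\pin$ agree on $Z_{\GLt_r}\Mtn$. The only minor over-provision is that you establish $\pin_\omega^{(2)}$ as both a sub and a quotient of $\Ind_{Z_{\GLt_r}\Mtn}^{A_{\Mt}\Mtn}\pin_\omega^{(1)}$, when the subrepresentation direction alone already gives the inclusion that finishes the argument.
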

\begin{proof}
This follows from the previous proposition because we have the
inclusion $\Ind_{A_{\Mt}\Mtn}^{\Mt}\pin_{\omega}\hookrightarrow
\Ind_{Z_{\GLt_r}\Mtn}^{\Mt}\pin_{\omega}$.
\end{proof}
\quad

%%%%%%%%%%%%%%%%%%%%%%%%%%%%%%%%%%%%%%%%%%%%%%%%%%%%%%%%%%%%%%%%%%%

\subsection{\bf The archimedean case}
\label{S:archimedean}

%%%%%%%%%%%%%%%%%%%%%%%%%%%%%%%%%%%%%%%%%%%%%%%%%%%%%%%%%%%%%%%%%%%

Let us make some remarks when
$F$ is archimedean. Strictly speaking, Mezo assumes that the field $F$
is non-archimedean. If $F=\C$, then $\Mtn=\Mt$. Indeed,
$\Mt(\C)=M(\C)\times\mu_n$ (direct product), and the metaplectic tensor product is
obtained simply by taking the tensor product $\pi_1\otimes\cdots\otimes\pi_k$
and descending it to $\Mt(\C)$. Hence there is essentially no
discrepancy between the metaplectic case and the non-metaplectic one.

If $F=\R$ (so necessarily $n=2$), one
can trace the argument of Mezo and make sure the construction works
for this case as well, with the proviso that equivalence has to be
considered as infinitesimal equivalence. However, it has been
communicated to the author by J. Adams that for this case, the induced
representation $\Ind_{Z_{\GLt_r}\Mtn}^{\Mt}\pin_\omega$ is always
irreducible. (See \cite{Adams}). Hence one can simply define the
metaplectic tensor product to be this induced representation.

\quad

%%%%%%%%%%%%%%%%%%%%%%%%%%%%%%%%%%%%%%%%%%%%%%%%%%%%%%%%%%%%%%%%%%%

\subsection{\bf Twists by Weyl group elements}
\label{S:Weyl_group_local}

%%%%%%%%%%%%%%%%%%%%%%%%%%%%%%%%%%%%%%%%%%%%%%%%%%%%%%%%%%%%%%%%%%%

As in the notation section, we let $W_M$ be the subset of the Weyl
group $W_{\GL_r}$ consisting of
only those elements which permute the $\GL_{r_i}$-factors of
$M=\GL_{r_1}\times\cdots\times\GL_{r_k}$. Though $W_M$ is not a group
in general, it is identified with the group $S_k$ of permutations of
$k$ letters. Assume $w\in W_M$ is such that
\[
M':=wMw^{-1}=\GL_{r_{\sigma(1)}}\times\cdots\times\GL_{r_{\sigma(k)}}
\]
for a permutation $\sigma\in S_k$, and so
$w(g_1,\dots,g_k)w^{-1}=(g_{\sigma(1)},\dots,g_{\sigma(k)})$ for each
$(g_1,\dots,g_k)\in M$. Namely $w$ corresponds to the permutation
$\sigma^{-1}$. Then we have
\[
\MMt=\s(w)\Mt\s(w)^{-1}.
\]

Let $\pi=(\pi_1\,\otimest\cdots\otimest\,\pi_k)_\omega$ be an irreducible
admissible representation of $\Mt$. As in the notation section one can define the
twist $^{\s(w)}\pi$ of $\pi$ by $\s(w)$ to be the representation of $\MMt$ on
the space $V_\pi$ given by $^{\s(w)}\pi(\mt')=\pi(\s(w)^{-1}\mt'\s(w))$ for
$\mt'\in\MMt$. To ease the notation we simply write
\[
^w\pi:=\,^{\s(w)}\pi.
\]
Actually since $\mu_n\subseteq\Mt$ is in the center, for any preimage
$\tilde{w}$ of $w$, we have $\,^{\s(w)}\pi=\,^{\tilde{w}}\pi$, and
hence the notation $^w\pi$ is not ambiguous. 

The goal of this subsection is to show that the
metaplectic tensor product behaves in the expected way under the Weyl
group action. Namely, we will prove
\begin{Thm}\label{T:Weyl_group_local}
With the above notations, we have
\begin{equation}\label{E:Weyl_group_local}
^w(\pi_1\,\otimest\cdots\otimest\,\pi_k)_\omega
\cong(\pi_{\sigma(1)}\,\otimest\cdots\otimest\,\pi_{\sigma(k)})_\omega.
\end{equation}
\end{Thm}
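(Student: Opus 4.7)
The plan is to apply Lemma \ref{L:equivalent_tensor_product} and reduce the theorem to exhibiting a common irreducible constituent for the two sides on the subgroup $Z_{\GLt_r}\MMtn$. For each $i$, fix an irreducible constituent $\pi_i^{(n)}$ of $\pi_i|_{\GLtn_{r_i}}$ and form $\pin := \pi_1^{(n)} \otimest \cdots \otimest \pi_k^{(n)}$ on $\Mtn$, with extension $\pino := \omega\pin$ to $Z_{\GLt_r}\Mtn$. By Proposition \ref{P:Mezo}, $\pino$ is an irreducible constituent of $(\pi_1 \otimest \cdots \otimest \pi_k)_\omega|_{Z_{\GLt_r}\Mtn}$, and hence ${}^w\pino$ is an irreducible constituent of ${}^w(\pi_1 \otimest \cdots \otimest \pi_k)_\omega|_{Z_{\GLt_r}\MMtn}$.

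Next, I would establish the key identification
\[
{}^w\pino \;\cong\; (\pi_{\sigma(1)}^{(n)} \otimest \cdots \otimest \pi_{\sigma(k)}^{(n)})_\omega
\]
as representations of $Z_{\GLt_r}\MMtn$. On the central piece $Z_{\GLt_r}$, conjugation by $\s(w)$ acts trivially (as $Z_{\GLt_r}$ is the center of $\GLt_r$), so the character $\omega$ is preserved. On the $\Mtn$-piece, I would carry out the cocycle computation of $\s(w)^{-1}\mt'\s(w)$ for $\mt' \in \MMtn$ using Proposition \ref{P:BLS} together with the block-compatibility formula (\ref{E:compatibility}). The key observation is that because $w \in W_M$ merely permutes the $\GL_{r_i}$-blocks of $M$, and because every element of $\Mtn$ has block-determinants lying in $F^{\times n}$, the Hilbert symbols arising in the computation are all trivial. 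Hence the conjugation $\mt' \mapsto \s(w)^{-1}\mt'\s(w)\colon \MMtn \to \Mtn$ coincides (possibly up to a character of $\MMtn$ that is trivial on each $\GLtn_{r_{\sigma(i)}}$-factor, which by Proposition \ref{P:local_uniqueness} is irrelevant) with the naive block permutation $(h_1,\ldots,h_k) \mapsto (h_{\sigma^{-1}(1)},\ldots,h_{\sigma^{-1}(k)})$. Consequently, writing $\mt' = (h_1,\ldots,h_k) \in \MMtn$ with $h_i \in \GLtn_{r_{\sigma(i)}}$,
\[
{}^w\pin(\mt') \;=\; \pi_1^{(n)}(h_{\sigma^{-1}(1)}) \otimes \cdots \otimes \pi_k^{(n)}(h_{\sigma^{-1}(k)}),
\]
which after reindexing the tensor factors via the permutation $\sigma$ equals $\pi_{\sigma(1)}^{(n)}(h_1) \otimes \cdots \otimes \pi_{\sigma(k)}^{(n)}(h_k)$, i.e.\ the action of $\pi_{\sigma(1)}^{(n)} \otimest \cdots \otimest \pi_{\sigma(k)}^{(n)}$ on $\mt'$.

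Finally, since $(\pi_{\sigma(1)}^{(n)} \otimest \cdots \otimest \pi_{\sigma(k)}^{(n)})_\omega$ is, by construction, an irreducible constituent of $(\pi_{\sigma(1)} \otimest \cdots \otimest \pi_{\sigma(k)})_\omega|_{Z_{\GLt_r}\MMtn}$, the two irreducible representations of $\MMt$ appearing in the theorem share a common irreducible constituent on $Z_{\GLt_r}\MMtn$, and Lemma \ref{L:equivalent_tensor_product} delivers the claimed equivalence.

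The main obstacle will be the cocycle bookkeeping in the second step, namely the explicit verification that conjugation by $\s(w) = (w,1)$ sends $\MMtn$ to $\Mtn$ via the naive block permutation up to a harmless character. This requires expanding $\s(w)^{-1}\mt'\s(w)$ in terms of $\sigma_r$, applying the formulas for $\sigma_r$ on Weyl elements from Proposition \ref{P:BLS}, and invoking the vanishing of $(a,b)_F$ whenever $a \in F^{\times n}$; once this explicit computation is in hand, the rest is a formal invocation of Mezo's framework.
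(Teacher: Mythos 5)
Your proposal follows the same strategy as the paper's proof: reduce via Lemma \ref{L:equivalent_tensor_product} to comparing restrictions to $Z_{\GLt_r}\MMtn$, and show that conjugation by $\s(w)$ implements the naive block permutation on $\Mtn$. But the crux of that second step — the lemma that $\s(w)(m,1)\s(w)^{-1}=(wmw^{-1},1)$ for $m\in M^{(n)}$, i.e.\ that the cocycle discrepancy $\varphi_w(m)=\sigma_r(w,mw^{-1})\sigma_r(m,w^{-1})\sigma_r(w,w^{-1})^{-1}$ is identically $1$ on $M^{(n)}$ — is the entire technical content of the paper's proof, and you defer it rather than prove it. Your intuition is right in outcome (the final obstruction collapses to a power of $(\det(t_i),-1)$, which vanishes since $\det(t_i)\in F^{\times n}$), but reaching that point requires first showing $\varphi_w$ is a homomorphism on $M^{(n)}$ (via block-compatibility), reducing to single-block elements, and then a multi-step manipulation using Bruhat decomposition and the formulas in Proposition \ref{P:BLS}, including the length-additivity lemma \cite[Lemma 10]{BLS}. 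This is not a routine "invoke triviality of Hilbert symbols" step and cannot be waved at as bookkeeping to be filled in later.

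A smaller but real issue is the hedge "possibly up to a character of $\MMtn$ trivial on each $\GLtn_{r_{\sigma(i)}}$-factor, which by Proposition \ref{P:local_uniqueness} is irrelevant." As stated the hedge is vacuous: $\MMtn$ is generated by those factors, so such a character is trivial. But more importantly the appeal to Proposition \ref{P:local_uniqueness} is a misfire: that proposition concerns twists of the $\pi_i$ by characters of $\GLt_{r_i}$ trivial on $\GLtn_{r_i}$ while the central character $\omega$ is held fixed; it does not license you to absorb an unknown cocycle-valued character on $\MMtn$ or on $Z_{\GLt_r}\MMtn$. If the conjugation did introduce such a twist, the restriction to $Z_{\GLt_r}\MMtn$ would change and you would land at a different $\omega'$, altering the statement. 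The reason no such twist arises is exactly what the deferred cocycle computation establishes, so the hedge is not a safety net — it is the gap.
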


To prove this, we first need
\begin{Lem}
For each $(m,1)\in\Mtn$ and $w\in W_M$, where $m\in M^{(n)}$, we have
\[
\s(w)(m,1)\s(w)^{-1}=(wmw^{-1},1),
\]
namely $\s(w)\s(m)\s(w)^{-1}=\s(wmw^{-1})$.
\end{Lem}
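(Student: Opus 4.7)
The plan is to unwind the claim as a scalar cocycle identity and verify it in two steps. Since in this section the metaplectic group is realized using the block-compatible cocycle $\sigma_r$ (so $\GLt_r = \sigGLt_r$) and the section $\s$ sends $g \mapsto (g, 1)$, a direct computation — using $(w, 1)^{-1} = (w^{-1}, \sigma_r(w, w^{-1})^{-1})$ and cancelling terms — shows that the equality $\s(w)(m, 1)\s(w)^{-1} = (wmw^{-1}, 1)$ amounts to the single identity
\[
\sigma_r(w, m) = \sigma_r(wmw^{-1}, w), \qquad m \in M^{(n)}, \; w \in W_M.
\]
Fixing $w$ throughout, define $\alpha: M^{(n)} \to \mu_n$ by $\alpha(m) := \sigma_r(w, m)\sigma_r(wmw^{-1}, w)^{-1}$; the goal is to prove $\alpha \equiv 1$.

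First I would show $\alpha$ is a group homomorphism. Applying the 2-cocycle identity (Proposition \ref{P:BLS}(0)) twice to both $\sigma_r(w, mm')$ and $\sigma_r(wmm'w^{-1}, w)$, one finds
\[
\alpha(mm') = \alpha(m)\alpha(m') \cdot \frac{\sigma_r(wmw^{-1}, wm'w^{-1})}{\sigma_r(m, m')}.
\]
For $m, m' \in M^{(n)}$, the block-compatibility of $\sigma_r$ (Equation \ref{E:compatibility}) reduces $\sigma_r(m, m')$ to $\prod_i \sigma_{r_i}(g_i, g_i')$, since every inter-block Hilbert symbol $(\det g_i, \det g_j')^?$ has both arguments in $F^{\times n}$ and is therefore trivial. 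Because $w$ merely permutes the blocks of $M$, the same block-compatibility applied to $wmw^{-1}$ and $wm'w^{-1}$ yields the identical product. Hence the correction factor is $1$ and $\alpha$ is multiplicative.

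Next I would verify $\alpha = 1$ on a generating set of $M^{(n)} = \GL_{r_1}^{(n)} \times \cdots \times \GL_{r_k}^{(n)}$. A convenient generating set consists of the $\SL_{r_i}$-factors embedded blockwise, together with the torus elements $t_{a, i} := \diag(I_{r_1}, \ldots, \iota_i(a^n), \ldots, I_{r_k})$ for $a \in F^\times$ and $i = 1, \ldots, k$. For $r_i \geq 2$ the group $\SL_{r_i}(F)$ is perfect, so any homomorphism to the abelian $\mu_n$ is trivial; the case $r_i = 1$ contributes nothing. For the torus generators $t_{a, i}$, write $w = \eta_w \epsilon$ with $\eta_w \in \M$ and $\epsilon \in T$ having entries $\pm 1$, and expand $\sigma_r(w, t_{a, i})$ and $\sigma_r(wt_{a, i}w^{-1}, w)$ using Proposition \ref{P:BLS}(3), (4), and (5). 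Every surviving Hilbert symbol involves the $n$-th power $a^n$ as an argument and is therefore trivial, while the residual sign contributions coming from $\eta_w$ and $\epsilon$ match on the two sides because $wt_{a, i}w^{-1}$ is obtained from $t_{a, i}$ by the same permutation that defines $w$.

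The main obstacle is the torus case: carefully tracking which positive roots $\eta_w$ sends to negative and matching the $\pm 1$ contributions between the two sides is notationally heavy, though conceptually routine. The conceptual point is that the hypothesis $m \in M^{(n)}$, rather than $m \in M$, is precisely what kills every Hilbert symbol that would otherwise obstruct the equality; once multiplicativity of $\alpha$ is in hand, the remaining verification reduces to this finite bookkeeping check on generators.
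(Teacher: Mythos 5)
Your proof is correct, and it takes a genuinely different route in the final verification. The shared skeleton is: reduce to a cocycle identity, show the discrepancy function is multiplicative on $M^{(n)}$ using block-compatibility, then check on a generating set. But the two proofs diverge in three places. (i) Your target identity is $\sigma_r(w,m)=\sigma_r(wmw^{-1},w)$, whereas the paper works with $\sigma_r(w^{-1},wmw^{-1})^{-1}\sigma_r(m,w^{-1})=1$; the two are equivalent under the cocycle relation, but yours is a cleaner starting point. (ii) Your multiplicativity argument is a direct cocycle manipulation, while the paper conjugates $(mm',\sigma_r(m,m'))$ by $\s(w)$ in two ways and compares — both work. (iii) Most significantly, after reducing to a single block $g_i\in\GL_{r_i}^{(n)}$, the paper runs the full Bruhat decomposition $g_i=n_it_i\eta_in_i'$ and computes $\sigma_r(m,w^{-1})$ and $\sigma_r(w^{-1},wmw^{-1})$ via the BLS toolkit, ending with the observation that $\det(t_i)\in F^{\times n}$ kills $(\prod_j a_j,-1)^{r-1+2c}$. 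You instead split $\GL_{r_i}^{(n)}=\SL_{r_i}\cdot\{\iota_i(a^n)\}$ and kill the $\SL_{r_i}$ part by perfectness — a clean idea the paper does not use — leaving only the diagonal elements $t_{a,i}=\diag(\dots,a^n,\dots,1,\dots)$. Your deferred bookkeeping is in fact simpler than you suggest: with $w=\ttt(w)\eta_w$, applying Proposition \ref{P:BLS}(0) and (5) gives $\sigma_r(w,t_{a,i})=\sigma_r(\ttt(w),wt_{a,i}w^{-1})$ and $\sigma_r(wt_{a,i}w^{-1},w)=\sigma_r(wt_{a,i}w^{-1},\ttt(w))$, and in Proposition \ref{P:BLS}(4) every factor is of the form $(\pm1,a^n)$, $(\pm1,1)$, $(a^n,\pm1)$, or $(1,\pm1)$ — all trivial — so no residual $(\pm1,\pm1)$ terms actually survive to require matching. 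The paper's method is more explicit and self-contained within the BLS framework; yours trades some of that explicitness for the perfectness shortcut and a smaller generating set.
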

\begin{proof}
Note that $\s(w)=(w,1)$ and
$\s(w)^{-1}=(w^{-1},\sigma_r(w,w^{-1})^{-1})$ because we are using
$\sigGLt_r$, and hence
\[
\s(w)(m,1)\s(w)^{-1}
=(wmw^{-1},\sigma_r(w, mw^{-1})\sigma_r(m, w^{-1}) \sigma_r(w,w^{-1})^{-1}).
\]
Let
\[
\varphi_w(m):=\sigma_r(w, mw^{-1})\sigma_r(m, w^{-1}) \sigma_r(w,w^{-1})^{-1}.
\]
 We need to show $\varphi_w(m)=1$ for all $m\in M^{(n)}$. Let us first
 show that the map $m\mapsto\varphi_w(m)$ is a
 homomorphism on $M^{(n)}$. To see it, for $m, m'\in M^{(n)}$ we have
\begin{align*}
\s(w)(m,1)(m',1)\s(w)^{-1}&=\s(w)(mm',\sigma_r(m,m'))\s(w)^{-1}\\
&=(wmm'w^{-1},\sigma_r(m,m')\varphi_w(mm')).
\end{align*}
On the other hand, we have
\begin{align*}
\s(w)(m,1)(m',1)\s(w)^{-1}&=\s(w)(m,1)\s(w)^{-1}\s(w)(m',1)\s(w)^{-1}\\
&=(wmw^{-1},\varphi_w(m))(wm'w^{-1},\varphi_w(m'))\\
&=(wmm'w^{-1}, \sigma_r(wmw^{-1},wm'w^{-1})
\varphi_w(m)\varphi_w(m'))\\
&=(wmm'w^{-1}, \sigma_r(m,m')
\varphi_w(m)\varphi_w(m')),
\end{align*}
where the last equality follows because
$\sigma_r(wmw^{-1},wm'w^{-1})=\sigma_r(m,m')$ by the
block-compatibility of $\sigma_r$. Hence by comparing those two, one
obtains $\varphi_w(mm')=\varphi_w(m)\varphi_w(m')$. Therefore to show
$\varphi_w(m)=1$, it suffices to show it for the elmenets of the form
\begin{equation}\label{E:form_of_m}
m=\diag(I_{r_1},\dots,I_{r_{i-1}},g_i,I_{r_{i+1}},\dots, I_{r_k})
\end{equation}
for $g_i\in\GL_{r_i}^{(n)}$.

Then one can rewrite $\varphi_w(m)$ as follows:
\begin{align*}
\varphi_w(m)&=\sigma_r(w, mw^{-1})\sigma_r(m, w^{-1})
\sigma_r(w,w^{-1})^{-1}\\
&=\sigma_r(w, w^{-1}wmw^{-1})\sigma_r(m, w^{-1})
\sigma_r(w,w^{-1})^{-1}\\
&=\sigma_r(ww^{-1}, wmw^{-1})\sigma_r(w, w^{-1})\sigma_r(w^{-1},
wmw^{-1})^{-1}\sigma_r(m, w^{-1})\sigma_r(w,w^{-1})^{-1}\\
&=\sigma_r(w^{-1}, wmw^{-1})^{-1}\sigma_r(m, w^{-1}),
\end{align*}
where for the third equality we used Proposition \ref{P:BLS} (0).
So we only have to show
\begin{equation}\label{E:cocycle_computation1}
\sigma_r(w^{-1}, wmw^{-1})^{-1}\sigma_r(m, w^{-1})=1.
\end{equation}

This can be shown by using the algorithm computing the cocycle
$\sigma_r$ given by \cite{BLS}. To use the results of \cite{BLS}, it
should be mentioned that one needs to use the set $\M$ for a set of
representatives of the Weyl group of $\GL_r$ as defined in the
notation section. Also let us recall the following notation from
\cite{BLS}: For each
$g\in\GL_r$, the ``torus part function'' $\ttt:\GL_r\rightarrow T$ is
the unique map such that
\[
\ttt(nt\eta n')=t,
\] 
where $n, n'\in N_B$, $t\in T$ and $\eta\in\mathfrak{M}$ when $\GL_r$
is written as
\[
\GL_r=\coprod_{\eta\in\mathfrak{M}}N_BT\eta N_B
\]
by the Bruhat decomposition. Namely $\ttt(g)$ is the ``torus part'' of
$g$.

Using this language, each $w\in W_M$ is written as
\[
w=\ttt(w)\eta_w
\]
where $\eta_w\in\mathfrak{M}$, and
$\ttt(w)\in\GL_{r_{\sigma(1)}}\times\cdots\times\GL_{r_{\sigma(k)}}$ is
of the form
\[
\ttt(w)=(\varepsilon_{\sigma(1)}I_{\sigma(1)},\dots,\varepsilon_{\sigma(k)}I_{\sigma(k)}),
\]
where $\varepsilon_i\in\{\pm 1\}$.

We are now ready to carry out our cocycle computations for
(\ref{E:cocycle_computation1}). Let us deal with $\sigma_r(m, w^{-1})$
first. Write $m=nt\eta n'$ by the Bruhat decomposition, so
$\ttt(m)=t$. But recall that we are assuming $m$ is of the form
as in (\ref{E:form_of_m}), so the decomposition $nt\eta n'$ takes place essentially
inside the $\GL_{r_i}$-block.  In particular, we can write
\[
m=\diag(I_{r_1},\dots,I_{r_{i-1}},n_it_i\eta_i n_i',I_{r_{i+1}}\dots,I_{r_k}),
\]
where $t_i\in\GL^{(n)}_{r_i}$. (Note that $\det(t_i)\in F^{\times n}$.)
Then one can compute $\sigma_r(m,
w^{-1})$ as follows:
\begin{align*}
\sigma_r(m, w^{-1})&=\sigma_r(nt\eta n', w^{-1})\\
&=\sigma_r(t\eta, n'w^{-1})\quad\text{by Proposition \ref{P:BLS} (1), (2)}\\
&=\sigma_r(t\eta, w^{-1}wn'w^{-1})\\
&=\sigma_r(t\eta, w^{-1})\quad\text{because $wn'w^{-1}\in N_B$ and 
by Proposition \ref{P:BLS} (1)}\\
&=\sigma_r(t\eta, \ttt(w^{-1})\eta_{w^{-1}}).
\end{align*}
Now since $\eta$ is essentially inside the $\GL_{r_i}$-factor of $M$
and $\eta_{w^{-1}}$ only permutes the $\GL_{r_j}$-factors of $M$, we
have $l(\eta\eta_{w^{-1}})=l(\eta)+l(\eta_{w^{-1}})$, where $l$ is the
length function. Hence by applying \cite[Lemma 10, p.155]{BLS}, we
have
\begin{equation}\label{E:cocycle_computation2}
\sigma_r(t\eta, \ttt(w^{-1})\eta_{w^{-1}})
=\sigma_r(t,\eta\ttt(w^{-1})\eta^{-1})\sigma_r(\eta, \ttt(w^{-1})).
\end{equation}
Here note that $\ttt(w^{-1})\in
M=\GL_{r_1}\times\cdots\times\GL_{r_k}$ is of the form
$(\varepsilon_1 I_{r_1},\dots, \varepsilon_k I_{r_k})$ and $\eta$ is
in the $\GL_{r_i}$-block. Hence
$\eta\ttt(w^{-1})\eta^{-1}=\ttt(w^{-1})$. Thus by the
block-compatibility of $\sigma_r$, (\ref{E:cocycle_computation2}) is
written as
\[
\sigma_{r_i}(t_i, \varepsilon_i I_{r_i})\sigma_{r_i}(\eta_i, \varepsilon_i I_{r_i}).
\]
Clearly, if $\varepsilon_i=1$, then both $\sigma_{r_i}(t_i,
\varepsilon_i I_{r_i})$ and $\sigma_{r_i}(\eta_i, \varepsilon_i
I_{r_i})$ are $1$. If $\varepsilon_i=-1$, then by Proposition
\ref{P:BLS} (3), one can see that $\sigma_{r_i}(\eta_i,
\varepsilon_i I_{r_i})=1$. Hence in either case, one has
\begin{equation}\label{E:cocycle_computation3}
\sigma_r(m,w^{-1})
=\sigma_{r_i}(t_i, \varepsilon_i I_{r_i}).
\end{equation}

Next let us deal with $\sigma_r(w^{-1}, wmw^{-1})$ in
(\ref{E:cocycle_computation1}). First by the analogous computation to
what we did for $\sigma(m, w^{-1})$, one can write
\begin{equation}\label{E:cocycle_computation4}
\sigma_r(w^{-1}, wmw^{-1})
=\sigma_r(w^{-1}, wt\eta w^{-1})
=\sigma_r(\ttt(w^{-1})\eta_{w^{-1}}, wt\eta w^{-1}).
\end{equation}
Since $w$ corresponds to the permutation $\sigma^{-1}$, if we let
\[
\tau_i:\GL_{r_i}\rightarrow
wMw^{-1}=\GL_{r_{\sigma(1)}}\times\cdots\times\GL_{r_{\sigma(k)}}
\]
be the embedding of $\GL_{r_i}$ into the corresponding $\GL_{r_i}$-factor of
$wMw^{-1}$, then (\ref{E:cocycle_computation4}) is written as
\[
\sigma_r(\ttt(w^{-1})\eta_{w^{-1}}, \tau_i(t_i)\tau_i(\eta_i)).
\]
Note that $\tau_i(\eta_i)\in\mathfrak{M}$ and
$l(\eta_{w^{-1}}\tau_i(\eta_i))=l(\eta_{w^{-1}})+l(\tau_i(\eta_i))$. Hence
by using \cite[Lemma 10, p.155]{BLS}, this is written as
\begin{equation}\label{E:cocycle_computation5}
\sigma_r(\ttt(w^{-1}), \eta_{w^{-1}}\tau_i(t_i)\eta_{w^{-1}}^{-1})
\sigma_r(\eta_{w^{-1}}, \tau_i(t_i)).
\end{equation}
By the block compatibility of $\sigma_r$, one can see
\[
\sigma_r(\ttt(w^{-1}), \eta_{w^{-1}}\tau_i(t_i)\eta_{w^{-1}}^{-1})
=\sigma_{r_i}(\varepsilon_iI_{r_i}, t_i).
\]
Also to compute $\sigma_r(\eta_{w^{-1}}, \tau_i(t_i))$, one needs to
use Proposition \ref{P:BLS} (3). For this purpose, let us write
\[
t_i=\begin{pmatrix}a_1&&\\ &\ddots&\\ &&a_{r_i}\end{pmatrix}\in\GL_{r_i}
\]
where $\det(t_i)=a_1\cdots a_{r_i}\in F^{\times n}$. By looking at the
formula in Proposition \ref{P:BLS} (3), one can see that
$\sigma_r(\eta_{w^{-1}}, \tau_i(t_i))$ is a power of 
\[
(-1, a_1)\cdots(-1,a_{r_i}), 
\]
which is equal to $(-1, a_1\cdots a_{r_i})=1$ because $\det(t_i)=a_1\cdots
a_{r_i}\in F^{\times n}$. Hence (\ref{E:cocycle_computation5}), which
is the same as (\ref{E:cocycle_computation4}), becomes
\[
\sigma_{r_i}(\varepsilon_iI_{r_i}, t_i).
\]
Hence the left hand side of (\ref{E:cocycle_computation1}) is written
as
\[
\sigma_{r_i}(\varepsilon_iI_{r_i}, t_i)^{-1}\sigma_{r_i}(t_i,\varepsilon_iI_{r_i}).
\]
We need to show this is $1$. But clearly this is the case if
$\varepsilon_i=1$. So let us assume $\varepsilon_i=-1$. Namely we will
show $\sigma_{r_i}(-I_{r_i}, t_i)^{-1}\sigma_{r_i}(t_i,-I_{r_i})=1$. But
by Proposition \ref{P:BLS} (4), one can compute
\[
\sigma_{r_i}(-I_{r_i}, t_i)=(-1, a_2)(-1, a_3)^2(-1,a_4)^3\cdots (-1, a_r)^{r-1+2c}
\]
and 
\[
\sigma_{r_i}(t_i, -I_{r_i})=(a_1, -1)^{r-1}(a_2,-1)^{r-2}(a_3,-1)^{r-3}\cdots (-1, a_{r-1}).
\]
Noting that $(-1,a_i)^{-1}=(a_i, -1)$, we have
\[
\sigma_{r_i}(-I_{r_i}, t_i)^{-1}\sigma_{r_i}(t_i,
-I_{r_i})=\prod_{i=1}^r(a_i, -1)^{r-1+2c}=(\prod_{i=1}^ra_i, -1)^{r-1+2c}=1,
\]
where the last equality follows because
$\det(t_i)=\prod_{i=1}^ra_i\in F^{\times n}$. This completes the proof.
\end{proof}

Now we are ready to prove Theorem \ref{T:Weyl_group_local}.

\begin{proof}[Proof of Theorem \ref{T:Weyl_group_local}]
By restricting to $\MMtn$, one can see that the left hand side of (\ref{E:Weyl_group_local})
contains the representation $^w(\pin_1\,\otimest\cdots\otimest\,\pin_k)$
and the right hand side of (\ref{E:Weyl_group_local}) contains
$\pin_{\sigma(1)}\,\otimest\cdots\otimest\,\pin_{\sigma(k)}$, where
$^w(\pin_1\,\otimest\cdots\otimest\,\pin_k)$ is the representation of $\MMtn=\s(w)\Mtn
\s(w)^{-1}$ whose space is the space of
$\pin_1\,\otimest\cdots\otimest\,\pin_k$. Hence by Lemma
\ref{L:equivalent_tensor_product}, it suffices to show that
\[
^w(\pin_1\,\otimest\cdots\otimest\,\pin_k)\cong
\pin_{\sigma(1)}\,\otimest\cdots\otimest\,\pin_{\sigma(k)}.
\]
But this can be seen from the commutative diagram
\[
\xymatrix{ 
&\GLtn_{r_{\sigma(1)}}\times\cdots\times\GLtn_{r_{\sigma(k)}}\ar[ddl]\ar[ddr]\ar[d]&\\ 
&\MMtn\ar[dl]\ar[dr]&\\ 
\Aut(V_{\pin_1}\otimes\cdots\otimes V_{\pin_k})\ar[rr]^{\sim}&&
\Aut(V_{\pin_{\sigma(1)}}\otimes\cdots\otimes V_{\pin_{\sigma(k)}}),
}
\]
where the left most arrow is the representation of
$\GLtn_{r_\sigma(1)}\times\cdots\times\GLtn_{r_\sigma(k)}$ (direct
product) acting on the space of $\pin_1\otimes\cdots\otimes\pin_k$ by
permuting each factor by $\sigma^{-1}$, which descends to the
representation $^w(\pin_1\,\otimest\cdots\otimest\,\pin_k)$ of $\Mtn$. To see this
indeed descends to $^w(\pin_1\,\otimest\cdots\otimest\,\pin_k)$, one needs
the above lemma.
\end{proof}

\quad

%%%%%%%%%%%%%%%%%%%%%%%%%%%%%%%%%%%%%%%%%%%%%%%%%%%%%%%%%%%%%%%%%%%

\subsection{\bf Compatibility with parabolic induction}
\label{S:parabolic_induction_local}

%%%%%%%%%%%%%%%%%%%%%%%%%%%%%%%%%%%%%%%%%%%%%%%%%%%%%%%%%%%%%%%%%%%

We will show the compatibility of the metaplectic tensor product with
parabolic induction. Hence we consider the standard parabolic subgroup
$P=MN\subseteq\GL_r$
where $M$ is the Levi part and $N$ the unipotent radical. 

First let us mention
\begin{Lem}\label{L:normalizer_local}
The image $N^\ast$ of the unipotent radical $N$ via the section $\s:\GL_r\rightarrow\GLt_r$
is normalized by the metaplectic preimage $\Mt$ of the Levi part $M$.
\end{Lem}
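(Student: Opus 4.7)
The plan is to reduce the claim to a direct computation with the cocycle $\sigma_r$ and then exploit the fact that the unipotent radical $N$ of the standard parabolic $P=MN$ sits inside the full upper unipotent $N_B$.

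First I would observe that since $\mu_n$ lies in the center of $\GLt_r$, conjugation by an element $\mt \in \Mt$ depends only on its image $m = p(\mt) \in M$. Hence it suffices to prove that for every $m\in M$ and every $n\in N$, the element $(m,1)(n,1)(m,1)^{-1}$ lies in $N^\ast = \s(N)$. A direct expansion in $\sigGLt_r$ gives
\[
(m,1)(n,1)(m,1)^{-1} = \bigl(mnm^{-1},\ \xi\bigr),\quad
\xi = \sigma_r(m,n)\,\sigma_r(mn,m^{-1})\,\sigma_r(m,m^{-1})^{-1}.
\]
Because $M$ normalizes $N$, we have $mnm^{-1}\in N$; thus the assertion reduces to showing $\xi = 1$, in which case the conjugate equals $\s(mnm^{-1})\in N^\ast$.

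The key point is that $N\subseteq N_B$, so Proposition \ref{P:BLS}(1) applies and collapses each of the three factors of $\xi$. Writing $m = 1\cdot m$ and $n = 1\cdot n$, property (1) gives $\sigma_r(m,n)=\sigma_r(m,1)=1$. Writing $mn = (mnm^{-1})\cdot m$ with $mnm^{-1}\in N\subseteq N_B$, the same property gives $\sigma_r(mn,m^{-1}) = \sigma_r(m,m^{-1})$. Substituting, $\xi = 1\cdot\sigma_r(m,m^{-1})\cdot\sigma_r(m,m^{-1})^{-1} = 1$, which finishes the proof.

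There is no real obstacle here: the whole argument is a two-line cocycle manipulation, and the only thing one must keep in mind is that the containment $N\subseteq N_B$ is what allows Proposition \ref{P:BLS}(1) to be applied on both sides of $m$ simultaneously. This is precisely why the statement is phrased in terms of the unipotent radical of a \emph{standard} parabolic rather than a general one.
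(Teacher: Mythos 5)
Your proof is correct and takes essentially the same approach as the paper's: reduce to $\mt = (m,1)$, expand the conjugation, and apply Proposition \ref{P:BLS} (1) twice (once to kill $\sigma_r(m,n)$ and once to rewrite $\sigma_r(mn,m^{-1})$ as $\sigma_r(m,m^{-1})$). The only cosmetic difference is that you make explicit the preliminary observation that conjugation factors through $p(\mt)$, which the paper compresses into the phrase ``We may assume $\mt=(m,1)$.''
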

\begin{proof}
This is known not only for $\GLt_r$ but for
any covering group. See \cite[Appendix I]{MW}.  But we will give a
simple proof for $\GLt_r$. Let
$\mt\in\Mt$ and $(n,1)\in N^\ast$, where $n\in N$. (Note that
since we are assuming the group $\GLt_r$ is defined by $\sigma_r$, each element
in $N^\ast$ is written as $(n,1)$.) We may assume $\mt=(m,1)$ for
$m\in M$. Noting that $\mt^{-1}=(m^{-1},\sigma_r(m,m^{-1})^{-1})$, we
compute
\begin{align*}
\mt(n,1)\mt^{-1}&=(m,1)(n,1)(m^{-1}, \sigma_r(m,m^{-1})^{-1})\\
&=(mn, \sigma_r(m,n)) (m^{-1}, \sigma_r(m,m^{-1})^{-1})\\
&=(mnm^{-1},\sigma_r(mn, m^{-1})\sigma_r(m,n) \sigma_r(m,m^{-1})^{-1}).
\end{align*}
By Proposition \ref{P:BLS} (1), $\sigma_r(m,n)=1$. Also since
$mnm^{-1}\in N$, we have $\sigma_r(mn,
m^{-1})=\sigma_r(mnm^{-1}m, m^{-1})=\sigma_r(m,m^{-1})$ again by
Proposition \ref{P:BLS} (1).  Thus we have
$\mt(n,1)\mt^{-1}=(mnm^{-1},1)\in N^\ast$.
\end{proof}

By this lemma, we can write
\[
\Pt=\Mt N^\ast
\]
where $\Mt$ normalizes $N^\ast$ and hence for a representation $\pi$
of $\Mt$ one can form the induced representation
\[
\Ind_{\Mt N^\ast}^{\GLt_r}\pi
\]
by letting $N^\ast$ act trivially.

\begin{Thm}\label{T:induction_local}
Let $P=MN\subseteq\GL_r$ be the standard parabolic subgroup whose Levi
part is $M=\GL_{r_1}\times\cdots\times\GL_{r_k}$. Further for each
$i=1,\dots,k$ let $P_i=M_iN_i\subseteq\GL_{r_i}$ be the standard
parabolic of $\GL_{r_i}$ whose Levi part is
$M_i=\GL_{r_{i,1}}\times\cdots\times\GL_{r_{i, l_i}}$. For each $i$, we are
given a representation 
\[
\sigma_i:=(\tau_{i,1}\,\otimest\cdots\otimest\,\tau_{i,l_i})_{\omega_i}
\]
of $\Mt_i$, which is given as the metaplectic tensor product of the
representations $\tau_{i,1},\dots,\tau_{i,l_i}$ of
$\GLt_{r_{i,1}},\dots,\GLt_{r_{i, l_i}}$. Assume that $\pi_i$ is an
irreducible constituent of the induced representation
$\Ind_{\Pt_i}^{\GLt_{r_i}}\sigma_i$. Then the metaplectic tensor
product
\[
\pi_\omega:=(\pi_1\,\otimest\cdots\otimest\,\pi_k)_\omega
\]
is an irreducible constituent of the induced representation
\[
\Ind_{\Qt}^{\Mt}(\tau_{1,1}\,\otimest\cdots\otimest\,\tau_{1, l_1}\,\otimest\cdots\otimest
\,\tau_{k,1}\,\otimest\cdots\otimest\,\tau_{k,l_k})_\omega,
\]
where $Q$ is the standard parabolic of $M$ whose Levi part is
$M_1\times\cdots\times M_k$.
\end{Thm}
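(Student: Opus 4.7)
The plan is to reduce, via Lemma \ref{L:equivalent_tensor_product} and Proposition \ref{P:Mezo}, to locating a shared constituent on restriction to $Z_{\GLt_r}\Mtn$. Write $\nu_\omega := (\tau_{1,1}\otimest\cdots\otimest\tau_{k,l_k})_\omega$, regarded as a representation of $\MMt$. By the construction of $\nu_\omega$, pick constituents $\tau^{(n)}_{i,j}$ of $\tau_{i,j}|_{\GLtn_{r_{i,j}}}$ such that $\omega\cdot(\tau^{(n)}_{1,1}\otimest\cdots\otimest\tau^{(n)}_{k,l_k})$ appears in $\nu_\omega|_{Z_{\GLt_r}\MMtn}$. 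It then suffices to exhibit one irreducible subquotient $\rho$ of $\Ind_{\Qt}^{\Mt}\nu_\omega$ whose restriction to $Z_{\GLt_r}\Mtn$ contains $\pin_\omega := \omega\cdot(\pin_1\otimest\cdots\otimest\pin_k)$ for some $\pin_i\subseteq\pi_i|_{\GLtn_{r_i}}$ furnishing the Mezo data for $\pi_\omega$; Lemma \ref{L:equivalent_tensor_product} will then force $\rho\cong\pi_\omega$.

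The first task is to pin down the $\pin_i$ compatibly with the $\tau^{(n)}_{i,j}$. Since $\det\colon M_i\to F^\times/F^{\times n}$ is already surjective via any single $\GL_{r_{i,j}}$-factor, one has $\GLt_{r_i}=\Pt_i\cdot\GLtn_{r_i}$, so Mackey collapses to a single double coset:
\[
(\Ind_{\Pt_i}^{\GLt_{r_i}}\sigma_i)\big|_{\GLtn_{r_i}}
=\Ind_{\Pt_i\cap\GLtn_{r_i}}^{\GLtn_{r_i}}\big(\sigma_i|_{\Pt_i\cap\GLtn_{r_i}}\big).
\]
Since $\sigma_i|_{Z_{\GLt_{r_i}}\Mtnni}$ contains $\omega_i\cdot(\tau^{(n)}_{i,1}\otimest\cdots\otimest\tau^{(n)}_{i,l_i})$ (Mezo's construction of $\sigma_i$), transitivity of induction identifies a suitable $\pin_i\subseteq\pi_i|_{\GLtn_{r_i}}$ as a constituent of $\Ind_{\Mtnni N_i^\ast}^{\GLtn_{r_i}}(\tau^{(n)}_{i,1}\otimest\cdots\otimest\tau^{(n)}_{i,l_i})$ with $N_i^\ast$ acting trivially. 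The block-compatibility of $\tau_M$ identifies $\Mtn$ as a central quotient of $\prod_i\GLtn_{r_i}$, so the external tensor product of these inductions descends to a $\otimest$-product of induced representations on $\Mtn$.

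Finally, apply Mackey on the right-hand side to $(\Ind_{\Qt}^{\Mt}\nu_\omega)|_{Z_{\GLt_r}\Mtn}$. The trivial double coset in $Z_{\GLt_r}\Mtn\backslash\Mt/\Qt$ yields the summand $\Ind_{Z_{\GLt_r}\Mtn\cap\Qt}^{Z_{\GLt_r}\Mtn}(\nu_\omega|_{Z_{\GLt_r}\Mtn\cap\Qt})$, inside which $\nu_\omega$ itself is (by its Mezo construction) obtained by inducing $\omega\cdot(\tau^{(n)}_{1,1}\otimest\cdots\otimest\tau^{(n)}_{k,l_k})$ outward from $Z_{\GLt_r}\MMtn N_Q^\ast$. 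Transitivity of induction then plants $\pin_\omega$ as a constituent on the nose, producing the required $\rho$. The main obstacle is the bookkeeping here: one must identify $Z_{\GLt_r}\Mtn\cap\Qt$ correctly as built blockwise from the $(\Mt_i\cap\GLtn_{r_i})N_i^\ast$ (which is strictly larger than $\MMtn N_Q^\ast$ in general), and verify that the single character $\omega$ plays consistent roles on both sides---a compatibility forced since both uses of $\omega$ extend the same character on the common intersection of $Z_{\GLt_r}$ with the relevant $n$-subgroup.
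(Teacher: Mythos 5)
Your strategy is sound in outline and is roughly dual to the paper's: the paper builds $\pi_\omega$ upward as a constituent of the induced representation, while you propose restricting $\Ind_{\Qt}^{\Mt}\nu_\omega$ down to $Z_{\GLt_r}\Mtn$ and locating $\pin_\omega$ there, then invoking Lemma \ref{L:equivalent_tensor_product}. The single-coset Mackey collapse on $(\Ind_{\Pt_i}^{\GLt_{r_i}}\sigma_i)|_{\GLtn_{r_i}}$ that you re-derive is exactly the paper's preparatory lemma. However, there is a genuine gap exactly where you flag "bookkeeping": the group $\Mtnni = \Mt_i\cap\GLtn_{r_i}$ (det of the whole $M_i$-block an $n$-th power) is strictly larger than $\Mtn_i = \GLtn_{r_{i,1}}\timest\cdots\timest\GLtn_{r_{i,l_i}}$ (det of each $\GL_{r_{i,j}}$-block an $n$-th power), and $\tau_{i,1}^{(n)}\otimest\cdots\otimest\tau_{i,l_i}^{(n)}$ is a representation only of $\Mtn_i$, not of $\Mtnni$. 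So the expression $\Ind_{\Mtnni N_i^\ast}^{\GLtn_{r_i}}\bigl(\tau_{i,1}^{(n)}\otimest\cdots\otimest\tau_{i,l_i}^{(n)}\bigr)$ is not well-formed as written, and similarly $\sigma_i|_{Z_{\GLt_{r_i}}\Mtnni}$ does not directly "contain" $\omega_i\cdot(\tau_{i,1}^{(n)}\otimest\cdots)$, which lives on the smaller $Z_{\GLt_{r_i}}\Mtn_i$.

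The paper resolves precisely this by an explicit intermediate inclusion: $\pin_i$ is a constituent of $\Ind_{\Mtnni N_i^\ast}^{\GLtn_{r_i}}\sigma_i|_{\Mtnni}$, which embeds (restriction of equivariance) into $\Ind_{\Mtn_i N_i^\ast}^{\GLtn_{r_i}}\sigma_i|_{\Mtn_i}$, and only then does one decompose $\sigma_i|_{\Mtn_i}$ as a sum of $\tau_{i,1}^{(n)}\otimest\cdots\otimest\tau_{i,l_i}^{(n)}$. Without a step of this kind your argument cannot proceed, since the Mackey intersection $Z_{\GLt_r}\Mtn\cap\Qt$ is built from the larger blocks $\Mtnni N_i^\ast$ while the metaplectic tensor product structure of $\nu_\omega$ lives on the smaller $\widetilde{M_Q^{(n)}}$. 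The $\omega$ vs.\ $\omega_i$ compatibility you note also requires verification; in the paper's route it comes for free because the characters need agree only on $Z_{\GLt_r}\cap\Mtn$, where they are both determined by $\pin$. So: correct plan, but the two issues you name as "bookkeeping" and "compatibility" are not mere bookkeeping — one of your displayed objects is ill-formed and a nontrivial intermediate inclusion is missing.
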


First we need 
\begin{Lem}
For a genuine representation $\pi$ of a Levi part $\Mt$, the map
\[
\Ind_{\Mt N^\ast}^{\GLt_r}\pi\rightarrow\Ind_{\Mtnn N^\ast}^{\GLtn_r}\pi|_{\Mtnn}
\]
given by the restriction $\varphi\mapsto \varphi|_{\GLtn_r}$ for
$\varphi\in \Ind_{\Mt N^\ast}^{\GLt_r}\pi$ is an isomorphism, where
\[
\Mtnn=\Mt\cap\GLtn_r.
\]
Hence in particular
\[
\left(\Ind_{\Mt N^\ast}^{\GLt_r}\pi\right)|_{\GLtn_r}\cong
\left(\Ind_{\Mt N^\ast}^{\GLt_r}\pi\right)\|_{\GLtn_r}\cong\Ind_{\Mtnn N^\ast}^{\GLtn_r}\pi|_{\Mtnn}
\]
as representations of $\GLtn_r$.
\end{Lem}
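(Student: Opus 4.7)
The plan is to reduce this to an application of Mackey's formula for the single $(\Mt N^\ast,\GLtn_r)$-double coset in $\GLt_r$. The two structural facts I need are:
\begin{enumerate}
\item[(a)] $\GLt_r=\Mt\cdot\GLtn_r$;
\item[(b)] $\Mt N^\ast\cap\GLtn_r=\Mtnn N^\ast$.
\end{enumerate}
For (a), it suffices to show $\GL_r=M\cdot\GL_r^{(n)}$: given $g\in\GL_r$, put $m=\diag(\iota_1(\det g),I_{r_2},\dots,I_{r_k})\in M$ using the notation of (\ref{E:iota}); then $\det(m^{-1}g)=1\in F^{\times n}$, so $m^{-1}g\in\GL_r^{(n)}$. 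Lifting this decomposition to the cover (possible because $\mu_n$ is central and contained in $\GLtn_r$) gives (a). For (b) note $N^\ast\subseteq\GLtn_r$, since every element of $N$ has determinant $1$ and hence its image under $\s$ lies in $\GLtn_r$. If $mn\in\GLtn_r$ with $m\in\Mt$, $n\in N^\ast$, then $m=(mn)n^{-1}\in\GLtn_r$, so $m\in\Mt\cap\GLtn_r=\Mtnn$, establishing (b).

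From (a) and (b), the inclusion $\GLtn_r\hookrightarrow\GLt_r$ descends to a bijection
\[
\Mtnn N^\ast\backslash\GLtn_r\;\xrightarrow{\;\sim\;}\;\Mt N^\ast\backslash\GLt_r,
\]
i.e.\ $\GLt_r$ decomposes into a single $(\Mt N^\ast,\GLtn_r)$-double coset. I would then verify directly that the restriction map $\varphi\mapsto\varphi|_{\GLtn_r}$ lands in $\Ind_{\Mtnn N^\ast}^{\GLtn_r}\pi|_{\Mtnn}$: for $h'=mn\in\Mtnn N^\ast$ and $g\in\GLtn_r$, the transformation rule of $\varphi$ together with the triviality of the $N^\ast$-action yields $(\varphi|_{\GLtn_r})(h'g)=\pi|_{\Mtnn}(m)(\varphi|_{\GLtn_r})(g)$. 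Equivariance under right translation by $\GLtn_r$ is immediate.

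Injectivity and surjectivity both come from the double-coset bijection. For injectivity: if $\varphi|_{\GLtn_r}=0$ and $g\in\GLt_r$ is written as $g=hg'$ with $h\in\Mt N^\ast$, $g'\in\GLtn_r$ via (a), then $\varphi(g)=\pi(h)\varphi(g')=0$. For surjectivity: given $\varphi'\in\Ind_{\Mtnn N^\ast}^{\GLtn_r}\pi|_{\Mtnn}$, define
\[
\varphi(hg'):=\pi(h)\varphi'(g'),\qquad h\in\Mt N^\ast,\; g'\in\GLtn_r.
\]
Well-definedness is where (b) is used: if $h_1g_1'=h_2g_2'$, then $h_0:=h_2^{-1}h_1=g_2'(g_1')^{-1}\in\Mt N^\ast\cap\GLtn_r=\Mtnn N^\ast$, and the transformation law of $\varphi'$ under $\Mtnn N^\ast$ together with $\pi|_{N^\ast}=1$ yield $\pi(h_1)\varphi'(g_1')=\pi(h_2)\pi(h_0)\varphi'(g_1')=\pi(h_2)\varphi'(h_0g_1')=\pi(h_2)\varphi'(g_2')$. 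One checks $\varphi\in\Ind_{\Mt N^\ast}^{\GLt_r}\pi$ and $\varphi|_{\GLtn_r}=\varphi'$ by construction. The second assertion of the lemma is immediate from the first: the equality $(\Ind_{\Mt N^\ast}^{\GLt_r}\pi)|_{\GLtn_r}=(\Ind_{\Mt N^\ast}^{\GLt_r}\pi)\|_{\GLtn_r}$ now follows because the restriction map is already injective on the full space.

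The only genuinely technical point to watch is that the definitions involve the standard normalizations ($\delta_P^{1/2}$-twist, smooth vectors); but $N$ is the same unipotent radical for the parabolic $P$ of $\GL_r$ and for its intersection $P\cap\GLtn_r$, so the modulus characters agree on $\Mtnn$, and smoothness passes through the restriction and the extension formula above without difficulty. Thus the algebraic bijection constructed above is simultaneously a bijection of smooth representations.
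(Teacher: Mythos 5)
Your proof is correct, and it takes a cleaner, more conceptual route than the paper. The injectivity argument is essentially identical: both you and the paper use the decomposition $\GLt_r=\Mt\cdot\GLtn_r$ (you via $m=\diag(\iota_1(\det g),I,\dots,I)$ so that $m^{-1}g$ has determinant $1$; the paper via $\begin{pmatrix}\det g^{-n+1}&\\ &I_{r-1}\end{pmatrix}$ so that the complementary factor has determinant $(\det g)^n$). Where you genuinely diverge is surjectivity. The paper picks the specific decomposition above, defines $\widetilde{\varphi}$ by an explicit formula involving the cocycle $\sigma_r$, and then verifies the transformation law $\widetilde\varphi(\mt\gt)=\pi(\mt)\widetilde\varphi(\gt)$ by hand, tracking a chain of cocycle corrections $\eta_1,\eta_2,\eta_3,\eta_4$ through Proposition \ref{P:BLS}(0). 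You instead isolate the two structural facts (a) $\GLt_r=\Mt\,\GLtn_r$ and (b) $\Mt N^\ast\cap\GLtn_r=\Mtnn N^\ast$, observe that these say $\GLt_r$ is a single $(\Mt N^\ast,\GLtn_r)$-double coset, and then well-definedness of the abstract extension $\varphi(hg'):=\pi(h)\varphi'(g')$ follows from (b) with no cocycle computation at all. Your intersection formula (b) is the real gain: it absorbs all the ambiguity in the decomposition at once, whereas the paper must argue that its chosen formula is consistent with an arbitrary $\Mt$-translate. Both proofs are valid; yours is shorter and localizes the group-theoretic content more transparently, at the small cost of having to establish (b) (which you do correctly, using that $N$ has determinant $1$ so $N^\ast\subseteq\GLtn_r$). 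One tiny remark: your closing paragraph about $\delta_P^{1/2}$-normalization is unnecessary here, since the paper's induction is unnormalized (it is simply trivial extension of $\pi$ across $N^\ast$ followed by induction), so there is no modulus twist to match in the first place.
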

\begin{proof}
To show it is one-to-one, assume $\varphi|_{\GLtn_r}=0$. We need to show
$\varphi=0$. But for any $g\in\GL_r$, one can write $g=\begin{pmatrix}\det
  g^{-n+1}&\\ & I_{r-1}\end{pmatrix}\begin{pmatrix}\det
  g^{n-1}&\\ & I_{r-1}\end{pmatrix}g$, where $\begin{pmatrix}\det
  g^{-n+1}&\\ & I_{r-1}\end{pmatrix}\in M$ and $\begin{pmatrix}\det
  g^{n-1}&\\ & I_{r-1}\end{pmatrix}g\in\GL_r^{(n)}$. Hence any
$\gt\in\GLt_r$ is written as $\gt=\mt\gt'$ for some $\mt\in\Mt$ and
$\gt'\in\GLtn_r$. Hence $\varphi(\gt)=\pi(\mt)\varphi(\gt')$. But
$\varphi(\gt')=0$. Hence $\varphi(\gt)=0$.

To show it is onto, let $\varphi\in\Ind_{\Mtnn
  N^\ast}^{\GLtn_r}\pi|_{\Mtnn}$. Define
$\widetilde{\varphi}:\GLt_r\rightarrow\pi$ by
\[
\widetilde{\varphi}(g,\xi)=\xi\pi(\begin{pmatrix}\det
  g^{-n+1}&\\ & I_{r-1}\end{pmatrix}, \eta)\varphi(\begin{pmatrix}\det
  g^{n-1}&\\ & I_{r-1}\end{pmatrix}g, 1),
\]
where $\eta$ is chosen to be such that $(\begin{pmatrix}\det
  g^{-n+1}&\\ & I_{r-1}\end{pmatrix}, \eta)(\begin{pmatrix}\det
  g^{n-1}&\\ & I_{r-1}\end{pmatrix}g, 1)=(g,1)$. Namely $\eta$ is
given by the cocycle as
\[
\eta=\sigma_r(\begin{pmatrix}\det
  g^{-n+1}&\\ & I_{r-1}\end{pmatrix}, \begin{pmatrix}\det
  g^{n-1}&\\ & I_{r-1}\end{pmatrix}g)^{-1}.
\]
That $\widetilde{\varphi}|_{\GLtn_r}=\varphi$ follows because if $g\in\GL_r^{(n)}$
then $(\begin{pmatrix}\det
  g^{-n+1}&\\ & I_{r-1}\end{pmatrix}, \eta)\in\Mtnn$. Also one can
check $\widetilde{\varphi}\in \Ind_{\Mt N^\ast}^{\GLt_r}\pi$ as follows: We
need to check $\varphi(\mt(g,\xi))=\pi(\mt)\varphi(g,\xi)$ for all
$\mt\in\Mt$. But since $\pi$ (and hence $\varphi$) is genuine, we may assume
$\mt$ is of the form $(m,1)$ for $m\in M$ and $\xi=1$. Then
\begin{align}
\notag\widetilde{\varphi}((m,1)(g,1))&=\widetilde{\varphi}(mg,\sigma_r(m,g))\\
\label{E:F_tilde}&=\sigma_r(m,g)\pi(\begin{pmatrix}\det
  (mg)^{-n+1}&\\ & I_{r-1}\end{pmatrix}, \eta_1)\varphi(\begin{pmatrix}\det
  (mg)^{n-1}&\\ & I_{r-1}\end{pmatrix}mg, 1),
\end{align}
where 
\[
\eta_1=\sigma_r(\begin{pmatrix}\det
  (mg)^{-n+1}&\\ & I_{r-1}\end{pmatrix}, \begin{pmatrix}\det
  (mg)^{n-1}&\\ & I_{r-1}\end{pmatrix}mg)^{-1}.
\]
Now 
\begin{align*}
&(\begin{pmatrix}\det(mg)^{n-1}&\\ & I_{r-1}\end{pmatrix}mg, 1)\\ 
=&(\begin{pmatrix}\det(mg)^{n-1}&\\ &
  I_{r-1}\end{pmatrix}m\begin{pmatrix}\det g^{-n+1}&\\ &
  I_{r-1}\end{pmatrix}, \eta_2)
(\begin{pmatrix}\det
  g^{n-1}&\\ & I_{r-1}\end{pmatrix}g, 1),
\end{align*}
where 
\[
\eta_2=\sigma_r(\begin{pmatrix}\det(mg)^{n-1}&\\ &
  I_{r-1}\end{pmatrix}m\begin{pmatrix}\det g^{-n+1}&\\ &
  I_{r-1}\end{pmatrix}, \begin{pmatrix}\det
  g^{n-1}&\\ & I_{r-1}\end{pmatrix}g)^{-1}.
\]
Since $(\begin{pmatrix}\det(mg)^{n-1}&\\ &
  I_{r-1}\end{pmatrix}m\begin{pmatrix}\det g^{-n-1}&\\ &
  I_{r-1}\end{pmatrix}, \eta_2)\in\Mtnn$, the right hand side of
(\ref{E:F_tilde}) becomes
\begin{align*}
&\sigma_r(m,g)\pi\Big((\begin{pmatrix}\det
  (mg)^{-n+1}&\\ & I_{r-1}\end{pmatrix}, \eta_1) (\begin{pmatrix}\det(mg)^{n-1}&\\ &
  I_{r-1}\end{pmatrix}m\begin{pmatrix}\det g^{-n+1}&\\ &
  I_{r-1}\end{pmatrix}, \eta_2)\Big)\\
&\qquad\qquad \varphi(\begin{pmatrix}\det
  g^{n-1}&\\ & I_{r-1}\end{pmatrix}g, 1)\\
&=\sigma_r(m,g)\pi(m\begin{pmatrix}\det g^{-n+1}&\\ &
  I_{r-1}\end{pmatrix},\eta_1\eta_2\eta_3) \varphi(\begin{pmatrix}\det
  g^{n-1}&\\ & I_{r-1}\end{pmatrix}g, 1)\\
&=\sigma_r(m,g)\pi(m,\eta_1\eta_2\eta_3\eta_4)\pi(\begin{pmatrix}\det g^{-n+1}&\\ &
  I_{r-1}\end{pmatrix}, 1) \varphi(\begin{pmatrix}\det
  g^{n-1}&\\ & I_{r-1}\end{pmatrix}g, 1),
\end{align*}
where
\[
\eta_3=\sigma_r(\begin{pmatrix}\det
  (mg)^{-n+1}&\\ & I_{r-1}\end{pmatrix},
\begin{pmatrix}\det(mg)^{n-1}&\\ &
  I_{r-1}\end{pmatrix}m\begin{pmatrix}\det g^{-n+1}&\\ &
  I_{r-1}\end{pmatrix}),
\]
and 
\[
\eta_4=\sigma_r(m, \begin{pmatrix}\det g^{-n+1}&\\ &
  I_{r-1}\end{pmatrix})^{-1}.
\]
Then one can compute
\[
\sigma_r(m,g)\eta_1\eta_2\eta_3\eta_4=\eta
\]
by using Proposition \ref{P:BLS} (0). Hence (\ref{E:F_tilde}) is
written as
\[
\pi(m,1)\pi(\begin{pmatrix}\det g^{-n+1}&\\ &
  I_{r-1}\end{pmatrix}, \eta) \varphi(\begin{pmatrix}\det
  g^{n-1}&\\ & I_{r-1}\end{pmatrix}g, 1)
=\pi(m,1)\widetilde{\varphi}(g,1).
\]
This completes the proof.
\end{proof}

With this lemma, one can prove the theorem.
\begin{proof}[Proof of Theorem \ref{T:induction_local}]
Let $\pi_i^{(n)}$ be an irreducible constituent of the restriction
$\pi_i|_{\GLtn_{r_i}}$. By the above lemma, it is an irreducible
constituent of 
\[
\Ind_{\Mtnni N_i^\ast}^{\GLtn_{r_i}}\sigma_i|_{\Mtnni}.
\]
Noting that $\Mtn_i\subseteq\Mtnni$, we have the inclusion
\[
\Ind_{\Mtnni N_i^\ast}^{\GLtn_{r_i}}\sigma_i|_{\Mtnni}
\hookrightarrow \Ind_{\Mtn_i N_i^\ast}^{\GLtn_{r_i}}\sigma_i|_{\Mtn_i}.
\]
But since $\sigma_i$ is a metaplectic tensor product of
$\tau_{i,1},\dots,{\tau_{i,l_i}}$, the restriction $\sigma_i|_{\Mtn_i}$
is a sum of representations of the form
\[
\tau_{i,1}^{(n)}\,\otimest\cdots\otimest\,\tau_{i,l_i}^{(n)}
\]
where each $\tau_{i,t}^{(n)}$ is an irreducible constituent of the
restriction $\tau_{i,t}|_{\GLtn_{^ir_t}}$ of $\tau_{i,t}$ to
$\GLtn_{r_{i,t}}$. Note that this is a metaplectic tensor product
representation of $\Mtn_i$. Hence the metaplectic tensor product
\[
\pi^{(n)}:=\pin_1\,\otimest\cdots\otimest\,\pin_k
\]
is an irreducible constituent of
\begin{equation}\label{E:tensor_of_tensor}
\widetilde{\bigotimes}_{i=1}^k\Ind_{\Mtn_i
  N_i^\ast}^{\GLtn_{r_i}}\,\tau_{i,1}^{(n)}\,\otimest\cdots\otimest\,
{\tau_{i,l_i}^{(n)}}.
\end{equation}
Note that the metaplectic tensor product for the group $\Mtn$ can be
defined for reducible representations, and hence
$\widetilde{\bigotimes}_{i=1}^k$ is defined and the space of the
representation is the same as the one for the usual tensor
product. In particular, the space of the representation
(\ref{E:tensor_of_tensor}) is the usual tensor product. Then one can
see that (\ref{E:tensor_of_tensor}) is equivalent to
\begin{equation}\label{E:tensor_of_tensor2}
\Ind_{\Mtn_1\timest\cdots\timest\Mtn_k (N_1\times\cdots\times
  N_k)^\ast}^{\Mtn}\otimest_{i=1}^k
\,\tau_{i,1}^{(n)}\,\otimest\cdots\otimest\,
{\tau_{i,l_i}^{(n)}}.
\end{equation}
(To see this one can define a map from (\ref{E:tensor_of_tensor}) to
(\ref{E:tensor_of_tensor2}) by
$\varphi_1\,\otimes\cdots\otimes\,\varphi_k\mapsto
\varphi_1\cdots\varphi_k$ where
$\varphi_1\cdots\varphi_k$ is the product of
functions which can be naturally viewed as a function on $\Mtn$.)

Now let $\omega$ be a character on $Z_{\GLt_r}$ that agrees with
$\pin$ on $Z_{\GLt_r}\cap\Mtn$, so that the product
\[
\pin_\omega:=\omega\cdot\pin_n
\]
is a well-defined representation of $Z_{\GLt_r}\Mtn$. Now all the
constituents of the representation (\ref{E:tensor_of_tensor2}) have the
same central character, and hence $\omega$ agrees with
(\ref{E:tensor_of_tensor2}) on $Z_{\GLt_r}\cap\Mtn$, and hence
$\pin_\omega$ is a constituent of 
\[
\Ind_{Z_{\GLt_r}\Mtn_1\timest\cdots\timest\Mtn_k (N_1\times\cdots\times
  N_k)^\ast}^{Z_{\GLt_r}\Mtn}\omega\cdot\otimest_{i=1}^k
\,\tau_{i,1}^{(n)}\,\otimest\cdots\otimest\,{\tau_{i,l_i}^{(n)}}.
\]
Recall that the metaplectic tensor product $\pi_\omega$ is a constituent of
\[
\Ind_{Z_{\GLt_r}\Mtn}^{\Mt}\pin_\omega
\]
and hence a constituent of
\[
\Ind_{Z_{\GLt_r}\Mtn}^{\Mt}\Ind_{Z_{\GLt_r}\Mtn_1\timest\cdots\timest\Mtn_k (N_1\times\cdots\times
  N_k)^\ast}^{Z_{\GLt_r}\Mtn}\omega\cdot\otimest_{i=1}^k
\,\tau_{i,1}^{(n)}\,\otimest\cdots\otimest\,{\tau_{i,l_i}^{(n)}},
\]
which is 
\begin{equation}\label{E:tensor_of_tensor3}
\Ind_{\Qt}^{\Mt}\Ind_{Z_{\GLt_r}\Mtn_1\timest\cdots\timest\Mtn_k (N_1\times\cdots\times
  N_k)^\ast}^{\Qt}\omega\cdot\otimest_{i=1}^k
\,\tau_{i,1}^{(n)}\,\otimest\cdots\otimest\,{\tau_{i,l_i}^{(n)}}
\end{equation}
by inducing in stages. 

Now one can see that the inner induced representation in
(\ref{E:tensor_of_tensor3}) is equal to
\begin{equation}\label{E:tensor_of_tensor4}
\Ind_{Z_{\GLt_r}\MQt^{(n)}}^{\MQt}\omega\cdot\otimest_{i=1}^k
\,\tau_{i,1}^{(n)}\,\otimest\cdots\otimest\,{\tau_{i,l_i}^{(n)}}
\end{equation}
where the unipotent group $(N_1\times\cdots\times N_k)^\ast$ acts
trivially and $\MQt$ is the Levi part of $\Qt$, namely 
\[
\MQt=\Mt_1\timest\cdots\timest\Mt_k.
\]
By Proposition \ref{P:Mezo} applied to the Levi $\MQt$, the representation
(\ref{E:tensor_of_tensor4}) is a sum of the metaplectic tensor
product
\[
(\tau_{1,1}^{(n)}\,\otimest\cdots\otimest\,
{\tau_{1,l_1}^{(n)}}\,\otimest\,\cdots\otimest\, \tau_{k,1}^{(n)}\,\otimest\cdots\otimest\,
{\tau_{k,l_k}^{(n)}})_\omega.
\]
Hence $\pi_\omega$ is a constituent of $\Ind_{\Qt}^{\Mt}(\tau_{1,1}^{(n)}\,\otimest\cdots\otimest\,
{\tau_{1,l_1}^{(n)}}\,\otimest\cdots\otimest\,\tau_{k,1}^{(n)}\,\otimest\cdots\otimest\,
{\tau_{k,l_k}^{(n)}})_\omega$ as claimed.
\end{proof}

\begin{Rmk}\label{R:induction_local}
In the statement of Theorem \ref{T:induction_local}, one can replace
``constituent'' by ``irreducible subrepresentation'' or ``irreducible
quotient'', and the analogous statement is still true. Namely if each
$\pi_i$ is an irreducible subrepresentation (resp. quotient) of the induced
representation in the theorem, then the metaplectic tensor product
$(\pi_1\,\otimes\cdots\otimes\,\pi_k)_\omega$ is also an irreducible
subrepresentation (resp. quotient) of the corresponding induced
representation. To prove it, one can simply replace all the
occurrences of ``constituent'' by ``irreducible subrepresentation'' or
``irreducible quotient'' in the above proof.
\end{Rmk}

%%%%%%%%%%%%%%%%%%%%%%%%%%%%%%%%%%%%%%%%%%%%%%%%%%%%%%%%%%%%%%%%%%%

\section{\bf The global metaplectic tensor product}

%%%%%%%%%%%%%%%%%%%%%%%%%%%%%%%%%%%%%%%%%%%%%%%%%%%%%%%%%%%%%%%%%%%

Starting from this section, we will show how to construct the
metaplectic tensor product of unitary automorphic
subrepresentations. Hence all the groups are over the ring of adeles
unless otherwise stated, and it should be recalled here that as in
\eqref{E:star} the group $\GL_r(F)^\ast$ is the image of $\GL_r(F)$ under the
partial map $\s:\GL_r(\A)\rightarrow\GLt_r(\A)$, and we simply write
$\GL_r(F)$ for $\GL_r(F)^\ast$, when there is no danger of
confusion. Also throughout the section the group
$A_{\Mt}(\A)$ is an abelian group that satisfies Hypothesis
($\ast$). 

%%%%%%%%%%%%%%%%%%%%%%%%%%%%%%%%%%%%%%%%%%%%%%%%%%%%%%%%%%%%%%%%%%%

\subsection{\bf The construction}

%%%%%%%%%%%%%%%%%%%%%%%%%%%%%%%%%%%%%%%%%%%%%%%%%%%%%%%%%%%%%%%%%%%

The construction is similar to the local case in that first we
consider the restriction to $\GLtn_{r_i}(\A)$, though we need an extra
care to ensure the automorphy.

Let us start with the following.
\begin{Lem}\label{L:unitary_restriction1}
Let $\pi$ be a genuine irreducible automorphic unitary
subrepresentation of $\GLt_r(\A)$. Then the restriction
$\pi|_{\GLtn_r(\A)}$ is completely reducible, namely
\[
\pi|_{\GLtn_r{\A}}=\bigoplus\pin_i
\]
where $\pi_i$ is an irreducible unitary representation of
$\GLtn_r(\A)$. 
\end{Lem}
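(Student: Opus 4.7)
The strategy is to reduce the global complete reducibility to the local case via the restricted tensor product decomposition. Since $\pi$ is irreducible, admissible, genuine, and unitary, one writes $\pi = \otimest'_v \pi_v$, where each $\pi_v$ is an irreducible admissible genuine unitary representation of $\GLt_r(F_v)$; the invariant Hermitian form on $\pi$ restricts to one on each $\pi_v$.

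The key structural observation is that $\GLtn_r(\A)$ corresponds, under the surjection $\rho$ of (\ref{E:surjection}), to the restricted product $\prod'_v \GLtn_r(F_v)$ taken with respect to the subgroups $\kappa(K_v) \cap \GLtn_r(F_v)$. This follows because $\det g \in \A^{\times n}$ if and only if $\det g_v \in F_v^{\times n}$ at every place $v$: writing $\det g = a^n$ with $a \in \A^\times$ gives $\det g_v = a_v^n$, and conversely any family of local $n$-th roots can be assembled adelically once $\det g_v \in \OFv^\times$ (which forces $a_v \in \OFv^\times$) at almost all $v$.

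At each place, Mezo's construction recalled in Section \ref{SS:Mezo} gives a finite decomposition
\[
\pi_v|_{\GLtn_r(F_v)} = \bigoplus_{j \in J_v} \pi_v^{(j)}
\]
into irreducible pieces, each of which inherits an invariant Hermitian form from $\pi_v$ (distinct isotypic components are automatically orthogonal under the inner product, and within an isotypic component one orthogonalizes). Applying distributivity of the restricted tensor product over these finite local direct sums, together with the identification of $\GLtn_r(\A)$ above, yields
\[
\pi|_{\GLtn_r(\A)} = \bigoplus_i \pin_i,
\]
where each $\pin_i$ is a restricted tensor product of the local constituents $\pi_v^{(j_v)}$, hence an irreducible unitary representation of $\GLtn_r(\A)$ by the usual tensor product theorem for restricted direct products.

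The main technical obstacle lies in making the distributivity step rigorous at almost all unramified places. Because $\mu_n \subseteq F$ forces $\OFv^\times \not\subseteq F_v^{\times n}$ for essentially all $v$ with $|n|_{F_v}=1$, the subgroup $\kappa(K_v)$ is not contained in $\GLtn_r(F_v)$, so the $K_v$-spherical vector of $\pi_v$ need not lie in a single summand $\pi_v^{(j)}$ but may have nonzero components across several of them. One circumvents this by fixing, at each such $v$, an orthogonal basis of the (finite-dimensional) $(\kappa(K_v) \cap \GLtn_r(F_v))$-fixed subspace of $\pi_v$ consisting of vectors each lying in a single $\pi_v^{(j)}$; the summands $\pin_i$ of the global decomposition are then indexed by sequences $(j_v)$ compatible with this choice of distinguished vectors at almost all places, producing a countable discrete direct sum as required.
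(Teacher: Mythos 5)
The paper's proof is a one-liner: $\pi|_{\GLtn_r(\A)}$ is admissible (since $\GLtn_r(\A)$ is open in $\GLt_r(\A)$, compact opens of the former are compact opens of the latter) and unitary, and an admissible representation carrying a positive-definite invariant Hermitian form is semisimple --- one checks on each finite-dimensional $K$-fixed subspace that the orthogonal complement of a subrepresentation is a complementary subrepresentation. No restricted tensor product structure is needed. Your approach, by contrast, attempts to decompose $\otimest'_v \pi_v$ by distributing the restricted tensor product over the local decompositions $\pi_v|_{\GLtn_r(F_v)}=\bigoplus_j\pi_v^{(j)}$, and here there is a genuine gap.

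You have correctly identified the obstacle: $\kappa(K_v)\not\subseteq\GLtn_r(F_v)$, so the distinguished vector $f_v^\circ$ defining $\pi=\otimest'_v\pi_v$ need not lie in a single summand $\pi_v^{(j)}$. But the proposed circumvention does not close the gap. Replacing $f_v^\circ$ by a new family of $(\kappa(K_v)\cap\GLtn_r(F_v))$-fixed vectors, each lying in a single $\pi_v^{(j)}$, changes the underlying vector space: the restricted tensor product is a concrete span of pure tensors that equal the distinguished vector at almost every place, and swapping the distinguished vector at infinitely many $v$ produces a different subspace of the full tensor product. Concretely, the vector $\otimes_v f_v^\circ$ --- i.e., the image of $1$ under the structure maps --- lies in $\pi|_{\GLtn_r(\A)}$ but, whenever $f_v^\circ$ has nonzero components in two or more summands at infinitely many $v$, it cannot be written as a \emph{finite} sum of pure tensors each eventually equal to one of your new basis vectors; formally expanding $\otimes_v f_v^\circ=\otimes_v\bigl(\sum_j f_v^{(j)}\bigr)$ produces an infinite sum that does not converge in the algebraic restricted tensor product. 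Hence the direct sum $\bigoplus_{(j_v)}\otimest'_v\pi_v^{(j_v)}$ you construct, indexed by eventually-constant sequences, is not $\pi|_{\GLtn_r(\A)}$, and no isomorphism between them is established. Indeed, knowing that $\pi|_{\GLtn_r(\A)}$ decomposes into irreducibles that \emph{are} restricted tensor products of the $\pi_v^{(j)}$ already \emph{presupposes} complete reducibility, which is the content of the lemma; the argument becomes circular without the abstract unitarity/admissibility input. Note also that your distributivity step would go through only if $f_v^\circ$ were concentrated in a single $\pi_v^{(j)}$ at almost all $v$, and you have given no reason to expect this; the local restriction $\pi_v|_{\GLtn_r(F_v)}$ is in general reducible even for unramified $\pi_v$.
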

\begin{proof}
This follows from the admissibility and unitarity of $\pi|_{\GLtn_r{\A}}$.
\end{proof}

The lemma implies that the restriction $\pi_i\|_{\GLtn_{r_i}(\A)}$ is
also completely reducible. (See the notation section for
the notation $\|$.) Hence each irreducible constituent of
$\pi_i\|_{\GLtn_{r_i}(\A)}$ is a subrepresentation. Let
\[
\pin_i\subseteq\pi_i
\]
be an irreducible subrepresentation. Then each vector $f\in\pin_i$ is the restriction to $\GLtn_{r_i}(\A)$
of an automorphic form on $\GLt_{r_i}(\A)$. Hence one can naturally
view each vector $f\in\pin_i$ as a function on the group
\[
H_i:=\GL_{r_i}(F)\GLtn_{r_i}(\A).
\]
Namely the representation $\pin_{i}$ is an
irreducible representation of the group $\GLtn_{r_i}(\A)$ realized in
a space of ``automorphic forms on $H_i$''.

Note that $H_i$ is indeed a group and
moreover it is closed in $\GLt_r(\A)$, which can be shown
by using Lemma \ref{L:discrete_sub}. Also note that each element in
$H_i$ is of the form $(h_i,\xi_i)$ for $h_i\in\GL_{r_i}(F)\GL_{r_i}(\A)$
and $\xi_i\in\mu_n$. By the product formula for the
Hilbert symbol and the block-compatibility of the cocycle $\tau_M$, we
have the natural surjection
\begin{equation}\label{E:global_surjection}
H_1\times\cdots\times H_k\rightarrow M(F)\Mtn(\A)
\end{equation}
given by the map $((h_1,\xi_1),\dots,(h_k,\xi_k))\mapsto
(h_1\cdots h_k,\xi_1\cdots\xi_k)$ because $(\det(h_i),\det(h_j))_\A=1$
for all $i, j=1,\dots, k$.

Now we can construct a metaplectic tensor product of $\pin_1,\dots,\pin_k$, which
is an ``automorphic representation'' of $\Mtn(\A)$ realized in a space
of ``automorphic forms on $M(F)\Mtn(\A)$'' as follows.

\begin{Prop}\label{P:tensor_product_Mtn}
Let
\[
V_{\pin_1}\otimes\cdots\otimes V_{\pin_k}
\]
be the space of functions on the direct product
$H_1\times\dots\times H_k$ which gives rise to an irreducible
representation of $\GLtn_{r_1}(\A)\times\dots\times \GLtn_{r_i}(\A)$
which acts by right translation. Then each function in this space can be viewed as a
function on the group
$M(F)\Mtn(\A)$, namely
it factors through the surjection
as in (\ref{E:global_surjection}) and thus gives rise to an
representation of $\Mtn(\A)$, which we
denote by
\[
\pin:=\pin_1\,\otimest\cdots\otimest\,\pin_k.
\]
Moreover each function in $V_{\pin}=V_{\pin_1}\otimes\cdots\otimes
V_{\pin_k}$ is ``automorphic'' in the sense that it is left invariant
on $M(F)$.
\end{Prop}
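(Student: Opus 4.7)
The plan is to verify three things in order: (a) that each pure tensor $f_1\otimes\cdots\otimes f_k$ descends through the surjection (\ref{E:global_surjection}) to a well-defined function on $M(F)\Mtn(\A)$; (b) that the right action of $\GLtn_{r_1}(\A)\times\cdots\times\GLtn_{r_k}(\A)$ similarly descends and realizes $\pin_1\,\otimest\cdots\otimest\,\pin_k$ as a representation of $\Mtn(\A)$; and (c) that the descended functions are left $M(F)$-invariant on $M(F)\Mtn(\A)$. A preliminary step used throughout: each $f_i\in V_{\pin_i}$ is the restriction to $\GLtn_{r_i}(\A)$ of an automorphic form on $\GLt_{r_i}(\A)$, so it extends uniquely to a left $\GL_{r_i}(F)$-invariant function on $H_i$ by $f_i(\gamma g):=f_i(g)$. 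This is unambiguous because $\GL_{r_i}(F)\cap\GLtn_{r_i}(\A)=\GL^{(n)}_{r_i}(F)$ and the ambient automorphy makes the two definitions agree on the overlap.

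For (a) and (b), I would identify the kernel of (\ref{E:global_surjection}) as $\{((I_{r_1},\xi_1),\ldots,(I_{r_k},\xi_k)):\xi_1\cdots\xi_k=1\}$. The genuineness of each $f_i$ yields
\[
\prod_{i=1}^{k}f_i\bigl((I_{r_i},\xi_i)\cdot(h_i,\eta_i)\bigr)=\Bigl(\prod_{i=1}^{k}\xi_i\Bigr)\prod_{i=1}^{k}f_i(h_i,\eta_i),
\]
which equals $\prod_i f_i(h_i,\eta_i)$ exactly on the kernel, so pure tensors, and hence the whole space $V_{\pin_1}\otimes\cdots\otimes V_{\pin_k}$, descend. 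The same calculation with right multiplications shows the right action of the kernel on tensors is trivial, so the action of $\prod_i\GLtn_{r_i}(\A)$ descends to an action of $\Mtn(\A)$ that agrees with $\pin_1\,\otimest\cdots\otimest\,\pin_k$ on pure tensors.

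For (c), I would take $\gamma=\diag(\gamma_1,\ldots,\gamma_k)\in M(F)$ and invoke Lemma \ref{L:splitting_cMPt}, which gives $\s(\gamma)=\s_{r_1}(\gamma_1)\cdots\s_{r_k}(\gamma_k)$ in $\cMt(\A)$; a priori block-compatibility of $\tau_M$ would introduce Hilbert symbol cross-terms $(\det\gamma_i,\det\gamma_j)_\A$, but these all vanish by the product formula since $\det\gamma_i,\det\gamma_j\in F^\times$. Consequently, if $((h_1,\xi_1),\ldots,(h_k,\xi_k))$ is any preimage of $m\in M(F)\Mtn(\A)$ under (\ref{E:global_surjection}), then a preimage of $\s(\gamma)\cdot m$ is $\bigl(\s_{r_i}(\gamma_i)\cdot(h_i,\xi_i)\bigr)_{i=1}^{k}$, and the left $\GL_{r_i}(F)$-invariance of each $f_i$ on $H_i$ gives $f(\s(\gamma)\cdot m)=f(m)$ at once.

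I expect (c) to be the step that requires the most care: the natural worry is that when the global splitting $\s$ of $M(F)$ is resolved into its $\GL_{r_i}$-block components, extra Hilbert symbol factors couple the blocks and obstruct factorwise invariance. Lemma \ref{L:splitting_cMPt}, whose proof rests on the product formula applied to $(\det\gamma_i,\det\gamma_j)_\A$, is precisely what rules this out; once this is in hand, the invariance is a coordinatewise application of the ambient $\GL_{r_i}(F)$-automorphy of each $f_i$.
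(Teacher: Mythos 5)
Your argument is correct and follows essentially the same route as the paper. The paper carries out the block-compatibility unwinding of $\tau_M$ explicitly inside the automorphy computation (rather than packaging it through Lemma~\ref{L:splitting_cMPt} and the homomorphism property of the surjection~(\ref{E:global_surjection}) as you do), but the operative ingredients --- the product formula annihilating the cross-term Hilbert symbols $(\det\gamma_i,\det g_j)_\A$, genuineness to descend through the kernel, and the ambient $\GL_{r_i}(F)$-automorphy of each $f_i$ --- are identical, so your version is just a cleaner packaging of the same computation.
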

\begin{proof}
Since $\pi_i$ is
genuine, for each $f_i\in V_{\pin_i}$ and
$g\in H_i$, we have $f_i(g(1,\xi))=f_i((1,\xi)g)=\xi f_i(g)$ for
all $\xi\in\mu_n$. Now the kernel of the  map
(\ref{E:global_surjection}) consists of the elements of the form
$((I_{r_1},\xi_1),\dots,(I_{r_k},\xi_k))$ with
$\xi_1\cdots\xi_k=1$. Hence each $f_1\otimes\cdots\otimes f_k\in
V_{\pin_1}\otimes\cdots\otimes V_{\pin_k}$, viewed as a function on
the direct product $H_1\times\cdots\times H_k$, factors through the
map (\ref{E:global_surjection}), which we denote by
$f_1\otimest\cdots\otimest f_k$. Namely we can naturally define a
function $f_1\otimest\cdots\otimest f_k $ on $M(F)\Mtn(\A)$ by
\[
(f_1\otimest\cdots\otimest f_k)(
\begin{pmatrix}h_1&&\\ &\ddots&\\ &&h_k\end{pmatrix},\xi)
=\xi f_1(h_1,1)\cdots f_k(h_k,1).
\]

One can see each function $f_1\otimest\cdots\otimest f_k$ is ``automorphic'' as follows: For
$\begin{pmatrix}\gamma_1&&\\ &\ddots&\\
  &&\gamma_k\end{pmatrix}\in M(F)$ and $\begin{pmatrix}g_1&&\\
  &\ddots&\\ &&g_k\end{pmatrix}\in M(F)M^{(n)}(\A)$, we have
\allowdisplaybreaks\begin{align*}
&(f_1\otimest\cdots\otimest f_k)\Big(\s\begin{pmatrix}\gamma_1&&\\ &\ddots&\\
  &&\gamma_k\end{pmatrix}
(\begin{pmatrix}g_1&&\\ &\ddots&\\ &&g_k\end{pmatrix},\xi)\Big)\\
=&(f_1\otimest\cdots\otimest f_k)\Big((\begin{pmatrix}\gamma_1&&\\ &\ddots&\\
  &&\gamma_k\end{pmatrix},\prod_{i=1}^ks_{r_i}(\gamma_i)^{-1})
(\begin{pmatrix}g_1&&\\ &\ddots&\\
  &&g_k\end{pmatrix},\xi)\Big)\quad\text{by definition of $\s$}\\
=&(f_1\otimest\cdots\otimest f_k)\Big(\begin{pmatrix}\gamma_1g_1&&\\ &\ddots&\\
  &&\gamma_kg_k\end{pmatrix},\xi\prod_{i=1}^ks_{r_i}(\gamma_i)^{-1}
\tau_M(\begin{pmatrix}\gamma_1&&\\ &\ddots&\\ &&\gamma_k\end{pmatrix},
\begin{pmatrix}g_1&&\\ &\ddots&\\ &&g_k\end{pmatrix})\Big)\\
=&(f_1\otimest\cdots\otimest f_k)\Big(\begin{pmatrix}\gamma_1g_1&&\\ &\ddots&\\
  &&\gamma_kg_k\end{pmatrix},\xi\prod_{i=1}^ks_{r_i}(\gamma_i)^{-1}\tau_{r_i}(\gamma_i,g_i)\Big)
\quad\text{by block-compatibility of $\tau_M$}\\
=&\left(\xi\prod_{i=1}^ks_{r_i}(\gamma_i)^{-1}\tau_{r_i}(\gamma_i,g_i)\right)
\left(\prod_{i=1}^k f_i(\gamma_ig_i,1)\right)
\quad\text{by definition of $f_1\otimest\cdots\otimest f_k$}\\
=&\xi\prod_{i=1}^kf_i\Big(\gamma_ig_i,s_{r_i}(\gamma_i)^{-1}\tau_{r_i}(\gamma_i,g_i)\Big)
\quad\text{because each $f_i$ is genuine}\\
=&\xi\prod_{i=1}^kf_i\Big((\gamma_i, s_{r_i}(\gamma_i)^{-1})(g_i,1)\Big)
\quad\text{by definition of $\tau_{r_i}$}\\
=&\xi\prod_{i=1}^kf_i(\s_{r_i}(\gamma_i)(g_i,1))
\quad\text{by definition of $\s_{r_i}$}\\
=&\xi\prod_{i=1}^kf_i(g_i,1)
\quad\text{by automorphy of $f_i$}\\
=&(f_1\otimest\cdots\otimest f_k) (\begin{pmatrix}g_1&&\\ &\ddots&\\
  &&g_k\end{pmatrix},\xi)\quad\text{by definition of $f_1\otimest\cdots\otimest f_k$} .
\end{align*}
\end{proof}

Just like the local case, we would like to extend the representation
$\pin$ to a representation of $A_{\Mt}(\A)\Mtn(\A)$ by letting
$A_{\Mt}(\A)$ act as a character. This is certainly possible by
choosing an appropriate character because
$A_{\Mt}(\A)\cap\Mtn(\A)$ is in the center of $\Mtn(\A)$. To
ensure the resulting representation is automorphic, however, one needs
extra steps to do it. 

For this purpose, let us, first, define
\begin{equation}\label{E:F_points_of_AM}
A_{\Mt}\Mtn(F):=A_{\Mt}(\A)\Mtn(\A)\cap\s(M(F)).
\end{equation}
Note that this is not necessarily the same as
$A_{\Mt}(F)\Mtn(F)$. Also let
\[
H:=A_{\Mt}\Mtn(F)\Mtn(\A).
\]
By Proposition \ref{P:M(F)_is_discrete}, the
image of $\s(M(F))$ (and hence $A_{\Mt}\Mtn(F)$) in the quotient
$\Mtn(\A)\backslash\Mt(\A)$ is discrete. Hence  $H$ is a
closed (and hence locally compact) subgroup of $\Mt(\A)$ by using Lemma
\ref{L:discrete_sub} with $G=\Mt(\A), Y=\Mtn(\A)$ and
$\Gamma=A_{\Mt}\Mtn(F)$. Also note
that the group $A_{\Mt}(\A)$ commutes pointwise with the group
$H$ by Proposition \ref{P:Z_M_commute_Mtn} and hence $A_{\Mt}(\A)\cap
H$ is in the center of $H$.

We need the following subtle but important lemma.
\begin{Lem}\label{H:central_character_on_H}
There exists
a character $\chi$ on the center $Z_H$ of $H$ such that
$f(ah)=\chi(a)f(h)$ for $a\in Z_H$, $h\in H$ and $f\in\pin$. (Note
that each $f\in\pin$ is a function on $M(F)\Mtn(\A)$ and hence can be
viewed as a function on $H$.)
\end{Lem}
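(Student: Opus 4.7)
The plan is to invoke Schur's Lemma on the irreducible $\Mtn(\A)$-representation $\pin$: using that $a\in Z_H$ commutes with $h\in H$, the desired identity $f(ah)=\chi(a)f(h)$ is equivalent to $R_af=\chi(a)f$, where $R_a$ denotes right translation. Since $a$ centralizes $\Mtn(\A)\subseteq H$, the operator $R_a$ commutes with right translation by $\Mtn(\A)$, so once we know $R_a$ preserves $V_{\pin}$ it must act as a scalar by Schur.

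First I would describe $Z_H$ concretely. Because $A_{\Mt}(\A)$ centralizes $\Mtn(\A)$ by Proposition~\ref{P:Z_M_commute_Mtn} and $H\subseteq A_{\Mt}(\A)\Mtn(\A)$, every $h\in H$ admits a (non-unique) decomposition $h=zm$ with $z\in A_{\Mt}(\A)$ and $m\in\Mtn(\A)$, and the condition $h\in Z_H$ forces $m\in Z_{\Mtn(\A)}$. Hence
\[
Z_H \;=\; (A_{\Mt}(\A)\cdot Z_{\Mtn(\A)})\cap H,
\]
with the ambiguity in the decomposition lying in $A_{\Mt}(\A)\cap\Mtn(\A)\subseteq Z_{\Mtn(\A)}$. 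Denote by $\omega_0$ the central character of $\pin$ on $Z_{\Mtn(\A)}$. Granting that $R_z$ preserves $V_{\pin}$ for $z\in A_{\Mt}(\A)\cap H$ (see the obstacle below), Schur gives $R_z f=c(z)f$ for a character $c$ on $A_{\Mt}(\A)\cap H$ that necessarily extends $\omega_0|_{A_{\Mt}(\A)\cap\Mtn(\A)}$; setting $\chi(zm):=c(z)\omega_0(m)$ for $z\in A_{\Mt}(\A)$, $m\in Z_{\Mtn(\A)}$ with $zm\in Z_H$ then defines a character on $Z_H$, the well-definedness being immediate from the compatibility of $c$ and $\omega_0$ on the intersection.

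Next I would verify the pointwise identity $f(ah)=\chi(a)f(h)$. For $h\in H$, write $h=\gamma_h m_h$ with $\gamma_h\in A_{\Mt}\Mtn(F)$ and $m_h\in\Mtn(\A)$. Centrality of $a$ together with the left-$\s(M(F))$-invariance of elements of $V_{\pin}$ (Proposition~\ref{P:tensor_product_Mtn}) reduces the identity to $f(am_h)=\chi(a)f(m_h)$; decomposing $a=z\tilde m$ and applying the scalar actions of $R_z$ (by $c$) and $R_{\tilde m}$ (by $\omega_0$) then yields the result.

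The main obstacle is justifying that $R_z$ preserves $V_{\pin}$ for $z\in A_{\Mt}(\A)\cap H$. Decomposing $z=\gamma_z m_z$ with $\gamma_z\in A_{\Mt}\Mtn(F)\subseteq\s(M(F))$ and $m_z\in\Mtn(\A)$, left-invariance identifies $R_z f$ on $\Mtn(\A)$ with left translation by $m_z$, which is not \emph{a priori} scalar because $m_z$ need not lie in $Z_{\Mtn(\A)}$. To handle this I would use the explicit construction $\pin=\pin_1\otimest\cdots\otimest\pin_k$ together with the inclusion $A_{\Mt}(\A)\subseteq\widetilde{Z_M}(\A)$, tracking the action of $z$ through each $\widetilde{Z_{\GL_{r_i}}}$-component by means of Lemma~\ref{L:center_GLtt} and the central characters of the individual $\pin_i$, so that $R_z$ respects the tensor factorization and hence preserves $V_{\pin}$ before Schur is invoked.
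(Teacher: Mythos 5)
Your plan---apply Schur's lemma to the irreducible $\Mtn(\A)$-representation $\pin$---has a genuine gap, and it is exactly the one you flag at the end. After reducing $a\in Z_H$ to a factor $z\in A_{\Mt}(\A)\cap H$ and writing $z=\gamma_z m_z$ with $\gamma_z\in A_{\Mt}\Mtn(F)$ and $m_z\in\Mtn(\A)$, the restriction of $R_z f$ to $\Mtn(\A)$ is the shift $m\mapsto f(m_z m)$. But $V_{\pin}=V_{\pin_1}\otimes\cdots\otimes V_{\pin_k}$ is, by construction, stable only under right translation by $\Mtn(\A)$; there is no reason for it to be stable under this left shift, and Schur cannot be invoked until that stability is established. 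Your proposed fix does not close the gap: passing to the tensor factors merely reproduces the problem on each factor, since $V_{\pin_i}$ is an irreducible subspace of $\pi_i\|_{\GLtn_{r_i}(\A)}$ and is not a priori stable under left translation by the $\GLtn_{r_i}(\A)$-component of $m_z$, nor under right translation by $\widetilde{Z_{\GL_{r_i}}}(\A)$---Lemma~\ref{L:center_GLtt} gives commutation of the groups, not invariance of the chosen subspace under the extra operators.

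The paper sidesteps this by enlarging the representation rather than analyzing an operator on a space that is too small. For each $i$ it chooses an irreducible subrepresentation $\pin_{H_i}$ of $\pi_i\|_{H_i}$ whose restriction to $\GLtn_{r_i}(\A)$ contains $\pin_i$, forms $\pin_{H_1}\otimest\cdots\otimest\pin_{H_k}$ as a representation of $M(F)\Mtn(\A)$, and selects an irreducible $H$-subrepresentation $\pin_H$ of its restriction to $H$ with $V_{\pin}\subseteq V_{\pin_H}$; unitarity of the $\pi_i$ gives the complete reducibility needed at each step. The space $V_{\pin_H}$ is then closed under right translation by all of $H$ by construction, and since $\pin_H$ is irreducible and unitary and $H$ is locally compact, $\pin_H$ has a central character $\chi$ on $Z_H$, yielding $f(ah)=\chi(a)f(h)$ for every $f\in V_{\pin_H}\supseteq V_{\pin}$. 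In effect, the stability of $V_{\pin}$ under $Z_H$ is a consequence of the lemma, not an available input, so the Schur-based route cannot be run on $V_{\pin}$ itself.
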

\begin{proof}
Let $\pin_{H_i}$ be an irreducible subrepresentation of
$\pi_i\|_{H_i}$ such that
$\pin_i\subseteq\pin_{H_i}\|_{\GLtn_{r_i}(\A)}$. Analogously to the
construction of $\pin=\pin_1\otimest\cdots\otimest\pin_k$, one can
construct the representation
$\pin_{H_1}\otimest\cdots\otimest\pin_{H_k}$ of $M(F)\Mtn(\A)$. (The
space of this representation is again a space of ``automorphic forms
on $M(F)\Mtn(\A)$'' but this time it is an irreducible representation
of the group $M(F)\Mtn(\A)$, rather than just $\Mtn(\A)$. The
construction is completely the same as $\pin$ and one can just modify
the proof of Proposition \ref{P:tensor_product_Mtn}.) Then one can see
\[
V_{\pin}\subseteq V_{\pin_{H_1}\otimest\cdots\otimest\pin_{H_k}},
\]
and
\[
(\pin_{H_1}\otimest\cdots\otimest\pin_{H_k})\|_{\Mtn(\A)}\cong
(\pin_{H_1}\otimest\cdots\otimest\pin_{H_k})|_{\Mtn(\A)}.
\]
Let $\pin_H$ be an irreducible subrepresentation of
$(\pin_{H_1}\otimest\cdots\otimest\pin_{H_k})|_H$ such that
\[
V_{\pin}\subseteq V_{\pin_H},
\]
where both sides are spaces of functions on $M(F)\Mtn(\A)$.
Such $\pin_H$ certainly exists, since each $\pi_i$ is unitary and the
unitary structure descends to
$\pin_{H_1}\otimest\cdots\otimest\pin_{H_k}$ making it
unitary. Now since $\pin_H$ is unitary and $H$ is locally
compact, $\pin_H$ admits a central character $\chi$. Thus for each $f\in
V_{\pin_H}$ and {\it a fortiori} each $f\in V_{\pin}$, we have
$f(ah)=\chi(a)f(h)$ for $a\in Z_H$ and $h\in H$.
\end{proof}

In the above lemma, if $a\in Z_H\cap\s(M(F))$, we have $\chi (a)=1$ by
the automorphy of $f$, namely $\chi$ is a ``Hecke character on $Z_H$''.

Now define a character $\omega$ on $A_{\Mt}(\A)$ such that $\omega$ is
trivial on $A_{\Mt}(F)$ and 
\[
\omega|_{A_{\Mt}(\A)\cap H}=\chi|_{A_{\Mt}(\A)\cap H}.
\]
Such $\omega$ certainly exists because $\chi|_{A_{\Mt}(\A)\cap H}$ is
viewed as a character on the group $\s(M(F))\cap(
A_{\Mt}(\A)\cap H)\backslash A_{\Mt}(\A)\cap H$, which is a locally compact
abelian group naturally viewed as a closed subgroup of
the locally compact abelian group $A_{\Mt}(F)\backslash A_{\Mt}(\A)$,
and thus it can be extended to $A_{\Mt}(F)\backslash A_{\Mt}(\A)$.

For each $f\in {\pin}$, viewed as a function on
$H=A_{\Mt}\Mtn(F)\Mtn(\A)$, we extend it to a function
$f_\omega:A_{\Mt}(\A)H\rightarrow\C$ by
\[
f_\omega(ah)=\omega(a)f(h),\quad\text{for all $a\in A_{\Mt}(\A)$ and
  $h\in H$}.
\]
This is well-defined because of our choice of $\omega$, and 
\begin{Lem}\label{L:automorphy_of_f_omega}
The function $f_\omega$ is a function on
$A_{\Mt}(\A)\Mtn(\A)$ such that
\[
f_\omega(\gamma m)=f_\omega(m)
\]
for all $\gamma\in A_{\Mt}\Mtn(F)$ and $m\in\Mtn(\A)$. Namely
$f_\omega$ is an ``automorphic form on $A_{\Mt}(\A)\Mtn(\A)$''.
\end{Lem}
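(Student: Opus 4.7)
The plan is to establish the left $A_{\Mt}\Mtn(F)$-invariance by a direct computation that exploits the central character $\chi$ from Lemma \ref{H:central_character_on_H} together with the compatibility $\omega|_{A_{\Mt}(\A)\cap H}=\chi|_{A_{\Mt}(\A)\cap H}$ built into the choice of $\omega$. The first observation is that, since $A_{\Mt}\Mtn(F)\subseteq A_{\Mt}(\A)\Mtn(\A)$, we have $A_{\Mt}(\A)H=A_{\Mt}(\A)\Mtn(\A)$, so the domain of $f_\omega$ is indeed all of $A_{\Mt}(\A)\Mtn(\A)$.

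Next I would fix $\gamma\in A_{\Mt}\Mtn(F)$ and $m\in\Mtn(\A)$, and use the defining formula (\ref{E:F_points_of_AM}) to write $\gamma=a_0n_0$ with $a_0\in A_{\Mt}(\A)$ and $n_0\in\Mtn(\A)$. The key point is that $a_0=\gamma n_0^{-1}\in A_{\Mt}(\A)\cap H$ lies in the center $Z_H$ of $H$: the group $H$ is generated by $\Mtn(\A)$ and by $A_{\Mt}\Mtn(F)\subseteq A_{\Mt}(\A)\Mtn(\A)$, and the element $a_0\in A_{\Mt}(\A)\subseteq\widetilde{Z_M}(\A)$ commutes with $\Mtn(\A)$ by Proposition \ref{P:Z_M_commute_Mtn} and with $A_{\Mt}(\A)$ because $A_{\Mt}$ is abelian.

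The verification is then a short chain of equalities. Using $n_0 m=a_0^{-1}\gamma m$ and noting that $\gamma m\in H$ (both factors lie in $H$), I would compute
\begin{align*}
f_\omega(\gamma m)
&=\omega(a_0)\,f(n_0m)=\omega(a_0)\,f(a_0^{-1}\gamma m)\\
&=\omega(a_0)\chi(a_0)^{-1}f(\gamma m)=\omega(a_0)\chi(a_0)^{-1}f(m)=f_\omega(m),
\end{align*}
where the third equality is the central character property of $f$ on $H$, the fourth is the left $\s(M(F))$-invariance of $f$ established in Proposition \ref{P:tensor_product_Mtn} applied to $\gamma\in\s(M(F))$, and the last uses $\omega(a_0)=\chi(a_0)$.

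The only genuinely delicate step — and the reason for the careful construction of $\chi$ in Lemma \ref{H:central_character_on_H} and its extension to $\omega$ — is checking that $a_0\in Z_H$ and that $\omega$ and $\chi$ agree there. Once those are in hand, the computation is mechanical; in particular the answer is independent of the chosen decomposition $\gamma=a_0n_0$, because any two choices of $a_0$ differ by an element of $A_{\Mt}(\A)\cap\Mtn(\A)\subseteq A_{\Mt}(\A)\cap H$, on which $\omega$ already coincides with $\chi$.
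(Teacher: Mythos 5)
Your argument is correct, but it is considerably longer than it needs to be, and the extra work you do duplicates machinery the paper has already discharged just before the lemma. The key observation you seem to have passed over is that the element $\gamma m$ already lies in $H$ (both $\gamma\in A_{\Mt}\Mtn(F)\subseteq H$ and $m\in\Mtn(\A)\subseteq H$), so one may evaluate $f_\omega$ on it with $a=1$: by the defining formula, $f_\omega|_H=f$. The automorphy statement then reduces immediately to $f(\gamma m)=f(m)$, which is the left $\s(M(F))$-invariance of $f$ established in Proposition \ref{P:tensor_product_Mtn} (and $A_{\Mt}\Mtn(F)\subseteq\s(M(F))$ by definition). Together with the equality $A_{\Mt}(\A)H=A_{\Mt}(\A)\Mtn(\A)$, which you do note, this is the entire content of the lemma.

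What you did instead — decomposing $\gamma=a_0n_0$ with $a_0\in A_{\Mt}(\A)$, $n_0\in\Mtn(\A)$, verifying $a_0\in A_{\Mt}(\A)\cap H\subseteq Z_H$, and then cancelling $\omega(a_0)$ against $\chi(a_0)$ — is a perfectly valid computation, but it is precisely the check that $f_\omega$ is \emph{well-defined} on overlapping descriptions $ah=a'h'$, which the paper states (and justifies via the choice of $\omega$) in the sentence immediately preceding the lemma. Since you are entitled to take that well-definedness as given, you can evaluate $f_\omega(\gamma m)$ with whichever decomposition is most convenient, and $a=1$, $h=\gamma m$ is the one that makes the argument a one-liner. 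Your closing remark that the answer is independent of the chosen $a_0$ is exactly this well-definedness point; once you invoke it up front rather than re-proving it inline, the whole proof collapses to the short version. Nothing is wrong, and all your auxiliary claims (the decomposition exists, $a_0\in Z_H$ via Proposition \ref{P:Z_M_commute_Mtn} and commutativity of $A_{\Mt}$, $\omega|_{A_{\Mt}(\A)\cap H}=\chi|_{A_{\Mt}(\A)\cap H}$) are accurate and match observations made in the text — you have simply rebuilt the well-definedness scaffolding instead of standing on it.
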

\begin{proof}
The lemma follows from the definition of $f_\omega$ and the obvious
equality $A_{\Mt}(\A)H=A_{\Mt}(\A)\Mtn(\A)$.
\end{proof}

The group $A_{\Mt}(\A)\Mtn(\A) $ acts on the space of functions of the
form $f_\omega$, giving rise to an ``automorphic
representation'' $\pin_\omega$ of $A_{\Mt}(\A)\Mtn(\A) $, namely
\[
V_{\pin_\omega}:=\{f_\omega:f\in\pin\}
\]
and $A_{\Mt}(\A)$ acts as the character $\omega$. As abstract
representations, we have
\begin{equation}\label{E:pin_omega}
\pin_\omega\cong\omega\cdot\pin
\end{equation}
where by $\omega\cdot\pin$ is the representation of the group
$A_{\Mt}(\A)\Mtn(\A)$ extended from $\pin$ by letting $A_{\Mt}(\A)$
act via the character $\omega$.

\quad

We need to make sure the relation between $\pin_\omega$ and its local
analogue we constructed in the previous section. For this, let us start with

\begin{Lem}\label{L:local_global}
Let $\pi\cong\otimest'_v\pi_v$ be a genuine admissible representation of
$\Mt(\A)$. Let $\pin$ be an irreducible quotient of the restriction
$\pi|_{\Mtn(\A)}$. If we write
\[
\pin\cong\underset{v}{\otimest}'\pin_v,
\]
then each $\pin_v$ is an irreducible constituent of the restriction $\pi_v|_{\Mtn_r(F_v)}$.
\end{Lem}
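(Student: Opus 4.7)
The plan is to reduce the claim to linear algebra on restricted tensor products at a single local place, using the surjection $\rho:\prod'_v \Mt(F_v) \twoheadrightarrow \Mt(\A)$ of (\ref{E:surjection}). Since every representation in sight is genuine, $\ker\rho$ acts trivially, so pulling back along $\rho$ I may freely identify $\pi$ with $\otimes'_v\pi_v$ as a representation of $\prod'_v\Mt(F_v)$, and likewise $\pin$ with $\otimes'_v\pin_v$ as a representation of $\prod'_v\Mtn(F_v)$. Because $\Mtn(F_v)=\Mt(F_v)\cap\Mtn(\A)$ place-by-place, the inclusion $\Mtn(\A)\subset\Mt(\A)$ respects the restricted-direct-product structure, so under these identifications $\pi|_{\Mtn(\A)}$ becomes $\otimes'_v(\pi_v|_{\Mtn(F_v)})$ and the given quotient map $\pi|_{\Mtn(\A)}\twoheadrightarrow\pin$ becomes a surjective $\prod'_v\Mtn(F_v)$-intertwiner
\[
\Phi:\;\bigotimes{}_v'\,\pi_v|_{\Mtn(F_v)}\;\twoheadrightarrow\;\bigotimes{}_v'\,\pin_v.
\]

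Next I would fix a place $v_0$ and restrict the action to the single factor $\Mtn(F_{v_0})$, which touches only the $v_0$-th slot of pure tensors. My strategy is to extract from $\Phi$ a nonzero $\Mtn(F_{v_0})$-equivariant linear map $\Psi:\pi_{v_0}|_{\Mtn(F_{v_0})}\to\pin_{v_0}$; by irreducibility of $\pin_{v_0}$ such a $\Psi$ is automatically surjective, and the desired conclusion follows at $v_0$. To build $\Psi$, I would choose a pure tensor $\xi=\bigotimes_v x_v$ whose image $\Phi(\xi)=\bigotimes_v y_v$ is a nonzero pure tensor in $\pin$ (possible by surjectivity together with the fact that pure tensors span), and fix linear functionals $\lambda_v$ on $\pin_v$ for $v\neq v_0$ with $\lambda_v(y_v)\neq 0$. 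Then setting
\[
\Psi(z):=\Bigl(\bigotimes_{v\neq v_0}\lambda_v\Bigr)\!\Bigl(\Phi\bigl(z\otimes\bigotimes_{v\neq v_0}x_v\bigr)\Bigr)
\]
produces an $\Mtn(F_{v_0})$-intertwiner with $\Psi(x_{v_0})=\bigl(\prod_{v\neq v_0}\lambda_v(y_v)\bigr)y_{v_0}\neq 0$. Since $\Psi$ is nonzero it must be surjective, exhibiting $\pin_{v_0}$ as an irreducible quotient---and in particular an irreducible constituent---of $\pi_{v_0}|_{\Mtn(F_{v_0})}$. As $v_0$ was arbitrary, this will complete the proof.

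The only subtlety worth flagging is the legitimacy of these pure-tensor manipulations inside the restricted tensor product: away from finitely many places the components $x_v$ and $y_v$ should be chosen to be the distinguished spherical vectors used to form $\otimes'$, so the expression $z\otimes\bigotimes_{v\neq v_0}x_v$ and the ``infinite'' product of functionals make sense in the usual restricted-tensor-product sense. Everything else is standard tensor-product-theorem bookkeeping, and I expect no real obstacle once the local-global setup is arranged properly.
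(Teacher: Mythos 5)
Your overall plan — restrict to a single place $v_0$, plug in a fixed pure-tensor away from $v_0$, and compose with a functional to produce a nonzero $\Mtn(F_{v_0})$-intertwiner $\pi_{v_0}\to\pin_{v_0}$ — is the same strategy the paper uses. But there is a genuine gap in the middle.

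You assert that one can ``choose a pure tensor $\xi=\bigotimes_v x_v$ whose image $\Phi(\xi)=\bigotimes_v y_v$ is a nonzero pure tensor in $\pin$,'' justifying this by surjectivity plus the fact that pure tensors span. That reasoning does not establish the claim: surjectivity of a linear map and the fact that pure tensors span the target do \emph{not} imply that some pure tensor in the source maps onto a pure tensor in the target. In general the image of a pure tensor under an intertwiner between restricted tensor products is only a finite sum of pure tensors, and there is no reason a priori that this sum collapses to a single one — especially here, where $\Phi$ is a non-injective surjection from a (typically reducible) restriction $\bigotimes_v'\pi_v|_{\Mtn(F_v)}$ onto the irreducible $\bigotimes_v'\pin_v$. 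Your later formula for $\Psi$ and the nonvanishing of $\Psi(x_{v_0})$ both depend essentially on this unproved factorization.

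The paper's proof handles precisely this point. After picking a pure tensor $\otimes w_v$ with $T(\otimes w_v)\neq 0$, it writes $T\circ i(w)=\sum_{j=1}^t x_j\otimes y_j$ with $x_j\in\pin_{v_0}$ and the $y_j\in\bigotimes'_{v\neq v_0}\pin_v$ \emph{linearly independent}, then chooses a single linear functional $\lambda$ on $\bigotimes_{v\neq v_0}\pin_v$ with $\lambda(y_1)\neq 0$ and $\lambda(y_j)=0$ for $j\geq 2$. The map $U(\otimes x_v)=\lambda(\otimes_{v\neq v_0}x_v)\,x_{v_0}$ is then a well-defined nonzero $\Mtn(F_{v_0})$-intertwiner, and $U\circ T\circ i\neq 0$ by construction. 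To repair your argument, replace the unjustified ``image is a pure tensor'' step by this linear-independence-plus-separating-functional argument; the rest of your proposal then goes through.
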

\begin{proof}
Since $\pin$ is an irreducible quotient, there is a surjective
$\Mtn(\A)$ map
\[
T:{\underset{v}{\otimest'}\pi_v}\rightarrow {\underset{v}{\otimest}'\pin_v}.
\]
Fix a place $v_0$. Since $T\neq0$, there exists a pure tensor
$\otimes w_v\in {{\otimest}'\pi_v}$ such that $T(\otimes w_v)\neq
0$. (Note that, as we have seen, the space of ${\otimest'\pi_v}$ is
the space of the usual restricted tensor product $\otimes'_v\pi_v$.) Define
\[
i:{\pi_{v_0}}\rightarrow {\otimest' \pi_v}
\]
by
\[
i(w)=w\otimes(\otimes_{v\neq v_0} w_v)
\]
for $w\in V_{\pi_{v_0}}$. Then the composite $T\circ i:
{\pi_{v_0}}\rightarrow {\otimes'_v\pin_v}$ is a non-zero
$\Mtn(F_{v_0})$ intertwining. Let $w\in {\pi_{v_0}}$ be such that
$T\circ i(w)\neq 0$. Then $T\circ i(w)$ is a finite linear combination
of pure tensors, and indeed it is written as 
\[
T\circ i(w)=x_1\otimes y_1+\cdots+x_t\otimes y_t,
\]
where $x_i\in {\pin_{v_0}}$ and $y_i\in\otimes'_{v\neq v_0}
{\pin_v}$. Here one can assume that $y_1,\dots,y_t$ are linearly
independent. Let $\lambda: \otimes_{v\neq v_0}
{\pin_v}\rightarrow\C$ be a linear functional such that $\lambda
(y_1)\neq 0$ and $\lambda (y_2)=\cdots=\lambda (y_t)=0$. (Such
$\lambda$ certainly exits because $y_1,\dots,y_t$ are linearly
independent.) Consider the map
\[
U:{\otimest'\pin_v}\rightarrow {\pin_{v_0}}
\]
defined on pure tensors by
\[
U(\otimes x_v)=\lambda (\otimes_{v\neq v_0} x)x_{v_0}.
\]
This is a non-zero $\Mtn(F_v)$ intertwining map. Moreover the
composite $U\circ T\circ i$ gives a non-zero $\Mtn(F_v)$ intertwining
map from ${\pi_{v_0}}$ to ${\pin_{v_0}}$. Hence $\pin_{v_0}$ is an
irreducible constituent of the restriction $\pi_{v_0}|_{\Mtn(F_{v_0})}$.
\end{proof}

By taking $k=1$ in the above lemma, one can see that if one writes
\[
\pin_i\cong\underset{v}{\otimest'}\pin_{i,v}
\]
then each local component $\pin_{i,v}$ is an irreducible constituent
of $\pi_{i,v}|_{\GLt_{r_i}(F_v)}$ where $\pi_{i,v}$ is the
$v$-component of $\pi_i\cong\underset{v}{\otimest'}\pi_{i,v}$. Then one
can see that for $\pin=\pin_1\otimest\cdots\otimest\pin_k$, if we write
$\pin\cong\underset{v}{\otimest'}\pin_v$, we have
\[
\pin_v\cong\pin_{1,v}\otimest\cdots\otimest\pin_{k,v}
\]
where the right hand side is the local metaplectic tensor product
representation of $\Mtn(F_v)$. Also one can see that the character
$\omega$ decomposes as $\omega=\underset{v}{\otimest'}\omega_v$ where
$\omega_v$ is a character on $A_{\Mt(F_v)}$. Hence by
(\ref{E:pin_omega}) we have

\begin{Prop}\label{P:global_local_pin_omega}
As abstract representations of $A_{\Mt}(\A)\Mtn(\A)$, we have
\[
\pin_\omega\cong\underset{v}{\otimest'}\pin_{\omega_v},
\]
where
\[
\pin_{\omega_v}=\omega_v\cdot \pin_{1,v}\otimest\cdots\otimest\pin_{k,v}
\]
is the representation of $A_{\Mt}(F_v)\Mtn(F_v)$ as defined in the previous section.
\end{Prop}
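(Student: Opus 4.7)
The plan is to first decompose each factor $\pin_i$ into a restricted tensor product over places, then use the construction of $\pin$ to decompose it locally, and finally check that the character $\omega$ decomposes compatibly.

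First I would apply Lemma \ref{L:local_global} with $k=1$ to each $\pi_i$ to obtain a decomposition
\[
\pin_i \cong \underset{v}{\otimest'}\,\pin_{i,v},
\]
where each $\pin_{i,v}$ is an irreducible constituent of $\pi_{i,v}|_{\GLtn_{r_i}(F_v)}$. Next, since $\pin = \pin_1 \otimest \cdots \otimest \pin_k$ is constructed (by Proposition \ref{P:tensor_product_Mtn}) by taking the algebraic tensor product $V_{\pin_1} \otimes \cdots \otimes V_{\pin_k}$ and descending through the surjection \eqref{E:global_surjection}, and since the same descent makes sense locally via the local analogue, one obtains a natural identification
\[
\pin \cong \underset{v}{\otimest'}\,(\pin_{1,v} \otimest \cdots \otimest \pin_{k,v}).
\]
The pointwise verification uses the block-compatibility of $\tau_M$ together with the fact that the local and global versions of the surjection from the direct product to the metaplectic group are compatible at each place.

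Next I would decompose $\omega$ as a restricted product of local characters. Since $A_{\Mt}(\A)$ is built as the restricted direct product $\prod'_v A_{\Mt}(F_v)$ by our compatibility convention on $A_{\Mt}$, the continuous character $\omega$ on $A_{\Mt}(\A)$ factors as $\omega = \prod_v \omega_v$ where each $\omega_v$ is a character of $A_{\Mt}(F_v)$. Furthermore, the compatibility condition $\omega|_{A_{\Mt}(\A) \cap H} = \chi|_{A_{\Mt}(\A) \cap H}$ implies that on the intersection $A_{\Mt}(F_v) \cap \Mtn(F_v)$ each $\omega_v$ agrees with the central action of $\pin_{1,v} \otimest \cdots \otimest \pin_{k,v}$, which is precisely the compatibility needed to define the local extension $\pin_{\omega_v} = \omega_v \cdot (\pin_{1,v}\otimest\cdots\otimest\pin_{k,v})$ on $A_{\Mt}(F_v)\Mtn(F_v)$.

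Finally, combining these two decompositions via \eqref{E:pin_omega}, which identifies $\pin_\omega$ as an abstract representation with $\omega \cdot \pin$ (the extension of $\pin$ by $\omega$), one obtains the desired isomorphism
\[
\pin_\omega \;\cong\; \omega \cdot \pin \;\cong\; \underset{v}{\otimest'}\bigl(\omega_v \cdot (\pin_{1,v}\otimest\cdots\otimest\pin_{k,v})\bigr) \;=\; \underset{v}{\otimest'}\,\pin_{\omega_v}.
\]
I expect the main obstacle to be bookkeeping in the second step: one must check that the canonical identification of the space of $\pin$ with a tensor product of local spaces is not merely an identification of vector spaces but intertwines the $A_{\Mt}(\A)\Mtn(\A)$-action with the restricted tensor product of the local $A_{\Mt}(F_v)\Mtn(F_v)$-actions. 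Since the action on $\pin$ is, by construction, inherited factor by factor from the actions on the $\pin_i$'s, and each $\pin_i$ has already been decomposed into local factors by Lemma \ref{L:local_global}, this reduces to tracking the block-compatibility of $\tau_M$ place by place, which is immediate from its definition as a product of local cocycles.
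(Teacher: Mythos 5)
Your proposal is correct and follows essentially the same route as the paper: decompose each $\pin_i$ via Lemma \ref{L:local_global} with $k=1$, observe that the descent defining $\pin$ is compatible with the local descents by the factorization of $\tau_M$, decompose $\omega$ through the restricted product structure of $A_{\Mt}(\A)$, and conclude via the identification $\pin_\omega\cong\omega\cdot\pin$ from \eqref{E:pin_omega}. You add a bit more detail on the character-compatibility check on $A_{\Mt}(F_v)\cap\Mtn(F_v)$ than the paper does, but the argument is the same.
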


\quad

Now that we have constructed the representation $\pin_\omega$ of
$A_{\Mt}(\A)\Mtn(\A)$, we can
construct an automorphic representation of $\Mt(\A)$ analogously to
the local case by inducing it to $\Mt(\A)$, though we need extra care
for the global case. First consider the compactly induced representation
\[
\cInd_{A_{\Mt}(\A)\Mtn(\A)}^{\Mt(\A)}\pin_\omega
=\{\varphi:\Mt(\A)\rightarrow\pin_\omega\}
\]
where $\varphi$ is such that $\varphi(hm)=\pin_\omega(h)\varphi(m)$ for all $
h\in A_{\Mt}(\A)\Mtn(\A)$ and $m\in\Mt(\A)$, and the map $m\mapsto
\varphi(m;1)$ is a smooth function on $\Mt(\A)$ whose support is
compact modulo $A_{\Mt}(\A)\Mtn(\A)$. (Note here that for each
$\varphi\in\Ind_{A_{\Mt}(\A)\Mtn(\A)}^{\Mt(\A)}\pin_\omega$ and
$m\in\Mt(\A)$, $\varphi(m)\in V_{\pin_\omega}$ is a function on
$A_{\Mt}(\A)\Mtn(\A)$. For $m'\in A_{\Mt}(\A)\Mtn(\A)$, we use the
notation $\varphi(m;m')$  for the value of
$\varphi(m)$ at $m'$ instead of writing
$\varphi(m)(m')$.)
Also consider the metaplectic restricted tensor product
\[
\underset{v}{\otimest'}\Ind_{A_{\Mt}(F_v)\Mtn(F_v)}^{\Mt(F_v)}\pin_{\omega_v},
\]
where for almost all $v$ at which all the data defining
$\Ind_{A_{\Mt}(F_v)\Mtn(F_v)}^{\Mt(F_v)}\pin_{\omega_v}$ are unramified,
we choose the spherical vector $\varphi_v^\circ\in
\Ind_{A_{\Mt}(F_v)\Mtn(F_v)}^{\Mt(F_v)}\pin_{\omega_v}$ to be the one
defined by
\[
\varphi_v^\circ(m)=\begin{cases}\pin_{\omega_v}(h)f_v^\circ&\text{if $m=h(k,1)$ for $h\in
    A_{\Mt}(F_v)\Mtn(F_v)$ and
    $(k,1)\in\Mt(\OFv)$};\\
0&\text{otherwise},
\end{cases}
\]
where $f_v^\circ\in\pin_{\omega_v}$ is the spherical vector defining the restricted
metaplectic tensor product
$\pin_\omega=\otimest_v'\pin_{\omega_v}$. (Let us mention that we do not
know if the dimension of the spherical vectors in
$\Ind_{A_{\Mt}(F_v)\Mtn(F_v)}^{\Mt(F_v)}\pin_{\omega_v}$ is one or not.) One has the  injection
\[
T:\underset{v}{\otimest'}\Ind_{A_{\Mt}(F_v)\Mtn(F_v)}^{\Mt(F_v)}\pin_{\omega_v}
\longhookrightarrow \cInd_{A_{\Mt}(\A)\Mtn(\A)}^{\Mt(\A)}\pin_\omega
\]
given by $T(\otimes_v\varphi_v)(m)=\otimes_v
\varphi_v(m_v)\in\otimest_v'\pin_{\omega_v}$. The reason the image of $T$
lies in the compactly induced space is because for almost all $v$, the
support of $\varphi^\circ$ is $A_{\Mt}(F_v)\Mtn(F_v)\Mt(\OFv)$ and for
all $v$ the index of $A_{\Mt}(F_v)\Mtn(F_v)$ in $\Mt(F_v)$ is finite
by (\ref{E:finite_quotient}). (Indeed, the support property and the
finiteness of this index imply that $T$ is actually onto as well,
though we do not use this fact.)

Let 
\[
V(\pin_\omega)=
T\left(\underset{v}{\otimest'}\Ind_{A_{\Mt}(F_v)\Mtn(F_v)}^{\Mt(F_v)}\pin_{\omega_v}\right),
\]
namely $V(\pin_\omega) $ is the image of $T$. For each
$\varphi\in V(\pin_\omega)$, define $\varphit:\Mt(\A)\rightarrow\C$ by
\begin{equation}\label{E:definition}
\varphit(m)=\sum_{\gamma\in  A_{M}M^{(n)}(F)\backslash M(F)}\varphi(\s(\gamma) m;1).
\end{equation}
Let us note that by $A_{M}M^{(n)}(F)$ we mean $p(A_{\Mt}\Mtn(F))$,
which is not necessarily the same as $A_{M}(F)\Mn(F)$, and
\[
\s(A_{M}M^{(n)}(F))=A_{\Mt}\Mtn(F)\subseteq A_{\Mt}(\A)\Mtn(\A).
\]
By the automorphy of $\pin_\omega$, $\varphi$ is left invariant on
$\s(A_{M}M^{(n)}(F))$ and hence the sum is well-defined. Also note
that for each fixed $m\in\Mt(\A)$ the map $m'\mapsto\varphi(m'm;1)$ is
compactly supported modulo
$A_{\Mt}(\A)\Mtn(\A)$. Now by our assumption on $A_{\Mt}$ (Hypothesis
($\ast$)), the image of $M(F)$ is discrete in
$A_M(\A)\Mn(\A)\backslash M(\A)$, and hence the group
$A_{M}M^{(n)}(F)\backslash M(F)$ naturally viewed as a subgroup of
$A_M(\A)\Mn(\A)\backslash M(\A)$ is discrete. Now a discrete subgroup
is always closed by \cite[Lemma 9.1.3 (b)]{Deitmar}. Thus the above
sum is a finite sum, and in particular the sum is convergent. Moreover
one can find $\varphi$ with the property that  the support of the map
$m'\mapsto\varphi(m';1)$ is small enough so that if $\gamma\in
A_{M}\Mn(F)\backslash M(F)$, then $\varphi(\gamma;1)\neq 0$ only at
$\gamma=1$. Thus the map $\varphi\mapsto\varphit$ is not identically
zero.

\begin{Rmk}\label{R:hypothesis}
It should be mentioned here that Hypothesis ($\ast$) is
needed to make sure that the sum in \eqref{E:definition}) is
convergent and not identically zero. The author suspects that either
one can always find $A_{\Mt}$ so that Hypothesis ($\ast$)
is satisfied (which is the case if $n=2$), or even without Hypothesis
($\ast$) one can show that the sum in \eqref{E:definition}
is convergent and not identically zero. But the thrust of this paper
is our application to symmetric square $L$-functions (\cite{Takeda1,
  Takeda2}) for which we only need the case for $n=2$.
\end{Rmk}

One can verify that $\varphit$ is a smooth automorphic form on $\Mt(\A)$: The
automorphy is clear. The smoothness and $K_f$-finiteness follows from the fact that
at each non-archimedean $v$, the induced representation
$\Ind_{A_{\Mt}(F_v)\Mtn(F_v)}^{\Mt(F_v)}\pin_{\omega_v}$ is
smooth and admissible. That $\varphit$ is $\ZZ$-finiteness and of uniform moderate
growth follows from the analogous property of
$\varphi(\s(\gamma)m)$, because the Lie algebra of $\Mt(F_v)$ at
archimedean $v$ is the same as that of $\Mtn(F_v)$.

As we mentioned, the sum in \eqref{E:definition} is finite but  which
$\gamma$ contributes to the sum depends on $m$. Yet, we have
\begin{Lem}\label{L:contribute}
For each $\varphi\in V(\pin_\omega)$, there exists a finite set $S$ of
places containing all the archimedean places such that those $\gamma$'s that contribute
to the sum in \eqref{E:definition} depend only on the classes in 
\[
\Mt(\A)\slash\Mtn(\A)\kappa(M(\mathcal{O}_S)),
\] 
where $\mathcal{O}_S=\prod_{v\notin S}\mathcal{O}_{F_v}$ and $\kappa:
  M(\A)\rightarrow\Mt(\A)$ is the section $m\mapsto(m,1)$.
\end{Lem}
\begin{proof}
By smoothness of $\varphi$ at the non-archimedean places, there exists a finite
set $S$ of places such that for all $k\in\kappa(M(\mathcal{O}_S))$, we have
$k\cdot\varphi=\varphi$. Hence one can see that
\[
\supp(\varphi)=\supp(m\cdot\varphi)
\]
for all $m\in \Mtn(\A)\kappa(M(\mathcal{O}_S))$ because $\Mtn(\A)$ is a normal
subgroup. This proves the lemma.
\end{proof}

\begin{Thm}\label{T:main}
Let
\[
\Vt(\pin_\omega)=\{\varphit:\varphi\in V(\pin_\omega)\}
\]
and $\pi_\omega$ an irreducible constituent of
$\Vt(\pin_\omega)$. Then it is an irreducible automorphic
representation of $\Mt(\A)$ and 
\[
\pi_\omega\cong\underset{v}{\otimest}'\pi_{\omega_v}, 
\]
where $\pi_{\omega_v} $ is the local metaplectic tensor product of Mezo.
Also the isomorphism class of $\pi_\omega$ depends only on the choice of the character
$\omega|_{Z_{\GLt_r}(\A)}$. 
\end{Thm}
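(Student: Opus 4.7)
The plan is to realize $\pi_\omega$ as an irreducible subquotient of the globally induced space $V(\pin_\omega)$, then to invoke Corollary \ref{C:local_tensor} place by place and a Flath-type factorization argument to match irreducible constituents with restricted tensor products of local Mezo tensor products.

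The set-up step is essentially formal. The map $\varphi\mapsto\varphit$ from $V(\pin_\omega)$ to $\Vt(\pin_\omega)$ is $\Mt(\A)$-equivariant since it is defined by summation of right translates, and each $\varphit$ has already been verified in the discussion preceding the theorem to be a smooth automorphic form on $\Mt(\A)$. Hence $\Vt(\pin_\omega)$ is a non-zero space of automorphic forms on $\Mt(\A)$ receiving a surjective $\Mt(\A)$-intertwining map from $V(\pin_\omega)$, and any irreducible constituent $\pi_\omega$ of $\Vt(\pin_\omega)$ is automatically an automorphic representation of $\Mt(\A)$.

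For the local-global decomposition, I would recall from the construction the identification
\[
V(\pin_\omega)\;\cong\;\otimest'_v\,\Ind_{A_{\Mt}(F_v)\Mtn(F_v)}^{\Mt(F_v)}\pin_{\omega_v},
\]
together with Proposition \ref{P:global_local_pin_omega}, which identifies the factor $\pin_{\omega_v}$ with Mezo's local metaplectic tensor product of $\pin_{1,v},\dots,\pin_{k,v}$. Corollary \ref{C:local_tensor} then tells us that every irreducible constituent of the local induced representation is isomorphic to Mezo's local metaplectic tensor product $\pi_{\omega_v}$. A standard restricted-tensor-product argument factors any irreducible constituent of the global space as $\otimest'_v\sigma_v$ with $\sigma_v$ an irreducible constituent of the $v$-th local factor, and each such $\sigma_v$ is therefore isomorphic to $\pi_{\omega_v}$. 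Passing to the quotient $\Vt(\pin_\omega)$ preserves this description, so $\pi_\omega\cong\otimest'_v\pi_{\omega_v}$.

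Finally, the dependence only on $\omega|_{Z_{\GLt_r}(\A)}$ is immediate from the local statement: if $\omega'$ is another character of $A_{\Mt}(\A)$ with $\omega'|_{Z_{\GLt_r}(\A)}=\omega|_{Z_{\GLt_r}(\A)}$, then $\omega'_v$ and $\omega_v$ agree on $Z_{\GLt_r}(F_v)$ at every place, and Mezo's uniqueness (Proposition \ref{P:local_uniqueness} combined with Lemma \ref{L:equivalent_tensor_product}) yields $\pi_{\omega'_v}\cong\pi_{\omega_v}$ for every $v$, hence $\pi_{\omega'}\cong\pi_\omega$ globally. The main obstacle will be the restricted-tensor-product step in paragraph three: one must verify, in the metaplectic adelic setting and with the spherical normalization built into the definition of $V(\pin_\omega)$, that irreducible subquotients of an admissible restricted tensor product of (possibly reducible) admissible representations factor as restricted tensor products of local subquotients. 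Once this Flath-type factorization is justified, the local input from Corollary \ref{C:local_tensor} finishes the proof essentially formally.
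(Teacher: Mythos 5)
Your outline matches the paper's approach in the first and last paragraphs, but you have correctly flagged the real issue yourself: the ``Flath-type factorization'' for arbitrary irreducible subquotients of a restricted tensor product of (possibly reducible) local representations is not a standard fact, and you leave it unresolved. The paper avoids having to prove it by first upgrading ``subquotient'' to ``quotient'' via unitarity. Specifically: each $\pi_i$ is assumed unitary, so each $\pin_i$ is unitary, hence $\pin_\omega$ is unitary, and therefore the compactly induced representation $\cInd_{A_{\Mt}(\A)\Mtn(\A)}^{\Mt(\A)}\pin_\omega$ and its subrepresentation $V(\pin_\omega)$ are unitary. Combined with admissibility, any irreducible subquotient of the unitary $V(\pin_\omega)\cong\otimest'_v\Ind_{A_{\Mt}(F_v)\Mtn(F_v)}^{\Mt(F_v)}\pin_{\omega_v}$ is in fact a direct summand, hence a genuine quotient of the restricted tensor product. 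Once one has a surjection $\otimest'_v\Ind\twoheadrightarrow\pi_\omega$, the factorization of $\pi_\omega$ into local factors that are quotients of the local induced spaces follows from exactly the pure-tensor-evaluation argument written out in the proof of Lemma~\ref{L:local_global} (which the paper cites precisely for this purpose), and then Corollary~\ref{C:local_tensor} identifies each local factor with Mezo's $\pi_{\omega_v}$. In short: the step you call ``the main obstacle'' is supplied by the unitarity hypothesis on the $\pi_i$, which is one of the standing assumptions of the global construction; without invoking it (or some substitute), the argument as you present it is incomplete.
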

\begin{proof}
Since the map $\varphi\mapsto\varphit$ is $\Mt(\A)$-intertwining, the
space $\Vt(\pin_\omega)$ provides a space of (possibly reducible)
automorphic representation of $\Mt(\A)$. Hence $\pi_\omega$ is an
automorphic representation of $\Mt(\A)$.

Since each $\pi_i$ is unitary, so is each $\pin_i$, from
which one can see that $\pin_\omega$ is unitary. Since
$V(\pin_\omega)$ is a subrepresentation of the compactly induced
representation induced from the unitary $\pin_\omega$,
$V(\pin_\omega)$ is unitary. Hence $\pi_\omega$, which is a
subquotient of
$V(\pin_\omega)\cong\underset{v}{\otimest}'\Ind_{A_{\Mt}(F_v)\Mtn(F_v)}^{\Mt(F_v)}\pin_{\omega_v}$,
is actually a quotient of
$\underset{v}{\otimest}'\Ind_{A_{\Mt}(F_v)\Mtn(F_v)}^{\Mt(F_v)}\pin_{\omega_v}$
by admissibility.
With this said, one can derive the isomorphism
$\pi_\omega\cong\underset{v}{\otimest}'\pi_{\omega_v}$ from Lemma
\ref{L:local_global}. Since the local $\pi_{\omega_v}$
depends only on the choice of $\omega_v|_{Z_{\GLt_r}(F_v)}$, the global $\pi_\omega$
depends only on $\omega|_{Z_{\GLt_r}(\A)}$ up to equivalence.
\end{proof}

We call the above constructed $\pi_\omega$ the global metaplectic
tensor product of $\pi_1,\dots,\pi_k$ (with respect to $\omega$) and
write
\[
\pi_\omega=(\pi_1\,\otimest\cdots\otimest\,\pi_k)_\omega.
\]

\begin{Rmk}
We do not know if the multiplicity one theorem holds for the group
$\Mt(\A)$, and hence do not know if the space $\Vt(\pin_\omega)$ has
only one irreducible constituent. In this sense, the definition of
$\pi_\omega$ depends on the choice of the irreducible constituent. For
this reason, the metaplectic tensor product should be
construed as an equivalence class of automorphic representations,
although we know more or less explicit way of expressing automorphic
forms in $\pi_\omega$.
\end{Rmk}

\quad

%%%%%%%%%%%%%%%%%%%%%%%%%%%%%%%%%%%%%%%%%%%%%%%%%%%%%%%%%%%%%%%%%%%

\subsection{\bf The uniqueness}

%%%%%%%%%%%%%%%%%%%%%%%%%%%%%%%%%%%%%%%%%%%%%%%%%%%%%%%%%%%%%%%%%%%

Just like the local case, the metaplectic tensor product of
automorphic representations is unique up to twist.

\begin{Prop}\label{P:global_uniqueness}
Let $\pi_1,\dots,\pi_k$ and $\pi'_1,\dots,\pi'_k$ be unitary automorphic subrepresentations
of $\GLt_{r_1}(\A),\dots,\GLt_{r_k}(\A)$. They give rise to isomorphic
metaplectic tensor products with a character $\omega$, \ie
\[
(\pi_1\,\otimest\cdots\otimest\,\pi_k)_\omega\cong
(\pi'_1\,\otimest\cdots\otimest\,\pi'_k)_\omega,
\]
if and only if for each $i$ there exists an automorphic character $\omega_i$ of
$\GLt_{r_i}(\A)$ trivial on $\GLtn_{r_i}(\A)$ such that
$\pi_i\cong\omega_i\otimes\pi'_i$.
\end{Prop}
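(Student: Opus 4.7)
The plan is to reduce to the local uniqueness statement (Proposition \ref{P:local_uniqueness}) via the local-global description provided by Theorem \ref{T:main}.

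For the ``if'' direction, I would assume $\pi_i \cong \omega_i \otimes \pi_i'$ with each $\omega_i$ an automorphic character trivial on $\GLtn_{r_i}(\A)$, pass to local components to obtain $\pi_{i,v} \cong \omega_{i,v} \otimes \pi'_{i,v}$ with $\omega_{i,v}$ trivial on $\GLtn_{r_i}(F_v)$, and apply Proposition \ref{P:local_uniqueness} place by place. The resulting local equivalences of metaplectic tensor products assemble, via Theorem \ref{T:main}, into the desired global equivalence $\pi_\omega \cong \pi'_\omega$.

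For the ``only if'' direction, I would start from $\pi_\omega \cong \pi'_\omega$ and invoke Theorem \ref{T:main} to obtain local equivalences at every place, then apply Proposition \ref{P:local_uniqueness} to supply local characters $\omega_{i,v}$ on $\GLt_{r_i}(F_v)$, trivial on $\GLtn_{r_i}(F_v)$, with $\pi_{i,v} \cong \omega_{i,v} \otimes \pi'_{i,v}$. Since each such $\omega_{i,v}$ factors through $\det$, it corresponds to a character of $F_v^\times/F_v^{\times n}$. At almost all finite places both $\pi_{i,v}$ and $\pi'_{i,v}$ are spherical, so within the coset of freedom $\{\eta_v : \eta_v \otimes \pi'_{i,v} \cong \pi'_{i,v}\}$ one can select $\omega_{i,v}$ unramified; the formal product $\omega_i := \prod_v \omega_{i,v}$ is then a well-defined character of $\GLt_{r_i}(\A)$ trivial on $\GLtn_{r_i}(\A)$.

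The hardest step will be showing that the $\omega_{i,v}$ can be chosen so that $\omega_i$ is automorphic, i.e., trivial on $\GL_{r_i}(F)^{\ast}$. My approach is to exploit the fact that both $\pi_i$ and $\pi'_i$ are realized in spaces of automorphic forms and that their restrictions to $\GLtn_{r_i}(\A)$ share a common automorphic constituent $\pin_i$, extracted from the construction of the metaplectic tensor product (the isomorphism $\pi_\omega \cong \pi'_\omega$ descends to an equivalence of the underlying $\pin_\omega$'s, hence to $\pin_i \cong \pin'_i$). Looking at the $\pin_i$-isotypic component inside the space of automorphic forms on $\GLt_{r_i}(\A)$ --- which is a $\GLt_{r_i}(\A)$-stable subspace of automorphic forms containing both $\pi_i$ and $\pi'_i$ as irreducible subrepresentations --- a Clifford/Mackey-type argument identifies $\pi_i$ and $\pi'_i$ as differing by a character of $\GLt_{r_i}(\A)/\GLtn_{r_i}(\A) \cong \A^\times/\A^{\times n}$. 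Because this character is realized by translation within a space of automorphic forms, it is automatically trivial on $\GL_{r_i}(F)^{\ast}$, hence automorphic.
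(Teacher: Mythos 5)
Your overall strategy --- reduce both directions to Mezo's local uniqueness (Proposition \ref{P:local_uniqueness}) through the local-global factorization of Theorem \ref{T:main} --- is exactly the paper's approach. The ``if'' direction and the first part of the ``only if'' direction proceed as in the paper, and you usefully flag the need to choose $\omega_{i,v}$ unramified at almost all places so that $\omega_i=\otimest'_v\omega_{i,v}$ is a well-defined character, a point the paper leaves implicit.

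The gap is in your treatment of the automorphy of $\omega_i$. Your final step --- ``because this character is realized by translation within a space of automorphic forms, it is automatically trivial on $\GL_{r_i}(F)^\ast$'' --- does not follow from what precedes it. The isomorphism $\pi_i\cong\omega_i\otimes\pi'_i$ produced by the place-by-place application of local uniqueness is an abstract equivalence of admissible representations; nothing forces it to be implemented by a map compatible with the automorphic realizations, so $\omega_i$ is not exhibited as a quotient of two functions on $\GL_{r_i}(F)^\ast\backslash\GLt_{r_i}(\A)$, and triviality on $\GL_{r_i}(F)^\ast$ does not come for free. The intermediate Clifford/Mackey step is also problematic: the $\pin_i$-isotypic component of the automorphic form space is not $\GLt_{r_i}(\A)$-stable (conjugation changes the isomorphism class of the $\GLtn_{r_i}(\A)$-constituent, and indeed $\pin'_i$ is a priori only a $\GLt_{r_i}$-conjugate of $\pin_i$, not equivalent to it), and even after passing to the $\GLt_{r_i}(\A)$-span, one would need a multiplicity-one statement for $\GLtn_{r_i}(\A)$ to conclude that two irreducible subrepresentations lying over the same $\pin_i$ differ by a character --- and the paper explicitly disclaims knowledge of multiplicity one for these covering groups. (To be fair, the paper itself dispatches the automorphy in a single sentence, so you are wrestling with a genuinely compressed step; the intended argument is presumably via comparing the central characters of $\pi_i$ and $\pi'_i$, which are automatically Hecke characters, but the details are not spelled out in the paper either.)
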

\begin{proof}
By Theorem \ref{T:main}, the global metaplectic tensor product is
written as the metaplectic restricted tensor product of the local
metaplectic tensor products of Mezo. Hence by Proposition
\ref{P:local_uniqueness} for each $i$ and each place $v$, there is a
character $\omega_{i,v}$ on $\GLt_{r_i}(F_v)$ trivial on $\GLtn_{r_i}(F_v)$ such that
$\pi_{i, v}\cong\omega_{i, v}\otimes\pi'_{i, v}$. Let
$\omega_i=\otimest_v'\omega_{i,v}$. Then
$\pi_i\cong\omega_i\otimes\pi'_i$. The automorphy of $\omega$ follows
from that of $\pi_i$ and $\pi'_i$. This proves the only if part. The
if part follows similarly.
\end{proof}

%%%%%%%%%%%%%%%%%%%%%%%%%%%%%%%%%%%%%%%%%%%%%%%%%%%%%%%%%

\subsection{\bf Cuspidality and square-integrability}\label{S:cuspidality}

%%%%%%%%%%%%%%%%%%%%%%%%%%%%%%%%%%%%%%%%%%%%%%%%%%%%%%%%%%

In this subsection, we will show that the cuspidality and
square-integrability are preserved for the metaplectic tensor product.

\begin{Thm}\label{T:cuspidal}
Assume $\pi_1,\dots,\pi_k$ are all cuspidal. Then the metaplectic tensor product
$\pi_\omega=(\pi_1\,\otimest\cdots\otimest\,\pi_k)_\omega$ is cuspidal.
\end{Thm}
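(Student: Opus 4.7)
The plan is to reduce cuspidality of $\pi_\omega=(\pi_1\,\otimest\cdots\otimest\,\pi_k)_\omega$ to that of each $\pi_i$, using the explicit realization from Theorem~\ref{T:main}. Any standard proper parabolic of $M$ decomposes as $P'=P_1\times\cdots\times P_k$ with $P_i=M_iN_i\subseteq\GL_{r_i}$ a standard parabolic and at least one $P_i$, say $P_1$, proper, so that $N_1$ is a nontrivial unipotent radical in $\GL_{r_1}$. The unipotent $N'(\A)=N_1(\A)\times\cdots\times N_k(\A)$ lies inside $N_M(\A)$, so by Proposition~\ref{P:s_split_M} the section $\s$ is a well-defined homomorphism on $N'(\A)$ landing in $\Mtn(\A)$ (determinants of unipotents are trivial). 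I must verify that for every $\varphit\in\pi_\omega$,
\[
\varphit_{N'}(m) := \int_{N'(F)\backslash N'(\A)} \varphit(\s(n)m)\,dn = 0.
\]

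Substituting $\varphit(x)=\sum_\gamma \varphi(\s(\gamma)x;1)$ and using Lemma~\ref{L:finite_sum} to interchange the (finite) sum with the integral, it suffices to show that for each $\gamma\in M(F)$,
\[
I_\gamma(m) := \int_{N'(F)\backslash N'(\A)} \varphi(\s(\gamma)\s(n)m;1)\,dn = 0.
\]
Since $\Mtn(\A)$ is normal in $\Mt(\A)$, the map $g(n):=\s(\gamma)\s(n)\s(\gamma)^{-1}$ is a group isomorphism from $\s(N'(\A))$ onto a unipotent subgroup $\widetilde{N}_\gamma\subseteq\Mtn(\A)$ that projects to $\gamma N'\gamma^{-1}=\gamma_1N_1\gamma_1^{-1}\times\cdots\times\gamma_kN_k\gamma_k^{-1}$. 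Writing $\s(\gamma)\s(n)=g(n)\s(\gamma)$ and applying the intertwining identity $\varphi(hx;1)=\varphi(x;h)$ valid for $h\in\Mtn(\A)$, followed by the Haar-preserving change of variables $\tilde n=g(n)$, I rewrite
\[
I_\gamma(m)=\int_{\widetilde{N}_\gamma(F)\backslash\widetilde{N}_\gamma(\A)} f(\tilde n)\,d\tilde n,\qquad f:=\varphi(\s(\gamma)m)\in V_{\pin_\omega}.
\]

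The restriction of $f$ to $\Mtn(\A)$ is a finite linear combination of pure tensors $f_1\otimest\cdots\otimest f_k$, where each $f_i\in V_{\pin_i}$ is the restriction to $\GLtn_{r_i}(\A)$ of a cusp form in $\pi_i$. Because the cocycle $\tau_M$ is trivial on unipotent pairs (Hilbert symbols of determinants equal $1$ and each $\tau_{r_i}$ vanishes on unipotents by Proposition~\ref{P:BLS}~(1)), pure tensors factor multiplicatively along the block decomposition, and Fubini gives
\[
\int_{\widetilde{N}_\gamma(F)\backslash\widetilde{N}_\gamma(\A)}(f_1\otimest\cdots\otimest f_k)(\tilde n)\,d\tilde n
=\prod_{i=1}^k\int_{(\gamma_iN_i\gamma_i^{-1})(F)\backslash(\gamma_iN_i\gamma_i^{-1})(\A)}f_i(\s(n_i))\,dn_i.
\]
Since $\gamma_1P_1\gamma_1^{-1}$ is still a proper parabolic of $\GL_{r_1}$, cuspidality of $\pi_1$ kills the $i=1$ factor and hence the whole product, completing the argument. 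The main delicacy will be justifying the change of variables across the cover cleanly: verifying that $g$ identifies $N'(F)\backslash N'(\A)$ with $\widetilde{N}_\gamma(F)\backslash\widetilde{N}_\gamma(\A)$ compatibly with Haar measures, and confirming that the cocycle factors incurred when expressing $f|_{\widetilde{N}_\gamma}$ as a tensor product on the conjugated unipotent $\gamma N'\gamma^{-1}$ are genuinely trivial so that the block decomposition really does hand the vanishing off to a single $\pi_i$.
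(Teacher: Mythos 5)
Your proposal is correct in outline and reaches the vanishing by the same ultimate mechanism (the cuspidality of each $\pi_i$ applied block by block), but you take a genuinely different route from the paper at the key intermediate step. The paper's proof first uses that the sum is over $\gamma\in A_M M^{(n)}(F)\backslash M(F)$ to replace each block $\gamma_i$ by $\diag(\det(\gamma_i)^{-n+1},1,\dots,1)$, i.e., to assume $\gamma$ is an honest diagonal matrix. With $\gamma$ diagonal, conjugation fixes $U$, and a short chain of manipulations using Proposition~\ref{P:s_split_M} collapses $\varphi(\s(\gamma)\s(u);1)$ to $\varphi(\s(\gamma);\s(u))$, from which one factors through pure tensors and applies cuspidality directly to the \emph{standard} unipotent $U_i$. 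You instead keep $\gamma$ general and conjugate the unipotent, producing $\widetilde{N}_\gamma$ lying over $\gamma N'\gamma^{-1}$, and then need to observe that the vanishing of the constant term of $f_i$ along the \emph{conjugated} unipotent $\gamma_i N_i\gamma_i^{-1}$ follows from cuspidality together with the left $\GL_{r_i}(F)$-invariance of the ambient cusp form (absorb $\s_{r_i}(\gamma_i)$ on the left). Your version works, but the paper's reduction to diagonal $\gamma$ is a neat device that avoids exactly the "main delicacy" you flag at the end: with $\gamma$ diagonal there is no conjugated unipotent, no sections on non-standard unipotents, and no extra cocycle bookkeeping. One small but real point to tighten in your writeup: the expression $f_i(\s(n_i))$ for $n_i\in\gamma_i N_i\gamma_i^{-1}$ is not literally defined (that group need not lie in $N_B$ where $\s$ is defined); what actually appears is $f_i\bigl(\s_{r_i}(\gamma_i)\s_{r_i}(n_i')\s_{r_i}(\gamma_i)^{-1}\bigr)$ with $n_i'\in N_i$, and then the automorphy step you need is $f_i\bigl(\s_{r_i}(\gamma_i)x\bigr)=f_i(x)$ before the change of variables on $N_i(F)\backslash N_i(\A)$.
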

\begin{proof}
Assume $\pi_1,\dots,\pi_k$ are all
cuspidal. It suffices to show that for each $\varphi\in
V(\pin_\omega)$
\[
\int_{U(F)\backslash U(\A)}\varphit(\s(u))\,du=0
\]
for all unipotent radical $U$ of the standard proper parabolic subgroup of $M$, where
recall from Proposition \ref{P:s_split_M} that the partial set
theoretic section $\s:M(\A)\rightarrow\Mt(\A)$ is defined (and a group
homomorphism) on the groups $M(F)$ and $U(\A)$. Note that by
definition of $\varphit$, we have
\begin{equation}\label{E:constant_term}
\int_{U(F)\backslash U(\A)}\varphit(\s(u))\,du=
\int_{U(F)\backslash U(\A)}
\sum_{\gamma\in A_{M}M^{(n)}(F)\backslash M(F)}\varphi(\s(\gamma)\s(u))\,du.
\end{equation}
Here we may assume $\gamma\in M(F)$ is a diagonal matrix, because for
each $\gamma=\diag(\gamma_1,\dots,\gamma_k)$ with
$\gamma_i\in\GL_{r_i}(F)$, we have
\[
\gamma_i=\gamma_i\begin{pmatrix}\det(\gamma_i)^{n-1}&\\
&I_{r_i-1}\end{pmatrix}\begin{pmatrix}\det(\gamma_i)^{-n+1}&\\
&I_{r_i-1}\end{pmatrix},
\]
where $\gamma_i\begin{pmatrix}\det(\gamma_i)^{n-1}&\\
&I_{r_i-1}\end{pmatrix}\in \GL_r^{(n)}(F)$. So for each $u\in U(F)$,
we have $\gamma u\gamma^{-1}\in U(F)$. Thus by the automophy of
$\varphi(\s(u);-)$ for each $u\in U(\A)$, one can see that the map
$u\mapsto \varphi(\s(\gamma)\s(u);1)$ is left invariant on
$U(F)$. Hence for the right hand side of \eqref{E:constant_term}, one
can change the sum and integral. So it suffices to
show $\int_{U(F)\backslash U(\A)}\varphi(\s(\gamma)\s(u);1)\,du=0$. 

 Since we are assuming $\gamma$ is a diagonal matrix, we have $\gamma
 u\gamma^{-1}\in U(\A)$ for all $u\in U(\A)$. Then
\begin{align*}\allowdisplaybreaks
&\int_{U(F)\backslash U(\A)}\varphi(\s(\gamma)\s(u);1)\,du\\
=&\int_{U(F)\backslash U(\A)}\varphi(\s(\gamma)
\s(u)\s(\gamma^{-1})\s(\gamma);1)\,du\\
=&\int_{U(F)\backslash U(\A)}\varphi(\s(\gamma);\s(\gamma)\s(
u)\s(\gamma^{-1}))\,du\quad\text{because $\s(\gamma)\s(
u)\s(\gamma^{-1})\in\widetilde{U}(\A)$}\\
=&\int_{U(F)\backslash U(\A)}\varphi(\s(\gamma);\s(\gamma)\s(
u\gamma^{-1}))\,du\quad\text{by Proposition \ref{P:s_split_M}}\\
=&\int_{U(F)\backslash U(\A)}\varphi(\s(\gamma);\s(\gamma)\s(
\gamma^{-1}u))\,du\quad\text{by change of variables $\gamma
  u\gamma^{-1}\mapsto u$}\\
=&\int_{U(F)\backslash U(\A)}\varphi(\s(\gamma);\s(\gamma)\s(
\gamma^{-1})\s(u))\,du\quad\text{by  Proposition \ref{P:s_split_M}}\\
=&\int_{U(F)\backslash U(\A)}\varphi(\s(\gamma);\s(u))\,du
\quad\text{by  Proposition \ref{P:s_split_M}}.
\end{align*}
We would like to show this is equal to zero. For this purpose, recall that
for each $\gamma$, $\varphi(\s(\gamma))$ is in the space
$V_{\pin_\omega}$ and hence is (a finite sum of functions) of
the form $f_1\otimest\cdots\otimest f_k$ with $f_i\in V_{\pi_i}$ and each
$f_i$ is a cusp form. We may assume $\varphi(\s(\gamma))$ is a simple
tensor $f_1\otimest\cdots\otimest f_k$. Now we can write $U=U_1\times\cdots\times U_k$,
where each $U_i$ is a unipotent subgroup of $\GL_{r_i}$ with at least
one of $U_i$ non-trivial, and
accordingly we denote each element $u\in U$ by
$u=\diag(u_1,\dots,u_k)$. Then by definition of $\s$, we have
\[
\s(u)=(u, \prod_is_{r_i}(u_i)^{-1}),
\]
and
\allowdisplaybreaks\begin{align*}
\varphi(\s(\gamma);\s(u))&=(f_1\otimest\cdots\otimest f_k)(\s(u))\\
&=(f_1\otimest\cdots\otimest f_k)(u, \prod_is_{r_i}(u_i)^{-1})\\
&=\left(\prod_is_{r_i}(u_i)^{-1}\right)f_1(u_1,1)\cdots f_k(u_k,1)\quad\text{by
  definition of $f_1\otimest\cdots\otimest f_k$}\\
&=f_1(u_1,s_{r_i}(u_i)^{-1})\cdots f_k(u_k,s_{r_k}(u_k)^{-1})\quad
\text{because each $f_i$ is genuine}\\
&=f_1(\s_{r_1}(u_1))\cdots f_k(\s_{r_k}(u_k))\quad
\text{by definition of $\s_{r_i}$}.
\end{align*}
Hence
\[
\int_{U(F)\backslash U(\A)}\varphi(\s(\gamma);\s(u))\,du
=\prod_{i=1}^k\int_{U_i(F)\backslash U_i(\A)}f_i(\s_{r_i}(u_i))\,du_i.
\]
This is equal to zero because each $f_i$ is cuspidal and at least one
of $U_i$ is non-trivial.
\end{proof}

Next let us take care of the square-integrability.

\begin{Thm}\label{T:square_integrable}
Assume $\pi_1,\dots,\pi_k$ are all square-integrable modulo center.
Then the metaplectic tensor product
$\pi_\omega=(\pi_1\,\otimest\cdots\otimest\,\pi_k)_\omega$ is
square-integrable modulo center.
\end{Thm}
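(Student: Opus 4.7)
The plan is to imitate the cuspidality argument of Theorem~\ref{T:cuspidal}, with an $L^2$-integral modulo $A_{\Mt}(\A)$ in place of the integral against a unipotent radical. I need to show
\[
\int_{A_{\Mt}(\A)\Mt(F)\backslash\Mt(\A)} |\varphit(m)|^2\,dm<\infty
\]
for every $\varphi\in V(\pin_\omega)$. By Lemma~\ref{L:finite_sum} we have $\varphit(m)=\sum_{i=1}^N\varphi(\s(\gamma_i)m;1)$ for finitely many $\gamma_i\in A_MM^{(n)}(F)\backslash M(F)$ depending only on $\varphi$. Fixing a fundamental domain $\Omega$ for the quotient, Cauchy--Schwarz yields
\[
\int_{\Omega}|\varphit(m)|^2\,dm\le N\sum_{i=1}^N\int_{\s(\gamma_i)\Omega}|\varphi(m';1)|^2\,dm',
\]
and since each $\gamma_i\in M(F)$, each $\s(\gamma_i)\Omega$ is again a fundamental domain. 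Thus it suffices to bound $\int_{\Omega'}|\varphi(m;1)|^2\,dm$ for any such fundamental domain $\Omega'$.

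Next I exploit that $\varphi\in V(\pin_\omega)\subseteq\cInd_{A_{\Mt}(\A)\Mtn(\A)}^{\Mt(\A)}\pin_\omega$, so $m\mapsto\varphi(m;1)$ is compactly supported modulo $A_{\Mt}(\A)\Mtn(\A)$. Combined with the strong-approximation argument of Lemma~\ref{L:strong_approximation}, this support meets only finitely many cosets of $A_{\Mt}(\A)\Mtn(\A)\Mt(F)$; choosing representatives $m_1,\dots,m_J\in\Mt(\A)$ reduces the problem to bounding a finite sum
\[
\sum_{j=1}^J\int_{\s(A_MM^{(n)}(F))(A_{\Mt}(\A)\cap\Mtn(\A))\backslash\Mtn(\A)}|f_j(m)|^2\,dm,
\]
where $f_j:=\varphi(m_j)\in V_{\pin_\omega}$, and we have used the equivariance $\varphi(mm_j;1)=f_j(m)$ on $A_{\Mt}(\A)\Mtn(\A)$ together with unitarity of $\omega$ to absorb the central $A_{\Mt}(\A)$ factor.

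Finally, each $f_j$ is a finite sum of pure tensors $f_{j,1}\otimest\cdots\otimest f_{j,k}$ with $f_{j,i}\in V_{\pin_i}$, which are restrictions of automorphic forms in $\pi_i$ to $H_i=\GL_{r_i}(F)\GLtn_{r_i}(\A)$. Using the explicit formula from Proposition~\ref{P:tensor_product_Mtn} together with Fubini, each such integral factors through the finite-kernel surjection $H_1\times\cdots\times H_k\twoheadrightarrow M(F)\Mtn(\A)$ into a product
\[
\prod_{i=1}^k\int_{\widetilde{Z_i}(\GL_{r_i}(F)\cap\GLtn_{r_i}(\A))\backslash\GLtn_{r_i}(\A)}|f_{j,i}(h_i)|^2\,dh_i
\]
for a suitable central $\widetilde{Z_i}\subseteq\widetilde{Z_{\GL_{r_i}}}(\A)$, and each factor is finite because $\pi_i$ is square-integrable modulo center on $\GLt_{r_i}(\A)$ by hypothesis and this property is inherited by $\pin_i$ upon restriction to the finite-covolume subgroup $\GLtn_{r_i}(\A)$. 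The main obstacle lies in this last step: the group $A_{\Mt}(\A)$ does not decouple as a product over the $\GLt_{r_i}(\A)$, so one must verify carefully that the Hilbert-symbol twists built into the block-compatible cocycle $\tau_M$, together with the character $\omega$, do not obstruct the separation of variables that drives the Fubini reduction; this is exactly where the detailed bookkeeping of Section~\ref{S:Levi} for $\cMt$ versus $\Mt$ is needed.
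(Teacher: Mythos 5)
Your plan has a genuine gap, and you name part of it yourself at the end. First, the integral you set out to bound is not the statement of the theorem: square-integrability modulo center means $\int_{Z_{\Mt}(\A)M(F)\backslash\Mt(\A)}|\varphit|^2<\infty$, and for a generic admissible choice of $A_{\Mt}$ (e.g.\ that of Proposition~\ref{P:A_M_for_n=2}) one does not have $A_{\Mt}=Z_{\Mt}$; indeed, for $a\in A_{\Mt}(\A)\setminus Z_{\Mt}(\A)$ conjugation by $\s(\gamma)$, $\gamma\in M(F)$, produces $\gamma$-dependent roots of unity $\xi_{\gamma,a}$, so $\varphit(am)=\omega(a)\sum_\gamma\xi_{\gamma,a}\varphi(\s(\gamma)m;1)$ and $|\varphit|$ need not descend to $A_{\Mt}(\A)M(F)\backslash\Mt(\A)$ at all. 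Second, and decisively, the ``main obstacle'' you flag at the end is real and not a matter of cocycle bookkeeping: the center $Z_{\Mt}(\A)$ (and any $A_{\Mt}$ large enough to contain it) is cut out by the coupling condition $a_1\equiv\cdots\equiv a_k$ modulo $\A^{\times n}$ across the $\GL_{r_i}$-blocks, so it is simply not a product over $i$, and the Fubini separation you need in the last step cannot go through at that normalization. This is exactly where the analogy with Theorem~\ref{T:cuspidal} breaks: there the unipotent radical $U=U_1\times\cdots\times U_k$ genuinely factors, which is why the cuspidality argument can be run block-by-block.

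The paper's proof avoids the obstruction by first invoking Lemma~\ref{L:M_square_integrable}, which replaces the center by the \emph{smaller} group $Z^{(n)}_M(\A)=\prod_iZ^{(n)}_{\GL_{r_i}}(\A)$: square-integrability modulo the center is shown to be equivalent to $\int_{Z^{(n)}_M(\A)M(F)\backslash M(\A)}|\varphit(\kappa(m))|^2\,dm<\infty$, because the intervening quotient is finite by strong approximation (Lemma~\ref{L:strong_approximation}). Since $Z^{(n)}_M(\A)$ does factor block-by-block, the subsequent splitting via Lemma~\ref{L:quotient_measure} produces an inner integral over $Z^{(n)}_M(\A)M^{(n)}(F)\backslash M^{(n)}(\A)$ that decouples and converges by Lemma~\ref{L:pin_square_integrable}, while the outer integral runs over a set which is finite modulo $M(\mathcal{O}_S)$, again by strong approximation. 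The missing ingredient in your proposal is precisely the reduction of Lemma~\ref{L:M_square_integrable} to a decoupling normalizing subgroup; without it the factorization step you need cannot be completed.
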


We need a few lemmas for the proof of this theorem.

\begin{Lem}\label{L:strong_approximation}
Let $S$ be a finite set of places including all the infinite places and let
$\mathcal{O}_S=\prod_{v\notin S}\mathcal{O}_{F_v}$. Then the group
\[
F^\times\A^{\times n}\mathcal{O}_S^\times\backslash\A^\times
\]
is finite.
\end{Lem}
\begin{proof}
Let $F_S:=\prod_{v\in S}F_v$. It suffices to show the subgroup
$F^\times F_S^{\times n}\mathcal{O}_S^\times\subseteq
F^\times\A^{\times n}\mathcal{O}_S^\times$ has a finite index in
$\A^\times$. But it is well-known that the group $F^\times
F_S^\times\mathcal{O}_S^\times$ has a finite index in
$\A^\times$. (Indeed, if $S=S_\infty$, the quotient $F^\times
F_S^{\times}\mathcal{O}_S^\times\backslash\A^\times$ is isomorphic to
the class group of $F$, and hence for general $S$, the group $F^\times
F_S^{\times}\mathcal{O}_S^\times\backslash \A^\times$ is a quotient of
the class group.) Also $F^\times F_S^{\times n}\mathcal{O}_S^\times$ has a
finite index in $F^\times F_S^{\times}\mathcal{O}_S^\times$, because
$F_S^{\times n}$ has a finite index in $F_S^\times$. Hence $F^\times
F_S^{\times n}\mathcal{O}_S^\times$ has a finite index in $\A^\times$.
\end{proof}

\begin{Rmk}
One can show that the group $F^\times\A^{\times
  n}\mathcal{O}_S^\times\backslash\A^\times$ surjects onto
$Cl(F)/Cl(F)^n$, where $Cl(F)$ is the class group of $F$. (See
\cite[Proposition 1, Appendix]{Kable}.) Hence this quotient group is
not trivial in general. Occasionally, however,
it can be shown to be the trivial group depending on $F$ and $n$. This
is the case for example if $n=2$ and $F=\Q$. An interested reader might
want to look at \cite[Appendix]{Kable}.
\end{Rmk}

\begin{Lem}\label{L:quotient_measure}
Let $G$ be a locally compact group and $H, N\subset G$ be closed
subgroups such that $NH$ is a closed subgroup. Further assume that the
quotient measures for $N\backslash G, H\backslash NH$ and $NH\backslash
G$ all exist. (Recall that in general the quotient measure for $N\backslash G$
exists if the modular characters of $G$ and $N$ agree on
$N$.) Then
\begin{align*}
\int_{N\backslash G}f(g)\;dg&=\int_{NH\backslash
G}\int_{N\backslash NH}\;f(hg)\;dh\;dg\\
&=\int_{NH\backslash
G}\int_{N\cap H\backslash H}\;f(hg)\;dh\;dg.
\end{align*}
for all $f\in L^1(N\backslash G)$.
\end{Lem}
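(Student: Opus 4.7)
The lemma is a standard Fubini-type decomposition of quotient measures, and I would prove the two displayed equalities separately.

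For the first equality, the plan is to apply the integration-in-stages formula to the chain of closed subgroups $N \subseteq NH \subseteq G$. Since the hypotheses give quotient measures on both $N\backslash G$ and $NH\backslash G$, and since $NH$ is closed in $G$, the standard theory of invariant measures on homogeneous spaces (Weil's formula) forces the existence of a compatible quotient measure on $N\backslash NH$, and for any $f\in L^1(N\backslash G)$ one obtains
\[
\int_{N\backslash G} f(g)\,dg \;=\; \int_{NH\backslash G}\int_{N\backslash NH} f(hg)\,dh\,dg
\]
by first integrating along the fibers $N\backslash NH$ of the canonical projection $N\backslash G \to NH\backslash G$.

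For the second equality, I would exhibit the natural map
\[
\Phi \colon (N\cap H)\backslash H \;\longrightarrow\; N\backslash NH, \qquad (N\cap H)h \;\longmapsto\; Nh.
\]
Surjectivity is immediate from the definition of $NH$, and injectivity follows from the fact that if $Nh_1 = Nh_2$ with $h_1,h_2\in H$, then $h_1 h_2^{-1} \in N \cap H$. Both coset spaces are locally compact Hausdorff right $H$-sets and $\Phi$ is continuous and $H$-equivariant, so standard homogeneous space theory gives that $\Phi$ is a homeomorphism. The right $H$-invariant quotient measure on $N\backslash NH$ therefore pulls back via $\Phi$ to a right $H$-invariant measure on $(N\cap H)\backslash H$, which by uniqueness agrees with the quotient measure coming from $(N\cap H)\backslash H$ up to a positive scalar that is absorbed into the normalization of Haar measures.

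The main obstacle will be the measure-theoretic bookkeeping in the first step: the hypotheses on the modular characters $\Delta_G,\Delta_N,\Delta_H,\Delta_{NH}$ (guaranteed by the existence of the three quotient measures) must be combined to force $\Delta_{NH}|_N = \Delta_N$, which is precisely the condition needed for $N\backslash NH$ to carry an $NH$-invariant quotient measure. This verification is routine but somewhat tedious; once it is in place, both equalities are immediate consequences of Weil's formula applied in stages.
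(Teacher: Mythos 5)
Your proposal is correct and follows essentially the same route as the paper, which invokes the Bourbaki integration-in-stages formula for the first equality and the canonical identification $N\backslash NH \cong (N\cap H)\backslash H$ for the second. One small remark: the modular-character check you flag as the ``main obstacle'' is actually immediate, since hypotheses $\Delta_G|_N=\Delta_N$ and $\Delta_G|_{NH}=\Delta_{NH}$ combine in one line to give $\Delta_{NH}|_N=\Delta_N$.
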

\begin{proof}
The first equality is \cite[Cor. 1  VII 47]{Bourbaki}, and the second
equality follows from the natural identification $N\backslash NH\cong
N\cap H\backslash H$.
\end{proof}

Now let $f:\Mt(\A)\rightarrow\C$ be any function. Then the absolute
value $|f|$ is
non-genuine in the sense that it factors through $M(\A)$. Also we let
\[
Z^{(n)}_M(\A):=
\{\begin{pmatrix}a_1^nI_{r_1}&&\\ &\ddots&\\ &&a_k^nI_{r_k}\end{pmatrix}:a_i\in\A^\times\}.
\]
This is a closed subgroup by Lemma \ref{L:closed_subgroup_local} and
\ref{L:closed_subgroup_local_global}.
Note the inclusions
\[
Z_M^{(n)}(\A)\subseteq p(Z_{\Mt}(\A))\subseteq Z_M(\A),
\]
where all the groups are closed subgroups of $M(\A)$. Then we have
\begin{Lem}\label{L:M_square_integrable}
Let $f:M(F)\backslash \Mt(\A)\rightarrow\C$ be an automorphic
form with a unitary central character. Then $f$ is square-integrable
modulo the center $Z_{\Mt}(\A)$ if and only if
$|f|\in L^2( Z^{(n)}_{M}(\A) M(F)\backslash M(\A))$ where $|f|$ is viewed as
a function on $M(\A)$ as noted above.
\end{Lem}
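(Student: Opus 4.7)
The claim is essentially an assertion about how integrals over two different quotients by the ``center'' compare when the integrand is genuinely $Z_{\Mt}(\A)$-invariant. My plan is to write everything on the $M(\A)$-side (using that $|f|$ is non-genuine), and then apply the Fubini-type formula of Lemma \ref{L:quotient_measure} to the chain of closed subgroups
\[
Z_M^{(n)}(\A)\,M(F)\;\subseteq\;p(Z_{\Mt}(\A))\,M(F)\;\subseteq\;M(\A),
\]
reducing the statement to the finiteness (and positivity) of a certain quotient volume.

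First I would observe that since $f$ has a unitary central character, $|f|$ (and hence $|f|^2$) is left-invariant under $Z_{\Mt}(\A)$. Because $|f|$ is non-genuine it descends through the canonical projection $p:\Mt(\A)\to M(\A)$, and the defining condition ``square-integrable modulo center'' for $f$ translates into
\[
\int_{p(Z_{\Mt}(\A))\,M(F)\backslash M(\A)}|f(m)|^2\,dm<\infty.
\]
Thus the task is to compare this with the integral over $Z_M^{(n)}(\A)\,M(F)\backslash M(\A)$.

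Write $Z=p(Z_{\Mt}(\A))$ and $Z'=Z_M^{(n)}(\A)$, so $Z'\subseteq Z$. Applying Lemma \ref{L:quotient_measure} with $G=M(\A)$, $N=Z'M(F)$, $H=Z$ (using that $Z$ normalizes everything in sight, so $NH=ZM(F)$ is a closed subgroup, and that modular characters are trivial on these abelian/discrete pieces), I obtain
\[
\int_{Z'M(F)\backslash M(\A)}|f|^2(m)\,dm
=\int_{ZM(F)\backslash M(\A)}\int_{Z'M(F)\cap Z\,\backslash\, Z}|f|^2(zm)\,dz\,dm.
\]
Since $|f|^2$ is left-invariant under $Z$, the inner integral collapses to $|f|^2(m)$ times the constant
\[
C:=\mathrm{vol}\!\left(Z'(Z\cap M(F))\backslash Z\right),
\]
using the identification $Z'M(F)\cap Z=Z'(Z\cap M(F))$. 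Once I know $0<C<\infty$, the equivalence of the two $L^2$-conditions is immediate.

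The main step, then, is verifying $0<C<\infty$. For this, I would use the explicit description of the center (Proposition above (\ref{E:center=center})): elements of $Z$ are of the form $\operatorname{diag}(a_1I_{r_1},\dots,a_kI_{r_k})$ with all $a_i$ congruent modulo $\A^{\times n}$ and subject to the condition $a_i^{r-1+2cr}\in\A^{\times n}$, while $Z'$ corresponds to the subgroup where each $a_i\in\A^{\times n}$. Sending $\operatorname{diag}(a_1I_{r_1},\dots,a_kI_{r_k})\mapsto(a_1,\dots,a_k)\bmod\A^{\times n}$ embeds $Z/Z'$ into $\bigl(\A^{\times n}\backslash\A^\times\bigr)^k$, and the image of $Z\cap M(F)$ lies in the image of $(F^\times)^k$. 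The finiteness of $C$ therefore follows from compactness of $F^\times\A^{\times n}\backslash\A^\times$, which is a direct consequence of Lemma \ref{L:strong_approximation}. Positivity is clear since $Z$ contains $Z'$ as an open-image subgroup of finite index in the relevant quotient. I expect the main technical nuisance to be a careful bookkeeping of which quotient measures exist (modular character compatibility) and of the identification $Z'M(F)\cap Z=Z'(Z\cap M(F))$, but both are routine once the setup is written out.
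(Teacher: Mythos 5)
Your overall strategy — reduce $|f|$ to the non-metaplectic side, apply Lemma~\ref{L:quotient_measure} to the chain $Z_M^{(n)}(\A)M(F)\subseteq p(Z_{\Mt}(\A))M(F)\subseteq M(\A)$, and use the unitarity of the central character to make the inner integrand constant — is exactly the structure of the paper's proof, so on the conceptual level you have it right. The identification $Z'M(F)\cap Z=Z'(Z\cap M(F))$ is also correct and indeed routine as you suspected.

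The genuine gap is in the final step where you deduce $C=\operatorname{vol}\bigl(Z'(Z\cap M(F))\backslash Z\bigr)<\infty$ from a continuous injection of this quotient into the compact group $\bigl(F^\times\A^{\times n}\backslash\A^\times\bigr)^k$. A continuous injective homomorphism from a locally compact group into a compact group does not by itself force the source to be compact or to have finite Haar measure: the source could map onto a dense non-closed subgroup (the standard example is the irrational line in a torus). To conclude finiteness you would additionally need the image to be closed, or equivalently you would need a discreteness argument in the quotient akin to Proposition~\ref{P:F_is_discrete} — and that argument is not among the things you flagged as ``routine bookkeeping.'' (A second, smaller, point: the compactness of $F^\times\A^{\times n}\backslash\A^\times$ does hold, but it uses both Lemma~\ref{L:strong_approximation} and the closedness of $F^\times\A^{\times n}$ from Proposition~\ref{P:F_is_discrete}, so calling it a ``direct consequence'' of the former alone is a slight overstatement.)

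The paper sidesteps this by exploiting the smoothness of $f$: at each $m$ it introduces the compact open subgroup $p(Z_{\Mt}(\mathcal{O}_S))$ stabilizing $f$, and then the relevant double coset space $Z_M^{(n)}(\A)p(Z_{\Mt}(\mathcal{O}_S))p(Z_{\Mt}(F))\backslash p(Z_{\Mt}(\A))$ injects into a product of copies of $F^\times\A^{\times n}\mathcal{O}_S^\times\backslash\A^\times$, which Lemma~\ref{L:strong_approximation} shows is \emph{literally a finite set}. Thus the inner integral becomes a finite sum times a compact volume, with no topological subtlety about closedness of images. Your approach is arguably cleaner in that it never invokes smoothness of $f$, but the price is precisely that you must then carry out the closed-image / discreteness verification that the compact open subgroup trick avoids. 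As written the proof has a missing step there.
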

\begin{proof}
Let $f$ be an automorphic form on $\Mt(\A)$ with a unitary central
character. Since $|f|$ is non-genuine, we have
\[
\int_{Z_{\Mt}(\A) M(F)\backslash\Mt(\A)}|f(\tilde{m})|^2\;d\tilde{m}
=\int_{p(Z_{\Mt}(\A)) M(F)\backslash M(\A)}|f(\kappa(m))|^2\;dm,
\]
where recall that $p:\Mt(\A)\rightarrow M(\A)$ is the canonical
projection. Note that the quotient measure on the right hand side
exists because the group $p(Z_{\Mt}(\A))M(F)$ is closed by
\cite[Lemma I.1.5, p.8]{MW} and is unimodular because $p(Z_{\Mt}(\A))$
is unimodular and $M(F)$ is discrete and countable.
By Lemma \ref{L:quotient_measure}, we have
\[
\int_{Z^{(n)}_{M}(\A)  M(F)\backslash M(\A)}|f(\kappa(m))|^2\;dm
=\int_{p(Z_{\Mt}(\A)) M(F)\backslash M(\A)}
\int_{Z_M^{(n)}(\A) p(Z_{\Mt}(F))\backslash p(Z_{\Mt}(\A))}|f(\kappa(zm))|^2\;dz\;dm.
\]
Since for each fixed $m\in M(\A)$, the function $z\mapsto f(\kappa(zm))$ is
a smooth function on $p(Z_{\Mt}(\A))$, there exists a finite set $S$ of places such that
for all $z'\in p(Z_{\Mt}(\mathcal{O}_S))=Z_M(\mathcal{O}_S)\cap
p(Z_{\Mt}(\A))$ we have  $f(\kappa(z'zm))=f(\kappa(zm))$. Hence the inner integral
of the above integral is written as
\begin{equation}\label{E:square_integrable}
\int_{Z^{(n)}_M(\A)p(Z_{\Mt}(\mathcal{O}_S)) p(Z_{\Mt}(F))\backslash p(Z_{\Mt}(\A))}|f(\kappa(zm))|^2\;dz.
\end{equation}
Note that we have the inclusion
\[
Z^{(n)}_M(\A)p(Z_{\Mt}(\mathcal{O}_S)) p(Z_{\Mt}(F))\backslash p(Z_{\Mt}(\A))
\subseteq
Z^{(n)}_M(\A)Z_M(\mathcal{O}_S)Z_M(F)\backslash Z_M(\A),
\]
because $p(Z_{\Mt}(\mathcal{O}_S))\cap
p(Z_{\Mt}(F))=Z_M(\mathcal{O}_S)\cap Z_M(F)=1$,
and note that $Z^{(n)}_M(\A)Z_M(\mathcal{O}_S)Z_M(F)\backslash Z_M(\A)$  
can be identified with the product of $k$ copies of
\[
F^\times\A^{\times n}\;\mathcal{O}_S^\times\backslash\A^\times.
\]
By Lemma \ref{L:strong_approximation}, we know that this is a finite
group, and hence the integral in (\ref{E:square_integrable}) is just a finite
sum. Thus for some finite $z_1,\dots,z_N\in p(Z_{\Mt}(\A))$, we have
\begin{align*}
\int_{Z^{(n)}_{M}(\A)  M(F)\backslash M(\A)}|f(\kappa(m))|^2\;dm
&=\int_{p(Z_{\Mt}(\A)) M(F)\backslash M(\A)}
\sum_{i=1}^N |f(\kappa(z_im))|^2\;dm\\
&=\sum_{i=1}^N \int_{p(Z_{\Mt}(\A)) M(F)\backslash M(\A)}
|f(\kappa(m))|^2\;dm\\
&=N\int_{p(Z_{\Mt}(\A)) M(F)\backslash M(\A)}
|f(\kappa(m))|^2\;dm,
\end{align*}
where for the second equality we used
\[
|f(\kappa(z_im))|=|f((\kappa(z_i)\kappa(m)))|
=|\omega(\kappa(z_1))||f(\kappa(m))|=|f(\kappa(m))|
\]
where $\omega$ is the central character of $f$ which is assumed to be
unitary. The lemma follows from this.
\end{proof}

\begin{Lem}\label{L:pin_square_integrable}
Assume $\pi_1,\dots,\pi_k$ are as in Theorem
\ref{T:square_integrable}. Let $\varphi_i\in\pin_i$ for $i=1,\dots,k$
and $\varphi=\varphi_1\otimest\cdots\otimest\varphi_k\in\pin$, which is
a function on $\Mtn(\A)$. Then
\[
\int_{Z_M^{(n)}(\A)M^{(n)}(F)\backslash M^{(n)}(\A)}|\varphi(\kappa(m))|^2\;dm<\infty.
\]
\end{Lem}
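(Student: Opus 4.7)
My strategy is to exploit the direct-product structure $M^{(n)}=\GL_{r_1}^{(n)}\times\cdots\times\GL_{r_k}^{(n)}$ to factor the integral into a product of integrals---one for each $\pi_i$---and then invoke Lemma~\ref{L:M_square_integrable} applied to each $\pi_i$ individually.

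First, because $M^{(n)}$ is a genuine direct product of algebraic groups embedded diagonally, all the relevant subgroups split as direct products: $M^{(n)}(\A)=\prod_i\GL_{r_i}^{(n)}(\A)$, $Z_M^{(n)}(\A)=\prod_i Z_{\GL_{r_i}}^{(n)}(\A)$, and $M^{(n)}(F)=\prod_i\GL_{r_i}^{(n)}(F)$ (the last equality uses \eqref{L:Hasse} applied to each factor). Hence $Z_M^{(n)}(\A)M^{(n)}(F)\backslash M^{(n)}(\A)$ is literally the direct product of the quotients $Z_{\GL_{r_i}}^{(n)}(\A)\GL_{r_i}^{(n)}(F)\backslash\GL_{r_i}^{(n)}(\A)$, and the product of the Haar measures is (a choice of) the natural Haar measure on the whole. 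Combined with
\[
\varphi(\kappa(\diag(g_1,\dots,g_k)))=\prod_{i=1}^k\varphi_i(\kappa(g_i)),
\]
which is immediate from the construction of $\varphi=\varphi_1\otimest\cdots\otimest\varphi_k$ in Proposition~\ref{P:tensor_product_Mtn}, Fubini reduces the claim to proving, for each $i$, that the integral of $|\varphi_i(\kappa(g_i))|^2$ over $Z_{\GL_{r_i}}^{(n)}(\A)\GL_{r_i}^{(n)}(F)\backslash\GL_{r_i}^{(n)}(\A)$ is finite.

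I then handle one such factor. Each $\varphi_i$ is the restriction to $H_i=\GL_{r_i}(F)\GLtn_{r_i}(\A)$ of some automorphic form $\phi_i\in\pi_i$, so $|\varphi_i|=|\phi_i|$ on $\kappa(\GL_{r_i}^{(n)}(\A))$. Since $\pi_i$ is square-integrable modulo center, it has a unitary central character, so Lemma~\ref{L:M_square_integrable} applied with $M$ replaced by $\GL_{r_i}$ gives $|\phi_i|\in L^2(Z_{\GL_{r_i}}^{(n)}(\A)\GL_{r_i}(F)\backslash\GL_{r_i}(\A))$. Finally, \eqref{L:Hasse} says $\GL_{r_i}(F)\cap\GL_{r_i}^{(n)}(\A)=\GL_{r_i}^{(n)}(F)$, so the inclusion $\GL_{r_i}^{(n)}(\A)\hookrightarrow\GL_{r_i}(\A)$ descends to an injection
\[
Z_{\GL_{r_i}}^{(n)}(\A)\GL_{r_i}^{(n)}(F)\backslash\GL_{r_i}^{(n)}(\A)\hookrightarrow Z_{\GL_{r_i}}^{(n)}(\A)\GL_{r_i}(F)\backslash\GL_{r_i}(\A);
\]
since $\GL_{r_i}^{(n)}(\A)$ is an open subgroup of $\GL_{r_i}(\A)$, Haar measures can be chosen compatibly so that this embedding is measure-preserving onto its image. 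Consequently the integral over the smaller quotient is bounded above by the integral over the larger one, which is finite by the preceding display.

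No substantive obstacle is expected; the proof is essentially bookkeeping, and the only delicate points are the direct-product decomposition $M^{(n)}(F)=\prod_i\GL_{r_i}^{(n)}(F)$ in the first step and the measure compatibility in the last step, both of which are routine.
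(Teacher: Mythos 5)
Your first two steps match the paper's approach exactly: decompose $M^{(n)}$ and the relevant subgroups as direct products, factor the integral via Fubini, and reduce to the single factor $\int_{Z_{\GL_{r_i}}^{(n)}(\A)\GL_{r_i}^{(n)}(F)\backslash\GL_{r_i}^{(n)}(\A)}|\varphi_i(\kappa(g_i))|^2\,dg_i$, to which Lemma~\ref{L:M_square_integrable} applied with $M=\GL_{r_i}$ is relevant. The gap is in your final step.

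You claim that $\GL_{r_i}^{(n)}(\A)$ is an open subgroup of $\GL_{r_i}(\A)$ and use this to argue that the injection of quotients is measure-preserving onto its image, so the integral over the smaller quotient is bounded by the finite integral over the larger one. But $\GL_{r_i}^{(n)}(\A)$ is \emph{not} open in $\GL_{r_i}(\A)$: its determinant image $\A^{\times n}$ is not open in $\A^\times$, because a basic open neighborhood of $1$ contains $\prod_{v\notin S}\mathcal{O}_{F_v}^\times$, while $\mathcal{O}_{F_v}^\times\not\subseteq F_v^{\times n}$ for almost every $v$ (the quotient $\mathcal{O}_{F_v}^\times/\mathcal{O}_{F_v}^{\times n}$ has order $\gcd(n,q_v-1)$, which is nontrivial for infinitely many $v$ once $\mu_n\subseteq F$). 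Consequently $Z_{\GL_{r_i}}^{(n)}(\A)\GL_{r_i}(F)\GL_{r_i}^{(n)}(\A)=\GL_{r_i}(F)\GL_{r_i}^{(n)}(\A)$ is a proper closed, non-open subgroup of $\GL_{r_i}(\A)$, hence of Haar measure zero, and the image of your smaller quotient inside $Z_{\GL_{r_i}}^{(n)}(\A)\GL_{r_i}(F)\backslash\GL_{r_i}(\A)$ is a null set. The ``restriction of measure'' argument therefore gives no information at all, and the bound you want does not follow from it.

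The correct way to finish is not to restrict the measure but to disintegrate it: apply the Weil integration formula (Lemma~\ref{L:quotient_measure}) with $G=\GL_{r_i}(\A)$, $N=Z_{\GL_{r_i}}^{(n)}(\A)\GL_{r_i}(F)$, $H=\GL_{r_i}^{(n)}(\A)$, using that $NH$ is closed (Lemma~\ref{L:discrete_sub} together with Proposition~\ref{P:M(F)_is_discrete}). This expresses $\int_{N\backslash G}|\varphi_i(\kappa(g))|^2\,dg$, which is finite by Lemma~\ref{L:M_square_integrable}, as an iterated integral whose inner integral is precisely $\int_{Z_{\GL_{r_i}}^{(n)}(\A)\GL_{r_i}^{(n)}(F)\backslash\GL_{r_i}^{(n)}(\A)}|\varphi_i(\kappa(hg))|^2\,dh$; finiteness for almost every $g$ then gives finiteness at $g=1$ since right translation preserves finiteness of this $L^2$-norm. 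That is exactly the route the paper takes, and it is the one you should use.
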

\begin{proof}
Write each element $m\in M(\A)$ as $m=\diag(g_1,\dots,g_k)$ where
$g_i\in \GL_{r_i}(\A)$. Then $\diag(g_1,\dots,g_k)\in M^{(n)}(\A)$ if and
only if $g_i\in \GL_{r_i}^{(n)}(\A)$ for all $i$. Hence the integral in the lemma is the
product of integrals 
\[
\int_{Z_{\GL_{r_i}}^{(n)}(\A)\GL_{r_i}^{(n)}(F)\backslash
  \GL_{r_i}^{(n)}(\A)}|\varphi_i(\kappa(g_i))|^2\;dg_i,
\]
where $Z_{\GL_{r_i}}^{(n)}(\A)$ consists of the elements of the form
$a_iI_{r_i}$ with $a_i\in\A^{\times n}$. So we have to show that this
integral converges. But with Lemma \ref{L:M_square_integrable} applied
to $M=\GL_{r_i}$, we know
\[
\int_{Z^{(n)}_{\GL_{r_i}}(\A)\GL_{r_i}(F)\backslash\GL_{r_i}(\A)}|\varphi_i(\kappa(g_i))|^2\;dg_i<\infty,
\]
because each $\varphi_i$ is square-integrable modulo center. By Lemmas
\ref{L:quotient_measure} and \ref{L:discrete_sub}, this is written as
\[
\int_{Z^{(n)}_{\GL_{r_i}}(\A)\GL^{(n)}_{r_i}(\A)\GL_{r_i}(F)\backslash\GL_{r_i}(\A)}
\int_{Z^{(n)}_{\GL_{r_i}}(\A)\GL^{(n)}_{r_i}(F)\backslash\GL^{(n)}_{r_i}(\A)}
|\varphi_i(\kappa(m_i'm_i))|^2\;dm_i'\;dm_i<\infty.
\]
In particular the inner integral converges, which proves the lemma.
\end{proof}

Now we are ready to prove Theorem \ref{T:square_integrable}.

\begin{proof}[Proof of Theorem \ref{T:square_integrable}]
By Lemma \ref{L:M_square_integrable}, we have only to show
\[
\int_{Z_M^{(n)}(\A)M(F)\backslash M(\A)}|\varphit(\kappa(m))|^2\;dm<\infty.
\]
By Lemma \ref{L:quotient_measure}, we have
\begin{align}\label{E:outer_integral}
\notag&\int_{Z_M^{(n)}(\A)M(F)\backslash M(\A)}|\varphit(\kappa(m))|^2\;dm\\
 =&\int_{Z_M^{(n)}(\A)M^{(n)}(\A)M(F)\backslash
  M(\A)}\int_{Z_M^{(n)}(\A)M^{(n)}(F)\backslash
  M^{(n)}(\A)}|\varphit(\kappa(m'm))|^2\;dm'\;dm\\
\notag =&\int_{Z_M^{(n)}(\A)M^{(n)}(\A)M(F)\backslash
  M(\A)}\int_{Z_M^{(n)}(\A)M^{(n)}(F)\backslash
  M^{(n)}(\A)}\left|\sum_{\gamma}\varphi(\kappa(\gamma
  m'm);1)\right|^2\;dm'\;dm\\
\notag =&\int_{Z_M^{(n)}(\A)M^{(n)}(\A)M(F)\backslash
  M(\A)}\int_{Z_M^{(n)}(\A)M^{(n)}(F)\backslash
  M^{(n)}(\A)}\left|\sum_{\gamma}\varphi(\kappa(\gamma
  m);\kappa(\gamma m'\gamma^{-1}))\right|^2\;dm'\;dm.
\end{align}

Let us show that the inner integral converges. Note that
\begin{align*}
&\int_{Z_M^{(n)}(\A)M^{(n)}(F)\backslash
  M^{(n)}(\A)}\left|\sum_{\gamma}\varphi(\kappa(\gamma
  m);\kappa(\gamma m'\gamma^{-1}))\right|^2\;dm'\\
\leq&
\int_{Z_M^{(n)}(\A)M^{(n)}(F)\backslash
  M^{(n)}(\A)}\sum_{\gamma}\left|\varphi(\kappa(\gamma
  m);\kappa(\gamma m'\gamma^{-1}))\right|^2\;dm',
\end{align*}
and the map $m'\mapsto \left|\varphi(\kappa(\gamma
  m);\kappa(\gamma m'\gamma^{-1}))\right|^2$ is invariant under
$Z_M^{(n)}(\A)M^{(n)}(F)$ on the left. Hence to show the inner
integral converges, it suffices to show the integral
\[
\int_{Z_M^{(n)}(\A)M^{(n)}(F)\backslash M^{(n)}(\A)}
\left|\varphi(\kappa(\gamma  m);\kappa(\gamma m'\gamma^{-1}))\right|^2\;dm'
\]
converges. But this follows from Lemma \ref{L:pin_square_integrable}.

To show the outer integral converges, note that the map $m\mapsto
|\varphit(\kappa(m'm))|^2$ is smooth and hence there
exists a finite set of places $S$ so that $\varphit(\kappa(m'm
k))=\varphit(\kappa(m'm k))$ for all $k\in M(\mathcal{O}_S)$. Thus the
integral in \eqref{E:outer_integral} is (a scalar multiple of)
\[
\int_{Z_M^{(n)}(\A)M^{(n)}(\A)M(F)\backslash
  M(\A)/M(\mathcal{O}_S)}\int_{Z_M^{(n)}(\A)M^{(n)}(F)\backslash
  M^{(n)}(\A)}|\varphit(\kappa(m'm))|^2\;dm'\;dm.
\]
Now the set theoretic map
\[
F^\times\A^{\times n}\underbrace{\mathcal{O}^\times_S\backslash\A^\times
\times\cdots\times
F^\times}_{\text{$k$ copies}}\A^{\times n}\mathcal{O}^\times_S
\backslash\A^\times
\rightarrow Z_M^{(n)}(\A)M^{(n)}(\A)M(F)\backslash
  M(\A)/M(\mathcal{O}_S)
\]
given by
\[
(a_1,\dots,a_k)\mapsto\begin{pmatrix}\iota_1(a_1)&&\\ &\ddots&\\ &&\iota_k(a_k)\end{pmatrix}
\]
where $\iota_i$ is as in (\ref{E:iota}) is a well-defined
surjection. Hence Lemma \ref{L:strong_approximation} implies that the
set 
\[
Z_M^{(n)}(\A)M^{(n)}(\A)M(F)\backslash
  M(\A)/M(\mathcal{O}_S)
\]
is a finite set. Therefore the outer integral of the above integral
is a finite sum and hence converges. This completes the proof.
\end{proof}

%%%%%%%%%%%%%%%%%%%%%%%%%%%%%%%%%%%%%%%%%%%%%%%%%%%%%%%%%%%

\subsection{\bf Twists by Weyl group elements}
\label{S:Weyl_group_global}

%%%%%%%%%%%%%%%%%%%%%%%%%%%%%%%%%%%%%%%%%%%%%%%%%%%%%%%%%%%

Just as we saw in Section \ref{S:Weyl_group_global} for the local
case, the global metaplectic tensor product behaves in the expected
way under the action of the Weyl group elements in $W_M$. Namely

\begin{Thm}\label{T:Weyl_group_global}
Let $w\in W_M$ be such that $^w(\GL_{r_1}\times\cdots\times\GL_{r_k})
=\GL_{r_{\sigma(1)}}\times\cdots\times\GL_{r_{\sigma(k)}}$. Then
we have
\[
^w(\pi_1\,\otimest\cdots\otimest\,\pi_k)_\omega
\cong(\pi_{\sigma(1)}\,\otimest\cdots\otimest\,\pi_{\sigma(k)})_\omega,
\]
where $w$ is viewed as an element in $\GL_r(F)$.
\end{Thm}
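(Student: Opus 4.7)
The plan is to deduce the global statement from the local one (Theorem \ref{T:Weyl_group_local}) by invoking the local-global decomposition of Theorem \ref{T:main}, with a preliminary check that the global twist is indeed automorphic.

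First I would realize $^{w}(\pi_1\otimest\cdots\otimest\pi_k)_\omega$ concretely as an automorphic representation of $\widetilde{M'}(\A)$, where $M'=\,^{w}M=\GL_{r_{\sigma(1)}}\times\cdots\times\GL_{r_{\sigma(k)}}$. Given $\varphit$ in the space of $(\pi_1\otimest\cdots\otimest\pi_k)_\omega$, define $\varphit^{w}$ on $\widetilde{M'}(\A)$ by $\varphit^{w}(m')=\varphit(\s(w)^{-1}m'\s(w))$. Since $w\in\GL_r(F)$ and $\s$ is a homomorphism on $\GL_r(F)$ by Proposition \ref{P:s_split}, we have $\s(w)^{-1}\s(\gamma')\s(w)=\s(w^{-1}\gamma'w)\in\s(M(F))$ for every $\gamma'\in M'(F)$, so $\varphit^{w}$ is left $M'(F)$-invariant; the other automorphy conditions are preserved because $\s(w)^{-1}(\,\cdot\,)\s(w)$ is a continuous group automorphism. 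Hence $^{w}(\pi_1\otimest\cdots\otimest\pi_k)_\omega$ is a genuine automorphic representation of $\widetilde{M'}(\A)$.

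Next, by Theorem \ref{T:main} both sides admit metaplectic restricted tensor product decompositions:
\[
(\pi_1\otimest\cdots\otimest\pi_k)_\omega\cong\underset{v}{\otimest'}(\pi_{1,v}\otimest\cdots\otimest\pi_{k,v})_{\omega_v},\qquad
(\pi_{\sigma(1)}\otimest\cdots\otimest\pi_{\sigma(k)})_\omega\cong\underset{v}{\otimest'}(\pi_{\sigma(1),v}\otimest\cdots\otimest\pi_{\sigma(k),v})_{\omega_v}.
\]
I would then argue that conjugation by $\s(w)$ is compatible with the restricted tensor product, using the surjection $\rho$ of \eqref{E:surjection}: lifting $\s(w)$ to an element of $\prod_v'\GLt_r(F_v)$ whose $v$-th component represents $w$ in $\GLt_r(F_v)$, and using that $\mu_n$ is central so the particular choice of lift does not affect the conjugation action, one obtains
\[
^{w}(\pi_1\otimest\cdots\otimest\pi_k)_\omega\cong\underset{v}{\otimest'}\,^{w}(\pi_{1,v}\otimest\cdots\otimest\pi_{k,v})_{\omega_v}
\]
as abstract representations of $\widetilde{M'}(\A)$. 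Now Theorem \ref{T:Weyl_group_local} applied at each place gives
\[
^{w}(\pi_{1,v}\otimest\cdots\otimest\pi_{k,v})_{\omega_v}\cong(\pi_{\sigma(1),v}\otimest\cdots\otimest\pi_{\sigma(k),v})_{\omega_v},
\]
and these local equivalences assemble into the desired global equivalence once one checks they preserve spherical vectors at almost all places (which is immediate, since both sides are irreducible and unramified at those places, hence the one-dimensional spherical line is canonically carried across).

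The main obstacle I anticipate is bookkeeping-level rather than conceptual: verifying that conjugation by the global $\s(w)$ really is the place-by-place conjugation by local representatives of $w$, even though $\s$ is merely a partial set-theoretic section globally. This rests on the product formula and on the fact that the local cocycles glue to the global one; the scalar discrepancies that appear in $\s_{r,v}(w)$ at individual places are $\mu_n$-valued and central, hence invisible to the conjugation action. Once this is confirmed the remainder of the argument is a straightforward chaining of Theorem \ref{T:main} and Theorem \ref{T:Weyl_group_local}.
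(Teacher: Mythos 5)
Your proposal is correct and follows essentially the same route as the paper's own (very brief) proof: write $\s(w)$ as the product $\prod_v(w,s_{r,v}(w))$, observe that the resulting conjugation on the restricted tensor product is place-by-place conjugation by representatives of $w$ (with the $\mu_n$-valued discrepancies invisible because they are central), and then invoke Theorem \ref{T:Weyl_group_local} at each place. The extra details you add — the automorphy check (which the paper defers to the Proposition immediately following the theorem) and the remark on spherical vectors — are accurate fillings-in, not deviations.
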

\begin{proof}
Note that each $\s(w)\in\GLt_r(\A)$ is written as $\prod_v(w,
s_{r,v}(w))$, where we view $(w, s_{r,v}(w))\in\GLt_r(F_v)$ as an
element of $\GLt_r(\A)$ view the natural embedding
$\GLt_r(F_v)\hookrightarrow\GLt_r(\A)$, and the product $\prod_v$ is
literally the product inside $\GLt_r(\A)$. Then one can see that
\[
^w(\pi_1\,\otimest\cdots\otimest\,\pi_k)_\omega=\otimest'_v\,
^w(\pi_{1,v}\,\otimest\cdots\otimest\,\pi_{k,v})_{\omega_v}.
\]
Hence the theorem follows from the local counter part (Theorem
\ref{T:Weyl_group_local}).
\end{proof}

The following is immediate:
\begin{Prop}
Let $\pi_\omega=(\pi_1\,\otimest\cdots\otimest\,\pi_k)_\omega$. For
$w$ as in the theorem and each automorphic form
$\varphit\in\pi_\omega$, define $^w\varphit:\;^w\Mt(A)\rightarrow\C$ by
\[
^w\varphit(m)=\varphit(\s(w)^{-1}m\s(w))
\]
for $m\in\;^w\Mt(\A)$. Then the representation $^w\pi_\omega$ is realized
in the space 
\[
\{^w\varphit:\varphit\in V_{\pi_\omega}\}.
\]
\end{Prop}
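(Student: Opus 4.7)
The plan is to verify two claims: first, that each $^w\varphit$ is genuinely an automorphic form on $^w\Mt(\A)$, and second, that the map $\varphit\mapsto{^w}\varphit$ intertwines the abstract twisted representation $^w\pi_\omega$ (defined on the space $V_{\pi_\omega}$ via $^w\pi_\omega(m')=\pi_\omega(\s(w)^{-1}m'\s(w))$) with right translation on the space $\{{^w}\varphit\}$.

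For the first claim, the key technical input is that $\s:\GL_r(F)\to\GLt_r(\A)$ is a group homomorphism on $\GL_r(F)$ (Proposition \ref{P:s_split}), so that for any $w\in W_M\subset\GL_r(F)$ and any $\gamma\in M(F)$ we have the clean identity
\[
\s(w)^{-1}\,\s(w\gamma w^{-1})\,\s(w)=\s(w^{-1}(w\gamma w^{-1})w)=\s(\gamma).
\]
Applying this with $\gamma'=w\gamma w^{-1}\in\,^wM(F)$ and inserting $\s(w)\s(w)^{-1}$, I get
\[
{^w}\varphit(\s(\gamma')m)=\varphit\bigl(\s(w)^{-1}\s(\gamma')\s(w)\cdot\s(w)^{-1}m\s(w)\bigr)=\varphit\bigl(\s(\gamma)\,\s(w)^{-1}m\s(w)\bigr)={^w}\varphit(m),
\]
by the automorphy of $\varphit\in\pi_\omega$. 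Smoothness, $K_f$-finiteness, $\ZZ$-finiteness and uniform moderate growth transfer from $\varphit$ to $^w\varphit$ because conjugation by the fixed element $\s(w)$ is a bicontinuous group automorphism at every place, carries a maximal compact subgroup to a maximal compact subgroup, and acts as the identity (on Lie algebras) up to the permutation $\sigma$ on the relevant factors; in particular it preserves the Harish-Chandra seminorms controlling growth. Thus $^w\varphit$ is a smooth automorphic form on $^w\Mt(\A)$, and moreover it is genuine because $\s(w)$ centralizes $\mu_n$.

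For the second claim, let $m'\in{^w}\Mt(\A)$, so that $\s(w)^{-1}m'\s(w)\in\Mt(\A)$. A direct computation gives
\[
(R_{m'}\,{^w}\varphit)(m)={^w}\varphit(mm')=\varphit\bigl(\s(w)^{-1}m\s(w)\cdot\s(w)^{-1}m'\s(w)\bigr)=\bigl(\pi_\omega(\s(w)^{-1}m'\s(w))\varphit\bigr)\bigl(\s(w)^{-1}m\s(w)\bigr),
\]
which is exactly $^w\bigl(\pi_\omega(\s(w)^{-1}m'\s(w))\varphit\bigr)(m)={^w}\bigl({^w}\pi_\omega(m')\varphit\bigr)(m)$. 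Hence the linear map $\varphit\mapsto{^w}\varphit$ is $^w\Mt(\A)$-equivariant from $(V_{\pi_\omega},{^w}\pi_\omega)$ onto the space of restrictions $\{{^w}\varphit\}$ with its right-translation action. It is surjective by construction, and it is injective because $\s(w)$ is invertible in $\GLt_r(\A)$, so $\varphit\equiv 0$ on a dense subset whenever $^w\varphit\equiv 0$.

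I do not anticipate any serious obstacle: the only subtlety is the compatibility of the splitting $\s$ with conjugation by $w$, and this is handled immediately by the homomorphism property of $\s$ on $\GL_r(F)$. Everything else — automorphy, the transfer of analytic properties, and the intertwining property — is a direct unwinding of the definitions.
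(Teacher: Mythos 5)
The paper states this proposition with no proof at all (it introduces it with ``The following is immediate:''), so there is no ``paper's own proof'' to compare against; your proof is a correct, careful unwinding of the definitions. The key point -- that $\s$ is a homomorphism on $\GL_r(F)$ so that $\s(w)^{-1}\s(w\gamma w^{-1})\s(w)=\s(\gamma)$, from which automorphy of $^w\varphit$ and the intertwining property both follow -- is exactly what makes the claim ``immediate,'' and the rest of your argument (transfer of analytic regularity, genuineness, injectivity) is standard and sound.
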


Let us mention the following subtle point. Here we have (at least) two different
realizations of $^w\pi_\omega$ in a space of automorphic
forms on $^w\Mt(\A)$, the one is in the space $\{^w\varphit:\varphit\in
V_{\pi_\omega}\}$  as in the proposition and the other as in the
definition of the metaplectic tensor product
$(\pi_{\sigma(1)}\,\otimest\cdots\otimest\,\pi_{\sigma(k)})_\omega$ by
choosing an appropriate $A_{^w\Mt}$ that satisfies Hypothesis
($\ast$) with respect to the Levi $^w\Mt$ (if possible at
all). Without the multiplicity one property for the group $^w\Mt$, we
do not know if they coincide. But one can see that if $A_{\Mt}$
satisfies Hypothesis ($\ast$) with respect to $\Mt$, then
the group $^wA_{\Mt}:=wA_{\Mt}w^{-1}$ satisfies Hypothesis
($\ast$) with respect to $^w\Mt$. Then if we define
$(\pi_{\sigma(1)}\,\otimest\cdots\otimest\,\pi_{\sigma(k)})_\omega$ by
choosing $A_{^w\Mt}=\,^wA_{\Mt}$, one can see from the construction of
our metaplectic tensor product that the space of
$(\pi_{\sigma(1)}\,\otimest\cdots\otimest\,\pi_{\sigma(k)})_\omega$ is
indeed a space of automorphic forms of the form $^w\varphit$ for
$\varphit\in V_{\pi_\omega}$.

%%%%%%%%%%%%%%%%%%%%%%%%%%%%%%%%%%%%%%%%%%%%%%%%%%%%%%%%%%%

\subsection{\bf Compatibility with parabolic induction}
\label{S:parabolic_induction_global}

%%%%%%%%%%%%%%%%%%%%%%%%%%%%%%%%%%%%%%%%%%%%%%%%%%%%%%%%%%%

Just as the local case, we have the compatibility with parabolic
inductions. But before stating the theorem, let us mention
\begin{Lem}
Let $P=MN$ be the standard parabolic subgroup of $\GL_r$. Then
$\Mt(\A)$ normalizes $N(\A)^\ast$, where $N(\A)^\ast$ is the image of
$N(\A)$ under the partial section $\s:\GL_r(\A)\rightarrow\GLt_r(\A)$.
\end{Lem}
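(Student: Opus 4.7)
The plan is to reduce the global claim to the local version already proved in Lemma \ref{L:normalizer_local}. The group $\GLt_r(\A)$ is defined via the cocycle $\tau_r = \prod_v \tau_{r,v}$, and for any $n \in N(\A) \subseteq N_B(\A)$ the section $\s(n)$ is well-defined and equals $(n, \prod_v s_{r,v}(n_v)^{-1})$ by Proposition \ref{P:s_split}. Consequently any identity of the form $\mt \cdot \s(n) \cdot \mt^{-1} = \s(m n m^{-1})$ in $\GLt_r(\A)$ decomposes place by place against $\tau_r = \prod_v \tau_{r,v}$, so it suffices to verify the analogous identity in each $\GLt_r(F_v)$ for a choice of lift of $\mt$ to the restricted direct product $\prod_v' \GLt_r(F_v)$ under the surjection $\rho$ of (\ref{E:surjection}).

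Fix a place $v$. I would then invoke the local isomorphism $\GLt_r(F_v) \to \sigGLt_r(F_v)$, $(g, \xi) \mapsto (g, s_{r,v}(g)\xi)$, which carries the section $\s_{r,v}: g \mapsto (g, s_{r,v}(g)^{-1})$ used here to the tautological section $g \mapsto (g, 1)$ of the $\sigma_{r,v}$-realization employed in Section \ref{S:Mezo}. Under this isomorphism $N(F_v)^\ast$ corresponds to $\{(n, 1) : n \in N(F_v)\}$, while the preimage $\Mt(F_v)$ of $M(F_v)$ is intrinsic to the extension and hence is carried to the analogously defined subgroup of $\sigGLt_r(F_v)$. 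Lemma \ref{L:normalizer_local}, applied inside $\sigGLt_r(F_v)$, then gives $\mt_v \cdot \s(n_v) \cdot \mt_v^{-1} = \s(m_v n_v m_v^{-1}) \in N(F_v)^\ast$ after transporting back.

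Assembling these local identities across all places yields $\mt \cdot \s(n) \cdot \mt^{-1} = \s(m n m^{-1}) \in N(\A)^\ast$, establishing the lemma. The only delicate point, which I expect to be the main obstacle, is the bookkeeping in the reduction step: one must choose a lift $(\mt_v)_v$ of $\mt$ under $\rho$ and check that $\s(n)$ lifts compatibly to the element $\prod_v \s(n_v)$ of the restricted direct product, so that the pointwise local conjugations assemble to the global conjugation. Both facts follow from the definition of $\tau_r$ as the product of the local $\tau_{r,v}$ and from the homomorphism property of $\s$ on $N(\A)$ given by Proposition \ref{P:s_split}, so this obstacle is essentially notational rather than substantive.
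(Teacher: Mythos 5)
Your proof is correct and reduces the global statement to Lemma \ref{L:normalizer_local} via the product structure of the cocycle, which is also what the paper's proof does. The only difference is presentational: the paper works out the required identity
\[
s_r(n)^{-1}\tau_r(m,n)\tau_r(m,m^{-1})^{-1}\tau_r(mn,m^{-1})=s_r(mnm^{-1})^{-1}
\]
explicitly in $\GLt_r(\A)$ and verifies it ``semi-locally'' by converting each $\tau_{r,v}$ to $\sigma_{r,v}$ via (\ref{E:tau_sigma}) and quoting the local cocycle computation, whereas you package the same reduction through the surjection $\rho$ of (\ref{E:surjection}) and the local isomorphism $\GLt_r(F_v)\cong\sigGLt_r(F_v)$, thereby quoting Lemma \ref{L:normalizer_local} directly instead of re-running its cocycle manipulation. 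Both use the same two facts (the local lemma and the compatibility $\tau_r=\prod_v\tau_{r,v}$, $s_r=\prod_v s_{r,v}$); your version makes the logical structure a bit more visible, at the cost of the bookkeeping you correctly flag at the end, namely checking that the elements $\s(n_v)$ and $\s(m_vn_vm_v^{-1})$ lie in $\kappa(K_v)$ for almost all $v$ so that the local identities assemble inside the restricted product and push down through $\rho$.
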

\begin{proof}\label{L:normalizer_global}
One can prove it by using the local analogue (Lemma
\ref{L:normalizer_local}). Namely let $\mt=(m,1)\in\Mt(\A)$,
so $\mt^{-1}=(m^{-1},\tau_r(m,m^{-1})^{-1})$. Also let
$n^\ast=(n,s_r(n)^{-1})\in N(\A)^\ast$. Then
\begin{align*}
\mt n^\ast\mt^{-1}&=(m,1)(n,s_r(n)^{-1})(m^{-1},\tau_r(m,m^{-1})^{-1})\\
&=(mnm^{-1}, s_r(n)^{-1}\tau_r(m,n)\tau(m,m^{-1})^{-1}\tau_r(mn,m^{-1})).
\end{align*}
Then one needs to show
\[
s_r(n)^{-1}\tau_r(m,n)\tau_r(m,m^{-1})^{-1}\tau_r(mn,m^{-1})
=s(mnm^{-1})^{-1},
\]
so that $\mt n^\ast\mt^{-1}=(mnm^{-1})^\ast\in N(\A)^\ast$. But one
can show it by arguing ``semi-locally''. Namely for a sufficiently large
finite set $S$ of places, we have
\begin{align*}
&s_r(n)^{-1}\tau_r(m,n)\tau_r(m,m^{-1})^{-1}\tau_r(mn,m^{-1})\\
=&\prod_{v\in
  S}s_r(n_v)^{-1}\tau_r(m_v,n_v)\tau_r(m_v,m_v^{-1})^{-1}\tau_r(m_vn_v,m_v^{-1})\\
=&\prod_{v\in
  S}s_r(n_v)^{-1}\sigma_r(m_v,n_v)\frac{s_r(m_v)s_r(n_v)}{s_r(m_vn_v)}\\
&\qquad\qquad\cdot\sigma_r(m_v,m_v^{-1})^{-1}\frac{s_r(m_vm_v^{-1})}{s_r(m_v)s_r(m_v^{-1})}
\sigma_r(m_vn_v,m_v^{-1})\frac{s_r(m_vn_v)s_r(m_v^{-1})}{s_r(m_vn_vm_v^{-1})}\\
=&\prod_{v\in S}s_r(m_vn_vm_v^{-1})^{-1}\\
=&s_r(mnm^{-1})^{-1},
\end{align*}
where for the second equality we used (\ref{E:tau_sigma}), for the
third equality we used the same cocycle computation as in the proof of
Lemma \ref{L:normalizer_local} and finally for the last equality we
used $s_r(m_vn_vm_v^{-1})=1$ for all $v\notin S$.
\end{proof}

Let us mention that for the case at hand one can prove this lemma as
we did here. However this lemma holds not just for our $\GLt_r(\A)$
but for covering groups in general . (See \cite[I.1.3(4), p.4]{MW}.) 

At any rate, this lemma allows one to form the global induced
representation
\[
\Ind_{\Mt(\A)N(\A)^\ast}^{\GLt_r(\A)}\pi
\]
for an automorphic representation $\pi$ of $\Mt(\A)$, and hence one
can form Eisenstein series on $\GLt_r(\A)$ just like the
non-metaplectic case. 

With this said, we have

\begin{Thm}\label{T:induction_global}
Let $P=MN\subseteq\GL_r$ be the standard parabolic subgroup whose Levi
part is $M=\GL_{r_1}\times\cdots\times\GL_{r_k}$. Further for each
$i=1,\dots,k$ let $P_i=M_iN_i\subseteq\GL_{r_i}$ be the standard
parabolic of $\GL_{r_i}$ whose Levi part is
$M_i=\GL_{r_{i,1}}\times\cdots\times\GL_{r_{i,l_i}}$. For each $i$,
assume we can find $A_{\Mt_i}$ that satisfies Hypothesis
($\ast$) with respect to $M_i$ (which is always the case if
$n=2$), and we are
given an automorphic representation 
\[
\sigma_i:=(\tau_{i,1}\,\otimest\cdots\otimest\,\tau_{i,l_i})_{\omega_i}
\]
of $\Mt_i(\A)$, which is given as the metaplectic tensor product of the
unitary automorphic subrepresentations $\tau_{i,1},\dots,\tau_{i,l_i}$ of
$\GLt_{r_{i,1}}(\A),\dots,\GLt_{r_{i,l_i}}(\A)$, respectively. Assume that $\pi_i$ is an
irreducible constituent of the induced representation
$\Ind_{\Pt_i(\A)}^{\GLt_{r_i}(\A)}\sigma_i$ and is realized as an
automorphic subrepresentation. Then the metaplectic tensor
product
\[
\pi_\omega:=(\pi_1\,\otimest\cdots\otimest\,\pi_k)_\omega
\]
is an irreducible constituent of the induced representation
\[
\Ind_{\Qt(\A)}^{\Mt(\A)}(\tau_{1,1}\,\otimest\cdots\otimest\,\tau_{1, l_1}\,\otimest\cdots\otimest\,
\tau_{k,1}\,\otimest\cdots\otimest\,\tau_{k, l_k})_\omega,
\]
where $Q$ is the standard parabolic of $M$ whose Levi part is
$M_1\times\cdots\times M_k$, where $M_i\subseteq\GL_{r_i}$ for each $i$.
\end{Thm}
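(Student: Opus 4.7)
The plan is to reduce the global statement to its local counterpart (Theorem \ref{T:induction_local}) by invoking the local-global decomposition established in Theorem \ref{T:main}. Concretely, I will work one place at a time, applying the local compatibility, and then reassemble using the metaplectic restricted tensor product.

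First I would decompose all the relevant representations into their local components. By Theorem \ref{T:main} applied to each $\sigma_i$, one writes $\sigma_i \cong \otimest'_v\,\sigma_{i,v}$ where $\sigma_{i,v} = (\tau_{i,1,v}\,\otimest\cdots\otimest\,\tau_{i,l_i,v})_{\omega_{i,v}}$ is Mezo's local metaplectic tensor product on $\Mt_i(F_v)$. Since $\pi_i$ is an irreducible constituent of the adelic induced representation $\Ind_{\Pt_i(\A)}^{\GLt_{r_i}(\A)}\sigma_i$, and the latter decomposes as the metaplectic restricted tensor product of the local induced representations $\Ind_{\Pt_i(F_v)}^{\GLt_{r_i}(F_v)}\sigma_{i,v}$, it follows from the tensor product theorem for admissible representations that if $\pi_i \cong \otimest'_v\,\pi_{i,v}$, then each local component $\pi_{i,v}$ is an irreducible constituent of $\Ind_{\Pt_i(F_v)}^{\GLt_{r_i}(F_v)}\sigma_{i,v}$.

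Next, at every place $v$, I would invoke the local compatibility result (Theorem \ref{T:induction_local}) to conclude that the local metaplectic tensor product
\[
\pi_{\omega_v} := (\pi_{1,v}\,\otimest\cdots\otimest\,\pi_{k,v})_{\omega_v}
\]
is an irreducible constituent of $\Ind_{\Qt(F_v)}^{\Mt(F_v)}(\tau_{1,1,v}\,\otimest\cdots\otimest\,\tau_{k,l_k,v})_{\omega_v}$. By Theorem \ref{T:main} applied to $M$ (using the assumed $A_{\Mt}$) and also to the finer Levi $M_1\times\cdots\times M_k$ (whose Hypothesis ($\ast$) data can be built from the $A_{\Mt_i}$ combined with the choice for $M$), the global metaplectic tensor products appearing in the statement likewise decompose as restricted tensor products of their local counterparts. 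Taking $\otimest'_v$ of the local constituent relations and using that parabolic induction commutes with the restricted tensor product yields that $\pi_\omega \cong \otimest'_v \pi_{\omega_v}$ is an irreducible constituent of $\Ind_{\Qt(\A)}^{\Mt(\A)}(\tau_{1,1}\,\otimest\cdots\otimest\,\tau_{k,l_k})_\omega$, as desired.

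The hard part will be bookkeeping with Hypothesis ($\ast$): the theorem implicitly requires a coherent choice of abelian subgroups $A_{\Mt}$, $A_{\MQt}$ (for $\MQt = \Mt_1\timest\cdots\timest\Mt_k$) and each $A_{\Mt_i}$ so that the three global metaplectic tensor products in the statement all make sense, and these choices must be mutually compatible so that the central characters $\omega$ restrict correctly across the inclusions $\MQt\subseteq\Mt$ and $\Mt_i\subseteq\GLt_{r_i}$. The cleanest approach is to fix $A_{\Mt}$ first, then take $A_{\Mt_i}=A_{\Mt}\cap\GLt_{r_i}$ enlarged if necessary to satisfy Hypothesis ($\ast$), and finally set $A_{\MQt}=A_{\Mt}$ viewed inside $\MQt$; with this setup the local-global decomposition of Theorem \ref{T:main} transfers the constituent relation at each place to the global one without any further analytic input, since all that is being compared are abstract restricted tensor products of admissible representations.
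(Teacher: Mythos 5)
Your proposal is correct and follows essentially the same route as the paper: the paper's own proof is a one-sentence invocation of Theorem \ref{T:induction_local} together with the local-global decomposition $\pi_\omega\cong\otimest'_v\pi_{\omega_v}$ from Theorem \ref{T:main}. You have merely spelled out the bookkeeping (local decomposition of the $\sigma_i$, passage of the constituent relation to local factors via a Lemma \ref{L:local_global}-style argument, and reassembly by $\otimest'_v$) that the paper leaves implicit.
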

\begin{proof}
This follows from its local analogue (Theorem \ref{T:induction_local})
and the local-global compatibility of the metaplectic tensor product
$\pi_\omega\cong\otimest'\pi_{\omega_v}$.
\end{proof}

\begin{Rmk}
Just as we mentioned in Remark \ref{R:induction_local} for the local
case, in the above theorem one may replace ``constituent'' by
``irreducible subrepresentation'' or ``irreducible quotient'', and the
analogous statement still holds.
\end{Rmk}

%%%%%%%%%%%%%%%%%%%%%%%%%%%%%%%%%%%%%%%%%%%%%%%%%%%%%%%%%%%

\subsection{\bf Restriction to a smaller Levi}\label{S:restriction}

%%%%%%%%%%%%%%%%%%%%%%%%%%%%%%%%%%%%%%%%%%%%%%%%%%%%%%%%%%%

As the last thing in this paper, let us mention an important property
of the metaplectic tensor product which one needs when one computes
constant terms of metaplectic Eisenstein series. (See \cite{Takeda2}.) 

Both locally and globally, let 
\[
M_2=\GL_{r_2}\times\cdots\times\GL_{r_k}
=\{\begin{pmatrix}I_{r_1}&&&\\ &g_2&&\\ &&\ddots&\\
  &&&g_k\end{pmatrix}\in M: g_i\in\GL_{r_i}\}
\]
be viewed as a subgroup of $M$ in the obvious way. We view $\GL_{r-r_1}$
as a subgroup of $\GL_r$ embedded in the right lower corner, and so
$M_2$ can be also viewed as a Levi subgroup of $\GL_{r-r_1}$ embedded
in this way. 

Both locally and globally, we let
\[
\tau_{M_2}:M_2\times M_2\rightarrow\mu_n
\]
be the block-compatible 2-cocycle on $M_2$ defined analogously to $\tau_M$. One
can see that the block-compatibility of $\tau_M$ and $\tau_{M_2}$
implies
\begin{equation}\label{E:tau_M2}
\tau_{M_2}={\tau_M}|_{M_2\times M_2},
\end{equation}
which gives the embeddings
\begin{align*}
\Mt_2\subseteq\Mt\hookrightarrow\GLt_r.
\end{align*}
(Note that the last map is not the natural inclusion because here $\Mt$
is actually $\cMt$, and
that is why we use $\hookrightarrow$ instead of $\subseteq$.) 

For each automorphic form $\varphit\in V_{\pi_\omega}$ in the space of
the metaplectic tensor product, one would like to know which
space the restriction $\varphit|_{\Mt_2(\A)}$ belongs to. Just like
the non-metaplectic case, it would
be nice if this restriction is simply in the space of the metaplectic
tensor product of $\pi_2,\dots,\pi_k$ with respect to the character
$\omega$ restricted to, say, $A_{\Mt}\cap\Mt_2$. But as we will see, this is not necessarily the
case. The metaplectic tensor product is more subtle.

Let us first introduce the subgroup $A_{\Mt_2}$ of $\Mt_2$ which
plays the role analogous to that of $A_{\Mt}$:
\[
A_{\Mt_2}(R):=\{(\begin{pmatrix}I_{r_1}&\\ &A_2\end{pmatrix},\xi):
(\begin{pmatrix}a_1I_{r_1}&\\ &A_2\end{pmatrix},\xi)\in
A_{\Mt}(R)  \text{ for some $a_1\in R^{\times n}$}\}.
\]
Note that $A_{\Mt}(R)\cap\Mt_2(R) \subseteq A_{\Mt_2}(R)$, but the equality
might not hold in general. Also note $A_{\Mt_2}(R)\subseteq
A_{\Mt}(R)$. The following lemma implies that $A_{\Mt_2}$ is abelian.
\begin{Lem}
Let $(\begin{pmatrix}I_{r_1}&\\
  &A_2\end{pmatrix}, \xi), (\begin{pmatrix}I_{r_1}&\\
  &A'_2\end{pmatrix}, \xi')\in A_{\Mt_2}(R)$. Then
\[
\tau_{M_2}(A_2,A'_2)=\tau_{M_2}(A'_2,A_2).
\]
\end{Lem}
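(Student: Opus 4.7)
The plan is to lift the two elements from $A_{\Mt_2}(R)$ back to $A_{\Mt}(R)$, exploit the abelianness of $A_{\Mt}$ there, and then use the block-compatibility of $\tau_M$ to push the symmetry down to $\tau_{M_2}$. By the definition of $A_{\Mt_2}(R)$, I can choose $a_1, a'_1 \in R^{\times n}$ such that $(\diag(a_1 I_{r_1}, A_2), \xi)$ and $(\diag(a'_1 I_{r_1}, A'_2), \xi')$ both lie in $A_{\Mt}(R)$. Since $A_{\Mt}$ is by construction abelian, these two lifted elements commute in $\cMt$, which (after cancelling the $\mu_n$-parts) amounts to
\[
\tau_M(\diag(a_1 I_{r_1}, A_2), \diag(a'_1 I_{r_1}, A'_2))
= \tau_M(\diag(a'_1 I_{r_1}, A'_2), \diag(a_1 I_{r_1}, A_2)).
\]

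Next I would expand both sides using the block-compatibility of $\tau_M$, viewing $M = \GL_{r_1} \times M_2$ as $k$ blocks (the first of size $r_1$, the remaining $k-1$ from $M_2$). This produces three types of factors on each side: (a) a $\tau_{r_1}$-factor involving the scalar matrices $a_1 I_{r_1}, a'_1 I_{r_1}$, (b) the $\tau_{M_2}$-factor built out of the $M_2$ blocks, which follows from the block-compatibility of $\tau_{M_2}$ itself, and (c) a collection of Hilbert symbols of the form $(a_1^{r_1}, \det A'_2)^{1+c}$, $(\det A_2, a'_1{}^{r_1})^c$ (and their swapped versions on the right-hand side). Because $a_1, a'_1 \in R^{\times n}$, their $r_1$-th powers remain in $R^{\times n}$, so every such Hilbert symbol is trivial and the (c)-factors drop out entirely from both sides.

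It remains to handle the (a)-factors, i.e.\ to show $\tau_{r_1}(a_1 I_{r_1}, a'_1 I_{r_1}) = \tau_{r_1}(a'_1 I_{r_1}, a_1 I_{r_1})$. For this I would invoke Lemma \ref{L:center_GLtt}: $a_1 I_{r_1}$ is a scalar matrix so $(a_1 I_{r_1}, 1) \in \widetilde{Z_{\GL_{r_1}}}$, while $a'_1 \in R^{\times n}$ forces $a'_1 I_{r_1} \in \GL_{r_1}^{(n)}$, so the two elements commute in $\GLt_{r_1}$, giving exactly the desired equality of cocycle values. (Alternatively, one can compute both directly using Proposition~\ref{P:BLS} (4) and the coboundary formula \eqref{E:tau_sigma}, observing that $(a_1, a'_1) = 1$ since both are $n$-th powers.)

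Combining these three cancellations, the commutator equation reduces to $\tau_{M_2}(A_2, A'_2) = \tau_{M_2}(A'_2, A_2)$, which is the claim. I expect no real obstacle: the only subtlety is step (a), which is neatly absorbed by Lemma \ref{L:center_GLtt}, and one should be a little careful in the global case that the block-compatibility formula and the triviality of Hilbert symbols on $n$-th powers apply verbatim to $\tau_M$ (which is the product of the local $\tau_{M_v}$'s), but no new idea is needed beyond what is already in the paper.
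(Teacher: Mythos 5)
Your argument is correct and follows the same route as the paper: lift to $A_{\Mt}(R)$, use its abelianness to get symmetry of $\tau_M$, and apply block-compatibility to descend to $\tau_{M_2}$. The paper's proof is a one-liner ("block-compatibility of $\tau_M$ and $A_{\Mt}(R)$ abelian"); you have usefully filled in the two tacit points it glosses over --- that the Hilbert-symbol cross-terms vanish because $a_1, a'_1 \in R^{\times n}$, and that the residual $\tau_{r_1}(a_1 I_{r_1}, a'_1 I_{r_1})$ factor is symmetric, for which Lemma \ref{L:center_GLtt} (applied with $a'_1 I_{r_1}\in\GL^{(n)}_{r_1}$) is exactly the right tool.
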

\begin{proof}
This follows by the block-compatibility of $\tau_M$ and the fact that
$A_{\Mt}(R)$ is abelian.
\end{proof}
Also one can see that the image of $A_{\Mt_2}(R)$ under the canonical
projection is closed, and hence $A_{\Mt_2}(R)$ is closed.

Another property to be mentioned is
\begin{Lem}\label{L:M_2}
For $R=\A$ or $F_v$, we have
\[
A_{\Mt_2}(R)\Mtn_2(R)=A_{\Mt}(R)\Mtn(R)\cap \Mt_2(R).
\]
Also for global $F$ we have
\[
A_{\Mt_2}\Mtn_2(F)=A_{\Mt}\Mtn(F)\cap \s(M_2(F)),
\]
where by definition
\[
A_{\Mt_2}\Mtn_2(F):=A_{\Mt_2}(\A)\Mtn_2(\A)\cap\s(M(F)),
\]
which is not necessarily the same as $A_{\Mt_2}(F)\Mtn_2(F)$.
\end{Lem}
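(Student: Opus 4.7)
The plan is to prove the local/adelic equality first and then derive the global one by intersecting with $\s(M(F))$. The easy inclusion $A_{\Mt_2}(R)\Mtn_2(R) \subseteq A_{\Mt}(R)\Mtn(R) \cap \Mt_2(R)$ amounts to unpacking the definition of $A_{\Mt_2}(R)$: for $y = a_2 m_2$ with $a_2 = (\diag(I_{r_1}, A_2), \xi) \in A_{\Mt_2}(R)$, the defining witness provides some $a_1 \in R^{\times n}$ with $\tilde a := (\diag(a_1 I_{r_1}, A_2), \xi) \in A_{\Mt}(R)$, and a short cocycle check using block compatibility of $\tau_M$ (Proposition~\ref{P:BLS}(4)) gives
\[
a_2 = \tilde a \cdot (\diag(a_1^{-1}I_{r_1}, I_{r-r_1}), \eta)
\]
for an explicit $\eta \in \mu_n$. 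The second factor lies in $\GLtn_{r_1}(R) \hookrightarrow \Mtn(R)$ since $a_1^{-1} \in R^{\times n}$; combined with $m_2 \in \Mtn_2(R) \subseteq \Mtn(R)$ and the obvious containment $y \in \Mt_2(R)$, this direction follows.

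For the reverse inclusion, which is the substance of the lemma, I would take $x \in A_{\Mt}(R)\Mtn(R) \cap \Mt_2(R)$ with a decomposition $x = am$, where $a = (\diag(a_1 I_{r_1}, \ldots, a_k I_{r_k}), \xi_a) \in A_{\Mt}(R)$ and $m = (\diag(g_1, \ldots, g_k), \xi_m) \in \Mtn(R)$. Matching upper-left blocks forces $a_1 g_1 = I_{r_1}$, so $g_1 = a_1^{-1} I_{r_1}$, and since $g_1 \in \GL_{r_1}^{(n)}$ we obtain $a_1^{r_1} \in R^{\times n}$. The crux is to upgrade this to $a_1 \in R^{\times n}$ by adjusting the decomposition: since $Z_{\GLt_r}(R) \subseteq A_{\Mt}(R)$ and, by \eqref{E:center_GLt}, it contains $(\alpha I_r, 1)$ for any $\alpha \in R^\times$ with $\alpha^{r-1+2cr} \in R^{\times n}$, the substitution $(a,m) \mapsto (a\cdot(\alpha I_r,1),\, (\alpha I_r,1)^{-1}m)$ yields a new valid decomposition in which $a_1$ is replaced by $\alpha a_1$; exploiting the numerical constraint $a_1^{r_1} \in R^{\times n}$ together with the structural constraints inherited from $a \in \widetilde{Z_M}(R)$ (and the choice of $A_{\Mt}$ underlying Hypothesis ($\ast$)), one can choose such an $\alpha$ with $\alpha a_1 \in R^{\times n}$. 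Once $a_1 \in R^{\times n}$, the factorization
\[
x = \bigl(\diag(I_{r_1}, a_2 I_{r_2}, \ldots, a_k I_{r_k}), \xi_a\bigr) \cdot \bigl(\diag(I_{r_1}, g_2, \ldots, g_k), \xi_{m'}\bigr)
\]
has first factor in $A_{\Mt_2}(R)$ (witnessed by $a_1$, with $a$ itself as the required lift) and second factor in $\Mtn_2(R)$ (since $g_i \in \GL_{r_i}^{(n)}$ for $i \geq 2$); the scalar $\xi_{m'} \in \mu_n$ is then determined by matching cocycle values via block compatibility.

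For the global equality, the partial section $\s$ respects the block decomposition by Lemma~\ref{L:splitting_cMPt}, so $\s(M(F)) \cap \Mt_2(\A) = \s(M_2(F))$. Since $A_{\Mt_2}(\A)\Mtn_2(\A) \subseteq \Mt_2(\A)$, the definition unwinds as
\[
A_{\Mt_2}\Mtn_2(F) = A_{\Mt_2}(\A)\Mtn_2(\A) \cap \s(M(F)) = A_{\Mt_2}(\A)\Mtn_2(\A) \cap \s(M_2(F)),
\]
and the already-proven adelic identity rewrites this as $A_{\Mt}(\A)\Mtn(\A) \cap \Mt_2(\A) \cap \s(M_2(F)) = A_{\Mt}(\A)\Mtn(\A) \cap \s(M_2(F)) = A_{\Mt}\Mtn(F) \cap \s(M_2(F))$, as desired. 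The main obstacle is the upgrade $a_1^{r_1} \in R^{\times n} \Longrightarrow a_1 \in R^{\times n}$ in the local/adelic reverse inclusion, which is not purely formal and must exploit both the specific choice of $A_{\Mt}$ and the explicit description \eqref{E:center_GLt} of $Z_{\GLt_r}$; all remaining bookkeeping with $\mu_n$-factors is routine.
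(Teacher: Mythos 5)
Your easy inclusion and the global reduction via $\s(M(F))\cap\Mt_2(\A)=\s(M_2(F))$ are fine. The gap is in the reverse inclusion of the adelic/local identity, precisely at the step where you try to upgrade $a_1^{r_1}\in R^{\times n}$ to $a_1\in R^{\times n}$.

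There are two problems. First, the substitution $(a,m)\mapsto(a\cdot(\alpha I_r,1),\ (\alpha I_r,1)^{-1}m)$ keeps $a\cdot(\alpha I_r,1)$ in $A_{\Mt}(R)$, but for $(\alpha I_r,1)^{-1}m$ to remain in $\Mtn(R)$ you also need $\alpha^{r_i}\in R^{\times n}$ for every $i$, a constraint you never check. Second, and more fundamentally, even taking this into account there is in general no such $\alpha$. Take $n=2$, $r=3$, $r_1=2$, $r_2=1$ with $A_{\Mt}$ as in Proposition~\ref{P:A_M_for_n=2}; then $a_1\in\A^\times$ is unconstrained (since $a_1^{r_1}=a_1^2$ is automatically a square), the condition $\alpha^{r_2}=\alpha\in\A^{\times 2}$ forces $\alpha$ to be a square, and hence $\alpha a_1$ is a square if and only if $a_1$ already was. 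Yet the lemma is true in this example, because $A_{M_2}(\A)=p(\Zt_2)$ is the full $\A^\times$ and so $A_{M_2}(\A)M_2^{(2)}(\A)=M_2(\A)$. The underlying misreading is that you treat membership of $(\diag(I_{r_1},A_2),\xi_a)$ in $A_{\Mt_2}(R)$ as requiring $a_1\in R^{\times n}$ (so that $a$ itself is the witness), whereas the definition only asks that \emph{some} $\alpha\in R^{\times n}$ admit a lift $(\diag(\alpha I_{r_1},A_2),\xi)\in A_{\Mt}(R)$ — a witness that has nothing to do with the $a_1$ in your given decomposition. The paper's (admittedly terse) argument exploits exactly this freedom: it factors $a=a_2a_1'$ with $a_2=(\diag(I_{r_1},A_2),\cdot)$ and $a_1'=(\diag(a_1I_{r_1},I_{r-r_1}),\cdot)$, so that $a_1'm\in\Mtn_2(R)$ is immediate from $g_1=a_1^{-1}I_{r_1}$ and $g_i\in\GL_{r_i}^{(n)}$, and then verifies $a_2\in A_{\Mt_2}(R)$ directly from the explicit structure of $A_{\Mt}$, choosing a witness $\alpha$ that need not be $a_1$. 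To repair your argument you should drop the attempt to make $a_1$ itself an $n$-th power and instead verify, for the specific $A_{\Mt}$ at hand, that $\diag(\alpha I_{r_1},A_2)\in A_M(R)$ for \emph{some} $\alpha\in R^{\times n}$ whenever $\diag(a_1I_{r_1},A_2)\in A_M(R)$ with $a_1^{r_1}\in R^{\times n}$.
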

\begin{proof}
The lemma can be verified by direct computations. Note that for both
cases, the inclusion $\subseteq$ is immediate. To show the reverse
inclusion, we need that  if $a\in A_{\Mt}(R)$ and $m\in \Mtn(R)$ are
such that $am\in A_{\Mt}(R)\Mtn(R)\cap \Mt_2(R)$, one can always write
$a=a_2a_1$ with $a_2\in A_{\Mt_2}(R)$ such that $a_1m\in\Mtn_2(R)$,
and hence $am=a_2(a_1m)\in A_{\Mt_2}(R)\Mtn_2(R)\subseteq A_{\Mt_2}\Mtn_2(F)$.
\end{proof}

\quad

Now assume that our group $A_{\Mt}$ satisfies
the following:
\begin{hypo2}
The group $A_{\Mt}$ satisfies:
\begin{enumerate}
\item[(0)] $A_{\Mt}$ satisfies Hypothesis ($\ast$)
\item $A_{\Mt_2}$ as defined above contains the
center $Z_{\GLt_{r-r_1}}$.
\item $A_{\Mt_2}$ satisfies Hypothesis ($\ast$)
with respect to $\Mt_2$.
\end{enumerate}
\end{hypo2}

As an example of $A_{\Mt}$ satisfying the above hypothesis, we have
\begin{Lem}
If $n=2$, the choice of $A_{\Mt}$ as in Proposition
\ref{P:A_M_for_n=2} satisfies this hypothesis. Moreover, one has
\[
A_{\Mt_2}=A_{\Mt}\cap\Mt_2
\]
both locally and globally.
\end{Lem}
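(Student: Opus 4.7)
The plan is to verify, in order, the three clauses of Hypothesis $(\ast\ast)$ and then the equality $A_{\Mt_2}=A_{\Mt}\cap\Mt_2$, exploiting the fact that Proposition \ref{P:A_M_for_n=2} gives an explicit description of $A_{\Mt}$ in the $n=2$ case.

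Clause (0) is just the content of Proposition \ref{P:A_M_for_n=2}, so nothing needs to be done there. I would next prove the equality $A_{\Mt_2}=A_{\Mt}\cap\Mt_2$, because once established it makes clauses (1) and (2) essentially automatic. The inclusion $\supseteq$ is immediate: any element $(\diag(I_{r_1},A_2),\xi)\in A_{\Mt}\cap\Mt_2$ satisfies the defining condition of $A_{\Mt_2}$ with $a_1=1\in R^{\times 2}$. For the reverse inclusion one takes $(\diag(I_{r_1},A_2),\xi)\in A_{\Mt_2}$, lifts it to some $(\diag(a_1 I_{r_1},A_2),\xi)\in A_{\Mt}$ with $a_1\in R^{\times 2}$, and has to show that $(\diag(I_{r_1},A_2),\xi)$ already lies in $A_{\Mt}$. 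Here the crucial input is the explicit shape of $A_{\Mt}$ from Proposition \ref{P:A_M_for_n=2}: because $n=2$ and $a_1\in R^{\times 2}$, the ``scalar'' element $(\diag(a_1 I_{r_1},I_{r-r_1}),1)$ is visibly in $A_{\Mt}$ (it is of the exact form prescribed by that proposition), and multiplying by its inverse gives the desired element of $A_{\Mt}\cap\Mt_2$. The block-compatibility of $\tau_M$ and the fact that Hilbert symbols involving the $n$-th power $a_1$ are trivial make this algebraic manipulation clean.

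Granting the equality, clause (1) is immediate: the center $Z_{\GLt_{r-r_1}}$, realized as a subgroup of $\Mt_2$, consists of elements of the form $(\diag(I_{r_1},aI_{r_2},\dots,aI_{r_k}),\xi)$ with $a^{r-r_1-1+2c(r-r_1)}\in R^{\times n}$, and any such element can be extended to an element of $Z_{\GLt_r}\subseteq A_{\Mt}$ lying in $\Mt_2$, so it belongs to $A_{\Mt}\cap\Mt_2=A_{\Mt_2}$. For clause (2), one applies Proposition \ref{P:hypothesis} directly to the smaller Levi $M_2\subseteq\GL_{r-r_1}$ with the partition $r-r_1=r_2+\cdots+r_k$, obtaining a group that satisfies Hypothesis $(\ast)$. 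One then verifies that the construction of Proposition \ref{P:A_M_for_n=2} applied to $M_2$ produces exactly $A_{\Mt_2}$ as we have defined it; this reduces to comparing the defining conditions and again uses that multiplying the first block by an element of $R^{\times 2}$ lies in the $n$-th power subgroup.

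The main obstacle is the reverse inclusion in the equality: without the explicit form of $A_{\Mt}$ from Proposition \ref{P:A_M_for_n=2}, one has no way to ``peel off'' the factor $a_1 I_{r_1}$ while staying inside $A_{\Mt}$. This is the precise point at which the hypothesis $n=2$ is essential---for general $n$, $R^{\times n}$ no longer interacts with the Kazhdan--Patterson center and with the cocycle in the uniform way needed to carry out this splitting, which is why the analogous statement for $n>2$ requires the $\gcd$ condition mentioned in the main theorem.
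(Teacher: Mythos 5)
Your overall strategy---establish the equality $A_{\Mt_2}=A_{\Mt}\cap\Mt_2$ first and then read off clauses (1) and (2)---is sound, and since the paper's own proof consists solely of the remark that this ``can be merely checked case-by-case,'' your write-up supplies genuinely more detail. The equality is handled correctly: $\supseteq$ is immediate, and for $\subseteq$ the key step, that $(\diag(a_1I_{r_1},I_{r-r_1}),1)\in A_{\Mt}$ when $a_1\in R^{\times 2}$, does hold because $\diag(a_1I_{r_1},I_{r-r_1})=(a_1I_r)\cdot\diag(I_{r_1},a_1^{-1}I_{r-r_1})$ is a product of elements of $p(\Zt_1)$ and $p(\Zt_2)$; it would be cleaner to exhibit this factorization than to call the element ``visibly'' of the form in Proposition \ref{P:A_M_for_n=2}, since it is not literally a single $\Zt_i$. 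Clause (2) is also handled correctly once one notes that the $\Zt_j$ from Proposition \ref{P:A_M_for_n=2} applied to $M_2\subseteq\GL_{r-r_1}$ coincide with the $\Zt_j$ for $j\geq 2$ from the same proposition applied to $M$, so that construction produces exactly $A_{\Mt}\cap\Mt_2$.

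The argument you give for clause (1), however, is wrong as stated. You say an element $(\diag(I_{r_1},aI_{r-r_1}),\xi)$ of $Z_{\GLt_{r-r_1}}\subseteq\Mt_2$ ``can be extended to an element of $Z_{\GLt_r}\subseteq A_{\Mt}$ lying in $\Mt_2$,'' but the only elements of $Z_{\GLt_r}$ lying in $\Mt_2$ are those with matrix part $I_r$ (the first block must be $I_{r_1}$, which forces the scalar to be $1$), so this ``extension'' is vacuous. The conclusion is nonetheless correct and follows by a shorter route: $Z_{\GLt_{r-r_1}}$, embedded in $\Mt_2$ in the lower-right corner, is precisely the factor $\Zt_2$ appearing in the definition $A_{\Mt}=\Zt_1\Zt_2\cdots\Zt_k$ of Proposition \ref{P:A_M_for_n=2}, hence is already contained in $A_{\Mt}\cap\Mt_2=A_{\Mt_2}$. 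A final, smaller quibble: your closing remark locates the essential use of $n=2$ in the peeling-off step, but $R^{\times n}\subseteq R^{\times n/d}$ holds for any $n$, so that step would go through for general $n$ whenever $A_{\Mt}$ is built the same way. What really uses $n=2$ is that Proposition \ref{P:A_M_for_n=2} is available at all, i.e.\ that Hypothesis ($\ast$) can be verified for this $A_{\Mt}$, which is the obstruction the paper points to.
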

\begin{proof}
This can be merely checked case-by-case.
\end{proof}

\quad

Next for each
$\delta\in\GL_{r_1}(F)$, define
$\omega_{\delta}:A_{\Mt_2}(F)\backslash A_{\Mt_2}(\A)\rightarrow\C^1$ by 
\[
\omega_{\delta}(a)=\omega(\s(\delta) a \s(\delta^{-1})).
\]
Since $\s(\delta) A_{\Mt_2}(\A)\s(\delta^{-1})=A_{\Mt_2}(\A)$ and
$A_{\Mt_2}(\A)\subseteq A_{\Mt}(\A)$, this is
well-defined, and since $\s$ is a homomorphism on $M(F)$,
$\omega_\delta$ is a character. Indeed, one can compute
\begin{equation}\label{E:omega_delta}
\omega_{\delta}(a)=(\det\delta,\det a)^{1+2c}\omega(a)
\end{equation}
because one can see
\[
\s(\delta) a \s(\delta^{-1})=(1,(\det\delta,\det a)^{1+2c})a
\]
and $\omega$ is genuine. Hence 
for each $a\in
A_{\Mt_2}(\A)\cap A_{\Mt_2}\Mtn_2(F)\Mtn_2(\A)$ we have $\omega_\delta(a)=\omega(a)$
because $(\det\delta,\det a)=1$, namely
\[
\omega_\delta|_{A_{\Mt_2}(\A)\cap A_{\Mt_2}\Mtn_2(F)\Mtn_2(\A)}
=\omega|_{A_{\Mt_2}(\A)\cap A_{\Mt_2}\Mtn_2(F)\Mtn_2(\A)}.
\]
Therefore using $\pi_2,\dots,\pi_k$ and $\omega_\delta$, one can construct the metaplectic
tensor product representation of $\Mt_2(\A)$ with respect to $A_{\Mt_2}$, namely
\begin{equation}\label{E:pi_restricted_to_M_2}
\pi_{\omega_\delta}:=(\pi_2\,\otimest\cdots\otimest\,\pi_k)_{\omega_\delta}.
\end{equation}

Then we have
\begin{Prop}\label{P:restriction}
Assume $A_{\Mt}$ satisfies Hypothesis ($\ast\ast$). 
For each $\varphit\in\pi_\omega=(\pi_1\otimest\cdots\otimest\pi_k)_{\omega}$, 
\[
\varphit|_{\Mt_2(\A)}\in\bigoplus_\delta m_\delta\pi_{\omega_\delta},
\]
where
$\pi_{\omega_\delta}=(\pi_2\otimest\cdots\otimest\pi_k)_{\omega_\delta}$
as in (\ref{E:pi_restricted_to_M_2})
and $\delta$ runs through a finite subset of $\GL_{r_1}(F)$, and
$m_\delta\in\Z^{>0}$ is a multiplicity. (Note
that which
$\delta$ appears in the sum could depend on $\varphi$.)
\end{Prop}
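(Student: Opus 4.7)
The plan is to decompose the defining sum along the double cosets $A_MM^{(n)}(F)\backslash M(F)/M_2(F)$. Writing $\gamma=\diag(\gamma_1,\gamma_2,\dots,\gamma_k)\in M(F)$ and using the factorization $\s(\gamma)=\s_1(\gamma_1)\s_2(\gamma')$ with $\s_1(\delta):=\s(\diag(\delta,I,\dots,I))$ and $\s_2(\gamma'):=\s(\diag(I,\gamma_2,\dots,\gamma_k))$, together with the identity $A_MM^{(n)}(F)\cap M_2(F)=A_{M_2}M_2^{(n)}(F)$ from Lemma~\ref{L:M_2}, one obtains a bijection of $A_MM^{(n)}(F)\backslash M(F)$ with $\bigsqcup_{\delta\in D}A_{M_2}M_2^{(n)}(F)\backslash M_2(F)$ for a set of representatives $D\subseteq\GL_{r_1}(F)$. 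Substituting this decomposition and restricting to $m\in\Mt_2(\A)$ yields
\[
\varphit|_{\Mt_2(\A)}(m)=\sum_{\delta\in D}\varphit_\delta(m),\qquad
\varphit_\delta(m):=\sum_{\gamma'\in A_{M_2}M_2^{(n)}(F)\backslash M_2(F)}\varphi\bigl(\s_1(\delta)\s_2(\gamma')m;1\bigr),
\]
in which only finitely many $\delta$ contribute nontrivially by Lemma~\ref{L:finite_sum}.

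The main task is then to show that each nonzero $\varphit_\delta$ lies in a realization of $(\pi_2\otimest\cdots\otimest\pi_k)_{\omega_\delta}$. I would introduce the auxiliary function
\[
\psi_\delta(m)(\eta):=\varphi(\s_1(\delta)m;\eta),\qquad m\in\Mt_2(\A),\ \eta\in A_{\Mt_2}(\A)\Mtn_2(\A),
\]
well-defined because $A_{\Mt_2}(\A)\Mtn_2(\A)\subseteq A_{\Mt}(\A)\Mtn(\A)$ by Lemma~\ref{L:M_2}. Using the block-compatible formula for $\tau_M$, one checks that $\psi_\delta$ takes values in a sum of copies of the space of $\omega_\delta\cdot(\pin_2\otimest\cdots\otimest\pin_k)$, and that it satisfies the left-equivariance needed to lie in $\cInd_{A_{\Mt_2}(\A)\Mtn_2(\A)}^{\Mt_2(\A)}\bigl[\omega_\delta\cdot(\pin_2\otimest\cdots\otimest\pin_k)\bigr]$---exactly the inner datum used to build $(\pi_2\otimest\cdots\otimest\pi_k)_{\omega_\delta}$ in Theorem~\ref{T:main}. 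Then $\varphit_\delta(m)=\sum_{\gamma'}\psi_\delta(\s_2(\gamma')m)(1)$ is precisely the averaging that produces an automorphic form in $(\pi_2\otimest\cdots\otimest\pi_k)_{\omega_\delta}$; Hypothesis~($\ast\ast$) ensures this construction is legitimate for $\Mt_2$, and since multiplicity one on $\Mt_2(\A)$ is not available one only gets an inclusion into a direct sum with multiplicities $m_\delta$.

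The key technical step---and the source of the twist from $\omega$ to $\omega_\delta$---is the commutation relation
\[
\s_1(\delta)\,a\,\s_1(\delta)^{-1}=(\det\delta,\det a)^{1+2c}\cdot a\qquad\text{for all }a\in A_{\Mt_2}(\A),
\]
together with the easier fact that $\s_1(\delta)$ commutes pointwise with $\Mtn_2(\A)$. Both should follow from the block compatibility of $\tau_M$ and Lemma~\ref{L:to_compute_center}, using crucially that the $\GL_{r_1}$-block of any $a\in A_{\Mt_2}(\A)$ and the block determinants of elements of $\Mtn_2(\A)$ all lie in $\A^{\times n}$, which annihilates the ``diagonal'' Hilbert symbols and leaves only the cross terms depending on $\delta$. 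Combining these with the identity $\omega_\delta(a)=(\det\delta,\det a)^{1+2c}\omega(a)$ from \eqref{E:omega_delta} and the characterizing property $\varphi(hg;\eta)=\varphi(g;\eta h)$ of $V(\pin_\omega)$ delivers the required left-transformation law for $\psi_\delta$. Automorphy of $\varphit_\delta$ under $\s(M_2(F))$ is immediate from a change of variables $\gamma'\mapsto\gamma'\gamma$ in the inner sum. I expect the cocycle bookkeeping behind the twisted commutator relation above to be the main obstacle; once that is in hand, the remaining steps are formal.
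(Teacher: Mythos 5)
Your proposal is correct and follows essentially the same route as the paper's proof: decompose the sum defining $\varphit$ via the inclusion $A_{M_2}M_2^{(n)}(F)\backslash M_2(F)\hookrightarrow A_MM^{(n)}(F)\backslash M(F)$ from Lemma~\ref{L:M_2} into a finite double sum over representatives $\delta\in\GL_{r_1}(F)$ and $\gamma'\in A_{M_2}M_2^{(n)}(F)\backslash M_2(F)$, then observe that each inner averaged function lands in $(\pi_2\otimest\cdots\otimest\pi_k)_{\omega_\delta}$ because conjugation by $\s(\delta)$ twists $\omega$ to $\omega_\delta$ on $A_{\Mt_2}(\A)$, with Hypothesis~($\ast\ast$) invoked at the same place for the same reason. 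The cocycle commutator identity you single out as the key technical step is exactly the computation underlying \eqref{E:omega_delta} in the paper.
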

\begin{proof}
Recall that
\[
\varphit(m)=\sum_{\gamma\in  A_{M}M^{(n)}(F)\backslash M(F)}\varphi(\s(\gamma) m;1),
\]
where the sum is finite but by Lemma \ref{L:contribute} we know that
which $\gamma$ contributes to the sum depends only on the class in
$\Mt(\A)\slash\Mtn(\A)\kappa(M(\mathcal{O}_S))$ for some finite set $S$ of places. 
Note that $A_M\Mn(F)$ is a
normal subgroup of $M(F)$, and hence $A_{M}\Mn(F)\backslash
M(F)$ is a group. (This is actually an abelian group because it is a
subgroup of the abelian group $A_{M}(\A)\Mn(\A)\backslash M(\A)$.) By
Lemma \ref{L:M_2} we have the inclusion 
\[
A_{M_2}\Mn_2(F)\backslash
M_2(F) \hookrightarrow A_{M}\Mn(F)\backslash
M(F).
\]
Hence we have
\begin{align*}
\varphit(m)&=\sum_{\gamma\in  A_{M}\Mn(F)\backslash
  M(F)}\varphi(\s(\gamma) m;1)\\
&=\sum_{\delta\in M_2(F)A_{M}\Mn(F)\backslash M(F)}\;
\sum_{\mu\in A_{M_2}\Mn_2(F)\backslash
  M_2(F)}\varphi(\s(\mu)\s(\delta) m;1).
\end{align*}
By using Lemma \ref{L:M_2}, one
can see that the map on $\Mt_2(\A)$ defined by $m_2\mapsto
\varphi(m_2\s(\delta)m)$ is in the induced space
$\cInd_{A_{\Mt_2}(\A)\Mtn_2(\A)}^{\Mt_2(\A)}\pin_{\omega, 2}$,
where
$\pin_{\omega,2}:=\omega(\pin_2\,\otimest\cdots\otimest\,\pin_k)$ and
$\omega$ is actually the restriction of $\omega$ to $A_{\Mt_2}(\A)$. Now
since we are assuming that $A_{\Mt}$ satisfies Hypothesis
($\ast\ast$), the inner sum is finite. Since the sum over $\gamma\in  A_{M}\Mn(F)\backslash
  M(F)$ is finite, the outer sum is also finite. 

Since $\delta\in  M_2(F)A_{M}\Mn(F)\backslash M(F)$ can be chosen to
be in $\GL_{r_1}(F)$, we have $\s(\mu)\s(\delta)=\s(\delta)\s(\mu)$. So we
can write
\[
\varphit(m)=\sum_{\delta\in M_2(F)A_{M}\Mn(F)\backslash M(F)}\;
\sum_{\mu\in A_{M_2}\Mn_2(F)\backslash
  M_2(F)}\varphi(\s(\delta)\s(\mu) m;1).
\]
One can see by using Lemma \ref{L:M_2} that for each $\delta$ 
the map on $\Mt_2(\A)$ defined by
\[
m_2\mapsto \varphi(\s(\delta)m_2;1)
\]
is in the induced space
$\cInd_{A_{\Mt_2}(\A)\Mtn_2(\A)}^{\Mt_2(\A)}\pin_{\omega_{\delta}}$,
where
$\pin_{\omega_{\delta}}=\omega_{\delta}(\pin_2\,\otimest\cdots\otimest\,\pin_k)$.
Hence the function on $\Mt_2(\A)$ defined by 
\begin{equation*}
\varphit_\delta: m_2\mapsto \sum_{\mu\in A_{M_2}M_2^{(n)}(F)\backslash
  M_2(F)}\varphi(\s(\delta)\s(\mu) m_2;1)
\end{equation*}
belongs to a space of $\pi_{\omega_{\delta}}$. Hence we may write
\begin{equation}\label{E:varphit_gamma}
\varphit(m_2)=\sum_{\delta\in M_2(F)A_{M}\Mn(F)\backslash M(F)}\varphit_\delta(m_2).
\end{equation}
for all $m_2\in\Mt_2(\A)$. 

Now we will show that this sum can be written as a finite sum
independently of $m_2$. First as we noted above the $\delta$'s that
contribute to the sum depend only on the classes in
$\Mt(\A)\slash\Mtn(\A)\kappa(M(\mathcal{O}_S))$. Hence for each
coset in $\Mt_2(\A)\slash\Mtn_2(\A)\kappa(M_2(\mathcal{O}_S))$ the
$\delta$'s that contribute to the sum are all equal. Also since
$\varphit_\delta$ is left invariant on $\s(M_2(F))$, the $\delta$'s
that contribute to the sum in \eqref{E:varphit_gamma} depend only on
the double cosets in
\[
\s(M_2(F))\backslash \Mt_2(\A)\slash\Mtn_2(\A)\kappa(M_2(\mathcal{O}_S)).
\]
But one can see that this double coset space can be identified with the product of
$k-1$ copies of
\[
F^\times\backslash\A^\times\slash \A^{\times n}\;\mathcal{O}_S^\times
=F^\times\A^{\times n}\;\mathcal{O}_S^\times\backslash\A^\times,
\]
which is finite by Lemma \ref{L:strong_approximation}. Hence there are
only finitely many $\delta$'s such that $\varphit_\delta(m_2)\neq 0$
at least for some $m_2$. 

Hence there exists finitely many
$\delta_1,\dots,\delta_N\in M_2(F)A_{M}\Mn(F)\backslash M(F)$ such that
\[
\varphit|_{\Mt_2(\A)}=\sum_{i=1}^N\varphit_{\delta_i}.
\]
Since we do not know the
multiplicity one property for the group $\Mt_2$, we might have a
possible multiplicity $m_\delta$. This completes the proof.
\end{proof}

\begin{Thm}\label{T:restriction}
Assume that the metaplectic tensor product
$(\pi_1\otimest\cdots\otimest\pi_k)_{\omega} $ is realized with the
group $A_{\Mt}$ which satisfies Hypothesis ($\ast\ast$). Then we have
\[
(\pi_1\otimest\cdots\otimest\pi_k)_{\omega}\|_{\Mt_2(\A)}\subseteq
\bigoplus_{\delta\in\GL_{r_1}(F)} m_\delta(\pi_2\otimest\cdots\otimest\pi_k)_{\omega_\delta},
\]
where $m_\delta\in \Z^{\geq 0}$
\end{Thm}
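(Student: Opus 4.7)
The plan is to derive Theorem \ref{T:restriction} as an essentially immediate corollary of Proposition \ref{P:restriction}, since the substantive analytic work (invoking Hypothesis ($\ast\ast$) to control the finiteness of the inner sum over $\mu$) is already carried out there; what remains is to assemble pointwise information into a statement about the whole space. First I would observe that by construction the map $\varphit\mapsto\varphit|_{\Mt_2(\A)}$ is an $\Mt_2(\A)$-equivariant surjection from $V_{\pi_\omega}$ onto the space $V_{\pi_\omega\|_{\Mt_2(\A)}}$, so the latter inherits the structure of an admissible $\Mt_2(\A)$-representation as a quotient of the admissible $V_{\pi_\omega}$.

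Next, for each $\varphit\in V_{\pi_\omega}$, Proposition \ref{P:restriction} produces a finite subset $\{\delta_1,\ldots,\delta_N\}\subseteq\GL_{r_1}(F)$ (depending on $\varphit$) and nonnegative integers $m_{\delta_i}$ such that $\varphit|_{\Mt_2(\A)}\in\bigoplus_{i=1}^N m_{\delta_i}(\pi_2\otimest\cdots\otimest\pi_k)_{\omega_{\delta_i}}$. Letting $\varphit$ range over all of $V_{\pi_\omega}$ and collecting contributions across all possible representatives in $\GL_{r_1}(F)$ yields the desired containment
\[
V_{\pi_\omega\|_{\Mt_2(\A)}}\subseteq \bigoplus_{\delta\in\GL_{r_1}(F)} m_\delta\,(\pi_2\otimest\cdots\otimest\pi_k)_{\omega_\delta},
\]
where for each $\delta$ the multiplicity $m_\delta$ is defined as the multiplicity of $(\pi_2\otimest\cdots\otimest\pi_k)_{\omega_\delta}$ in $V_{\pi_\omega\|_{\Mt_2(\A)}}$, with $m_\delta=0$ when this summand does not occur. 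Admissibility of the target then forces every such $m_\delta$ to be finite, so $m_\delta\in\Z^{\geq 0}$ as claimed.

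The main (and essentially only) subtlety is that, by formula \eqref{E:omega_delta}, $\omega_\delta$ depends on $\delta$ only through the character $(\det\delta,\,\cdot\,)^{1+2c}$ restricted to $A_{\Mt_2}(\A)$. Consequently distinct $\delta,\delta'\in\GL_{r_1}(F)$ may produce equal $\omega_\delta=\omega_{\delta'}$ and hence abstractly isomorphic summands, which must be grouped together when interpreting the right hand side as a genuine direct sum. Because the theorem asserts only an inclusion of spaces and places no constraint on $m_\delta$ beyond $m_\delta\in\Z^{\geq 0}$, this grouping causes no logical difficulty; it is merely a convention in indexing the decomposition. Hence once Proposition \ref{P:restriction} is in hand, the theorem follows with essentially no further work, and I do not expect any genuine obstacle beyond reconciling this indexing.
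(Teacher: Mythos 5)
Your approach matches the paper exactly: the paper's entire proof reads ``This is immediate from the above proposition,'' and your write-up spells out precisely that aggregation of Proposition \ref{P:restriction} over all $\varphit\in V_{\pi_\omega}$. One small caveat: the claim that $V_{\pi_\omega\|_{\Mt_2(\A)}}$ ``inherits the structure of an admissible $\Mt_2(\A)$-representation as a quotient of the admissible $V_{\pi_\omega}$'' is not justified as stated, since $V_{\pi_\omega}$ is admissible as an $\Mt(\A)$-representation and admissibility is not preserved under restriction to the subgroup $\Mt_2(\A)$; however, this side remark is not load-bearing, and the paper itself does not dwell on the finiteness of $m_\delta$, so the essential argument stands.
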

\begin{proof}
This is immediate from the above proposition. 
\end{proof}

Now we can restrict the metaplectic tensor product ``from the
bottom'', and get the same result. Let
\[
M_{k-1}=\GL_{r_1}\times\GL_{r_{k-1}}=
\{\begin{pmatrix}g_1&&&\\ &\ddots&&\\ &&g_{k-1}&\\
  &&&I_{r_k}\end{pmatrix}\in M: g_i\in\GL_{r_i}\},
\]
and embed $M_{k-1}$ in $\GL_r$ in the upper left corner. Then define
$A_{\Mt_{k-1}}$ and the character $\omega_\delta$ analogously. Also
consider the analogue of Hypothesis ($\ast\ast$), namely
\begin{hypo3}
The group $A_{\Mt}$ satisfies:
\begin{enumerate}
\item[(0)] $A_{\Mt}$ satisfies Hypothesis ($\ast$)
\item $A_{\Mt_{k-1}}$ as defined above contains the
center $Z_{\GLt_{r-r_k}}$.
\item $A_{\Mt_{k-1}}$ satisfies Hypothesis ($\ast$)
with respect to $\Mt_{k-1}$.
\end{enumerate}
\end{hypo3}
Then we have
\begin{Thm}\label{T:restriction}
Assume that the metaplectic tensor product
$(\pi_1\otimest\cdots\otimest\pi_k)_{\omega} $ is realized with the
group $A_{\Mt}$ which satisfies Hypothesis ($\ast\ast\ast$). Then we have
\[
(\pi_1\otimest\cdots\otimest\pi_k)_{\omega}\|_{\Mt_{k-1}(\A)}\subseteq
\bigoplus_{\delta\in\GL_{r_k}(F)}
m_\delta(\pi_1\otimest\cdots\otimest\pi_{k-1})_{\omega_\delta},
\]
where $m_\delta\in \Z^{>0}$
\end{Thm}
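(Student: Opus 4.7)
The proof plan is to mirror the argument of Proposition \ref{P:restriction} with the roles of the first and last block interchanged; only cosmetic changes are needed because the block-compatibility of $\tau_M$ is symmetric under permutations of the diagonal blocks.

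First I would establish the analogue of Lemma \ref{L:M_2} for $M_{k-1}$, namely that $A_{\Mt_{k-1}}(R)\Mtn_{k-1}(R) = A_{\Mt}(R)\Mtn(R)\cap \Mt_{k-1}(R)$ locally and $A_{\Mt_{k-1}}\Mtn_{k-1}(F) = A_{\Mt}\Mtn(F)\cap \s(M_{k-1}(F))$ globally. The proof is the same direct computation as before, where one writes an element of the intersection as $am$ with $a\in A_{\Mt}$ and $m\in\Mtn$, then splits $a$ into a piece in $A_{\Mt_{k-1}}$ and a piece $a_k$ supported in the $\GL_{r_k}$-block such that $a_km\in\Mtn_{k-1}$.

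Next, starting from the formula
\[
\varphit(m) = \sum_{\gamma\in A_MM^{(n)}(F)\backslash M(F)}\varphi(\s(\gamma)m;1),
\]
which is a finite sum by Lemma \ref{L:finite_sum}, I would telescope the sum over the two-step filtration
\[
A_MM^{(n)}(F)\subseteq M_{k-1}(F)A_MM^{(n)}(F)\subseteq M(F),
\]
to rewrite it as a double sum with inner summation over $\mu\in A_{M_{k-1}}M_{k-1}^{(n)}(F)\backslash M_{k-1}(F)$ and outer sum over $\gamma\in M_{k-1}(F)A_MM^{(n)}(F)\backslash M(F)$. By Hypothesis ($\ast\ast\ast$)(2) applied to $M_{k-1}$, the outer quotient identifies with cosets represented by elements $\delta\in\GL_{r_k}(F)$ embedded in the lower-right $\GL_{r_k}$-block. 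Crucially, since $\mu\in M_{k-1}(F)$ lives in the upper-left blocks and $\delta\in\GL_{r_k}(F)$ in the lower-right block, the matrices commute and the block-compatibility of $\tau_M$ together with the product formula for the Hilbert symbol yields $\s(\mu\delta)=\s(\delta)\s(\mu)=\s(\mu)\s(\delta)$.

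With that identity in hand, restricting to $m=m_{k-1}\in\Mt_{k-1}(\A)$, the map $m_{k-1}\mapsto\varphi(\s(\delta)m_{k-1};1)$ lies in $\cInd_{A_{\Mt_{k-1}}(\A)\Mtn_{k-1}(\A)}^{\Mt_{k-1}(\A)}\pin_{\omega_\delta}$ where $\pin_{\omega_\delta}=\omega_\delta\cdot(\pin_1\,\otimest\cdots\otimest\,\pin_{k-1})$, and the character $\omega_\delta$ on $A_{\Mt_{k-1}}$ is computed by the conjugation formula $\omega_\delta(a)=\omega(\s(\delta)a\s(\delta)^{-1})=(\det\delta,\det a)^{1+2c}\omega(a)$, exactly as in \eqref{E:omega_delta}. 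The inner sum over $\mu$ then produces an automorphic form in $(\pi_1\otimest\cdots\otimest\pi_{k-1})_{\omega_\delta}$, and summing over the finitely many $\delta$'s yields the desired decomposition, with $m_\delta$ accounting for the possible failure of multiplicity one on $\Mt_{k-1}$.

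The main obstacle is the verification that $\s(\mu)$ and $\s(\delta)$ commute for $\mu\in M_{k-1}(F)$ and $\delta\in\GL_{r_k}(F)$; this requires unpacking $\tau_M$ on such pairs and invoking the global product formula $(\det\mu,\det\delta)_\A=1$ to cancel the cross-terms. Given that, the rest of the argument is a direct transcription of Proposition \ref{P:restriction}.
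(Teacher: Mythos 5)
Your proposal is correct and matches the paper's approach exactly: the paper's own proof of this theorem simply states that it is ``essentially the same as the case for the restriction to $\Mt_2$,'' leaving the mirrored verification to the reader, and your outline supplies precisely that verification. One small simplification you could make: the ``main obstacle'' you flag, namely that $\s(\mu)$ and $\s(\delta)$ commute for $\mu\in M_{k-1}(F)$ and $\delta\in\GL_{r_k}(F)$, requires no unpacking of $\tau_M$, since Proposition \ref{P:s_split_M} (via Lemma \ref{L:splitting_cMPt}) already shows $\s$ is a group homomorphism on all of $M(F)$, so $\s(\mu)\s(\delta)=\s(\mu\delta)=\s(\delta\mu)=\s(\delta)\s(\mu)$ is immediate from $\mu\delta=\delta\mu$ in $M(F)$.
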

\begin{proof}
The proof is essentially the same as the case for the restriction to
$\Mt_2$. We will leave the verification to the reader.
\end{proof}

Also for the case $n=2$, we can do even better.
\begin{Thm}\label{T:restriction_n=2}
Assume $n=2$. 
\begin{enumerate}[(a)]
\item Choose $A_{\Mt}$ to be as in Proposition
\ref{P:A_M_for_n=2}. For $j=2,\dots,k$, let
$M_j=\GL_{r_j}\times\cdots\times\GL_{r_k}\subseteq M$ embedded into
the right lower corner. Then 
\[
(\pi_1\otimest\cdots\otimest\pi_k)_{\omega}\|_{\Mt_j(\A)}\subseteq
\bigoplus_{\omega'} m_{\omega'}(\pi_j\otimest\cdots\otimest\pi_k)_{\omega'},
\]
where $\omega'$ runs through a countable number of characters on
$A_{\Mt_j}=A_{\Mt}\cap\Mt_j$.

\item Choose $A_{\Mt}$ to be as in Proposition
\ref{P:A_M_for_n=2_2}. For $j=1,\dots,k-1$, let
$M_{k-j}=\GL_{r_1}\times\cdots\times\GL_{r_{k-j}}\subseteq M$ embedded into
the left upper corner. Then 
\[
(\pi_1\otimest\cdots\otimest\pi_k)_{\omega}\|_{\Mt_{k-j}(\A)}\subseteq
\bigoplus_{\omega'} m_{\omega'}(\pi_1\otimest\cdots\otimest\pi_{k-j})_{\omega'},
\]
where $\omega'$ runs through a countable number of characters on
$A_{\Mt_{k-j}}=A_{\Mt}\cap\Mt_{k-j}$.
\end{enumerate}
\end{Thm}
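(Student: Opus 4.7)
The plan is to prove part (a) by induction on $j$. The base case $j=2$ is precisely Theorem \ref{T:restriction} applied with $A_{\Mt}$ chosen as in Proposition \ref{P:A_M_for_n=2}: for $n=2$ the lemma stated just before the theorem says exactly that this choice satisfies Hypothesis ($\ast\ast$) and that $A_{\Mt_2}=A_{\Mt}\cap\Mt_2$, so the conclusion
\[
\pi_\omega\|_{\Mt_2(\A)}\subseteq\bigoplus_\delta m_\delta(\pi_2\otimest\cdots\otimest\pi_k)_{\omega_\delta}
\]
is the $j=2$ case. For the inductive step, assuming the conclusion for some $j\geq 2$, I would apply Theorem \ref{T:restriction} again, this time inside the Levi $\Mt_j(\A)$ and with respect to the embedding $M_{j+1}\subseteq M_j$ in the lower-right corner, to each summand $(\pi_j\otimest\cdots\otimest\pi_k)_{\omega''}$. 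Since restriction of functions is transitive, this produces a decomposition of $\pi_\omega\|_{\Mt_{j+1}(\A)}$ as a direct sum of metaplectic tensor products of $\pi_{j+1},\dots,\pi_k$ with appropriate characters on $A_{\Mt_{j+1}}$.

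The key verification at each inductive step is that the group $A_{\Mt_j}=A_{\Mt}\cap\Mt_j$ continues to satisfy the analogue of Hypothesis ($\ast\ast$) with respect to the pair $(\Mt_j,\Mt_{j+1})$: namely that $A_{\Mt_j}$ satisfies Hypothesis ($\ast$) for $\Mt_j$, that $A_{\Mt_{j+1}}$ (formed from $A_{\Mt_j}$ by the recipe of Section \ref{S:restriction}) contains $Z_{\GLt_{r-r_1-\cdots-r_j}}$, and that $A_{\Mt_{j+1}}$ satisfies Hypothesis ($\ast$) for $\Mt_{j+1}$. All three should follow directly from the explicit description of $A_{\Mt}$ in Proposition \ref{P:A_M_for_n=2} together with the Hasse--Minkowski identity $\A^{\times 2}\cap F^\times=F^{\times 2}$, which is what makes the $n=2$ case behave well under iterated intersections with smaller Levi subgroups. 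In particular one should obtain the identity $A_{\Mt_{j+1}}=A_{\Mt}\cap\Mt_{j+1}$ at every stage.

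The main technical subtlety, and the reason the characters $\omega'$ range over a \emph{countable} rather than finite set, is accumulation across iterations. A single application of Theorem \ref{T:restriction} produces, for each fixed $\varphit$, only a finite collection of coset representatives $\delta\in\GL_{r_j}(F)$; but different $\varphit$ in the ambient space can produce different $\delta$'s, so the total twisting data swept out as we range over all automorphic forms lives in the countable group $\GL_{r_1}(F)\times\cdots\times\GL_{r_{j-1}}(F)$. The multiplicities $m_{\omega'}$ arise by combining multiplicatively the finite multiplicities $m_\delta$ at each stage, together with possible collisions when distinct iterated twists give rise to the same character on $A_{\Mt_j}$.

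Part (b) is proven identically, using Proposition \ref{P:A_M_for_n=2_2} and Hypothesis ($\ast\ast\ast$) in place of Proposition \ref{P:A_M_for_n=2} and Hypothesis ($\ast\ast$), and iterating the ``restriction from the bottom'' variant of Theorem \ref{T:restriction} established immediately before the present theorem. The only change is that at the $i$-th iteration the $\delta$'s now live in $\GL_{r_{k-i+1}}(F)$, but the induction and the counting argument go through unchanged.
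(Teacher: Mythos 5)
Your proposal is correct and follows the same strategy as the paper's proof: an induction that successively applies Theorem \ref{T:restriction} (resp.\ its ``from the bottom'' variant for part (b)), with the key step being the verification that $A_{\Mt_j}=A_{\Mt}\cap\Mt_j$ continues to satisfy Hypotheses ($\ast$) and ($\ast\ast$) at each stage, which follows from the explicit product description of $A_{\Mt}$ in Proposition \ref{P:A_M_for_n=2}. Your elaboration on why $\omega'$ ranges over a countable set (finitely many $\delta$'s per form, but accumulating across all forms and iterations) is a helpful clarification of a point the paper leaves implicit, but it does not change the underlying argument.
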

\begin{proof}
For (a), one can inductively show that $A_{\Mt_j}=A_{\Mt_{j-1}}\cap\Mt_{j-1}$
satisfies both Hypotheses ($\ast$) and
($\ast\ast$) for the Levi $M_j$. Thus one can successively
apply the above theorem for $j=2,\dots,k$, which proves the
theorem. The case (b) can be treated similarly.
\end{proof}

\begin{Rmk}
In the above theorem, we choose different $A_{\Mt}$ for the two
cases to define $(\pi_1\otimest\cdots\otimest\pi_k)_{\omega}$. They
are, however, equivalent,
because, though the character $\omega$ is a character on $A_{\Mt}$,
the metaplectic tensor product is dependent only on the restriction
$\omega|_{Z_{\GLt_r}}$ to the center.
\end{Rmk}

%%%%%%%%%%%%%%%%%%%%%%%%%%%%%%%%%%%%%%%%%%%%%%%%%%%%%%%%%%%

\appendix\section{\bf On the discreteness of the group
  $A_{M} M^{(n)}(F)\backslash M (F)$}\label{A:topology}

%%%%%%%%%%%%%%%%%%%%%%%%%%%%%%%%%%%%%%%%%%%%%%%%%%%%%%%%%%%

In this appendix, we will discuss the issue of when $A_{\Mt}$ can be chosen so that
the group $A_{M} M^{(n)}(F)\backslash M (F)$ is a discrete subgroup of
$A_{\Mt}(\A)\Mtn(\A)\backslash\Mt(\A)$, and hence the metaplectic
tensor product can be defined. In particular, we will show that if
$n=2$, one can always choose such $A_{\Mt}$, and
hence all the global results hold without any condition. If $n>2$, the
author does not know if it is always possible to chose such nice
$A_{\Mt}$, though he suspects that this is always the case.

Throughout this appendix the field $F$ is a number field. Also for
topological groups $H\subseteq G$, we always assume $H\backslash G$ is
equipped with the quotient topology.

The crucial fact is
\begin{Prop}\label{P:F_is_discrete}
For any positive integer $m$, the image of $F^\times$ in
$\A^{\times m}\backslash \A^\times$ is discrete in the quotient topology.
\end{Prop}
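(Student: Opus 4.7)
The plan is to produce an open neighborhood $U\ni 1$ in $\A^\times$ whose image in $\A^{\times m}\backslash\A^\times$ meets the image of $F^\times$ only at the identity coset; equivalently, I want to arrange $F^\times\cap U\A^{\times m}\subseteq \A^{\times m}$.

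First I would take $U_0=\prod_{v\in S}U_v\times\prod_{v\notin S}\OFv^\times$, where $S\supseteq S_\infty$ is a finite set and $U_v\subseteq\OFv^\times$ at finite $v\in S$. A direct local computation shows that for any $f\in F^\times\cap U_0\A^{\times m}$, writing $f=uz^m$ with $u\in U_0$ and $z\in\A^\times$ forces $v(u)=0$ at every finite $v$, hence $v(f)\equiv 0\pmod m$ at every finite $v$, so the fractional ideal $(f)$ is an $m$-th power $\mathfrak{a}^m$ of some fractional ideal $\mathfrak{a}$ of $F$.

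The key step is to show that such $f$ lie in only finitely many cosets of $F^{\times m}$. Using the finiteness of the $m$-torsion $\mathrm{Cl}(F)[m]$ of the class group, fix representatives $\mathfrak{a}_1,\ldots,\mathfrak{a}_h$ of its classes and elements $\beta_i\in F^\times$ with $\mathfrak{a}_i^m=(\beta_i)$; writing $\mathfrak{a}=\mathfrak{a}_i(\alpha)$ for some $i$ and $\alpha\in F^\times$ yields $f=u\beta_i\alpha^m$ for some unit $u\in\OF^\times$. Dirichlet's unit theorem then gives the finiteness of $\OF^\times/(\OF^\times)^m$, so $f\pmod{F^{\times m}}$ ranges over a finite subset $T\subset F^\times/F^{\times m}$, whose image $T'$ in $\A^\times/\A^{\times m}$ is also finite. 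In particular $\pi(F^\times\cap U_0\A^{\times m})\subseteq T'$, where $\pi$ denotes the canonical projection.

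Finally, I would verify that $\A^{\times m}$ is closed in $\A^\times$ — each local $F_v^{\times m}$ is closed in $F_v^\times$, and this property passes to the restricted product — so $\A^\times/\A^{\times m}$ is Hausdorff. Then for each $t\in T'\setminus\{1\}$, use Hausdorffness to pick an open $U_t\ni 1$ in $\A^\times$ with $t\notin\pi(U_t)$, and intersect $U_0$ with the finitely many $U_t$'s to obtain the desired $U$: any $f\in F^\times$ with $\pi(f)\in\pi(U)$ must satisfy $\pi(f)\in T'\cap\pi(U)=\{1\}$, proving discreteness. The hard part is the key step, where finiteness of $T'$ must be arranged uniformly in $f$; this is where the finiteness of $\mathrm{Cl}(F)[m]$ combines with Dirichlet's theorem to absorb all ambiguity into finitely many classes.
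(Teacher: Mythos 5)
Your proof is correct and follows essentially the same strategy as the paper: restrict to an open neighborhood that forces $(f)$ to be an $m$-th power ideal, then bound the cosets of $F^{\times m}$ via the finiteness of $\mathrm{Cl}(F)[m]$ together with Dirichlet's unit theorem, and finish by appealing to Hausdorffness of the quotient. The paper packages the middle step as a short exact sequence $0\to U_F^m\backslash U_F\to F^{\times m}\backslash G\to P_F^m\backslash(P_F\cap I_F^m)\to 0$, while you carry out the same count by hand with explicit representatives $\beta_i$, and you spell out the final topological shrinking more explicitly, but the content is identical.
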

\begin{proof}
Let $K=\prod_{v}K_v\subseteq\A^\times$ be the open neighborhood of the
identity defined by $K_v=\mathcal{O}_{F_v}^\times$ for all finite $v$
and $K_v=F_v^\times$ for all infinite $v$. To show the discreteness of
the image of $F^\times$,
it suffices to show that the set $\A^{\times m} K\cap \A^{\times
  m}F^\times$ has only finitely many points modulo $A^{\times
  m}$. This is because the image of $F^\times$ in $\A^{\times m}\backslash
\A^\times$ will then have an open neighborhood of the identity in the
subspace topology for $\A^{\times
  m}\backslash \A^{\times m}F^\times$ containing finitely many points,
and the quotient $\A^{\times m}\backslash\A^\times$ is Hausdorf since $\A^{\times
  m}$ is closed.

Now let $a^m\in\A^{\times m}$ and $u\in F^\times$ be such that
$a^mu\in \A^{\times m} K\cap \A^{\times m}F^\times$. Then
$u\in\A^{\times m}K$, and so for each finite $v$, we have $u_v\in
F_v^{\times m}K_v$, which implies the fractional ideal $(u)$ generated
by $u$ is $m^{\text{th}}$ power in the group $I_F$ of fractional
ideals of $F$. Namely $(u)\in P_F\cap I_F^m$, where $P_F$ is the
group of principal fractional ideals. On the other hand for any
$(u)\in P_F\cap I_F^m$, one can see that $u\in\A^{\times m}K$. 

Accordingly, if we define
\[
G:=\{u\in F^\times: (u)\in  P_F\cap I_F^m\},
\]
we have the surjection
\[
F^{\times m}\backslash G\rightarrow \A^{\times m}\backslash( \A^{\times
  m} K\cap \A^{\times m}F^\times),
\]
given by $u\mapsto\A^{\times m}u$. So we have only to show that the group $F^{\times m}\backslash G$ is
finite. But note that the map $u\mapsto (u)$ gives rise to the short exact sequence
\[
0\rightarrow U_F^{m}\backslash U_F\rightarrow F^{\times m}\backslash
G\rightarrow P_F^m\backslash P_F\cap I_F^m\rightarrow 0,
\]
where $U_K$ is the group of units for $F$. Now the group
$U_F^{m}\backslash U_F$ is finite by Dirichlet's unit
theorem. The group $P_F^m\backslash P_F\cap I_F^m$ is
isomorphic to the group of $m$-torsions in the class group of $F$ via
the map
\[
P_F^m\backslash P_F\cap I_F^m\rightarrow P_F\backslash I_F,\quad
\mathfrak{A}^m\mapsto\mathfrak{A}
\]
for each fractional ideal $\mathfrak{A}^m \in I_F^m$, and hence
finite. Therefore $F^{\times m}\backslash G$ is finite.
\end{proof}

As a first consequence of this, we have
\begin{Prop}\label{P:M(F)_is_discrete}
The image of $M(F)$ in $M^{(n)}(\A)\backslash M(\A)$ is discrete.
\end{Prop}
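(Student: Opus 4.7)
The plan is to reduce the statement to Proposition A.1 via the determinant map. I proceed as follows.

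First, I would establish that the determinant maps on each block assemble into a topological group isomorphism
\[
M^{(n)}(\A)\backslash M(\A)\;\xrightarrow{\sim}\;\bigl(\A^{\times n}\backslash\A^\times\bigr)^{k}.
\]
The map $M(\A)\to(\A^\times)^{k}$ sending $\diag(g_1,\dots,g_k)$ to $(\det g_1,\dots,\det g_k)$ is a continuous surjective group homomorphism, and is an open map because on each factor $\GL_{r_i}(\A)\to\A^\times$ admits the continuous section $a\mapsto\iota_i(a)$ of \eqref{E:iota}. Composing with the open continuous quotient $(\A^\times)^k\to(\A^{\times n}\backslash\A^\times)^k$ yields an open continuous surjection whose kernel is exactly $M^{(n)}(\A)$, so the induced bijection on the quotient is a homeomorphism.

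Next, I would identify the image of $M(F)$ under this identification. Since every element of $M(F)$ has the form $\diag(\gamma_1,\dots,\gamma_k)$ with $\gamma_i\in\GL_{r_i}(F)$, and since using the sections $\iota_i$ one sees that the image of the determinant map $M(F)\to(\A^\times)^k$ coincides with the image of $(F^\times)^k$ embedded diagonally, the image of $M(F)$ in $(\A^{\times n}\backslash\A^\times)^k$ is precisely the image of $(F^\times)^k$ under the $k$-fold product of the natural maps $F^\times\to\A^{\times n}\backslash\A^\times$.

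Finally, I would invoke Proposition \ref{P:F_is_discrete} with $m=n$: the image of $F^\times$ in $\A^{\times n}\backslash\A^\times$ is discrete. Since a finite product of discrete subspaces of a finite product of topological groups is again discrete, the image of $(F^\times)^k$ in $(\A^{\times n}\backslash\A^\times)^k$ is discrete, and transporting back through the topological isomorphism above gives the proposition. There is no real obstacle here once the topological identification is in place; the only mildly delicate point is verifying that the determinant quotient map is open, which is handled by the explicit section $\iota_i$.
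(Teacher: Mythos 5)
Your proof is essentially the same as the paper's: both use the determinant map $\Det_M$ to identify $M^{(n)}(\A)\backslash M(\A)$ with $(\A^{\times n}\backslash\A^\times)^k$ as topological groups (the paper constructs a continuous inverse via the sections $\iota_i$, while you use the same sections to argue openness, which is an equivalent route), and then both invoke Proposition \ref{P:F_is_discrete}.
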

\begin{proof}
Let
\[
\Det_M:M(\A)\rightarrow \underbrace{\A^{\times
    n}\backslash\A^\times\times\cdots\times\A^{\times n}\backslash \A^\times}_{k-\text{times}}
\]
be the map defined by
$\Det_M(\diag(g_1,\dots,g_k))=(\det(g_1),\dots,\det(g_k))$. Then
$\ker(\Det_M)=M^{(n)}(\A)$. Moreover the map $\Det_M$ is
continuous. Hence we have a continuous group isomorphism
\[
M^{(n)}(\A)\backslash M(\A)\rightarrow \A^{\times
  n}\backslash\A^\times\times\cdots\times\A^{\times n}\backslash
\A^\times.
\]
Moreover, one can construct the continuous inverse by sending each
$a_i\in \A^{\times  n}\backslash\A^\times$ to the first entry of the
$i^{\text{th}}$ block $\GL_{r_i}(\A)$. But the image of $M(F)$ in $\A^{\times
  n}\backslash\A^\times\times\cdots\times\A^{\times n}\backslash
\A^\times$ under $\Det_M$ is discrete by the above proposition. The
proposition follows.
\end{proof}

As a corollary,

\begin{Cor}\label{C:GCD}
If the center $Z_{\GLt_r}(\A)$ is contained in $\Mtn(\A)$, which is
the case if $n$ divides $nr_i/d$ for all
$i=1,\dots,k$ where $d=\gcd(n, r-1+2cr)$, then Hypothesis
($\ast$) is satisfied and the metaplectic tensor
product can be defined.
\end{Cor}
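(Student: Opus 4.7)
The plan is to reduce the corollary to Proposition \ref{P:M(F)\_is\_discrete} by first verifying the inclusion $Z_{\GLt_r}(\A)\subseteq\Mtn(\A)$ under the arithmetic hypothesis, and then observing that this inclusion collapses $A_M(\A)M^{(n)}(\A)$ to $M^{(n)}(\A)$ when $A_{\Mt}$ is chosen to be the center.

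First I would verify the arithmetic claim. By the adelic analogue of \eqref{E:center_GLt}, every element of $Z_{\GLt_r}(\A)$ is of the form $(aI_r,\xi)$ with $a\in\A^{\times n/d}$, say $a=b^{n/d}$ for some $b\in\A^\times$. Viewing $aI_r$ as $\diag(aI_{r_1},\dots,aI_{r_k})\in M(\A)$, the condition that this element lies in $M^{(n)}(\A)$ is that $\det(aI_{r_i})=a^{r_i}=b^{nr_i/d}\in\A^{\times n}$ for each $i$, which holds precisely when $n\mid nr_i/d$ for all $i$. Hence the stated divisibility condition implies $Z_{\GLt_r}(\A)\subseteq\Mtn(\A)$.

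Next I would take $A_{\Mt}=Z_{\GLt_r}$, which is legitimate since $Z_{\GLt_r}(\A)$ is closed, abelian, contains itself, and decomposes as a restricted direct product of its local factors, all as required in Section \ref{S:abelian}. Under the inclusion just established, $A_M(\A)\subseteq M^{(n)}(\A)$, so $A_M(\A)M^{(n)}(\A)=M^{(n)}(\A)$ and Hypothesis $(\ast)$ becomes the assertion that the image of $M(F)$ in $M^{(n)}(\A)\backslash M(\A)$ is discrete. But this is exactly Proposition \ref{P:M(F)_is_discrete}, which has just been proved, so Hypothesis $(\ast)$ is automatic and the metaplectic tensor product can be constructed with this choice of $A_{\Mt}$.

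There is no real obstacle here: the only nontrivial input is the discreteness result of Proposition \ref{P:M(F)_is_discrete}, whose core ingredient is the finiteness of $F^{\times m}\backslash\{u\in F^\times:(u)\in P_F\cap I_F^m\}$ established via Dirichlet's unit theorem and the finiteness of the $m$-torsion in the class group of $F$. The rest is bookkeeping with the explicit description of $Z_{\GLt_r}$.
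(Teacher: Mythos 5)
Your proposal is correct and follows essentially the same route as the paper: choose $A_{\Mt}=Z_{\GLt_r}$, note that the hypothesized inclusion collapses $A_M(\A)M^{(n)}(\A)$ to $M^{(n)}(\A)$, and invoke Proposition \ref{P:M(F)_is_discrete}. The only difference is that you spell out the divisibility verification $\det(aI_{r_i})=b^{nr_i/d}\in\A^{\times n}$, which the paper leaves implicit; that is a welcome addition, not a deviation.
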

\begin{proof}
If the center is already in $\Mtn(\A)$, one can choose
$A_{\Mt}(\A)=Z_{\GLt_r}(\A)$ and then $A_{\Mt}(\A)\Mtn(\A)=\Mtn(\A)$,
and so $A_MM^{(n)}(F)=M^{(n)}(F)$. Then by the above proposition,
$A_MM^{(n)}(F)\backslash M(F)$ is discrete in $A_{\Mt}(\A)\Mtn(\A)\backslash\Mt(\A)$.
\end{proof}

Proposition \ref{P:M(F)_is_discrete} also implies
\begin{Prop}
The group $M(F)M^{(n)}(\A)$ (resp. $M(F)^\ast\Mtn(\A)$) is a closed
subgroup of $M(\A)$ (resp. $\Mt(\A)$). 
\end{Prop}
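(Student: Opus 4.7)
The plan is to deduce closedness from discreteness via the standard fact that a discrete subgroup of a Hausdorff topological group is automatically closed (Lemma 9.1.3(b) of Deitmar, already cited earlier in the paper), and then transfer the result to the metaplectic cover by continuity of the canonical projection.

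For the first assertion, I would start from Proposition \ref{P:M(F)_is_discrete}, which gives that the image of $M(F)$ in the quotient $M^{(n)}(\A)\backslash M(\A)$ is discrete. Since $M^{(n)}(\A)$ is closed in $M(\A)$ by Lemma \ref{L:closed_subgroup_global}, the quotient $M^{(n)}(\A)\backslash M(\A)$ is Hausdorff; and since $M(\A)/M^{(n)}(\A)$ is abelian (it is continuously isomorphic to a finite product of the groups $\A^{\times n}\backslash\A^\times$, as seen in the proof of Proposition \ref{P:M(F)_is_discrete}), $M^{(n)}(\A)$ is normal so the quotient is a topological group in the usual sense. Applied to this Hausdorff group, the cited discreteness forces the image of $M(F)$ to be closed. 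Pulling back under the continuous quotient map $q\colon M(\A)\rightarrow M^{(n)}(\A)\backslash M(\A)$, the preimage $q^{-1}(q(M(F)))=M(F)M^{(n)}(\A)$ is therefore closed in $M(\A)$.

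For the metaplectic version, I would simply invoke the continuity of the canonical projection $p\colon \Mt(\A)\rightarrow M(\A)$. Since $p$ is continuous and $M(F)M^{(n)}(\A)$ has just been shown to be closed in $M(\A)$, the preimage $p^{-1}(M(F)M^{(n)}(\A))$ is closed in $\Mt(\A)$. The key observation is that this preimage equals $M(F)^\ast\Mtn(\A)$: indeed $\ker p=\mu_n$ is contained in $\Mtn(\A)$, so
\[
p^{-1}(M(F)M^{(n)}(\A))=\mu_n\cdot M(F)^\ast\Mtn(\A)=M(F)^\ast\Mtn(\A),
\]
where we also use that $p(M(F)^\ast)=M(F)$ and $p(\Mtn(\A))=M^{(n)}(\A)$.

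There is really no significant obstacle here: all the substantive work is packed into Proposition \ref{P:F_is_discrete} and Proposition \ref{P:M(F)_is_discrete}, together with the basic topological fact about discrete subgroups. The only minor point worth being explicit about in the write-up is the identification of the preimage under $p$ with $M(F)^\ast\Mtn(\A)$, which relies on $\mu_n\subseteq\Mtn(\A)$.
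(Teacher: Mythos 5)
Your argument is correct and, for the first assertion, slightly cleaner than the paper's. Both routes reduce the metaplectic statement to the $M(\A)$ statement via continuity of the canonical projection, and both ultimately rest on Proposition \ref{P:M(F)_is_discrete} together with the Deitmar fact that a discrete subgroup of a Hausdorff topological group is closed. Where you diverge is in the final step: the paper appeals to its general Lemma \ref{L:discrete_sub}, whose proof runs a net argument (a convergent net $\{\gamma_i y_i\}$ in $\Gamma Y$ has images $q(\gamma_i)$ eventually constant in the discrete closed set $q(\Gamma)$, forcing the net into a single closed coset $\gamma Y$), whereas you bypass the net computation altogether by observing that $M(F)M^{(n)}(\A)=q^{-1}\bigl(q(M(F))\bigr)$ — using normality of $M^{(n)}(\A)$ to identify the two-sided product — so that closedness falls out immediately from continuity of $q$. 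In the presence of normality your pullback argument subsumes the lemma's net argument; the paper's formulation has the modest advantage that Lemma \ref{L:discrete_sub} is packaged as a reusable principle invoked at several other points in the text. You are also more explicit than the paper about the reduction step, verifying $p^{-1}(M(F)M^{(n)}(\A))=M(F)^\ast\Mtn(\A)$ via $\ker p=\mu_n\subseteq\Mtn(\A)$, which the paper leaves implicit in the remark that $p$ is continuous.
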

\begin{proof}
It suffices to show it for $M(F)M^{(n)}(\A)$ because the canonical
projection is continuous. But for this, one can apply the following lemma
with $G=M(\A), Y=M^{(n)}(\A)$ and $\Gamma=M(F)$, which will complete
the proof.
\end{proof}

\begin{Lem}\label{L:discrete_sub}
Let $G$ be a Hausdorf topological group. If $\Gamma\subset G$ is a discrete
subgroup and $Y\subset G$ a closed normal subgroup such that the image of $\Gamma$
in $G\slash Y$ is discrete in the quotient topology, then the group $\Gamma Y$ is closed in
$G$.
\end{Lem}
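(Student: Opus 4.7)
My plan is to pass to the quotient $G/Y$ and exploit the standard fact that a discrete subgroup of a Hausdorff topological group is closed. Since $Y$ is a closed normal subgroup, the quotient $G/Y$ is a Hausdorff topological group and the canonical projection $\pi\colon G\to G/Y$ is continuous. By hypothesis, $\pi(\Gamma)$ is discrete in $G/Y$.

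The key auxiliary statement I will establish is: any discrete subgroup $H$ of a Hausdorff topological group $\mathcal G$ is closed. I will prove this by choosing an open neighborhood $U$ of the identity with $U\cap H=\{e\}$, then using continuity of multiplication to find a symmetric open neighborhood $V$ of $e$ with $VV^{-1}\subset U$. This guarantees that every translate $xV$ meets $H$ in at most one point. For $x\in\overline H$, exactly one such point $h$ exists in $xV$; if $x\ne h$, the Hausdorff property produces a smaller open neighborhood of $x$ missing $h$ and hence missing $H$ altogether, contradicting $x\in\overline H$. Thus $\overline H=H$.

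Applying this to $\pi(\Gamma)\subset G/Y$ yields that $\pi(\Gamma)$ is closed in $G/Y$. The proof then concludes by the set-theoretic identity
\[
\pi^{-1}\bigl(\pi(\Gamma)\bigr)=\Gamma Y,
\]
which realizes $\Gamma Y$ as the preimage of a closed set under the continuous map $\pi$, and therefore closed in $G$.

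I do not anticipate any genuine obstacle: the argument is a short piece of topological-group bookkeeping. The only point worth remarking is that the assumption that $\Gamma$ itself be discrete in $G$ plays no role in this argument; only the discreteness of its image in $G/Y$ is used. So the lemma could in fact be stated under the slightly weaker hypothesis, though keeping it as written does no harm.
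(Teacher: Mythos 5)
Your proof is correct, and it begins the same way as the paper's: pass to the Hausdorff quotient $G/Y$, observe that the image of $\Gamma$ is discrete hence closed there. But you finish more cleanly. The paper, having established that $p(\Gamma)$ is closed and discrete in $G/Y$, then shows $\Gamma Y$ is closed by a net argument: take a convergent net $\{\gamma_i y_i\}$ in $\Gamma Y$, project it, note the projected net must eventually be constant, and conclude the original net eventually lies in a single coset $\gamma Y$, which is closed. Your observation that $\Gamma Y = \pi^{-1}\bigl(\pi(\Gamma)\bigr)$ bypasses all of that: $\Gamma Y$ is the preimage of a closed set under the continuous map $\pi$, hence closed, full stop. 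This is the sharper route; the net argument in the paper essentially re-derives the continuity of $\pi$ in disguise. Two further small points: (i) where the paper cites a reference for ``discrete implies closed,'' you give the elementary proof inline, which is fine and self-contained; (ii) your remark that the discreteness of $\Gamma$ in $G$ itself is never used is accurate --- only the discreteness of its image in $G/Y$ enters, and the same is true of the paper's own argument. The stronger hypothesis is carried in the lemma because in the applications $\Gamma = M(F)$ or $\Gamma = \GL_r(F)$ is discrete anyway, so nothing is lost.
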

\begin{proof}
Let $p:G\rightarrow G/Y$ be the canonical projection. By our
assumption, the image $p(\Gamma)$ of $\Gamma$ is discrete in the quotient
topology. Now since $Y$ is closed, the quotient $G/Y$ is a Hausdorf
topological group. Hence $p(\Gamma)$ is closed by \cite[Lemma 9.1.3
(b)]{Deitmar}. To show $\Gamma Y$ is closed, it suffices to show every
net $\{\gamma_iy_i\}_{i\in I}$ that converges in $G$, where
$\gamma_i\in\Gamma$ and $y_i\in Y$, converges in $\Gamma Y$. But since
$p$ is continuous, the net $\{p(\gamma_iy_i)\}$ converges in $G/Y$. But
$p(\gamma_iy_i)=p(\gamma_i)$ and $p(\gamma_i)\in p(\Gamma)$. Since
$p(\Gamma)$ is closed and discrete, in order for the net $\{p(\gamma_i)\}$ to
converge, there exists $\gamma\in\Gamma$ such that
$p(\gamma_i)=p(\gamma)$ for all sufficiently large $i\in I$, namely,
the net $\{p(\gamma_i)\}$ is eventually constant. Hence for
sufficiently large $i$, we have $\gamma_iy_i=\gamma y_i'$ for
some $y_i'\in Y$. This means that the net $\{\gamma_iy_i\}$ is
eventually in the set $\gamma Y$. But since $Y$ is closed, so is
$\gamma Y$, which implies that the net  $\{\gamma_iy_i\}$ converges in
$\gamma Y\subset \Gamma Y$.
\end{proof}

Finally in this appendix, we will show that if $n=2$, one can always
choose $A_{\Mt}$ so that the group $A_MM^{(n)}(F)\backslash M(F)$ is
discrete and hence the metaplectic tensor product is defined, and
moreover the metaplectic tensor product can be realized in such a way
that it behaves nicely with the restriction to the smaller rank groups.

First let us note that for any $r$, the center $Z_{\GLt_r}(\A)$ is
given by
\[
Z_{\GLt_r}(\A)=\{(aI_r, \xi): a\in \A^{\times \varepsilon}\},\quad
\varepsilon=
\begin{cases} 
1\quad\text{if $r$ is odd};\\
2\quad\text{if $r$ is even}.
\end{cases}\\
\]
Accordingly, one can see
\[
Z_{\GLt_r}(\A)\GLtt_r(\A)=
\begin{cases}
\GLt_r(\A)\quad\text{if $r$ is odd};\\
\GLtt_r(\A)\quad\text{if $r$ is even}.
\end{cases}
\]

With this said, one can see
\begin{Prop}\label{P:A_M_for_n=2}
Assume $n=2$. Let 
\[
\Zt_i(\A)=Z_{\GLt_{r_i+\cdots+r_k}}(\A)\subseteq
\GLt_{r_i}(\A)\timest\cdots\timest\GLt_{r_k}(\A)
\subseteq\Mt(\A),
\]
and 
\[
A_{\Mt}(\A)=\Zt_1(\A)\Zt_2(\A)\cdots\Zt_k(\A).
\]
Then $A_{\Mt}(\A)$ is a closed abelian subgroup of
$\widetilde{Z_M}(\A)$ and further the
group $A_{\Mt}(\A)\Mtt(\A)$ is closed and the
image of $M(F)$ in $A_M(\A)M^{(2)}(\A)\backslash M(\A)$ as well as in
$A_{\Mt}(\A)\Mtt(\A)\backslash\Mt(\A)$ is discrete.
\end{Prop}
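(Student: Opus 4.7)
The plan is to establish the three assertions---that $A_{\Mt}(\A)$ is a closed abelian subgroup of $\widetilde{Z_M}(\A)$, that $A_{\Mt}(\A)\Mtt(\A)$ is closed in $\Mt(\A)$, and that the image of $M(F)$ is discrete in each of the two stated quotients---by reducing everything past the metaplectic cover to an analysis of $M(\A)/M^{(2)}(\A)\cong V^k$, where $V=\A^{\times 2}\backslash\A^\times$ and the isomorphism is induced by the blockwise determinant. Throughout, set $\varepsilon_i=1$ if $r_i+\cdots+r_k$ is odd and $\varepsilon_i=2$ otherwise, so that $\Zt_i(\A)$ projects onto $\{aI_{r_i+\cdots+r_k}:a\in\A^{\times\varepsilon_i}\}$.

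For the first assertion, abelianness follows from a nested-center argument: for $i<j$ the group $\Zt_j(\A)$ sits inside $\GLt_{r_j+\cdots+r_k}(\A)$, which embeds into $\GLt_{r_i+\cdots+r_k}(\A)$ via the natural block inclusion---valid because the block-compatibility of $\sigma_r$ makes the two cocycles agree on the image---while $\Zt_i(\A)$ is by definition central in $\GLt_{r_i+\cdots+r_k}(\A)$. Hence $\Zt_i(\A)$ and $\Zt_j(\A)$ commute pointwise, and $A_{\Mt}(\A)$ is abelian. For closedness, observe that the image $A_M(\A)$ in $M(\A)$ is precisely the set of diagonal matrices $\diag(b_1I_{r_1},\ldots,b_kI_{r_k})$ with $b_i/b_{i-1}\in\A^{\times\varepsilon_i}$ (setting $b_0=1$). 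Since $\A^{\times 2}$ is closed in $\A^\times$ by Lemmas \ref{L:closed_subgroup_local} and \ref{L:closed_subgroup_local_global}, each defining condition is closed, so $A_M(\A)$ is closed in $Z_M(\A)$ and $A_{\Mt}(\A)=p^{-1}(A_M(\A))$ (using $\mu_n\subseteq A_{\Mt}(\A)$) is closed in $\Mt(\A)$.

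Next compute the image $G$ of $A_M(\A)$ in $V^k$. Parameterizing via the $c_j$'s, the $i$-th coordinate is $\bar b_i^{r_i}$, where $\bar b_i=\sum_{j\leq i,\,j\in J}\bar c_j$ with $J=\{j:\varepsilon_j=1\}$ and $\bar c_j\in V$ free for $j\in J$. Since $V$ has exponent $2$, this coordinate is $0$ when $r_i$ is even and equals $\bar b_i$ when $r_i$ is odd. Consequently $G$ is cut out in $V^k$ by finitely many linear equations: $e_i=0$ for $i$ with $r_i$ even, $e_{i_1}=0$ if $J\cap[1,i_1]=\emptyset$ (where $i_1$ is the smallest odd-$r$ index), and $e_{i_t}=e_{i_{t-1}}$ for consecutive odd-$r$ indices $i_{t-1}<i_t$ with $J\cap(i_{t-1},i_t]=\emptyset$. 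Each such equation defines a closed subgroup of $V^k$, so $G$ is closed, and therefore $A_M(\A)M^{(2)}(\A)$ is closed in $M(\A)$; pulling back through the canonical projection $p$, whose kernel $\mu_n$ already lies in $A_{\Mt}(\A)$, gives closedness of $A_{\Mt}(\A)\Mtt(\A)$ in $\Mt(\A)$.

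Finally, the same combinatorial data identifies $V^k/G$ with a finite power $V^{k-c}$, where $c$ counts the nontrivial classes of the equivalence relation on $\{0,1,\ldots,k\}$ generated by the above equalities. The resulting quotient map sends $(v_1,\ldots,v_k)$ to a tuple whose components are either individual coordinates $v_i$ or differences $v_i-v_j$. Applied to the image of $M(F)$, each component is of the form $\overline{\det g_i}$ or $\overline{\det g_i\cdot\det g_j^{-1}}$ with $g_i\in\GL_{r_i}(F)$ and $g_j\in\GL_{r_j}(F)$, and hence lies in the image $\bar F^\times$ of $F^\times$ in $V$. By Proposition \ref{P:F_is_discrete}, $\bar F^\times$ is discrete in $V$, so $(\bar F^\times)^{k-c}$ is discrete in $V^{k-c}$; this yields the desired discreteness in $A_M(\A)M^{(2)}(\A)\backslash M(\A)$, and the metaplectic version follows because the induced map $\Mt(\A)/A_{\Mt}(\A)\Mtt(\A)\to M(\A)/A_M(\A)M^{(2)}(\A)$ is a continuous open bijection, hence a homeomorphism. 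The main obstacle is the combinatorial bookkeeping needed to describe $G$ and verify the quotient identification; while systematic, it is opaque until one writes out the parametrization, and this is where the restriction to $n=2$ really gets used (via the exponent-$2$ structure on $V$).
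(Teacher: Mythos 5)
Your proof is correct and follows essentially the same strategy as the paper's: the abelianness argument (via the nested centers $\Zt_i\subseteq Z_{\GLt_{r_i+\cdots+r_k}}$) is identical, and both proofs reduce the closedness of $A_{\Mt}(\A)\Mtt(\A)$ and the discreteness claim to showing that the image of $A_M(\A)$ under the blockwise determinant is a closed subgroup of $(\A^{\times2}\backslash\A^\times)^k$ with quotient a finite power of $\A^{\times2}\backslash\A^\times$ on which the image of $M(F)$ is controlled by Proposition~\ref{P:F_is_discrete}. The one place the two differ in emphasis: the paper summarizes the description of $A_M(\A)M^{(2)}(\A)$ as the kernel of a map $\Det_M$ ``by induction on $k$'' without spelling it out, whereas you carry out the combinatorics explicitly over $\mathbb{F}_2$---parametrizing $A_M(\A)$ by $b_i/b_{i-1}\in\A^{\times\varepsilon_i}$, describing the image in $V^k$ as the column span of a $\{0,1\}$-matrix, and exhibiting the defining linear equations. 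I spot-checked your list of equations ($e_i=0$ for $r_i$ even, $e_{i_1}=0$ when the initial segment misses $J$, $e_{i_t}=e_{i_{t-1}}$ across gaps disjoint from $J$) on several configurations of parities and they pin down $\mathrm{im}(A)$ correctly; since each equation has a distinct leading coordinate they are independent, so the quotient is indeed $V^{k-\mathrm{rank}(A)}$ with components landing in $\overline{F^\times}$ as you say. The final step---that the map $\Mt(\A)/A_{\Mt}(\A)\Mtt(\A)\to M(\A)/A_M(\A)M^{(2)}(\A)$ is a homeomorphism because $\mu_n\subseteq A_{\Mt}(\A)$ and $p$ is open---is correct and a clean way to transfer the discreteness to the cover. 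In short: same approach, with your version making explicit the linear algebra the paper leaves to the reader.
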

\begin{proof}
It is clear that
$A_{\Mt}(\A)$ is abelian since for each $i=1,\dots, k$, $\Zt_i$ is the
center of $\GLt_{r_i+\cdots+r_k}(\A)$, and hence commutes pointwise
with $\Zt_j(\A)\subseteq \GLt_{r_i+\cdots+r_k}(\A)$ for all $j\geq
i$. To show $A_{\Mt}(\A)$ is closed, it suffices to show
$A_{M}(\A):=p(A_{\Mt}(\A))$ is closed. Now one can write
$A_{M}(\A)=\prod_v'A_M(F_v)$, where $A_M(F_v)$ is defined analogously
to the global case. Then one can see that $Z^{(2)}_M(F_v)\subseteq
A_M(F_v)\subseteq Z_M(F_v)$, and since $Z^{(2)}_M(F_v)$ is closed and
of finite index in $Z_M(F_v)$, so is $A_M(F_v)$. But $Z_M(F_v)$ is
closed in $M(F_v)$ and so $A_M(F_v)$ is closed in $M(F_v)$. Then one
can show that $A_M(\A)$ is closed in $M(\A)$ by Lemma
\ref{L:closed_subgroup_local_global}.

Now one can show by induction on $k$ that the group $A_{M}(\A)M^{(2)}(\A)$ is
the kernel of the map
\[
\Det_M:M(\A)\rightarrow \A^{\times
  \varepsilon_1}\backslash\A^\times\times\cdots\times\A^{\times
  \varepsilon_k}\backslash \A^\times,
\]
where $\varepsilon_i$ is either $1$ or $2$. Hence one has a continuous
group isomorphism
\[
A_{\Mt}(\A)\Mtt(\A)\backslash\Mt(\A)\rightarrow \A^{\times
  \varepsilon_1}\backslash\A^\times\times\cdots\times\A^{\times
  \varepsilon_k}\backslash \A^\times,
\]
where the space on the right is Hausdorff. Hence the space on the left
is Hausdorf as well, which shows $A_{\Mt}(\A)\Mtt(\A)$ is closed. Also
one can show that the image of $M(F)$ is discrete as we did for
Proposition \ref{P:M(F)_is_discrete}.
\end{proof}

\begin{Prop}\label{P:A_M_for_n=2_2}
Assume $n=2$. Let 
\[
\Zt_j(\A)=Z_{\GLt_{r_1+\cdots+r_{k-j}}}(\A)\subseteq
\GLt_{r_1}(\A)\timest\cdots\timest\GLt_{r_{k-j}}(\A)
\subseteq\Mt(\A),
\]
and 
\[
A_{\Mt}(\A)=\Zt_1(\A)\Zt_2(\A)\cdots\Zt_k(\A).
\]
Then $A_{\Mt}(\A)$ is a closed abelian subgroup of
$\widetilde{Z_M}(\A)$ and further the
group $A_{\Mt}(\A)\Mtt(\A)$ is closed and the
image of $M(F)$ in $A_M(\A)M^{(2)}(\A)\backslash M(\A)$ as well as in
$A_{\Mt}(\A)\Mtt(\A)\backslash\Mt(\A)$ is discrete.
\end{Prop}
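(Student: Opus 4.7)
The plan is to mirror the proof of Proposition \ref{P:A_M_for_n=2} verbatim, with the roles of the upper-left and lower-right corners interchanged. First, I would verify that $A_{\Mt}(\A)$ is abelian: for $j' \geq j$ the subgroup $\Zt_{j'}(\A)$ sits inside $\GLt_{r_1}(\A)\timest\cdots\timest\GLt_{r_{k-j'}}(\A)$, which is contained in $\GLt_{r_1+\cdots+r_{k-j}}(\A)$; since $\Zt_j(\A)$ is by definition the center of this larger group, the two commute pointwise, and induction on the number of factors gives the abelianness of the full product.

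Next, to show that $A_{\Mt}(\A)$ is closed, it suffices (since the canonical projection is continuous) to prove that $A_M(\A) := p(A_{\Mt}(\A))$ is closed in $M(\A)$. One checks that $A_M(\A) = \prod_v' A_M(F_v)$, where each local piece satisfies $Z_M^{(2)}(F_v) \subseteq A_M(F_v) \subseteq Z_M(F_v)$, and $Z_M^{(2)}(F_v)$ is a closed subgroup of finite index in $Z_M(F_v)$. Hence $A_M(F_v)$ is closed in $M(F_v)$, and Lemma \ref{L:closed_subgroup_local_global} yields closedness of $A_M(\A)$.

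The key step is then to construct, by induction on $k$, a continuous group homomorphism
\[
\Det'_M : M(\A) \longrightarrow \A^{\times \varepsilon_1}\backslash\A^\times \times \cdots \times \A^{\times \varepsilon_k}\backslash\A^\times,
\]
where $\varepsilon_i \in \{1,2\}$ is determined by the parity of the relevant cumulative block-size ($r_1+\cdots+r_{k-j}$), whose kernel is exactly $A_M(\A)M^{(2)}(\A)$. Concretely, $\Det'_M$ records the ``successive ratios'' of block determinants, reflecting the fact that elements of $\Zt_j(\A)$ introduce the same scalar in the first $k-j$ blocks. This gives a continuous group isomorphism from $A_{\Mt}(\A)\Mtt(\A)\backslash\Mt(\A)$ onto the Hausdorff target, so the quotient is Hausdorff and $A_{\Mt}(\A)\Mtt(\A)$ is closed in $\Mt(\A)$. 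The discreteness of the image of $M(F)$ in both quotients then follows by applying Proposition \ref{P:F_is_discrete} coordinatewise to the target of $\Det'_M$, exactly as in the proof of Proposition \ref{P:M(F)_is_discrete}.

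The main obstacle is the inductive identification of $A_M(\A)M^{(2)}(\A)$ as the kernel of $\Det'_M$: because the centers $\Zt_j(\A)$ are nested and cover the blocks from the left in an overlapping manner, one has to carefully track which scalars get absorbed into $M^{(2)}(\A)$ at each stage. This is purely a bookkeeping calculation (as in the mirror case), and once it is carried out the remainder of the argument is formal.
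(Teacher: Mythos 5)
Your proposal is correct and follows essentially the same route as the paper: the paper's own proof of Proposition \ref{P:A_M_for_n=2_2} is simply the sentence ``Identical to the previous proposition,'' i.e.\ it also mirrors the argument of Proposition \ref{P:A_M_for_n=2} with the corners swapped. Your elaboration of the commutation argument (each $\Zt_{j'}$ for $j'\geq j$ lies inside $\GLt_{r_1+\cdots+r_{k-j}}(\A)$, whose center is $\Zt_j$), the local-global closedness via $Z_M^{(2)}(F_v)\subseteq A_M(F_v)\subseteq Z_M(F_v)$ and Lemma \ref{L:closed_subgroup_local_global}, and the identification of $A_M(\A)M^{(2)}(\A)$ as the kernel of a determinant-type map into a product of $\A^{\times\varepsilon_i}\backslash\A^\times$ all match the paper's template exactly, so no new idea was needed and none is missing.
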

\begin{proof}
Identical to the previous proposition.
\end{proof}

Let us make the following final remark.
\begin{Rmk}
The above proposition and Corollary \ref{C:GCD} imply Proposition
\ref{P:hypothesis}.  Also for $n>2$, if $n$ and $r=r_1+\cdots+r_k$ are
such that $n$ divides $nr_i/d$ for all  $i=1\cdots k$  where $d=\gcd(n, r-1+2cr)$
and $n$ divides $ nr_i/d_2$ for all $i=2\cdots k$  where $d_2=\gcd(n,
r-r_1-1+2c(r-r_2))$, then $A_{\Mt}=Z_{\GLt_r}$ satisfies Hypothesis
($\ast\ast$), and hence one has the restriction property to the
smaller rank group. Moreover this is always the case, for example, if
$\gcd(n, r-1+2cr)=\gcd(n, r-r_1-1+2c(r-r_1))=1$. Similarly one can
satisfy Hypothesis ($\ast\ast\ast$) if
$n$ divides $nr_i/d$ for all  $i=1\cdots k$  and divides $
nr_i/d_{k-1}$ for all $i=1\cdots k-1$  where $d_{k-1}=\gcd(n,
r-r_{k-1}-1+2c(r-r_{k-1}))$. Those conditions are indeed often satisfied
especially when $n$ is a prime.
\end{Rmk}

\end{document}